\definecolor{darkblue}{rgb}{0,0,.5}
\definecolor{cc}{RGB}{125,0,0}
\definecolor{jd}{RGB}{0,125,0}
\definecolor{ha}{RGB}{0,0,200}
\newcommand{\ind}{\mathds{1}}
\newcommand{\var}{\mathsf{Var}}
\newcommand{\cov}{\mathsf{Cov}}
\newcommand{\Ep}{\mathbb{E}}
\newcommand{\E}{\mathbb{E}}
\newcommand{\Prb}{\mathbb{P}}
\newcommand{\R}{\mathbb{R}} 
\newcommand{\N}{\mathbb{N}} 
\renewcommand{\P}{\mathbb{P}}
\newcommand{\mc}[1]{\mathcal{#1}}
\newcommand{\brc}[1]{\left\{{#1}\right\}}
\newcommand{\prn}[1]{\left({#1}\right)} 
\newcommand{\prns}[1]{({#1})} 
\newcommand{\brk}[1]{\left[{#1}\right]} 
\newcommand{\norm}[1]{\left\|{#1}\right\|} 
\newcommand{\what}[1]{\widehat{#1}}
\newcommand{\wt}[1]{\widetilde{#1}}
\newcommand{\wb}[1]{\overline{#1}}
\newcommand{\majY}{Y^+}
\newcommand{\sgn}{\mathsf{sign}}
\newcommand{\normal}{\mathsf{N}}
\newcommand{\bindist}{\mathsf{Binomial}}
\newcommand{\maj}{\mathsf{maj}}
\newcommand{\lr}{^{\textup{lr}}}
\newcommand{\sem}{^{\textup{sp}}}
\newcommand{\mv}{^{\textup{mv}}}
\newcommand{\ds}{^{\textup{DS}}}
\newcommand{\dsp}{^{\textup{DS-prob}}}
\newcommand{\glad}{^{\textup{GLAD}}}
\newcommand{\gladp}{^{\textup{GLAD-prob}}}
\newcommand{\proj}{\mathsf{P}}
\newcommand{\projperp}{\proj_{u\opt}^\perp}
\newcommand{\Flink}{\mathcal{F}_{\mathsf{link}}}
\newcommand{\lipconst}{\mathsf{L}}
\newcommand{\loss}{\ell}
\newcommand{\poploss}{L}
\newcommand{\flink}{\mathcal{F}_{\mathsf{link}}}
\newcommand{\fsymlink}{\mathcal{F}_{\mathsf{link}}^0}
\newcommand{\fsymlinkL}{\mathcal{F}_{\mathsf{link}}^{\mathsf{L}}}
\newcommand{\fsymlinksp}{\mathcal{F}_{\mathsf{link}}^{\mathsf{sp}}}
\newcommand{\ltwo}[1]{\norm{#1}_2} 
\newcommand{\linf}[1]{\norm{#1}_\infty} 
\newcommand{\norms}[1]{\|{#1}\|} 
\newcommand{\ltwos}[1]{\norms{#1}_2} 
\newcommand{\linfs}[1]{\norms{#1}_\infty} 
\newcommand{\<}{\langle} 
\renewcommand{\>}{\rangle}
\DeclareMathOperator*{\argmin}{argmin}
\newcommand{\simiid}{\stackrel{\textup{iid}}{\sim}}
\newcommand{\indic}[1]{1\!\left\{{#1}\right\}}
\newcommand{\cd}{\stackrel{d}{\rightarrow}}
\newcommand{\cp}{\stackrel{p}{\rightarrow}}
\newcommand{\cas}{\stackrel{a.s.}{\rightarrow}}
\long\def\@makecaption#1#2{
  \vskip 0.8ex
  \setbox\@tempboxa\hbox{\small {\bf #1:} #2}
  \parindent 1.5em  
  \dimen0=\hsize
  \advance\dimen0 by -3em
  \ifdim \wd\@tempboxa >\dimen0
  \hbox to \hsize{
    \parindent 0em
    \hfil 
    \parbox{\dimen0}{\def\baselinestretch{0.96}\small
      {\bf #1.} #2
    } 
    \hfil}
  \else \hbox to \hsize{\hfil \box\@tempboxa \hfil}
  \fi
}
\newcommand{\defeq}{\coloneqq}
\newcommand{\LtwoP}[1]{\norm{#1}_{L^2(\Prb)}}
\newcommand{\LtwoPbold}[1]{\norm{#1}_{L^2(\Prb)}}
\newcommand{\LtwoPn}[1]{\norm{#1}_{L^2(\Prb_n)}}
\newcommand{\LtwoPns}[1]{\norms{#1}_{L^2(\Prb_n)}}
\newcommand{\LtwoQ}[1]{\norm{#1}_{L^2(Q)}}
\newcommand{\LtwoQn}[1]{\norm{#1}_{L^2(Q_n)}}
\newcommand{\LtwoQns}[1]{\norms{#1}_{L^2(Q_n)}}
\newcommand{\lipnorm}[1]{\norm{#1}_{\textup{Lip}}}
\newcommand{\half}{\frac{1}{2}}
\newcommand{\event}{\mc{E}}
\newcommand{\card}{\textup{card}}
\newcommand{\starhull}{\textup{Star}}
\newcommand{\hinge}[1]{\left({#1}\right)_+}
\definecolor{innerboxcolor}{rgb}{.9,.95,1}
\definecolor{outerlinecolor}{rgb}{.6,0,.2}
\newcommand{\opt}{^\star}
\newcommand{\subopt}{_\star}
\newcommand{\gme}{\what{\theta}_{n,m}} 
\newcommand{\tmle}{\what{\theta}\lr_{n,m}} 
\newcommand{\mve}{\what{\theta}\mv_{n,m}} 
\newcommand{\spe}{\what{\theta}\sem_{n,m}} 
\newcommand{\dse}{\what{\theta}\ds_{n,m}} 
\newcommand{\dspe}{\what{\theta}\dsp_{n,m}} 
\newcommand{\glade}{\what{\theta}\glad_{n,m}} 
\newcommand{\gladpe}{\what{\theta}\gladp_{n,m}} 
\newcommand{\G}{\mathbb{G}}
\newtheorem{claim}{Claim}[section]
\newtheorem{lemma}[claim]{Lemma}
\newtheorem{assumption}{Assumption}
\newtheorem{theorem}{Theorem}
\newtheorem{proposition}{Proposition}
\newtheorem{corollary}{Corollary}
\title{How many labelers do you have? \\
  A closer look at gold-standard labels}
\author{%
  Chen Cheng\thanks{Department of Statistics, Stanford University; email: \texttt{chencheng@stanford.edu}.} \and
  Hilal Asi\thanks{Department of Electrical Engineering, Stanford University; email: \texttt{asi@stanford.edu}.}
  \and
  John Duchi\thanks{Departments of Statistics and Electrical Engineering, Stanford University; email: \texttt{jduchi@stanford.edu}.}
}
\begin{document}
\maketitle


\begin{abstract}
  The construction of most supervised learning datasets revolves around
  collecting multiple labels for each instance, then aggregating the labels
  to form a type of ``gold-standard''. We question the wisdom of this
  pipeline by developing a (stylized) theoretical model of this process and
  analyzing its statistical consequences, showing how access to non-aggregated
  label information can make training well-calibrated models 
  more feasible than it is with
  gold-standard labels. The entire story, however, is subtle, and the
  contrasts between aggregated and fuller label information depend on
  the particulars of the problem,
  where estimators that use aggregated information exhibit robust but slower
  rates of convergence, while estimators that can effectively
  leverage all labels converge more quickly \emph{if} they have
  fidelity to (or can learn) the true labeling process.
  The theory
  makes several predictions for real-world datasets, including when
  non-aggregate labels should improve learning performance, which we
  test to corroborate the validity of our predictions.
\end{abstract}



\section{Introduction}

The centrality of data collection to the development of statistical machine
learning is evident~\cite{Donoho17}, with numerous challenge datasets
driving advances~\cite{MarcusSaMa94, LewisYaRoLi04, AsuncionNe07,
  KrizhevskyHi09, DengDoSoLiLiFe09, RussakovskyDeSuKrSaMaHuKaKhBeBeFe15,
  SchuhmannBeVeGoWiChCoKaMuWoScKuCrScKaJi22}.  Essential to these is the
collection of \emph{labeled data}. While in the past, experts could provide
reliable labels for reasonably sized datasets, the cost and size of modern
datasets often precludes this expert annotation, motivating a growing
literature on crowdsourcing and other sophisticated dataset generation
strategies that aggregate expert and non-expert feedback or collect
internet-based loosely supervised and multimodal data~\citep{DawidSk79,
  Howe06, WhitehillWuBeMoRu09, RussakovskyDeSuKrSaMaHuKaKhBeBeFe15,
  RatnerBaEhFrWuRe17, SchuhmannBeVeGoWiChCoKaMuWoScKuCrScKaJi22,
  GadreEtAl23}.  By aggregating multiple labels, one typically hopes to
obtain clean, true, ``gold-standard'' data.  Yet most statistical machine
learning development---theoretical or methodological---does not investigate
this full data generating process, assuming only that data comes in the form
of $(X, Y)$ pairs of covariates $X$ and targets (labels) $Y$~\cite{Vapnik95,
  BoucheronBoLu05, BartlettJoMc06, HastieTiFr09}.  Here, we argue for a more
holistic perspective: broadly, that analysis and algorithmic development
should focus on the more complete machine learning pipeline, from dataset
construction to model output; and more narrowly, questioning such
aggregation strategies and the extent to which such cleaned data is
essential or even useful.



To that end, we develop a stylized theoretical model to capture
uncertainties in the labeling process, allowing us to understand the
contrasts, limitations and possible improvements of using aggregated or
non-aggregated data in a statistical learning pipeline. We model each
example as a pair $(X_i, (Y_{i1},\dots,Y_{im}))$ where $X_i$ is a data point
and $Y_{ij}$ are noisy labels. In the most basic formulation of our results,
we compare two methods: empirical risk minimization using all the labels,
and empirical risk minimization using cleaned labels $\majY$ based on
majority vote. While this grossly simplifies modern crowdsourcing and other
label aggregation strategies~\cite{DawidSk79,
  RussakovskyDeSuKrSaMaHuKaKhBeBeFe15, RatnerBaEhFrWuRe17}, the simplicity
allows (i) us to understand fundamental limitations of algorithms based on
majority-vote aggregation, (ii) circumventing these limits by using full,
non-aggregated information, and (iii) provides a potential base for future
work: the purpose of the simplicity is to allow application of classical
statistical tools.  By carefully analyzing these models, our main results
show that the error of models fit using non-aggregated label information
outperforms that of ``standard'' estimators that use aggregated (cleaned,
majority-vote) labels, so long as the model has fidelity to the data, while
majority-vote estimators provide robust (but slower) convergence, and these
tradeoffs are fundamental.  We develop several extensions to these basic
results, including misspecified models, semiparametric scenarios where one
must learn link functions, and simple models of learned annotator
reliability.



While our models are stylized, they also make several concrete and testable
predictions for real datasets; if our approach provides a useful
abstraction, it must suggest improvements in learning even for more complex
and challenging to analyze scenarios. Our theory predicts that
methods that fit predictive models on non-aggregated data should both make
better-calibrated predictions and, in general, have lower classification
error than models that use aggregated clean labels.  To that end,
we consider two real datasets as well as one large scale semisynthetic dataset, and they
corroborate the predictions and implications of our theory even beyond
logistic models.  In particular, majority-vote based algorithms yield
uncalibrated models in all experiments, whereas the algorithms that
use full-label information train (more) calibrated models. Moreover, the former
algorithms exhibit worse classification error in our experiments, with the
error gap depending on parameters---such as inherent label noise---that
we can also address in our theoretical models.
 
%
%
%


\subsection{Problem formulation}
\label{sec:formulation}

To situate our results, we begin by
providing the key ingredients in the paper.

\paragraph{The model with multiple labels.}
Consider a binary classification problem with data points $X_1, \dots, X_n
\simiid \P_X$, $X_i \in \R^d$, with $m$ labelers. We assume each labeler
annotates data points independently through a generalized linear model,
and the labelers use
$m$ possibly different link functions $\sigma_1\opt, \dots, \sigma_m\opt
\in \flink$, where
\begin{align*}
  \flink := \brc{\sigma : \R \to [0, 1]
    \mid \sigma(0) = 1/2, \sgn(\sigma(t) - 1/2) = \sgn(t)}.
\end{align*} 
Here $\sgn(t) = -1$ for $t <0$, $\sgn(t) = 1$ for $t > 0$ and $\sgn(0) =
0$. If $\sigma(t) + \sigma(-t) = 1$ for all $t \in \mathbb{R}$, we say the
link function is symmetric and denote the class of symmetric
functions by $\fsymlink \subset \flink$. The link functions
generate labels via the distribution
\begin{align}
  \Prb_{\sigma, \theta}(Y = y \mid X = x) & =
  \sigma ( y \< \theta, x\>) ~~~ \mbox{for~} y \in \left\{\pm 1\right\},
  ~ x,\theta \in \R^d. \label{eq:general-calibration-model}
\end{align}
Key to our stylized model, and what allows our analysis, is that we assume
labelers use the same linear classifier $\theta\opt \in \R^d$---though each
labeler $j$ may have a distinct link $\sigma_j\opt$---so we obtain
conditionally independent labels $Y_{ij} \sim \Prb_{\sigma_j\opt,
  \theta\opt}(\cdot \mid X_i)$.  For example, in the logistic model where
labelers have identical link, $\sigma_j\opt(t) = 1 / \prn{1 + e^{-t}}$. We
seek to recover $\theta\opt$ or the direction $u\opt := \theta\opt /
\ltwo{\theta\opt}$ from the observations $(X_i, Y_{ij})$.

\paragraph{Classification and calibration.}
For an estimator $\what{\theta}$ and associated direction $\what{u} \defeq
\what{\theta} / \ltwos{\what{\theta}}$, we measure performance through
\begin{align*}
  \textbf{(i)} & \quad \text{The classification error:} ~~ ~\|u^\star - \what{u}\|_2 = \sqrt{2(1 - \langle u\opt, \what{u} \rangle)}. \\
  \textbf{(ii)} & \quad
  \text{The calibration error:} ~~~~ ~~ \|\theta^\star - \what{\theta}\|_2.
\end{align*}
We term these classification and calibration from the rationale that for
classification, we only need to control the difference between the
directions $\what{u}$ and $u\opt$, while calibration---that for a new data
point $X$, the value $\sigma_j\opt( \< \what{\theta}, X \>)$ is close to
$\Prb_{\sigma_j\opt, \theta\opt}(Y = 1\mid X) = \sigma_j\opt(\< \theta\opt,
X \>)$---requires controlling the error in $\what{\theta}$ as an estimate of
$\theta\opt$. As another brief motivation for calling \textbf{(i)} the
classification error, note that if $X$ has rotationally symmetric
distribution, then for any unit vectors $u, u\opt$ we have $\P(\sgn(\<X,
u\>) \neq \sgn(\<X, u\opt\>)) = \frac{1}{\pi} \cos^{-1}\<u, u\opt\>$; because
$\cos^{-1} t = \sqrt{2(1 - t)} \cdot (1 + o(1))$ as $t \uparrow 1$, the
error $\ltwo{u\opt - u} = \sqrt{2(1 - \<u, u\opt\>)}$ is asymptotically
equivalent to the angle between $u$ and $u\opt$.

\paragraph{Estimators.}
We consider two types of estimators: one using aggregated labels and the
other using each label from different annotators. At the highest level, the
aggregated estimator depends on processed labels $\majY_i$ for each example
$X_i$, while the non-aggregated estimator uses all labels $Y_{i1}, \ldots,
Y_{im}$.  To center the discussion, we instantiate this 
via logistic regression (with generalizations in the sequel).  For the
logistic link $\sigma\lr(t) = \frac{1}{1 + e^{-t}}$, define the logistic
loss
\begin{align*}
  \loss_\theta\lr(y \mid x) = -\log \Prb_{\sigma\lr, \theta}(y \mid x)
  = \log(1 + e^{-y\<x,\theta\>})  .
\end{align*}
In the non-aggregated model, we let let $\Prb_{n,m}$ be the empirical
measure on $\{(X_i, (Y_{i1},\dots,Y_{im}))\}$ and
consider the logistic regression estimator 
\begin{align}
  \label{eq:maximum-likelihood}
  \tmle = \argmin_\theta \left\{\Prb_{n,m} \loss_\theta\lr
  = \frac{1}{nm} \sum_{i = 1}^n \sum_{j = 1}^m \loss_\theta\lr(Y_{ij} \mid X_i)
  \right\}  ,
\end{align}
which is the maximum likelihood estimator (MLE) assuming the logistic model
is true.
We focus on the simplest
aggregation strategy, where example $i$ has majority vote label
\begin{equation*}
  \majY_i
  = \maj(Y_{i1}, \ldots, Y_{im}).
\end{equation*}
Then
letting $\wb{\Prb}_{n,m}$ be the empirical measure on
$\{(X_i, \majY_i)\}$, the majority-vote estimator solves
\begin{align}
  \label{eq:logistic-majority}
  \mve = \argmin_\theta  \brc{\wb{\Prb}_{n,m}\loss_\theta\lr = \frac 1 n \sum_{i=1}^n  \loss_\theta\lr(\majY_i \mid X_i)}.
\end{align}
Method~\eqref{eq:logistic-majority} acts as our proxy for the ``standard''
data analysis pipeline, with cleaned labels, while
method~\eqref{eq:maximum-likelihood} is our proxy for non-aggregated methods
using all labels.  A more general formulation than the majority
vote~\eqref{eq:logistic-majority} could allow complex aggregation
strategies, e.g., crowdsourcing, but we abstract away details to capture
what we view as the essentiae for statistical learning problems
(e.g.\ CIFAR~\cite{KrizhevskyHi09} or
ImageNet~\cite{RussakovskyDeSuKrSaMaHuKaKhBeBeFe15}) where only aggregated
label information is available.

Our main technical approaches characterize the
estimators $\mve$ and $\tmle$ via asymptotic calculations.  Under
appropriate assumptions on the data generating
mechanisms~\eqref{eq:general-calibration-model}, which will include
misspecification, we both (i) provide consistency results that elucidate the
infinite sample limits for $\mve$, $\tmle$, and a few more general
estimators, and (ii) carefully evaluate their limit distributions via
asymptotic normality calculations. The latter allows direct comparisons
between the different estimators through their limiting covariances, which
exhibit (to us) interestingly varying dependence on $m$ and the scaling of
the true parameter $\theta\opt$.


\subsection{Summary of theoretical results and implications}

We obtain several results clarifying the distinctions between
estimators that use aggregate labels from those that treat them
individually.


\paragraph{Improved performance using multiple labels for well-specified models}
As in our discussion above, our main approach to highlighting the import of
multiple labels is through asymptotic characterization of the
estimators~\eqref{eq:maximum-likelihood}--\eqref{eq:logistic-majority} and
similar estimators. We begin this in
Section~\ref{sec:well-sepcified-mod} by focusing on the particular case that
the labelers follow a logistic model. As specializations of our major results
to come, we show that the multi-label MLE~\eqref{eq:maximum-likelihood} is
calibrated and enjoys faster rates of convergence (in $m$) than the
majority-vote estimator~\eqref{eq:logistic-majority}.  The improvements
depend in subtle ways on the particulars of the underlying distribution, and
we connect them to Mammen-Tsybakov-type noise conditions in
Propositions~\ref{prop:lowd-maximum-likelihood}
and~\ref{prop:lowd-majority-vote-m-noise}. In ``standard'' cases
(say, Gaussian features), the improvement scales as $\sqrt{m}$;
for problems with little classification noise, the majority vote
estimator becomes brittle (and relatively slower), while the
convergence rate gap decreases for noisier problems.

\paragraph{Robustness of majority-vote estimators}
Nonetheless, our results also evidence support for majority vote estimators
of the form~\eqref{eq:logistic-majority}. Indeed, in
Section~\ref{sec:mis-spec} we provide master results and consequences that
hold for both well- and mis-specified losses, which highlight the robustness
of the majority vote estimator.  While MLE-type
estimators~\eqref{eq:maximum-likelihood} enjoy faster rates of convergence
when the model is correct, these rates break down when the model is
mis-specified in ways we make precise; in contrast, majority-vote-type
estimators~\eqref{eq:logistic-majority} maintain their (not quite optimal)
convergence guarantees, yielding $\sqrt{m}$-rate improvements
even under mis-specification, suggesting practical value of cleaning
data when modeling uncertainty is impossible.

\paragraph{Fundamental limits of majority vote}
In Section~\ref{sec:lower-bound}, we develop fundamental limits for
estimation given only majority vote labels $(X, \majY)$. We start with a
simple result that any estimator based on majority vote aggregation cannot
generally be calibrated. Leveraging local asymptotic minimax theory, we also
provide lower bounds for estimating the direction $u\opt$ and parameter
$\theta\opt$ given majority vote labels, which coincide with our results in
Section~\ref{sec:well-sepcified-mod}. These results highlight both (i) that
cleaned labels necessarily force slower convergence in the number of
labelers $m$ and (ii) the robustness of majority-vote-based estimators: even
with a mis-specified link function, they achieve rate-optimal
convergence for procedures receiving only cleaned labels $\majY$.




\paragraph{Semi-parametric approaches}
The final theoretical component of the paper (Section~\ref{sec:semi-param})
provides an exemplar approach that puts the pieces together and achieves
efficiency via semi-parametric approaches. While
not the main focus of the paper, we highlight two applications.
In the first, we use an initial estimator to fit a link function, then
produce a refined estimator minimizing the induced semiparametric loss and
recovering efficient estimation. In the second, we highlight how our results
provide potential insights into existing crowdsourcing techniques by
leveraging a blackbox crowdsourcing algorithm providing measures of labeler
reliability to achieve (optimal) estimates.

\paragraph{Experimental evaluation}
Our theoretical results predict two outcomes: first, that \emph{if} the
model has fidelity to the labeling process, then using all noisy labels
should yield better estimates than procedures using cleaned labels, and
second, that majority vote estimators should be more robust. In
Section~\ref{sec:experiments}, we therefore provide real and semi-synthetic
experiments that corroborate the theoretical predictions; the experiments
also highlight a need, which some researchers are now beginning to
address~\cite{GadreEtAl23}, for more applied development of actual datasets,
so that we can more carefully evaluate the downstream consequences of
filtering, cleaning, and otherwise manipulating the data we use to fit our
models.

\subsection{Related work}
\label{sec:related-work}

We briefly overview related work, acknowledging that to make our theoretical
model tractable, we capture only a few of the complexities inherent in
dataset construction. Label aggregation strategies often attempt to
evaluate labeler uncertainty, dating to~\citet{DawidSk79}, who study
labeler uncertainty estimation to overcome noise in
clinical patient measurements. With the rise of
crowdsourcing, such reliability models have
attracted substantial recent interest, with approaches for optimal budget
allocation~\cite{KargerOhSh14}, addressing untrustworthy or malicious
labelers~\cite{CharikarStVa17}, and more broadly an intensive line of work
studying crowd labeling and aggregation~\cite{WhitehillWuBeMoRu09,
  WelinderBrPeBe10, TianZh15, RatnerBaEhFrWuRe17}, with substantial
applications~\cite{DengDoSoLiLiFe09, RussakovskyDeSuKrSaMaHuKaKhBeBeFe15}.

The focus in many of these, however, is to obtain a single  clean
and trustworthy label for each example. Thus, while these
aggregation techniques have been successful and important, our work takes a
different perspective.  First, we target statistical analysis for
the full learning pipeline---to understand the theoretical landscape of the
learning problem with multiple labels for each example---as opposed to
obtaining only clean labels. Moreover, we argue for an increased focus on
calibration of the resulting predictive model, which aggregated (clean)
labels necessarily impede. We therefore adopt a perspective similar to
\citeauthor{PetersonBaGrRu19} and \citeauthor{PlataniosAlXiMi20}'s
applied work~\citep{PetersonBaGrRu19,PlataniosAlXiMi20},
which highlight ways that incorporating human
uncertainty into learning pipelines can make classification ``more
robust''~\citep{PetersonBaGrRu19}.  Instead of splitting the learning
procedure into two phases, where the first aggregates labels and the second
trains, we simply use non-aggregated labels throughout
learning. \citet{PlataniosAlXiMi20} propose a richer model than our stylized
scenarios, directly modeling labeler reliability, but the
simpler approaches we investigate allow us to be theoretically precise about
the limiting behavior and performance of the methods.

Our results complement a new strain of experimental and applied work in
machine learning.  The Neural Information Processing Systems conference, as
of 2021, includes a Datasets and Benchmarks track, recognizing that they
``are crucial for the development of machine learning
methods''~\cite{BeygelzimerLiVaDa21}. \citet{GadreEtAl23} introduce
DataComp, a testbed for experimental work on datasets, building off the long
history of dataset-driven development in machine learning~\cite{HardtRe22}.
We take a more theoretical approach, attempting to lay mathematical
foundations for this line of work.

We mention in passing that our precise consistency results rely on
distributional assumptions on the covariates $X$, for example, that they are
Gaussian. That such technical conditions appear may be familiar from work,
for example, in single-index or nonlinear measurement models~\cite{PlanVe16,
  PlanVe13}. In such settings, one assumes $\E[Y \mid X] = f(\<\theta\opt,
X\>)$ for an unknown increasing $f$, and it is essential that $\E[Y X]
\propto \theta\opt$ to allow estimation of $\theta\opt$; we leverage similar
techniques.

\paragraph{Notation}
We use $\norm{x}_p$ to denote the $\ell_p$ norm of a vector $x$. For a
matrix $M$, $\norm{M}$ is its spectral norm, and $M^\dagger$ is its
Moore-Penrose pseudo inverse. For a unit vector $u \in \R^d$, the projection
operator to the orthogonal space of $\mathrm{span}\{u\}$ is $\proj_{u}^\perp
= I - u u^\top$. We use the notation $f(n) \asymp g(n)$ for $n \in
\mathbb{N}$ and $f(x) \asymp g(x)$ for $x \in \R_+$ if there exist numerical
constants $c_1, c_2$ and $n_0, x_0 \geq 0$ such that $c_1|g(n)| \leq |f(n)|
\leq c_2|g(n)|$ and $c_2|g(x)| \leq |f(x)| \leq c_2|g(x)|$ for $n \geq n_0$
and $x \geq x_0$. We also use the empirical process notation $\Prb Z = \int
z d\Prb(z)$.  We let $c=o_m(1)$ denote that $c \to 0$ as $m \to \infty$.

\section{The well-specified logistic model}\label{sec:well-sepcified-mod}

We begin in a setting that allows the cleanest and most precise comparisons
between a method using aggregated labels and one without by considering the
logistic model for the link~\eqref{eq:general-calibration-model},
\begin{align*}
  \sigma\lr(t) = \frac{1}{1 + e^{-t}} \in \fsymlink.
\end{align*}
This simplicity allows results that highlight many of the conclusions we
draw, and so we present initial, relatively clean, results for the
estimators~\eqref{eq:maximum-likelihood} and~\eqref{eq:logistic-majority}
here.  In particular, we assume identical links $\sigma_1\opt = \dots =
\sigma_m\opt = \sigma\lr$, have an i.i.d.\ sample $X_1, \ldots, X_n$, where
for each $i$ we draw $(Y_{i1}, \ldots, Y_{im}) \simiid
\Prb_{\sigma\lr,\theta\opt}(\cdot \mid X_i)$ for a true vector $\theta\opt$.

\subsection{The isotropic Gaussian case} \label{sec:gauss}

To deliver the general taste of our results on the performance of the full
information~\eqref{eq:maximum-likelihood} and majority
vote~\eqref{eq:logistic-majority} approaches, we start by studying the
simplest case when $X \sim \normal(0, I_d)$.

\paragraph{Performance with non-aggregated data.}

The
non-aggregated MLE estimator $\tmle$ in Eq.~\eqref{eq:maximum-likelihood}
admits a standard analysis~\cite{VanDerVaart98},
which we state as a corollary of a result where $X$ has more general
distribution in Proposition~\ref{prop:lowd-maximum-likelihood} to come.
%
\begin{corollary}
  \label{cor:lowd-maximum-likelihood-Gaussian} 
  Let $X \sim \normal(0, I_d)$ and $t\opt = \ltwos{\theta\opt}$. The maximum
  likelihood estimator $\tmle$ is consistent, with $\tmle \cp
  \theta\opt$, and for $\proj_{u\opt}^\perp = I_d - u\opt {u\opt}^\top$,
  \begin{align*}
    \sqrt{n} (\what{u}\lr_{n,m} - u^\star)
    & \cd \normal\left(0,  m^{-1}
    \cdot (t\opt)^{-2} \cdot \E[\sigma\lr(\<\theta\opt, X\>)
      (1 - \sigma\lr(\<\theta\opt, X\>))]^{-1}
    \proj_{u\opt}^\perp \right).
  \end{align*}
\end{corollary}

The first part of \Cref{cor:lowd-maximum-likelihood-Gaussian} demonstrates
that the non-aggregated MLE classifier is calibrated: it recovers both the
direction and scale of $\theta\opt$. Moreover, the second part shows that
this classifier enjoys convergence rates that roughly scale as
$O(1)/\sqrt{nm}$, so that a linear increase in the label size $m$ roughly
yields a linear increase in convergence rate.

\paragraph{Performance with majority-vote aggregation.}

The analysis of the majority vote estimator~\eqref{eq:logistic-majority}
requires more care, though the assumption that $X \sim \normal(0, I_d)$ allows
us to calculate the limits explicitly. In
brief, we show that when $X$ is Gaussian, the estimator is not calibrated
and has slower convergence rates in $m$ for classification error than the
non-aggregated classifier.
The basic idea is that a classifier fit using
majority vote labels $\majY_i$
should still point in the direction of $\theta\opt$,
but it should be (roughly) ``calibrated'' to the probability of a majority
of $m$ labels being correct.

We follow this idea and sketch the derivation here, as it is central to all
of our coming theorems, and then state the companion corollary to
Corollary~\ref{cor:lowd-maximum-likelihood-Gaussian}.  Each result depends
on the probability
\begin{equation*}
  \Prb(\majY = \sgn(\<x, \theta^\star\>) \mid x)
  = \rho_m(|\<x, \theta^\star\>|)
\end{equation*}
of obtaining a correct label using majority vote,
where $\rho_m$ defines the binomial probability function
\begin{align}
  \rho_m(t) =
  \P\left(\bindist\Big(m, \frac{1}{1 + e^{-|t|}}\Big)\ge \frac{m}{2}\right)
  = 
  \sum_{i=\lceil m/2 \rceil}^m \binom{m}{i} \left(\frac{1}{1 + e^{-|t|}} \right)^i \left(\frac{e^{-|t|}}{1 + e^{-|t|}} \right)^{m-i}, \label{eq:link-function-m-majority-vote}
\end{align}
when $m$ is odd. (When $m$ is even the final sum has the additional additive
term $\frac 1 2 \binom{m}{m/2} \frac{e^{-m|t|/2}}{(1 + e^{-|t|})^m}$, which
is asymptotically negligible.) Key to the coming result is choosing a
parameter to roughly equalize binomial (majority vote) and logistic
(Bernoulli) probabilities, and so for $Z \sim \normal(0, 1)$ we define the
function
\begin{align}
  h_m(t) &  = \Ep \brk{|Z|(1-\rho_m(t^\star |Z|))}-\Ep
  \brk{\frac{|Z|}{1 + e^{t|Z|}}}.
  \label{eqn:h-centering}
\end{align}
We use $h$ to find the minimizer of population loss $\poploss\mv_m(\theta) =
\Ep[ \ell_\theta\lr(\majY \mid X)]$ by considering the ansatz
that $\theta = t u^\star$ for
some $t > 0$. Using the definition~\eqref{eq:link-function-m-majority-vote}
of $\rho_m$, we can write
\begin{align*}
  \poploss\mv_m(\theta)  
  & = \Ep\left[\log(1 + \exp(-S \<X, \theta\>)) \cdot\rho_m(t^\star| \<X, u^\star\>|) \right]  + \Ep\left[\log(1 + \exp(S\<X, \theta\>)) \cdot (1-\rho_m(t^\star|\<X, u^\star\>|)) \right],
\end{align*}
where $S = \sgn(\<X, \theta^\star\>)$, and compute the gradient
\begin{align*}
  \nabla \poploss\mv_m(\theta) & = - \Ep \left[\frac{S}{1 + \exp(S \<X, \theta\>)} \rho_m(t^\star |\<X, u^\star\>|) X \right] + \Ep \left[\frac{S\exp(S \<X, \theta\>)}{1 + \exp(S \<X, \theta\>)} (1-\rho_m(t^\star|\<X, u^\star\>|)) X \right].
\end{align*}
We set $Z = \<X, u^\star\>$ and decompose $X$ into the independent sum $X =
(X - u^\star Z) + u^\star Z$. Substituting in $\theta = t u\opt$ yields
\begin{align}
  \nabla \poploss\mv_m(t u^\star) 
  & \stackrel{\mathrm{(i)}}{=} - \Ep \left[\frac{\sgn(Z)}{1 + \exp(t|Z|)} \rho_m(t^\star |Z|) X \right]  + \Ep \left[\frac{\sgn(Z)\exp(t|Z|)}{1 + \exp(t|Z|)} (1-\rho_m(t^\star |Z|)) X \right] \nonumber	\\
  &= - \Ep \left[\frac{\sgn(Z)Z}{1 + \exp(t|Z|)} \rho_m(t^\star |Z|)\right] u^\star  + \Ep \left[\frac{\sgn(Z)Z\exp(t|Z|)}{1 + \exp(t|Z|)} (1-\rho_m(t^\star |Z|))\right]  u^\star \nonumber \\
  & = \left(\Ep \left[\frac{|Z|  \exp(t|Z|)}{1 + \exp(t|Z|)}\right] - \Ep \left[|Z| \rho_m(t^\star |Z|) \right]\right) u^\star \nonumber \\
  & = \left(\Ep \brk{|Z|(1-\rho_m(t^\star  |Z|))}-\Ep \brk{\frac{|Z|}{1 + e^{t|Z|}}}\right) u^\star = h_m(t) u^\star  , \label{eq:majority-vote-root}
\end{align} 
where in (i) we substitute $S = \sgn(Z)$. As we will present in
Corollary~\ref{cor:lowd-majority-vote-m}, $h_m(t)=0$ has a unique solution
$t_m \asymp \sqrt m$, and the global minimizer of the population loss
$\poploss\mv_m$ is thus exactly $t_m u\opt$.


By completing the calculations for the precise value of
$t_m$ above and a performing few asymptotic normality calculations,
we have the following result, a 
special case of Proposition~\ref{prop:lowd-majority-vote-m-noise} to come.

\begin{corollary} \label{cor:lowd-majority-vote-m}
  Let $X \sim \normal(0, I_d)$
  and $t\opt = \ltwos{\theta\opt}$.
  There are numerical constants $a, b > 0$ such that the following hold: for the function $h = h_m$ in~\eqref{eqn:h-centering},
  there is a unique $t_m \geq t_1 = t^\star$ solving $h(t_m) = 0$
  and 
  \begin{equation*}
    \what{\theta}\mv_{n,m} \cp t_m u\opt
    ~~ \mbox{and} ~~
    \lim_{m \to \infty} \frac{t_m}{t\opt \sqrt{m}} = a.
  \end{equation*}
  Moreover,
  there exists a function
  $C_m(t) = \frac{b}{t \sqrt{m}}(1 + o_m(1))$
  as $m \to \infty$ such that
  $\what{u}\mv_{n,m} = \mve/\ltwos{\mve}$ satisfies
  \begin{align*}
    \sqrt{n} \left(\what{u}\mv_{n,m} - u^\star \right) &\cd \normal\left(0, C_m(t\opt) {\proj_{u\opt}^\perp}\right).
  \end{align*}
\end{corollary}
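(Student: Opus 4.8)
The plan is to pin down the population minimizer of $\poploss\mv_m$ exactly through the ansatz $\theta = tu\opt$, then push this through standard $Z$-estimation asymptotics and a delta-method step for the normalization map, and finally extract the $m$-scalings of the two governing constants ($t_m$ and the limiting variance) by a careful analysis of the binomial tail $\rho_m$ integrated against the half-normal density.

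\textbf{Population minimizer.} First I would show $\poploss\mv_m$ is strictly convex and coercive (Gaussian design and $\psi(s) := e^{-s}/(1+e^{-s})^2 > 0$ give $\nabla^2\poploss\mv_m = \E[\psi(\<X,\theta\>)XX^\top]\succ 0$, and the logistic population loss of a non-degenerately-noisy design grows linearly in $\ltwos\theta$), so it has a unique minimizer, characterized by the stationarity equation. The gradient identity already in the text, $\nabla\poploss\mv_m(tu\opt) = h_m(t)u\opt$, then reduces the problem to analyzing the scalar $h_m$ from~\eqref{eqn:h-centering}. Differentiating in $t$ gives $h_m'(t) = \E\big[Z^2 e^{t|Z|}/(1+e^{t|Z|})^2\big] > 0$, so $h_m$ is strictly increasing; $h_m(\infty) = \E[|Z|(1-\rho_m(t\opt|Z|))] > 0$ because $\rho_m(s) < 1$ for every finite $s$ (all $m$ labels can be wrong); and using $1-\rho_1(s) = 1/(1+e^{s})$ together with the standard monotonicity $\rho_m(s)\ge\rho_1(s)$ for $s\ge 0$ (majority vote of $m$ honest labelers is at least as accurate as one), we get $h_m(t\opt) = \E[|Z|(1-\rho_m(t\opt|Z|))] - \E[|Z|/(1+e^{t\opt|Z|})]\le h_1(t\opt) = 0$, with equality at $m=1$. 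Hence $h_m$ has a unique root $t_m\ge t\opt$, and $\theta_m := t_m u\opt$ is the unique minimizer of $\poploss\mv_m$; consistency $\mve\cp\theta_m$ then follows from the usual argmax/Wald consistency argument given coercivity and uniform convergence of $\wb\Prb_{n,m}\loss_\theta\lr$.

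\textbf{Asymptotic normality and structure.} Classical M/$Z$-estimation (van der Vaart) applied to the smooth criterion $\wb\Prb_{n,m}\loss_\theta\lr$ gives $\sqrt n(\mve - \theta_m)\cd\normal(0,H_m^{-1}\Sigma_m H_m^{-1})$, where $H_m = \E[\psi(t_m Z)XX^\top]$ and $\Sigma_m = \cov\!\big(\!-\!\wb Y X/(1+e^{\wb Y\<X,\theta_m\>})\big)$ (the score is mean zero at $\theta_m$). Writing $X = Zu\opt + X_\perp$ with $X_\perp = \proj_{u\opt}^\perp X$ independent of $Z = \<X,u\opt\>$, both $H_m$ and $\Sigma_m$ are block-diagonal in $\R^d = \mathrm{span}(u\opt)\oplus\mathrm{span}(u\opt)^\perp$, with $H_m|_\perp = \E[\psi(t_m Z)]\,\proj_{u\opt}^\perp$ and $\Sigma_m|_\perp = \gamma_m\proj_{u\opt}^\perp$, where $\gamma_m = \E\big[\rho_m(t\opt|Z|)(1+e^{t_m|Z|})^{-2} + (1-\rho_m(t\opt|Z|))(1+e^{-t_m|Z|})^{-2}\big]$. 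The delta method on $\theta\mapsto\theta/\ltwos\theta$, whose derivative at $\theta_m$ is $t_m^{-1}\proj_{u\opt}^\perp$ (and in particular annihilates the $u\opt$-direction), then yields $\sqrt n(\what u\mv_{n,m} - u\opt)\cd\normal(0, C_m(t\opt)\proj_{u\opt}^\perp)$ with the scalar $C_m(t\opt) = \gamma_m/\big(t_m^2\,\E[\psi(t_m Z)]^2\big)$.

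\textbf{The $m$-scaling (main obstacle).} This is where essentially all the work is: controlling $\rho_m$ sharply enough to read off the constants. Using the Gaussian approximation $1-\rho_m(s) = \Prb(\bindist(m,\tfrac{1}{1+e^{-s}})<\tfrac m2) \approx \Phi(-\sqrt m\,\sinh(s/2))$, with uniform error control across $s$ and large-deviation control of the tails so dominated convergence applies, the integral $\E[|Z|(1-\rho_m(t\opt|Z|))]$ concentrates on $|Z| = \Theta(m^{-1/2})$; the substitution $z = w/(t\opt\sqrt m)$ gives $\E[|Z|(1-\rho_m(t\opt|Z|))] = (c_1+o_m(1))/((t\opt)^2 m)$ for an explicit $c_1 > 0$, while $\E[|Z|/(1+e^{t|Z|})] = (c_2+o(1))/t^2$ with $c_2$ an explicit constant; so $h_m(t_m) = 0$ forces $t_m^2 = (c_2/c_1)(t\opt)^2 m(1+o_m(1))$, i.e. $t_m/(t\opt\sqrt m)\to a := \sqrt{c_2/c_1}$. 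An analogous change of variables $z = v/t_m$ (using $t_m\asymp t\opt\sqrt m$, which puts $\rho_m$ exactly in its $s\sim m^{-1/2}$ CLT window) gives $\E[\psi(t_m Z)] = (c_3+o(1))/t_m$ and $\gamma_m = (c_4+o(1))/t_m$ for explicit $c_3,c_4$ (with $c_4 = c_4(a)$). Substituting, $C_m(t\opt) = \gamma_m/(t_m^2\E[\psi(t_m Z)]^2) = (c_4/c_3^2)/t_m\cdot(1+o(1)) = b/(t\opt\sqrt m)\cdot(1+o_m(1))$ with $b = c_4/(a c_3^2)$, and the same computation with a general scale $t$ in place of $t\opt$ yields the stated form of $C_m(t)$. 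The delicate points — uniformity of the binomial CLT over the window that dominates every one of these integrals, plus tail bounds justifying the limit interchanges — are exactly the content packaged by Proposition~\ref{prop:lowd-majority-vote-m-noise}, of which this corollary is a specialization.
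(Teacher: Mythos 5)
Your proposal is correct and follows essentially the same route as the paper: the paper proves this corollary as the $\beta=1$ specialization of Proposition~\ref{prop:lowd-majority-vote-m-noise}, whose proof likewise (i) reduces to the scalar calibration equation $h_m(t)=0$ via the gradient identity and the monotonicity/sign arguments you give (including $\rho_m\ge\rho_1$ to get $t_m\ge t\opt$), (ii) invokes the master asymptotic-normality theorem with the block decomposition along $u\opt$ and its complement plus the delta method (which you re-derive inline rather than cite), and (iii) extracts the $\sqrt{m}$-scalings of $t_m$ and $C_m$ from the Berry--Esseen normal approximation to $1-\rho_m$ in the $|Z|\asymp m^{-1/2}$ window together with the change-of-variables lemmas — your $\Phi(-\sqrt{m}\sinh(s/2))$ is exactly the logistic specialization of the paper's standardized link. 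The uniformity and tail-control details you flag are indeed precisely what the paper's Lemmas~\ref{lem:t-Z-integral-noise} and~\ref{lem:m-rho-integral-noise-general} supply.
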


It is instructive to compare the rates of this estimator to the rates
for the non-aggregated MLE
in \Cref{cor:lowd-maximum-likelihood-Gaussian}. First, the non-aggregated
estimator is calibrated in that $\tmle \to \theta\opt$, in contrast to the
majority-vote estimator, which roughly
``calibrates'' to the probability majority vote is correct
(cf.~\eqref{eq:majority-vote-root}) via the convergence
$\mve \to c \sqrt{m} \theta\opt$ as $n
\to \infty$.  The scaling of $C_m$ in
\Cref{cor:lowd-majority-vote-m} is also important: the majority-vote
estimator exhibits worse convergence rates by a factor of $\sqrt{m}$ than
the estimator $\what{\theta}\lr_{n,m}$: for constants $c\lr$ and $c\mv$ that
depend only on $t\opt = \ltwos{\theta\opt}$ and $\Sigma = I - u\opt
{u\opt}^\top$, we have asymptotic variances differing by $\sqrt{m}$:
\begin{equation*}
	\sqrt{n}(\what{u}\lr_{n,m} - u\opt) \cd
	\normal\left(0, m^{-1} c\lr \Sigma \cdot (1 + o_m(1))\right)
	~~ \mbox{while} ~~
	\sqrt{n}(\what{u}\mv_{n,m} - u\opt) \cd
	\normal\left(0, m^{-1/2} c\mv \Sigma \cdot (1 + o_m(1))\right).
\end{equation*}

To obtain intuition for this result via comparison with
Corollary~\ref{cor:lowd-maximum-likelihood-Gaussian}, consider the Fisher
Information $\E[\sigma\lr(\<\theta\opt, X\>) (1 - \sigma\lr(\<\theta\opt,
  X\>)) XX^\top]$ for logistic regression. Letting $Z\opt = \<\theta\opt,
X\>$ be the predicted margin, there is only ``information'' available for an
estimator when $\sigma\lr(Z\opt)$ is near $\half$, as otherwise
$\sigma\lr(Z\opt)(1 - \sigma\lr(Z\opt)) \approx 0$; under majority vote, we
analogously require $\P(\bindist(m, \sigma\lr(Z\opt)) > \frac{m}{2}) \approx
\half$, which in turn means that we need $\sigma\lr(Z\opt) \in \half \pm
O(\frac{1}{\sqrt{m}})$ by standard binomial concentration, or $Z\opt =
\<\theta\opt, X\> \in \pm O(\frac{1}{\sqrt{m}})$.  This occurs on about
$1/\sqrt{m}$ fraction of the sample, so of $n \cdot m$ total observations,
majority vote ``loses'' a factor $\sqrt{m}$.

\newcommand{\mt}{$\textsc{M}_\beta$}

\subsection{Comparisons for more general feature distributions}
\label{sec:non-gauss}

The key to the preceding results---and an indication that they are
stylized---is that the covariates $X$ decompose into components aligned with
$\theta\opt$ and independent noise. Here, we abstract away the Gaussianity
assumptions to allow a more general and nuanced development carefully
tracking label noise, as margin and noise conditions play a
strong role in the relative merits of maximum-likelihood-type (full label
information) estimators versus those using cleaned majority-vote labels. The
results, as in Sec.~\ref{sec:gauss}, are consequences of the master theorems
to come in the sequel.

We first make our independence assumption.
\begin{assumption}
  \label{assumption:x-decomposition}
  The covariates $X$ have non-singular covariance $\Sigma$ and decompose as a
  sum of independent random vectors in the span of $u\opt$ and its
  complement
  \begin{equation*}
    X = W +  Z u^\star  ,
    ~~~\mbox{where}~~~
    W \perp\!\!\!\!\perp Z  , ~~
    \<W,u\opt\> = 0  ,
    ~ \Ep[W] = 0  ,
    ~ \Ep[Z] = 0  .
  \end{equation*}
\end{assumption}
Under these assumptions, we develop a characterization of the limiting
behavior of the majority vote and non-aggregated models based on
classification difficulty, adopting \citeauthor{MammenTs99}'s
perspective~\citep{MammenTs99}
and measuring difficulty of classification through the proximity of the
probability $\Prb(Y = 1 \mid X = x)$ to $1/2$.
Thus, for a \emph{noise exponent} $\beta \in (0, \infty)$, we
consider the condition
\begin{equation}
  \label{eqn:mammen-tsybakov}
  \Prb \left( \left|\Prb(Y = 1 \mid X) - \frac 1 2 \right| \leq \epsilon \right)
  = O(\epsilon^\beta)  .
  \tag{$\textsc{M}_\beta$}
\end{equation}
We see that as $\beta \uparrow \infty$ the problem becomes ``easier'' as it
is less likely to have a small margin---in particular, $\beta=\infty$ gives
a hard margin that $|\P(Y = 1 \mid X) - \half| \ge \epsilon$ for all small
$\epsilon$. Under the independent decomposition
Assumption~\ref{assumption:x-decomposition}, the noise
condition~\eqref{eqn:mammen-tsybakov} solely depends on the covariate's
projection onto the signal $Z$. We therefore consider the
following assumption on $Z$.
\begin{assumption}
  \label{assumption:z-density}
  For a given $\beta > 0$, $Z$ is \emph{$(\beta, c_Z)$-regular}, meaning
  that the absolute value $|Z|$ has density $p(z)$ on $(0, \infty)$, no
  point mass at $0$, and satisfies
  \begin{align*}
    \sup_{z \in (0, \infty)} z^{1 - \beta} p(z) < \infty,
    \qquad \lim_{z \to 0}z^{1 - \beta} p(z) = c_Z \in (0, \infty).
  \end{align*}
\end{assumption}

As the logistic function $\sigma\lr(t) = 1/(1+e^{-t})$ satisfies
${\sigma\lr}'(0) = 1/4$, for $t\opt = \ltwos{\theta\opt}$ in our logistic
model~\eqref{eq:general-calibration-model} we have $\Prb(Y = 1 \mid X = W +
u\opt Z, Z = z) = \sigma\lr(t\opt z) = 1/(1 + e^{- t\opt z})$. More
generally, for any link function $\sigma$ differentiable at $0$ with
$\sigma'(0) > 0$, we have $\Prb_\sigma(Y = 1 \mid Z = z) = \sigma(t\opt z) =
\half + \sigma'(0) t\opt z + o(t\opt z)$, so that the Mammen-Tsybakov noise
condition~\eqref{eqn:mammen-tsybakov} is equivalent to
\begin{align*}
  \Prb \left( \left|t^\star Z \right| \leq \epsilon \right) = O(\epsilon^\beta).
\end{align*}
Thus,
under Assumption~\ref{assumption:z-density},
condition~\eqref{eqn:mammen-tsybakov} holds, as
by dominated convergence we have
\begin{align*}
  \Prb\left(|t\opt Z| \le \epsilon\right)
  = \int_0^{\epsilon/t\opt} p(z) dz
  = \int_0^{\epsilon/t\opt} c_Z(1 + o_\epsilon(1)) z^{\beta - 1} dz
  = \frac{c_Z}{\beta} \epsilon^\beta \cdot (1 + o_\epsilon(1)),
\end{align*}
As a concrete case, when the features $X$ are isotropic Gaussian and so $Z
\sim \normal(0, 1)$, $\beta=1$.  We provide extensions of
Corollaries~\ref{cor:lowd-maximum-likelihood-Gaussian}
and~\ref{cor:lowd-majority-vote-m} in the more general cases the noise
exponent $\beta$ allows.


The maximum likelihood estimator retains its convergence
guarantees in this setting, and we can  be more precise for the
analogue of the final claim of Corollary~\ref{cor:lowd-maximum-likelihood-Gaussian} (see
Appendix~\ref{proof:lowd-maximum-likelihood} for a proof):
\begin{proposition}
  \label{prop:lowd-maximum-likelihood}
  Let Assumptions~\ref{assumption:x-decomposition} and
  \ref{assumption:z-density} hold for some $\beta  > 0$ and
  $t\opt = \ltwos{\theta\opt}$. Let $L\lr(\theta) = \Ep[
    \ell_\theta\lr(Y \mid X)]$ be the population logistic loss. Then the
  maximum likelihood estimator~\eqref{eq:maximum-likelihood} satisfies
	\begin{equation*}
		\sqrt{n} \left(\what{\theta}\lr_{n,m} - \theta\opt\right)
		\cd \normal(0, m^{-1}\nabla^2 L\lr(\theta\opt)^{-1})  . 
	\end{equation*}
	Moreover,
	\begin{equation*}
		\sqrt{n}\left(\what{u}\lr_{n,m} - u\opt\right)
		\cd \normal\left(0, m^{-1} \cdot (t\opt)^{-2} \proj_{u\opt}^\perp
		\nabla^2 \poploss\lr(\theta\opt)^{-1} \proj_{u\opt}^\perp\right), 
	\end{equation*}
	and there exists $C(t)$ such that
	$\lim_{t \to \infty} C(t) t^{2 - \beta}$ exists and is finite such that
	\begin{equation*}
		\sqrt{n} \left(\what{u}\lr_{n,m} - u\opt\right)
		\cd \normal\left(0, m^{-1} \cdot C(t\opt) \prn{\proj_{u\opt}^\perp \Sigma \proj_{u\opt}^\perp}^\dagger \right).  
	\end{equation*}
\end{proposition}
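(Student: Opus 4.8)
The plan is to treat the non-aggregated estimator~\eqref{eq:maximum-likelihood} as a standard $M$-estimator over the $n$ i.i.d.\ grouped observations $\xi_i = (X_i, (Y_{i1},\dots,Y_{im}))$ with per-group loss $g_\theta(\xi_i) = \frac{1}{m}\sum_{j=1}^m \ell_\theta\lr(Y_{ij}\mid X_i)$, so that $\P_{n,m}\ell_\theta\lr = \frac1n\sum_i g_\theta(\xi_i)$ and the population risk $\E[g_\theta(\xi)] = \E[\ell_\theta\lr(Y\mid X)] = \poploss\lr(\theta)$ is independent of $m$. First I would record that $\poploss\lr$ is finite (bounding $\ell_\theta\lr(y\mid x)\le \log 2 + \ltwos{\theta}\ltwos{x}$ and using $\E[\ltwos{X}]<\infty$, which holds since $\Sigma$ is a finite nonsingular covariance), coercive in $\theta$, and strictly convex --- its Hessian $\nabla^2\poploss\lr(\theta) = \E[\sigma\lr(\<X,\theta\>)(1-\sigma\lr(\<X,\theta\>))XX^\top]\succ 0$ because $\sigma\lr(1-\sigma\lr)>0$ and $\Sigma\succ0$ --- hence $\poploss\lr$ has a unique minimizer; since the model is well specified, $\Prb(Y=1\mid X) = \sigma\lr(\<X,\theta\opt\>)$, so $\nabla\poploss\lr(\theta\opt) = \E[(\sigma\lr(\<X,\theta\opt\>)-\Prb(Y=1\mid X))X] = 0$ and that minimizer is $\theta\opt$. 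Consistency $\tmle\cp\theta\opt$ then follows from the usual convexity argument for $M$-estimators (convexity of the objectives plus pointwise convergence to a function with a unique minimum).

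For asymptotic normality I would invoke the classical $Z$-estimator central limit theorem. Writing $\psi(x,y)$ for the gradient $\nabla_\theta\ell_\theta\lr(y\mid x)$ evaluated at $\theta\opt$, namely $\psi(x,y) = -\frac{y}{1+e^{y\<x,\theta\opt\>}}x$, the gradient of $g_\theta$ at $\theta\opt$ is $\frac1m\sum_j\psi(X_i,Y_{ij})$, and $\sqrt{n}(\tmle-\theta\opt)\cd\normal(0,A^{-1}BA^{-1})$ with $A = \nabla^2\poploss\lr(\theta\opt)$ and $B = \cov\big(\frac1m\sum_j\psi(X,Y_j)\big)$; the smoothness and integrability hypotheses (gradient Lipschitz in $\theta$ with envelope $\ltwos{x}^2/4$, $\E[\ltwos{X}^2]<\infty$) are immediate for the logistic loss. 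Because $Y_{i1},\dots,Y_{im}$ are conditionally i.i.d.\ given $X_i$ with $\E[\psi(X,Y)\mid X]=0$, the law of total covariance gives $B = \frac1m\E[\psi(X,Y)\psi(X,Y)^\top]$, and the conditional Bartlett identity for the well-specified logistic model yields $\E[\psi\psi^\top\mid X] = \E[\nabla_\theta^2\ell_{\theta\opt}\lr(Y\mid X)\mid X]$, whence $B = \frac1m A$ and the sandwich collapses to $m^{-1}\nabla^2\poploss\lr(\theta\opt)^{-1}$ --- the first display. The second display is the delta method applied to the map $\theta\mapsto\theta/\ltwos{\theta}$, whose differential at $\theta\opt$ is $(t\opt)^{-1}\projperp$; conjugating $m^{-1}\nabla^2\poploss\lr(\theta\opt)^{-1}$ by this differential gives exactly $m^{-1}(t\opt)^{-2}\projperp\nabla^2\poploss\lr(\theta\opt)^{-1}\projperp$.

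The third display is where Assumptions~\ref{assumption:x-decomposition}--\ref{assumption:z-density} enter, and I expect this to be the main obstacle. Using $X = W + Zu\opt$ so that $\<X,\theta\opt\> = t\opt Z$, and that $W$ is independent of $Z$ with $\E[W]=0$ and $\E[Z]=0$, the cross terms in $\nabla^2\poploss\lr(\theta\opt) = \E[w(Z)XX^\top]$, where $w(z) = \sigma\lr(t\opt z)(1-\sigma\lr(t\opt z))$, vanish, so the Hessian is block diagonal in the orthogonal splitting of $\R^d$ induced by $u\opt$: it equals $\E[w(Z)]\,\projperp\Sigma\projperp + \E[w(Z)Z^2]\,u\opt{u\opt}^\top$, where I use $\projperp\Sigma\projperp = \E[WW^\top]$ and both scalars are positive since $\Sigma\succ0$. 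Inverting block-wise and conjugating by $\projperp$ annihilates the $u\opt{u\opt}^\top$ term, leaving $\projperp\nabla^2\poploss\lr(\theta\opt)^{-1}\projperp = \E[w(Z)]^{-1}(\projperp\Sigma\projperp)^\dagger$, so that $C(t\opt) = \big((t\opt)^2\E[w(Z)]\big)^{-1}$. It then remains to show $C(t)\,t^{2-\beta} = \big(t^\beta\,\E[\sigma\lr(tZ)(1-\sigma\lr(tZ))]\big)^{-1}$ has a finite nonzero limit as $t\to\infty$. Since $\sigma\lr(u)(1-\sigma\lr(u)) = g(|u|)$ with $g(s) = e^{-s}/(1+e^{-s})^2$, substituting $v = tz$ in $\E[g(t|Z|)] = \int_0^\infty g(tz)p(z)\,dz = \frac1t\int_0^\infty g(v)p(v/t)\,dv$ and multiplying by $t^\beta$ gives $\int_0^\infty g(v)(v/t)^{\beta-1}\big[(v/t)^{1-\beta}p(v/t)\big]\,dv$; by Assumption~\ref{assumption:z-density} the bracketed factor converges to $c_Z$ pointwise and is bounded by $\sup_z z^{1-\beta}p(z)<\infty$, while $g(v)v^{\beta-1}$ is integrable on $(0,\infty)$ (integrable at $0$ as $\beta>0$, exponentially small at $\infty$), so dominated convergence yields $\lim_{t\to\infty}t^\beta\E[g(t|Z|)] = c_Z\int_0^\infty g(v)v^{\beta-1}\,dv\in(0,\infty)$, and hence $\lim_{t\to\infty}C(t)t^{2-\beta}$ exists and is finite. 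Beyond this last uniform-integrability bookkeeping, the only points needing care are verifying the standard convexity/consistency and $M$-estimator regularity conditions under the weak moment hypotheses on $X$.
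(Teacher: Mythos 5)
Your proposal is correct and follows essentially the same route as the paper: classical M-estimation asymptotics with the sandwich covariance collapsing via the Bartlett identity and the law of total covariance, the delta method with differential $(t\opt)^{-1}\proj_{u\opt}^\perp$, the block-diagonalization of $\nabla^2 \poploss\lr(\theta\opt)$ under Assumption~\ref{assumption:x-decomposition}, and the change-of-variables/dominated-convergence limit $t^\beta \Ep[g(t|Z|)] \to c_Z \int_0^\infty v^{\beta-1} g(v)\,dv$ (which is exactly the paper's Lemma~\ref{lem:t-Z-integral-noise}). The only difference is presentational: the paper obtains the normalized-estimator covariance by citing Corollary~\ref{cor:well-specified-normalized-cov} of the master Theorem~\ref{theorem:multilabel-asymptotics}, whereas you re-derive that specialization inline.
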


For majority-vote aggregation, we can in turn generalize
Corollary~\ref{cor:lowd-majority-vote-m}. In this case we still have $t_m \asymp \sqrt{m}$. However, the interesting factor here is that
the convergence rate now depends on the noise exponent $\beta$.

\begin{proposition} \label{prop:lowd-majority-vote-m-noise}
	Let
	Assumptions~\ref{assumption:x-decomposition} and \ref{assumption:z-density} hold for some $\beta \in (0,
	\infty)$, and $t\opt = \ltwos{\theta\opt}$. Suppose $h = h_m$ is the
	function~\eqref{eqn:h-centering} with $Z$ defined in Assumption~\ref{assumption:x-decomposition}.  There are constants $a, b > 0$,
	depending only on $\beta$ and $c_Z$, such that the following hold: there is a unique
	$t_m \geq t_1 = t^\star$ solving $h(t_m) = 0$ and for this $t_m$ we have
	both
	\begin{equation*}
		\what{\theta}\mv_{n,m} \cp t_m u\opt
		~~ \mbox{and} ~~
		\lim_{m \to \infty} \frac{t_m}{t\opt \sqrt{m}} = a.
	\end{equation*}
	Moreover,
	there exists a function
	$C_m(t) = \frac{b}{(t \sqrt m)^{2 - \beta}}(1 + o_m(1))$
	as $m \to \infty$ such that
	\begin{align*}
		\sqrt{n} \left(\what{u}\mv_{n,m} - u^\star \right)
		&\cd \normal\left(0, C_m(t\opt) \prn{\proj_{u\opt} \Sigma \proj_{u\opt}}^\dagger\right).
	\end{align*}
\end{proposition}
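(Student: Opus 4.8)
The plan is to first identify the population minimizer of $\poploss\mv_m$, then run a (mis-specified) M‑estimation central limit theorem for fixed $m$, and finally extract the $m\to\infty$ behaviour of the resulting constants via a local‑CLT analysis of $\rho_m$.

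\emph{Step 1 (population minimizer).} Since $\loss\lr$ is convex, $\poploss\mv_m$ is convex, and Assumption~\ref{assumption:x-decomposition} ($W\perp Z$, $\Ep[W]=0$) is exactly what makes the computation~\eqref{eq:majority-vote-root} valid, i.e.\ $\nabla\poploss\mv_m(tu\opt)=h_m(t)u\opt$ with the component orthogonal to $u\opt$ vanishing. So it suffices to show $h_m$ has a unique zero $t_m\ge t\opt$. The term $\Ep[|Z|/(1+e^{t|Z|})]$ in~\eqref{eqn:h-centering} is continuous and strictly decreasing in $t$ (Assumption~\ref{assumption:z-density} gives $\Prb(Z\neq0)=1$), so $h_m$ is continuous and strictly increasing with $h_m(t)\to\Ep[|Z|(1-\rho_m(t\opt|Z|))]>0$ as $t\to\infty$; hence it has exactly one zero. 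Using $\rho_1=\sigma\lr$ and the Condorcet-type monotonicity $\rho_m\ge\rho_1$, we get $1-\rho_m(t\opt|z|)\le(1+e^{t\opt|z|})^{-1}$, so $h_m(t\opt)\le h_1(t\opt)=0$, and strict monotonicity forces $t_m\ge t\opt$ (with $t_1=t\opt$). Together with strict convexity of $\poploss\mv_m$ at $t_mu\opt$ (the Hessian below is positive definite) and coercivity of $\poploss\mv_m$ (which holds because $0<\rho_m<1$), standard convex M‑estimation consistency gives $\mve\cp\theta_m\defeq t_mu\opt$.

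\emph{Step 2 (fixed-$m$ asymptotic normality and the sandwich).} For fixed $m$, $\mve$ is a mis-specified M‑estimator for the convex loss $\loss\lr$, so the usual $Z$-estimator CLT yields $\sqrt n(\mve-\theta_m)\cd\normal(0,H_m^{-1}G_mH_m^{-1})$ with $H_m=\nabla^2\poploss\mv_m(\theta_m)$ and $G_m=\cov(\nabla\loss\lr_{\theta_m}(\wb Y\mid X))$ (no mean subtraction, as $\nabla\poploss\mv_m(\theta_m)=0$). Writing $X=W+Zu\opt$ and $\<X,\theta_m\>=t_mZ$, the (label-free) Hessian integrand $\sigma\lr(t_mZ)(1-\sigma\lr(t_mZ))XX^\top$ and the score outer product $\psi_m(Z)XX^\top$, where $\psi_m(z)=\rho_m(t\opt|z|)(1+e^{t_m|z|})^{-2}+(1-\rho_m(t\opt|z|))(1+e^{-t_m|z|})^{-2}$ comes from averaging over $\wb Y\mid Z$, both have vanishing cross terms by $W\perp Z,\ \Ep[W]=0$; with $\Ep[WW^\top]=\projperp\Sigma\projperp$ this gives
\[
H_m=\gamma_m\,\projperp\Sigma\projperp+\delta_m\,u\opt{u\opt}^\top,\qquad
G_m=\wb\gamma_m\,\projperp\Sigma\projperp+\wb\delta_m\,u\opt{u\opt}^\top,
\]
where $\gamma_m=\Ep[\sigma\lr(t_mZ)(1-\sigma\lr(t_mZ))]$, $\wb\gamma_m=\Ep[\psi_m(Z)]$, and $\delta_m,\wb\delta_m$ are the analogues with an extra $Z^2$ weight; all four are strictly positive, so $H_m\succ0$. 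Because the two summands act on orthogonal subspaces, $H_m^{-1}G_mH_m^{-1}=(\wb\gamma_m/\gamma_m^2)(\projperp\Sigma\projperp)^\dagger+(\wb\delta_m/\delta_m^2)u\opt{u\opt}^\top$, and the delta method for $\theta\mapsto\theta/\ltwos\theta$ at $\theta_m$ (Jacobian $t_m^{-1}\projperp$) annihilates the $u\opt{u\opt}^\top$ block, yielding $\sqrt n(\what u\mv_{n,m}-u\opt)\cd\normal(0,C_m(t\opt)(\projperp\Sigma\projperp)^\dagger)$ with $C_m(t\opt)=\wb\gamma_m/(t_m^2\gamma_m^2)$.

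\emph{Step 3 ($m\to\infty$ constants).} It remains to show $t_m/(t\opt\sqrt m)\to a$ and $C_m(t)=b(t\sqrt m)^{\beta-2}(1+o_m(1))$. Each of $\Ep[|Z|(1-\rho_m(t\opt|Z|))]$, $\gamma_m$, $\wb\gamma_m$ concentrates on $|Z|\asymp m^{-1/2}$ (for the first and third because a coin with bias $\sigma\lr(t\opt z)-\tfrac12\asymp z$ is reliably decided by a majority of $m$ votes once $z\gg m^{-1/2}$; for $\gamma_m$ because $\sigma\lr(t_mz)(1-\sigma\lr(t_mz))$ decays like $e^{-t_m|z|}$ and $t_m\asymp\sqrt m$). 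Substituting $z=w/\sqrt m$, using the local CLT approximation $1-\rho_m(t\opt w/\sqrt m)\to\Phi(-t\opt w/2)$, and invoking Assumption~\ref{assumption:z-density} ($z^{1-\beta}p(z)\to c_Z$ for the leading term, $\sup_z z^{1-\beta}p(z)<\infty$ to dominate the exponentially small large-$z$ tail), dominated convergence gives $\Ep[|Z|(1-\rho_m(t\opt|Z|))]=c_Z(t\opt\sqrt m)^{-(\beta+1)}\big(\int_0^\infty v^\beta\Phi(-v/2)\,dv\big)(1+o_m(1))$, and similarly with $t_m=s_m\sqrt m$, $\Ep[|Z|/(1+e^{t_m|Z|})]=c_Z(s_m\sqrt m)^{-(\beta+1)}\big(\int_0^\infty v^\beta(1+e^v)^{-1}dv\big)(1+o_m(1))$; matching via $h_m(t_m)=0$ forces $s_m/t\opt\to a$, a constant depending only on $\beta$. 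The same two rescalings ($u=t_mz$ for $\gamma_m$, $z=w/\sqrt m$ for $\wb\gamma_m$, the latter with limiting integrand built from $\Phi(-t\opt w/2)$ and $at\opt w$) give $\gamma_m=c_Z(t\opt\sqrt m)^{-\beta}(1+o_m(1))\cdot\text{(const)}$ and $\wb\gamma_m=c_Z(t\opt)^{-\beta}m^{-\beta/2}(1+o_m(1))\cdot\text{(const)}$; plugging into $C_m(t\opt)=\wb\gamma_m/(t_m^2\gamma_m^2)$ and collecting powers of $m$ and $t\opt$ gives $C_m(t\opt)=b(t\opt\sqrt m)^{\beta-2}(1+o_m(1))$, matching Corollary~\ref{cor:lowd-majority-vote-m} when $\beta=1$.

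\emph{Main obstacle.} The routine parts are the convexity and uniqueness in Step~1, the block-diagonal sandwich plus delta method in Step~2, and the power bookkeeping in Step~3. The real work is the uniform binomial-tail estimate behind Step~3: making $1-\rho_m(t\opt z)\approx\Phi(-\tfrac12\sqrt m\,t\opt z)$ quantitative and uniform in $z$ (a Berry–Esseen / local-CLT bound for $\bindist(m,\sigma\lr(t\opt z))$), controlling the exponentially small large-$z$ contribution against a density $p$ that may blow up like $z^{\beta-1}$ when $\beta<1$, and justifying the interchange of limit and integral after the $z=w/\sqrt m$ substitution. This is the step I expect to demand the most care.
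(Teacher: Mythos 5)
Your proposal is correct and follows essentially the same route as the paper: the one-dimensional gap function $h_m$ and the bound $\rho_m \ge \rho_1$ for existence/uniqueness of $t_m \ge t\opt$, the block-diagonal sandwich covariance plus delta method (which the paper packages as Theorem~\ref{theorem:majority-vote-asymptotics}), and the $z = w/\sqrt{m}$ rescaling with a Berry--Esseen normal approximation to $\rho_m$, a Hoeffding-type tail bound for large $z$, and dominated convergence (the paper's Lemmas~\ref{lem:t-Z-integral-noise} and~\ref{lem:m-rho-integral-noise-general}). You also correctly identify the genuinely delicate step—the uniform binomial-tail approximation and the control of the $z^{\beta-1}$ density near zero—which is exactly where the paper spends its technical effort.
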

\noindent
We defer the proof to Appendix~\ref{proof:lowd-majority-vote-m-noise}.

Paralleling the discussion in Section~\ref{sec:gauss}, we may compare the
performance of the MLE $\what{\theta}\lr_{n,m}$, which uses all labels, and
the majority-vote estimator $\what{\theta}\mv_{n,m}$ using only the cleaned
labels. When the classification problem is hard---meaning that
$\beta$ in Condition~\eqref{eqn:mammen-tsybakov} is near 0 so that
that classifying most examples is nearly random chance---we see
that the aggregation in the majority vote estimator still allows
convergence (nearly) as quickly as the non-aggregated estimator; the
problem is so noisy that data ``cleaning'' by aggregation is helpful.
Yet for easier problems, where $\beta \gg 0$, the gap between them
grows substantially; this is sensible, as aggregation is likely to
force a dataset to be separable, thus making fitting methods
unstable (and indeed, a minimizer may fail to exist). 



\section{Label aggregation and misspecified model}
\label{sec:mis-spec}

The logistic link provides clean interpretation and results, but we can
move beyond it to more realistic cases where labelers use
distinct links, although, to allow precise statements, we still assume the
same linear term $x \mapsto \<\theta\opt, x\>$ for each labeler's
generalized linear model. We study generalizations of the maximum
likelihood and majority vote estimators~\eqref{eq:maximum-likelihood}
and~\eqref{eq:logistic-majority}, highlighting dependence on
link fidelity.
In this setting, there are $m$ (unknown and possibly distinct)
link functions $\sigma_i\opt$, $i=1,2, \ldots, m$.
We show that the majority-vote estimator $\mve$ enjoys better robustness to model mis-specification than the non-aggregated estimator $\tmle$,
though both use identical losses.
In particular, our main result in this section implies
\begin{equation*}
\sqrt{n}(\what{u}\lr_{n,m} - u\opt) \cd
\normal\left(0, c \Sigma \cdot (1 + o_m(1))\right)
~~ \mbox{while} ~~
\sqrt{n}(\what{u}\mv_{n,m} - u\opt) \cd
\normal\left(0, m^{-1/2} c\mv \Sigma \cdot (1 + o_m(1))\right),
\end{equation*}
where $c$ and $c\mv$ are constants that depend only on the links $\sigma$,
$t\opt =
\ltwos{\theta\opt}$, and $\Sigma = I - u\opt {u\opt}^\top$ when $X \sim
\normal(0, I_d)$.  In contrast to the previous section, the majority-vote
estimator enjoys roughly $\sqrt{m}$-faster rates than the non-aggregated
estimator, maintaining its (slow) improvement with $m$, which the
MLE loses to misspecification.

To set the stage for our results, we define the
general link-based loss
\begin{equation*}
  \loss_{\sigma,\theta} (y \mid x) \defeq -\int_0^{y \<\theta, x\>} \sigma(-v) dv.
\end{equation*}
We then consider the general multi-label estimator and the majority-vote
estimator based on the loss $\loss_{\sigma,\theta}$,
\begin{equation}
  \label{eqn:general-estimators}
  \gme(\sigma) \defeq \argmin_\theta \Prb_{n,m} \loss_{\sigma, \theta},
  \qquad
  \mve(\sigma) \defeq \argmin_\theta \wb{\Prb}_{n,m} \loss_{\sigma, \theta}.
\end{equation}
When $\sigma = \sigma\lr$ is the logistic link, we recover the logistic loss
$\loss_{\sigma\lr,\theta} (y \mid x) = \loss_\theta\lr(y \mid x)$, and thus
we recover the results in~\Cref{sec:well-sepcified-mod}.  For both the
estimators, we suppress the dependence on the link $\sigma$
to write $\gme, \mve$ when the context is clear.


\subsection{Master results} \label{sec:master-results}

To characterize the behavior of multiple label estimators versus majority
vote, we provide master results as a foundation
for our convergence rate analyses throughout. By
a bit of notational chicanery, we consider both the cases that $\majY$ is a
majority vote and that we use multiple (non-aggregated) labels
simultaneously. In the case that the estimator uses the majority vote
$\majY$, let
\begin{equation*}
  \varphi_m(t) = \rho_m(t) \ind \{t \geq 0\} + (1 - \rho_m(t)) \ind\{t < 0\},
  ~~ \mbox{where} ~~
  \rho_m(t) \defeq
  \P(\majY = \sgn(\<X, \theta\opt\>) \mid \<X, \theta\opt\> = t),
\end{equation*}
and in the case that the estimator uses each label
from the $m$ labelers, let
\begin{equation*}
	\varphi_m(t) = \frac{1}{m} \sum_{j = 1}^m \sigma_j\opt(t)
	= \frac{1}{m} \sum_{j = 1}^m \P(Y_j = 1 \mid \<X, \theta\opt\> = t).
\end{equation*}
In either case, we then see that the population loss with
the link-based loss $\loss_{\sigma,\theta}$ becomes
\begin{equation}
	\label{eqn:generic-loss}
	L(\theta, \sigma) = \E\left[\loss_{\sigma,\theta}(1 \mid X)
	\varphi_m(\<X, \theta\opt\>)
	+ \loss_{\sigma,\theta}(-1 \mid X)
	(1 - \varphi_m(\<X, \theta\opt\>))\right],
\end{equation}
where we have taken a conditional expectation given $X$. We assume
Assumption~\ref{assumption:x-decomposition} holds, that $X$ decomposes into
the independent sum $X= Zu\opt + W$ with $W \perp u\opt$, and the true link
functions $\sigma_j\opt \in \Flink$. We further impose the following
assumption for the model link.
\begin{assumption}
	\label{assumption:model-link}
	For each sign $s \in \{-1, 1\}$,
	the model link function $\sigma$ satisfies
	$\lim_{t \to s \cdot \infty} \sigma(t) = 1/2 + sc$ for a constant
	$0 < c \le 1/2$ and is a.e.\ differentiable.
\end{assumption}
\paragraph{Minimizer of the population loss.}
We begin by characterizing---at a somewhat abstract level---the (unique)
solutions to the problem of minimizing the population
loss~\eqref{eqn:generic-loss}.  To characterize the minimizer $\theta\opt_L
\defeq \argmin_\theta L(\theta, \sigma)$, we hypothesize that it aligns with $u\opt
= \theta\opt / \ltwo{\theta\opt}$, using the familiar
ansatz that $\theta$ has
the form $\theta = t u\opt$. Using the formulation~\eqref{eqn:generic-loss},
we see that for $t\opt \defeq \ltwo{\theta\opt}$,
\begin{align}
	\nabla L(\theta, \sigma) &
	= -\E[\sigma(-\<\theta, X\>) X
	\varphi_m(\<X, \theta\opt\>)]
	+ \E[\sigma(\<\theta, X\>) X (1 - \varphi_m(\<X, \theta\opt\>))]
	\nonumber \\
	& = -\E[\sigma(- t Z) X \varphi_m(t\opt   Z)]
	+  \E[\sigma(tZ) X (1 - \varphi_m(t\opt Z))] \nonumber \\
	& = \left(-\E[\sigma(- tZ) Z \varphi_m(t\opt Z)] 
	+ \E[\sigma(tZ) Z (1 - \varphi_m(t\opt Z))] \right) u\opt
	= h_m(t) u\opt, \label{eqn:generic-gradient}
\end{align}
where the final line uses the decomposition $X = Z u\opt + W$ for the random
vector $W \perp u\opt$ independent of $Z$, and we recall
expression~\eqref{eqn:h-centering} to define the
\emph{calibration function}
\begin{equation}
  \label{eqn:h-m-func}
  h_{t\opt, m}(t) \defeq \E[\sigma(tZ) Z (1 - \varphi_m(t\opt Z))]
  - \E[\sigma(-t Z) Z \varphi_m(t\opt Z)].
\end{equation}
The function $h$ measures the gap between the
hypothesized link function $\sigma$ and the label probabilities
$\varphi_m$, functioning the approximately ``calibrate'' $\sigma$ to the
observed probabilities.
If we presume that a solution to $h_{t\opt,m}(t) = 0$ exists, then
evidently $t u\opt$ is \emph{a} minimizer of $L(\theta, \sigma)$. In fact,
such a solution exists and is unique
(see Appendix~\ref{sec:proof-generic-solutions} for a proof):

\newcommand{\Hessian}{H_L}  
\newcommand{\hessfunc}{\mathsf{he}}  
\newcommand{\linkerr}{\mathsf{le}}  
\newcommand{\varfunc}{C_m}  

\begin{lemma}
  \label{lemma:generic-solutions}
  Let Assumption~\ref{assumption:x-decomposition} hold and $h = h_{t\opt,m}$
  be the gap function~\eqref{eqn:h-m-func}. Then there is a unique solution
  $t_m > 0$ to $h(t) = 0$, and the generic loss~\eqref{eqn:generic-loss} has
  unique minimizer $\theta\opt_L = t_m u\opt$.  Define the matrix
  \begin{equation}
    \begin{split}
      \Hessian(t) &
      \defeq \E[(\sigma'(-tZ) \varphi_m(t\opt Z) + \sigma'(tZ)
	(1 - \varphi_m(t\opt Z))) Z^2 ] u\opt {u\opt}^\top \\
      & \qquad
      ~ + \E[\sigma'(-tZ) \varphi_m(t\opt Z)
	+ \sigma'(tZ) (1 - \varphi_m(t\opt Z))]
      \projperp \Sigma \projperp.
    \end{split}
    \label{eqn:parameterized-hessian}
  \end{equation}
  Then the Hessian is $\nabla^2 L(\theta\opt_L, \sigma) = \Hessian(t_m)$.
\end{lemma}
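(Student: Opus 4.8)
The plan is to reduce the minimization of $L(\cdot,\sigma)$ over $\R^d$ to a scalar problem along the ray $\R u\opt$, analyze the gap function $h = h_{t\opt,m}$ of~\eqref{eqn:h-m-func} there, and read off the Hessian by direct differentiation. First I would note that $\loss_{\sigma,\theta}(y\mid x)$ depends on $\theta$ only through the linear predictor $a = y\<\theta,x\>$, with $\partial_a^2\loss_{\sigma,\theta} = \sigma'(-ya)\ge 0$ for a nondecreasing model link, so $\theta\mapsto L(\theta,\sigma)$ is convex on $\R^d$. Writing $\theta = tu\opt + v$ with $v\perp u\opt$ and using the decomposition $X = Zu\opt + W$ from Assumption~\ref{assumption:x-decomposition}, we have $\<\theta,X\> = tZ + \<v,W\>$ with $\E[\<v,W\>\mid Z] = \<v,\E W\> = 0$ since $W\perp Z$ and $\E W = 0$; applying Jensen's inequality conditionally on $Z$ gives $L(tu\opt + v,\sigma)\ge L(tu\opt,\sigma)$, and strictly when $v\ne 0$ because $\Sigma$ is non-singular. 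Hence the minimizer lies on the ray, and by the gradient identity~\eqref{eqn:generic-gradient} it equals $t_m u\opt$ where $h(t_m) = 0$.

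\emph{Existence and uniqueness of $t_m$.} Set $g(t)\defeq L(tu\opt,\sigma)$; then $g$ is convex with $g'(t) = h(t)$, so $h$ is continuous and nondecreasing, and it suffices to exhibit a sign change. Using $\sigma(0) = \half$ and $\E Z = 0$ one gets $h(0) = -\E[Z\varphi_m(t\opt Z)] = -\E\brk{|Z|\,\abs{\varphi_m(t\opt Z) - \half}}$, which is strictly negative since $\varphi_m(s) - \half$ has the sign of $s$ (true for both the majority-vote $\rho_m$ and the averaged link $\frac1m\sum_j\sigma_j\opt$, as each $\sigma_j\opt\in\Flink$) and $Z$ is not a.s.\ $0$. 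As $t\to\infty$, Assumption~\ref{assumption:model-link} gives $\sigma(\pm tZ)\to\half\pm\sgn(Z)c$, so by dominated convergence $\lim_{t\to\infty} h(t) = c\,\E|Z| - \E\brk{|Z|\,\abs{\varphi_m(t\opt Z) - \half}}$. Showing this limit is strictly positive is the crux; granting it, the continuous nondecreasing function $h$ (negative on $(-\infty,0]$ and eventually positive) has a unique root $t_m > 0$, and by the reduction above $\theta\opt_L = t_m u\opt$ is the unique global minimizer of $L(\cdot,\sigma)$.

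\emph{The Hessian.} From $\loss_{\sigma,\theta}(y\mid x) = -\int_0^{y\<\theta,x\>}\sigma(-v)\,dv$ we get $\nabla_\theta\loss_{\sigma,\theta}(y\mid x) = -y\,\sigma(-y\<\theta,x\>)\,x$ and $\nabla^2_\theta\loss_{\sigma,\theta}(y\mid x) = \sigma'(-y\<\theta,x\>)\,xx^\top$. Taking expectations in~\eqref{eqn:generic-loss} and evaluating at $\theta = t_m u\opt$, where $\<\theta,X\> = t_m Z$ and $\<X,\theta\opt\> = t\opt Z$, yields $\nabla^2 L(t_m u\opt,\sigma) = \E\brk{w(Z)\,XX^\top}$ with $w(Z) = \sigma'(-t_m Z)\varphi_m(t\opt Z) + \sigma'(t_m Z)(1-\varphi_m(t\opt Z))$. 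Expanding $XX^\top = Z^2 u\opt{u\opt}^\top + Z\,(u\opt W^\top + W{u\opt}^\top) + WW^\top$: the cross terms vanish because $\E[W\mid Z] = 0$, and $\E[w(Z)\,WW^\top] = \E[w(Z)]\,\E[WW^\top] = \E[w(Z)]\,\projperp\Sigma\projperp$ since $W\perp Z$ and $\E[WW^\top] = \projperp\Sigma\projperp$ (using $\E X = 0$). This is exactly $\Hessian(t_m)$ of~\eqref{eqn:parameterized-hessian}. Justifying differentiation under the expectation when $\sigma$ is only a.e.\ differentiable (Assumption~\ref{assumption:model-link}) is routine via dominated convergence, since $\sigma$ is bounded and monotone.

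\emph{Main obstacle.} Apart from the positivity of $\lim_{t\to\infty} h(t)$, every step is routine convexity plus the independence decomposition of Assumption~\ref{assumption:x-decomposition}. That positivity is genuinely the crux: because $h$ is monotone and $h(0) < 0$, a root exists \emph{only if} $\lim_{t\to\infty} h(t) > 0$, so the argument must bound $\E\brk{|Z|\,\abs{\varphi_m(t\opt Z) - \half}}$ strictly below $c\,\E|Z|$---that is, control how far majority-vote (or averaged-link) aggregation drives the conditional label probability toward $0$ or $1$ relative to the saturation level $c$ of the model link $\sigma$ in Assumption~\ref{assumption:model-link}.
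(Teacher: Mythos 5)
Your architecture is the paper's: reduce to the ray $\{tu\opt\}$, do a sign analysis of $h$, and differentiate under the expectation for the Hessian. The Hessian computation and the argument $h(0)=-\E[Z\varphi_m(t\opt Z)]=-\E\brk{|Z|\,|\varphi_m(t\opt Z)-\half|}<0$ coincide with the paper's (which writes $-2\E[Z\varphi_m(t\opt Z)]$; your constant is the correct one, and the discrepancy is immaterial). Your Jensen-conditional-on-$Z$ reduction is a mild variant of the paper's route, which instead exhibits $t_m u\opt$ as a critical point of the convex loss via the gradient identity~\eqref{eqn:generic-gradient} and deduces uniqueness from $\Hessian(t_m)\succ 0$. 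One small repair: ``continuous and nondecreasing with a sign change'' gives existence but not uniqueness of the root; the paper gets strict monotonicity from $h'(t)=\E[\sigma'(tZ)Z^2(1-\varphi_m(t\opt Z))]+\E[\sigma'(-tZ)Z^2\varphi_m(t\opt Z)]>0$, and you should do the same.

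The step you flag as the crux is a genuine gap in your write-up, but you should know that your computation of the limit is the correct one and the paper's is not. The paper asserts $\lim_{t\to\infty}h(t)=\E[c|Z|(1-\varphi_m(t\opt Z))]+\E[c|Z|\varphi_m(t\opt Z)]=c\E[|Z|]>0$; since $\sigma(\pm tZ)Z\to Z/2\pm c|Z|$ and $\E[Z]=0$, the limit is actually $c\E[|Z|]-\E[Z\varphi_m(t\opt Z)]=\E\brk{|Z|\,(c-|\varphi_m(t\opt Z)-\half|)}$, i.e.\ the paper's display silently drops the (strictly negative) term $-\E[Z\varphi_m(t\opt Z)]$. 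A consistency check: in the well-specified logistic specialization (Appendix~F.2) the paper itself computes $h(\infty)=\E\brk{|Z|(1-\rho_m(t\opt|Z|))}$, which is exactly your formula with $c=\half$, not $\half\E[|Z|]$. So the condition you isolate is really needed: positivity holds in every instantiation the paper uses because there $c=\half$ (logistic or symmetric links saturating at $0$ and $1$) while $|\varphi_m-\half|<\half$ almost surely (the aggregated label probability never becomes deterministic), whereas for a model link with saturation level $c<\half$ against, say, a majority-vote $\varphi_m$ near $0/1$, the limit can be negative, $h$ has no zero, and the population minimizer escapes to infinity. To make your proof (and the lemma) airtight, add the hypothesis $c\,\E[|Z|]>\E\brk{|Z|\,|\varphi_m(t\opt Z)-\half|}$ — e.g.\ $c=\half$ and $\varphi_m(t\opt Z)\in(0,1)$ a.s. — and then your argument closes; everything else in your proposal is sound.
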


\paragraph{Asymptotic normality with multiple labels.} With the existence of minimizers assured, we turn to their asymptotics.
For each of these, we require slightly different calculations, as the
resulting covariances are slightly different.
To state the result when we have multiple labels,
we define the average link function $\wb{\sigma}\opt = \frac{1}{m} \sum_{j=1}^m \sigma_j\opt$ and the three functions
\begin{align}
	\linkerr(Z) & \defeq \sigma(t_m Z) (1 - \wb{\sigma}\opt(t\opt Z))
	- \sigma(-t_m Z) \wb{\sigma}\opt(t\opt Z), \nonumber \\
	\hessfunc(Z) & \defeq \sigma'(-t_m Z) \varphi_m(t\opt Z)
	+ \sigma'(t_m Z) (1 - \varphi_m(t\opt Z)),
	\label{eqn:error-functions}
	\\
	v_j(Z) & \defeq
	\sigma_j\opt(t\opt Z)(1 - \sigma_j\opt(t\opt Z))
	(\sigma(t_m Z) + \sigma(-t_m Z))^2.
	\nonumber
\end{align}
The first, the link error $\linkerr$, measures the mis-specification of the
link $\sigma$ relative to the average link $\wb{\sigma}\opt$. The second
function, $\hessfunc$, is a Hessian term, as $\Hessian(t_m) =
\E[\hessfunc(Z) Z^2]u\opt{u\opt}^\top + \E[\hessfunc(Z)] \projperp \Sigma
\projperp$, and the third is a variance term for each labeler $j$.
We have the following theorem, which we prove
in Appendix~\ref{sec:proof-multilabel-asymptotics}.
\begin{theorem}
  \label{theorem:multilabel-asymptotics}
  Let Assumptions~\ref{assumption:x-decomposition}
  and~\ref{assumption:model-link} hold, and let $\gme$ be the multilabel
  estimator~\eqref{eqn:general-estimators}. Define the shorthand $\wb{v} =
  \frac{1}{m} \sum_{j = 1}^m v_j$. Then $\gme \cas \theta_L\opt$,
  and
  \begin{equation*}
    \sqrt{n} (\what{\theta}_n - \theta_L\opt)
    \cd \normal\left(0,
    \frac{\E[\linkerr(Z)^2 Z^2] + m^{-1} \E[\wb{v}(Z) Z^2]}{
      \E[\hessfunc(Z) Z^2]^2} u\opt {u\opt}^\top
    + \frac{\E[\linkerr(Z)^2] + m^{-1}\E[\wb{v}(Z)]}{\E[\hessfunc(Z)]^2}
    \prn{\projperp \Sigma \projperp}^\dagger
    \right).
  \end{equation*}
  Additionally, if $\what{u}_n = \gme / \ltwos{\gme}$
  and $t_m$ is the unique zero of the gap function $h_{t\opt, m}(t) = 0$, then
  \begin{equation*}
    \sqrt{n}(\what{u}_n - u\opt)
    \cd
    \normal\left(0, \frac{1}{t_m^2}
    \frac{\E[\linkerr(Z)^2] + m^{-1}
      \E[\wb{v}(Z)]}{\E[\hessfunc(Z)]^2}
    \left(\projperp \Sigma \projperp\right)^\dagger\right).
  \end{equation*}
\end{theorem}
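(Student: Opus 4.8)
The plan is to view $\gme$ as an ordinary $M$-estimator over the $n$ i.i.d.\ observations $O_i \defeq (X_i, Y_{i1}, \dots, Y_{im})$, with per-observation criterion $g_\theta(O) \defeq \frac1m \sum_{j=1}^m \loss_{\sigma,\theta}(Y_j \mid X)$, so that $\gme = \argmin_\theta \Prb_{n,m} g_\theta$ and $\E[g_\theta(O)] = L(\theta,\sigma)$ is exactly the population loss~\eqref{eqn:generic-loss} with $\varphi_m = \frac1m\sum_j \sigma_j\opt$ (after conditioning on $X$). Since $|\loss_{\sigma,\theta}(y\mid x)| \le \frac12 |\<\theta,x\>|$ is integrable and, by Lemma~\ref{lemma:generic-solutions}, $L(\cdot,\sigma)$ has the unique minimizer $\theta_L\opt = t_m u\opt$, consistency $\gme \cas \theta_L\opt$ follows from a uniform law of large numbers on compacts together with a coercivity/identifiability check (and more directly from convexity of $g_\theta$ when $\sigma$ is nondecreasing); note that for any finite margin each labeler emits both labels with positive probability, so the data are a.s.\ not separable and a minimizer exists once $n$ is large.

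With consistency established, I would apply the standard $M$-estimation master theorem~\citep{VanDerVaart98}. Its hypotheses are: (i) $\theta \mapsto g_\theta(O)$ is differentiable at $\theta_L\opt$ for a.e.\ $O$, with score $\dot g_{\theta_L\opt}(O) = -\frac{X}{m}\sum_{j} Y_j\, \sigma(-Y_j t_m Z)$ where $Z \defeq \<X, u\opt\>$---this holds because $\loss_{\sigma,\theta}$ depends on $\theta$ only through $\<\theta,X\>$ and $s\mapsto\int_0^s\sigma(-v)\,dv$ is Lipschitz and differentiable at continuity points of $\sigma$; (ii) a square-integrable local Lipschitz envelope near $\theta_L\opt$, which holds since $\sigma\in[0,1]$ and $\E\ltwo{X}^2 < \infty$; and (iii) $L(\cdot,\sigma)$ is twice differentiable at $\theta_L\opt$ with nonsingular Hessian---this is Lemma~\ref{lemma:generic-solutions}, which gives $\nabla^2 L(\theta_L\opt,\sigma) = \Hessian(t_m)$, nonsingular because $\hessfunc(Z)\ge 0$ from~\eqref{eqn:error-functions} is strictly positive on a set of positive measure, so both $\E[\hessfunc(Z)]$ and $\E[\hessfunc(Z)Z^2]$ are positive. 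The theorem then yields the linearization $\sqrt n(\gme - \theta_L\opt) = -\Hessian(t_m)^{-1}\frac{1}{\sqrt n}\sum_i \dot g_{\theta_L\opt}(O_i) + o_\P(1)$, hence $\sqrt n(\gme - \theta_L\opt) \cd \normal(0, \Hessian(t_m)^{-1}\Lambda\,\Hessian(t_m)^{-1})$ with $\Lambda = \E[\dot g_{\theta_L\opt}(O)\dot g_{\theta_L\opt}(O)^\top]$; the score is mean-zero because $\nabla L(\theta_L\opt,\sigma) = h_{t\opt,m}(t_m)u\opt = 0$.

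It remains to evaluate $\Lambda$. Conditioning on $X$ and using the conditional independence of $Y_1,\dots,Y_m$, write $\psi_j \defeq Y_j\,\sigma(-Y_j t_m Z)$, which equals $\sigma(-t_m Z)$ with probability $\sigma_j\opt(t\opt Z)$ and $-\sigma(t_m Z)$ otherwise. A direct two-point computation gives $\frac1m\sum_j\E[\psi_j\mid X] = -\linkerr(Z)$ and $\Var(\psi_j\mid X) = \sigma_j\opt(t\opt Z)(1-\sigma_j\opt(t\opt Z))(\sigma(t_m Z)+\sigma(-t_m Z))^2 = v_j(Z)$, so by independence $\E[(\sum_j\psi_j)^2\mid X] = m\,\wb v(Z) + m^2 \linkerr(Z)^2$, with $\linkerr, v_j$ as in~\eqref{eqn:error-functions}. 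Hence $\Lambda = \E[\frac{XX^\top}{m^2}(\sum_j\psi_j)^2] = \E[XX^\top(\linkerr(Z)^2 + m^{-1}\wb v(Z))]$, and substituting the decomposition $X = Z u\opt + W$ of Assumption~\ref{assumption:x-decomposition} (the cross terms vanish because $\E[W\mid Z] = 0$, and $\E[WW^\top] = \projperp\Sigma\projperp$) gives $\Lambda = \E[(\linkerr(Z)^2 + m^{-1}\wb v(Z))Z^2]\,u\opt{u\opt}^\top + \E[\linkerr(Z)^2 + m^{-1}\wb v(Z)]\,\projperp\Sigma\projperp$. Since $\Hessian(t_m) = \E[\hessfunc(Z)Z^2]\,u\opt{u\opt}^\top + \E[\hessfunc(Z)]\,\projperp\Sigma\projperp$ shares the same block form relative to $\R^d = \mathrm{span}\{u\opt\}\oplus (u\opt)^\perp$, its inverse is $\E[\hessfunc(Z)Z^2]^{-1}u\opt{u\opt}^\top + \E[\hessfunc(Z)]^{-1}(\projperp\Sigma\projperp)^\dagger$, and the sandwich $\Hessian(t_m)^{-1}\Lambda\,\Hessian(t_m)^{-1}$ collapses block-by-block (using $(\projperp\Sigma\projperp)^\dagger\projperp\Sigma\projperp(\projperp\Sigma\projperp)^\dagger = (\projperp\Sigma\projperp)^\dagger$) to the covariance in the theorem statement.

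The direction claim then follows from the delta method applied to $\theta\mapsto\theta/\ltwo{\theta}$, whose Jacobian at $\theta_L\opt = t_m u\opt$ is $\frac1{t_m}\projperp$: conjugating the covariance above by $\frac1{t_m}\projperp$ annihilates the $u\opt{u\opt}^\top$ block and, via $\projperp(\projperp\Sigma\projperp)^\dagger\projperp = (\projperp\Sigma\projperp)^\dagger$, leaves $\frac1{t_m^2}\frac{\E[\linkerr(Z)^2] + m^{-1}\E[\wb v(Z)]}{\E[\hessfunc(Z)]^2}(\projperp\Sigma\projperp)^\dagger$. I expect the main obstacle to be the regularity bookkeeping feeding the master theorem: verifying the square-integrable local Lipschitz envelope uniformly near $\theta_L\opt$ and---more delicately---justifying that $L(\cdot,\sigma)$ is genuinely twice differentiable at $\theta_L\opt$ with the Hessian $\Hessian(t_m)$ of Lemma~\ref{lemma:generic-solutions} when $\sigma$ is only a.e.\ differentiable; this is where Assumption~\ref{assumption:model-link} and the structure of $Z$ must be used to license differentiating under the expectation. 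The remaining algebra---the two-point variance identity, the $X = Z u\opt + W$ reduction, and the block-matrix inversion---is routine but must be executed carefully to land on the precise constants.
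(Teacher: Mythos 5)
Your proposal is correct and follows essentially the same route as the paper: consistency via convexity of the loss, the standard M-estimation sandwich formula (the paper cites \citet[Thm.~5.23]{VanDerVaart98}), a conditional-on-$X$ variance computation that yields exactly the $\E[\linkerr(Z)^2 Z^2]u\opt{u\opt}^\top$ and $m^{-1}\E[\wb v(Z)]$ terms (the paper phrases this as the law of total covariance where you compute $\E[(\sum_j\psi_j)^2\mid X]$ directly, an identical calculation), the block inversion of $\Hessian(t_m)$ (the paper's Lemma~\ref{lem:Sigma-inverse}), and the delta method for the normalized direction. No substantive differences to report.
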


Theorem~\ref{theorem:multilabel-asymptotics} exhibits two dependencies: the
first on the link error terms $\E[\linkerr(Z)^2]$---essentially, a bias
term---and the second on the rescaled average variance $\frac{1}{m}
\E[\wb{v}(Z)]$. So the multi-label estimator recovers
an optimal $O(1/m)$ covariance if the link errors are negligible, but if
they are not, then it necessarily has $O(1)$ asymptotic covariance.
The next corollary highlights
how things simplify.
In the well-specified
case that $\sigma$ is symmetric and $\sigma = \wb{\sigma}\opt$, 
the zero of the gap
function~\eqref{eqn:h-m-func} is evidently $t_m = t\opt =
\ltwo{\theta\opt}$, the error term $\linkerr(Z) = 0$,
and $v_j(Z) = \sigma\opt(t\opt Z)(1 - \sigma\opt(t\opt Z))$, and by symmetry,
$\sigma'(t) = \sigma'(-t)$ so that
$\hessfunc(Z) = \sigma'(t\opt Z)$:
\begin{corollary}[The well-specified case] \label{cor:well-specified-normalized-cov}
	Let the conditions above hold. Then
	\begin{equation*}
		\sqrt{n}(\what{u}_n - u\opt)
		\cd \normal\left(0, \frac{1}{m} \cdot
		\frac{1}{\ltwo{\theta\opt}^2}
		\frac{\E[\sigma(t\opt Z) (1 - \sigma(t\opt Z))]}{\E[\sigma'(t\opt Z)]^2}
		\projperp \Sigma \projperp \right).
	\end{equation*}
\end{corollary}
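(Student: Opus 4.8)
This corollary is the specialization of Theorem~\ref{theorem:multilabel-asymptotics} to the well-specified regime, in which the model link $\sigma$ is symmetric and coincides with each true labeler link, so that $\sigma = \sigma_1\opt = \cdots = \sigma_m\opt$, hence $\wb{\sigma}\opt = \sigma \in \fsymlink$ and the averaged label function satisfies $\varphi_m(\cdot) = \wb{\sigma}\opt(\cdot) = \sigma(\cdot)$. The plan is simply to evaluate the four quantities $t_m$, $\linkerr$, $\hessfunc$, and $\wb{v}$ entering the covariance in Theorem~\ref{theorem:multilabel-asymptotics} under these hypotheses and substitute. Two symmetry identities do all the work: $\sigma(-t) = 1 - \sigma(t)$, and, wherever $\sigma$ is differentiable, $\sigma'(-t) = \sigma'(t)$, the latter obtained by differentiating $\sigma(t) + \sigma(-t) \equiv 1$.

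First I would pin down $t_m$. Substituting $\varphi_m(t\opt Z) = \sigma(t\opt Z)$ into the gap function~\eqref{eqn:h-m-func} and evaluating at $t = t\opt$ gives
\begin{equation*}
  h_{t\opt,m}(t\opt) = \E\!\left[\sigma(t\opt Z)\, Z\,(1 - \sigma(t\opt Z))\right] - \E\!\left[\sigma(-t\opt Z)\, Z\, \sigma(t\opt Z)\right] = 0,
\end{equation*}
since $\sigma(-t\opt Z) = 1 - \sigma(t\opt Z)$. By the uniqueness clause of Lemma~\ref{lemma:generic-solutions} this is the \emph{only} zero, so $t_m = t\opt = \ltwo{\theta\opt}$ and $\theta_L\opt = t\opt u\opt$. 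The same identity makes the link-error function vanish: from~\eqref{eqn:error-functions}, $\linkerr(Z) = \sigma(t\opt Z)(1 - \sigma(t\opt Z)) - \sigma(-t\opt Z)\sigma(t\opt Z) = 0$.

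Next I would simplify the remaining two functions of~\eqref{eqn:error-functions}. Using $\sigma'(-t) = \sigma'(t)$, the Hessian term collapses to $\hessfunc(Z) = \sigma'(t\opt Z)\big(\varphi_m(t\opt Z) + 1 - \varphi_m(t\opt Z)\big) = \sigma'(t\opt Z)$, consistent with $\Hessian(t\opt) = \E[\sigma'(t\opt Z) Z^2] u\opt {u\opt}^\top + \E[\sigma'(t\opt Z)]\projperp \Sigma \projperp$ from~\eqref{eqn:parameterized-hessian}. Since $\sigma(t\opt Z) + \sigma(-t\opt Z) = 1$, the squared factor in each $v_j$ equals $1$, so $v_j(Z) = \sigma(t\opt Z)(1 - \sigma(t\opt Z))$ for every $j$, and hence $\wb{v}(Z) = \sigma(t\opt Z)(1 - \sigma(t\opt Z))$ as well.

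It then remains only to substitute $t_m = t\opt$, $\linkerr \equiv 0$, $\hessfunc(Z) = \sigma'(t\opt Z)$, and $\wb{v}(Z) = \sigma(t\opt Z)(1-\sigma(t\opt Z))$ into the limit law for $\what{u}_n$ in Theorem~\ref{theorem:multilabel-asymptotics}: its covariance $\frac{1}{t_m^2}\frac{\E[\linkerr(Z)^2] + m^{-1}\E[\wb{v}(Z)]}{\E[\hessfunc(Z)]^2}(\projperp \Sigma \projperp)^\dagger$ reduces to $\frac{1}{m}\,\frac{1}{\ltwo{\theta\opt}^2}\,\frac{\E[\sigma(t\opt Z)(1 - \sigma(t\opt Z))]}{\E[\sigma'(t\opt Z)]^2}\,(\projperp \Sigma \projperp)^\dagger$, which is the asserted expression (the pseudoinverse coinciding with $\projperp \Sigma \projperp$ when $\Sigma$ acts isotropically on $\mathrm{span}(u\opt)^\perp$, e.g.\ $\Sigma = I_d$). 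There is no substantive obstacle beyond this bookkeeping; the only points demanding a moment's care are invoking uniqueness in Lemma~\ref{lemma:generic-solutions} to conclude $t_m = t\opt$ rather than merely that $t\opt$ is \emph{a} root, and observing that the mere a.e.\ differentiability granted by Assumption~\ref{assumption:model-link} is enough for $\sigma'(-t) = \sigma'(t)$ to hold for a.e.\ $t$, which suffices inside all the expectations.
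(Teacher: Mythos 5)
Your proposal is correct and follows essentially the same route as the paper, which proves the corollary by exactly this substitution: $t_m = t\opt$ from the gap function, $\linkerr \equiv 0$, $\hessfunc(Z) = \sigma'(t\opt Z)$ via $\sigma'(t)=\sigma'(-t)$, and $v_j(Z) = \sigma(t\opt Z)(1-\sigma(t\opt Z))$, all plugged into Theorem~\ref{theorem:multilabel-asymptotics}. Your parenthetical about the pseudoinverse correctly flags the only (cosmetic) mismatch between the theorem's $(\projperp\Sigma\projperp)^\dagger$ and the corollary's $\projperp\Sigma\projperp$.
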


\paragraph{Asymptotic normality with majority vote.}
When we use the majority vote estimators, the asymptotics differ: there is
no averaging to reduce variance as $m$ increases, even in a
``well-specified'' case. The asymptotic variance does (typically) decrease
as $m$ grows, but at a slower rate, roughly related to
the fraction of data where there is ``signal'', as we discuss
briefly following Corollary~\ref{cor:lowd-majority-vote-m}.
\begin{theorem}
  \label{theorem:majority-vote-asymptotics}
  Let Assumptions~\ref{assumption:x-decomposition}
  and~\ref{assumption:model-link} hold, and let $\what{\theta}_n = \mve$ be
  the general majority vote
  estimator~\eqref{eqn:general-estimators}. Let $t_m$ be
  the zero of the gap function~\eqref{eqn:h-m-func}, solving
  $h_{t\opt,m} (t) = 0$.  Then $\what{\theta}_n \cas
  \theta\opt_L = t_m u\opt$, and $\what{u}_n = \what{\theta}_n /
  \ltwos{\what{\theta}_n}$ satisfies
  \begin{equation*}
    \sqrt{n} \left(\what{u}_n - u\opt\right)
    \cd \normal\left(0, \frac{1}{t_m^2}
    \frac{\E[\sigma(-t_m |Z|)^2 \rho_m(t\opt Z) + \sigma(t_m |Z|)^2
	(1 - \rho_m(t\opt Z))]}{\E[\hessfunc(Z)]^2}
    \prn{\projperp \Sigma \projperp}^\dagger \right).
	\end{equation*}
\end{theorem}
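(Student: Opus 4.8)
The plan is to follow the standard $M$-estimation recipe for asymptotic normality of a Z-estimator, specialized to the decomposition $X = Zu\opt + W$ and the ansatz that the population minimizer has the form $\theta\opt_L = t_m u\opt$. First, I would invoke Lemma~\ref{lemma:generic-solutions} to establish that $\theta\opt_L = t_m u\opt$ is the unique minimizer of the population majority-vote loss $L(\theta, \sigma)$ with $\varphi_m = \rho_m$ (in its signed form), that the Hessian there is $\Hessian(t_m)$ as in~\eqref{eqn:parameterized-hessian}, and that $\Hessian(t_m)$ is nonsingular on the relevant subspace. Consistency $\what\theta_n \cas \theta\opt_L$ follows from a uniform law of large numbers over $\theta$ in a neighborhood of $\theta\opt_L$ together with uniqueness of the minimizer; Assumption~\ref{assumption:model-link} (bounded, a.e.\ differentiable link with nondegenerate asymptotes) and the moment conditions implicit in Assumption~\ref{assumption:x-decomposition} give the needed domination. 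I would then write out the estimating equation $\Prb_{n,m}\nabla_\theta \loss_{\sigma,\theta}(\wb Y \mid X) = 0$, Taylor-expand around $\theta\opt_L$, and apply the delta method / standard Z-estimator argument (as in \citet{VanDerVaart98}) to get
\begin{equation*}
  \sqrt{n}(\what\theta_n - \theta\opt_L) \cd \normal\!\left(0,\ \Hessian(t_m)^{-1}\, \mathrm{Cov}\big(\nabla_\theta \loss_{\sigma,\theta\opt_L}(\wb Y \mid X)\big)\, \Hessian(t_m)^{-1}\right).
\end{equation*}

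The main computation is then the score covariance. The gradient contribution of a single observation is $-\sigma(-\sgn(\wb Y)\<\theta\opt_L, X\>)\,\sgn(\wb Y)\, X$ at $\theta\opt_L = t_m u\opt$; conditioning on $X$ and using $\E[\nabla] = 0$ at the minimizer, the conditional second moment given $Z = z$ reduces to
\begin{equation*}
  \sigma(-t_m|z|)^2\, \rho_m(t\opt z) + \sigma(t_m|z|)^2\,(1-\rho_m(t\opt z))
\end{equation*}
times the outer product structure coming from $X = zu\opt + W$, since $\wb Y$ equals $\sgn(z)$ with probability $\rho_m(t\opt z)$ and $-\sgn(z)$ otherwise, and in the two cases the $\sigma$ factor is $\sigma(-t_m|z|)$ or $\sigma(t_m|z|)$ respectively. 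Taking expectations over $Z$ and $W$ (using $\E[W]=0$, $W\perp Z$) splits the covariance into a $u\opt{u\opt}^\top$ piece weighted by $\E[(\cdots)Z^2]$ and a $\projperp\Sigma\projperp$ piece weighted by $\E[(\cdots)]$. Sandwiching by $\Hessian(t_m)^{-1}$, whose two blocks are $\E[\hessfunc(Z)Z^2]^{-1}$ on $\mathrm{span}\{u\opt\}$ and $\E[\hessfunc(Z)]^{-1}(\projperp\Sigma\projperp)^\dagger$ on the complement, yields the stated covariance for $\what\theta_n$. Finally, to pass to $\what u_n = \what\theta_n/\ltwos{\what\theta_n}$, I would apply the delta method with the map $\theta \mapsto \theta/\ltwo\theta$, whose Jacobian at $t_m u\opt$ is $\frac{1}{t_m}\projperp$; this kills the $u\opt$-component and rescales the orthogonal block by $1/t_m^2$, producing exactly
\begin{equation*}
  \frac{1}{t_m^2}\,\frac{\E[\sigma(-t_m|Z|)^2\rho_m(t\opt Z) + \sigma(t_m|Z|)^2(1-\rho_m(t\opt Z))]}{\E[\hessfunc(Z)]^2}\,\big(\projperp\Sigma\projperp\big)^\dagger.
\end{equation*}

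The step I expect to be most delicate is not the algebra but verifying the regularity hypotheses of the Z-estimator theorem: namely that the map $\theta \mapsto \nabla_\theta\loss_{\sigma,\theta}(\wb Y\mid X)$ is suitably smooth in quadratic mean (so that Taylor expansion with an $o_P(\ltwos{\what\theta_n - \theta\opt_L})$ remainder is legitimate) even though $\sigma$ is only assumed a.e.\ differentiable, and that $\what\theta_n$ is $\sqrt n$-consistent to begin with. Here I would lean on Assumption~\ref{assumption:model-link}: the a.e.\ differentiability plus boundedness of $\sigma$ and $\sigma'$, combined with the absolute continuity of the law of $Z$ from Assumption~\ref{assumption:z-density} (so that the non-differentiability points of $\sigma$ are hit with probability zero), gives differentiability of the \emph{population} loss and a Lipschitz-in-$L^2$ bound on the score, which suffices. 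A secondary subtlety is confirming the Hessian block on $\mathrm{span}\{u\opt\}$ is strictly positive (needed for invertibility); this follows because $\hessfunc(Z) > 0$ almost surely given $0 < c \le 1/2$ in Assumption~\ref{assumption:model-link} forces $\sigma' \ge 0$ with strictly positive integral, so $\E[\hessfunc(Z)Z^2] > 0$.
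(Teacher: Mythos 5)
Your proposal is correct and follows essentially the same route as the paper's proof: consistency plus the standard M-estimator sandwich formula, the observation that conditionally on $X$ the score is a two-point random variable taking values $-\sigma(-t_m|Z|)\sgn(Z)X$ and $\sigma(t_m|Z|)\sgn(Z)X$ with probabilities $\rho_m(t\opt Z)$ and $1-\rho_m(t\opt Z)$, the block decomposition along $u\opt{u\opt}^\top$ and $\projperp\Sigma\projperp$ via Lemma~\ref{lem:Sigma-inverse}, and the delta method for $\theta\mapsto\theta/\ltwo{\theta}$. The only cosmetic difference is that the paper dispatches consistency via convexity of the loss rather than a uniform law of large numbers, which does not change the substance.
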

\noindent
We defer the proof to Appendix~\ref{sec:proof-majority-vote-asymptotics}. In most cases, we will take the link function $\sigma$ to be symmetric,
so that $\sigma(t) = 1 - \sigma(-t)$, and thus $\sigma'(t) = \sigma'(-t)$,
so that $\hessfunc(z) = \sigma'(t_m z) \ge 0$. This simplifies the denominator
in Theorem~\ref{theorem:majority-vote-asymptotics} to
$\E[\sigma'(t_m Z)]^2$.
Written differently, we may define a (scalar) variance-characterizing
function $\varfunc$ implicitly as follows: let $t_m = t_m(t)$ be a
zero of $h_{t,m}(s) =
\E[\sigma(sZ) Z (1 - \varphi_m(t Z))] - \E[\sigma(-s Z) Z \varphi_m(t Z)]
= 0$ in $s$, that is, $h_{t,m}(t_m(t)) = 0$ so that $t_m$
is a function of the size $t$ (recall the gap~\eqref{eqn:h-m-func}), and
then define
\begin{equation}
  \label{eqn:generic-majority-covariance}
  \varfunc(t) \defeq \frac{1}{t_m^2} \frac{\E[\sigma(-t_m |Z|)^2
      \rho_m(t Z) + \sigma(t_m |Z|)^2 (1 - \rho_m(t Z))]}{
    \E[\sigma'(t_m Z)]^2}
\end{equation}
where $t_m = t_m(t)$ above is implicitly defined.  Then
\begin{equation*}
  \sqrt{n}\left(\what{u}_n - u\opt\right)
  \cd \normal\left(0,
  \varfunc(t\opt) \prn{\projperp \Sigma \projperp}^\dagger
  \right).
\end{equation*}
Each of our main results, including those on well-specified models
previously, then follows by characterizing the behavior of $\varfunc(t)$ in
the asymptotics as $m \to \infty$ and the scaling
of the solution norm $t_m = \ltwos{\theta_L\opt}$,
which the calibration gap~\eqref{eqn:h-m-func} determines.
The key is that the scaling with $m$ varies depending on
the fidelity of the model, behavior of the links $\sigma$, and
the noise exponent~\eqref{eqn:mammen-tsybakov}, and our coming
consequences of the master theorems~\ref{theorem:multilabel-asymptotics}
and~\ref{theorem:majority-vote-asymptotics} help to reify this scaling.


%

\subsection{Robustness to model mis-specification}
\label{sec:robustness-model-misspecification}

Having established the general convergence results for the multi-label
estimator $\gme$ and the majority vote estimator $\mve(\sigma)$, we further
explicate their performance when we have a mis-specified model---the link
$\sigma$ is incorrect---by leveraging
Theorems~\ref{theorem:multilabel-asymptotics}
and~\ref{theorem:majority-vote-asymptotics} to precisely characterize their
asymptotics and show the majority-vote estimator can be more robust to model
mis-specification.



\paragraph{Multi-label estimator.}
As our focus here is descriptive, to make interpretable statements about
the multi-label estimator $\gme$ in~\eqref{eqn:general-estimators}, we
simplify by assuming that each link $\sigma_j\opt \equiv \sigma\opt \in
\flink$ is identical.
Then an immediate corollary of Theorem~\ref{theorem:multilabel-asymptotics}
follows:
\begin{corollary}
  \label{corollary:lowd-misspcfd-logistic-regression}
  Let Assumptions~\ref{assumption:x-decomposition} and
  \ref{assumption:z-density} hold for some $\beta \in (0, \infty)$, and
  $t\opt = \ltwos{\theta\opt}$. Then the calibration
  gap function~\eqref{eqn:h-m-func} has unique positive zero
  $h_{t\opt,m}(t_{\sigma\opt}) = 0$, and the multilabel
  estimator~\eqref{eqn:general-estimators} satisfies
  \begin{equation*}
    \gme \cp  t_{\sigma\opt} u\opt.
  \end{equation*}
  Additionally, the normalized estimate $\what{u}_{n,m} = \gme /
  \ltwos{\gme}$ satisfies
  \begin{equation*}
    \sqrt{n} \left(\what{u}_{n,m} - u\opt\right)
    \cd \normal\left(0,
    \frac{\E[\linkerr(Z)^2]  + m^{-1} \E[\wb{v}(Z)]}{t_{\sigma\opt}^2
      \E[\hessfunc(Z)]^2}  
    \prn{\proj_{u\opt} \Sigma \proj_{u\opt}}^\dagger\right)  .
  \end{equation*}
\end{corollary}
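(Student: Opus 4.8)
The plan is to read the corollary off Theorem~\ref{theorem:multilabel-asymptotics} and Lemma~\ref{lemma:generic-solutions}, after observing that the identical-link assumption $\sigma_1\opt = \cdots = \sigma_m\opt \equiv \sigma\opt$ collapses the $m$-dependence of every population-level object. Concretely, the averaged label function $\varphi_m(t) = \frac1m\sum_{j=1}^m \sigma_j\opt(t)$ and the average link $\wb{\sigma}\opt = \frac1m\sum_{j=1}^m \sigma_j\opt$ both equal $\sigma\opt$ identically, so the gap function~\eqref{eqn:h-m-func} reduces to
\begin{equation*}
  h_{t\opt,m}(t) = \E[\sigma(tZ)\, Z\, (1 - \sigma\opt(t\opt Z))] - \E[\sigma(-tZ)\, Z\, \sigma\opt(t\opt Z)],
\end{equation*}
which is manifestly free of $m$. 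Lemma~\ref{lemma:generic-solutions} (needing only Assumption~\ref{assumption:x-decomposition}) then supplies a unique positive zero of this function, which I would name $t_{\sigma\opt}$ precisely to flag the absence of $m$-dependence, and identifies the unique minimizer of $L(\cdot,\sigma)$ as $\theta\opt_L = t_{\sigma\opt} u\opt$; this is the first assertion of the corollary, and combined with the strong consistency $\gme \cas \theta\opt_L$ from Theorem~\ref{theorem:multilabel-asymptotics} it yields the claimed $\gme \cp t_{\sigma\opt} u\opt$.

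Next I would invoke the normalized-direction conclusion of Theorem~\ref{theorem:multilabel-asymptotics},
\begin{equation*}
  \sqrt{n}\,(\what{u}_{n,m} - u\opt) \cd \normal\!\left(0,\; \frac{1}{t_m^2}\,\frac{\E[\linkerr(Z)^2] + m^{-1}\E[\wb{v}(Z)]}{\E[\hessfunc(Z)]^2}\,\big(\projperp \Sigma \projperp\big)^\dagger\right),
\end{equation*}
and specialize each ingredient to the identical-link case: $\wb{v} = \frac1m\sum_{j=1}^m v_j = v_1$ is itself $m$-free, the link error is taken against $\wb{\sigma}\opt = \sigma\opt$, i.e.\ $\linkerr(Z) = \sigma(t_{\sigma\opt}Z)(1-\sigma\opt(t\opt Z)) - \sigma(-t_{\sigma\opt}Z)\sigma\opt(t\opt Z)$, and $\hessfunc(Z) = \sigma'(-t_{\sigma\opt}Z)\sigma\opt(t\opt Z) + \sigma'(t_{\sigma\opt}Z)(1-\sigma\opt(t\opt Z))$. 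Substituting $t_m = t_{\sigma\opt}$ from the first step then reproduces the covariance in the corollary, the $m^{-1}$ in front of $\E[\wb{v}(Z)]$ being the only residual $m$-dependence. I would also pause to record that the denominators are legitimate: $t_{\sigma\opt}>0$ by Lemma~\ref{lemma:generic-solutions}, and $\E[\hessfunc(Z)]>0$ since $\hessfunc \ge 0$ while $\sigma'>0$ on a neighborhood of the origin, which carries positive $|Z|$-mass by the density statement in Assumption~\ref{assumption:z-density}.

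The remaining bookkeeping is just to confirm the hypotheses of Theorem~\ref{theorem:multilabel-asymptotics}: Assumption~\ref{assumption:x-decomposition} is assumed; Assumption~\ref{assumption:model-link} on the model link $\sigma$ is the maintained assumption throughout Section~\ref{sec:mis-spec}; and the $(\beta, c_Z)$-regularity of $Z$ in Assumption~\ref{assumption:z-density} supplies exactly the distributional regularity of the signal direction (a density near $0$, no point mass at $0$, and the moment control for the relevant expectations) that the master theorem's M-estimation argument uses. Since all the real work already lives in Lemma~\ref{lemma:generic-solutions} and Theorem~\ref{theorem:multilabel-asymptotics}, there is no substantive obstacle; the only point deserving care is verifying that the identical-link reduction genuinely removes $m$ from both the gap function and $\wb{v}$, so that the notation $t_{\sigma\opt}$ and the asserted $m^{-1}$ scaling of the variance are legitimate, and—secondarily—matching Assumption~\ref{assumption:z-density} to the $Z$-regularity that Theorem~\ref{theorem:multilabel-asymptotics} tacitly invokes.
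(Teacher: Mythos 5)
Your proposal is correct and is essentially the paper's own argument: the paper offers no separate proof, stating only that the corollary is ``an immediate corollary of Theorem~\ref{theorem:multilabel-asymptotics}'' under identical links, and your explicit specialization (identical links make $\varphi_m$, $\wb{\sigma}\opt$, the gap function, and $\wb{v}$ all $m$-free, so Lemma~\ref{lemma:generic-solutions} gives the $m$-independent zero $t_{\sigma\opt}$ and the master theorem's normalized covariance reduces to the stated form) is exactly the intended derivation. The only loose point is minor: Assumption~\ref{assumption:z-density} is not actually needed by Theorem~\ref{theorem:multilabel-asymptotics} (which requires only Assumptions~\ref{assumption:x-decomposition} and~\ref{assumption:model-link}); it appears in the corollary's hypotheses for consistency with the surrounding $\beta$-dependent discussion, so your claim that the master theorem ``tacitly invokes'' it overstates its role without affecting correctness.
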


So in this simplified case, the asymptotic covariance remains of constant
order in $m$ unless $\E[\linkerr(Z)^2] = 0$.  In contrast, as we now show,
the majority vote estimator exhibits more robustness; this is perhaps
expected, as Corollary~\ref{cor:lowd-majority-vote-m} shows that in the
logistic link case, which is \emph{a fortiori} misspecified for majority
vote labels, has covariance scaling as $1/\sqrt{m}$, though the
generality of the behavior and its distinction from
Corollary~\ref{corollary:lowd-misspcfd-logistic-regression}
is interesting.

\paragraph{Majority vote estimator.}
For the majority-vote estimator, we relax our assumptions and allow
$\sigma_j\opt$ to be different, showing how the broad conclusions
Corollary~\ref{corollary:lowd-misspcfd-logistic-regression} suggests
continue to hold in some generality: majority vote estimators achieve slower
convergence than well-specified (maximum likelihood) estimators using each
label, but exhibit more robustness.
To characterize the large $m$
behavior, we require the following regularity conditions on
the average link $\wb{\sigma}_m\opt = \frac{1}{m} \sum_{j=1}^m \sigma_j\opt$,
which we require has a limiting derivative at $0$.

\begin{assumption} \label{assumption:condition-misspcfd-link-majority-vote}
  For the sequence of link functions $\{\sigma_j \mid j \in \mathbb{N}\}
  \subset \flink$, let $\wb{\sigma}_m\opt = \frac{1}{m} \sum_{j=1}^m
  \sigma_j\opt$. There exists ${\wb{\sigma}\opt}'(0) >0$ such that
  \begin{subequations}
    \begin{align}
      \bm{\mathrm{(i)}} \qquad  & \lim_{m \to \infty} \sqrt{m} \prn{\wb{\sigma}_m\opt\prn{\frac{t}{\sqrt m}} - \frac 1 2} = {\wb{\sigma}\opt}'(0) t   , \quad \textrm{for each } t \in \R  ; \label{ass:misspcfd-link-1} \\
      \bm{\mathrm{(ii)}} \qquad &\liminf_{m \to \infty} \inf_{t \neq 0} \frac{\left|\wb{\sigma}_m\opt(t) - \frac 1 2 \right|}{\min \left\{|t|, 1\right\}} > 0  ; \label{ass:misspcfd-link-2} \\
      \bm{\mathrm{(iii)}} \qquad &\lim_{t \to 0} \sup_{j \in \N}
      \left|\sigma_j\opt(t) - \frac 1 2 \right| = 0  . \label{ass:misspcfd-link-3}
    \end{align}
  \end{subequations}
\end{assumption}
\noindent
These assumptions simplify if the links are identical:
if $\sigma_j\opt \equiv \sigma\opt$,
we only require $\sigma\opt$ is differentiable around
$0$ with ${\sigma\opt}'(0) >0$ and $|\sigma\opt(t) - \half|
\gtrsim \min\{|t|, 1\}$.


We can apply Theorem~\ref{theorem:majority-vote-asymptotics} to obtain
asymptotic normality for the majority vote
estimator~\eqref{eqn:general-estimators}. We recall the probability
\begin{equation}
  \label{eq:link-function-m-majority-vote-general}
  \rho_m(t) \defeq
  \Prb(\majY = \sgn(\<X, \theta^\star\>) \mid \<X,
  \theta\opt\> = t)
\end{equation}
of the majority vote being correct given the margin $\<X, \theta\opt\> = t$
and the calibration gap function~\eqref{eqn:h-m-func}, which by a
calculation case resolves to the more convenient form
\begin{align*}
  h(t) & = h_{t\opt, m}(t) = \E[\sigma(t|Z|) |Z| (1 - \rho_m(t\opt Z))]
  - \E[\sigma(-t |Z|) |Z| \rho_m(t\opt Z)].
\end{align*}
The main technical challenge is to characterize the large $m$ behavior for
the asymptotic covariance function $\varfunc(t)$ defined implicitly in the
quantity~\eqref{eqn:generic-majority-covariance}.  We
postpone the details to Appendix~\ref{proof:lowd-misspcfd-majority-vote-m}
and state the result below, which is a consequence of
Theorem~\ref{theorem:majority-vote-asymptotics} and a careful asymptotic
expansion of the covariance
function~\eqref{eqn:generic-majority-covariance}.

\begin{proposition}
  \label{prop:lowd-misspcfd-majority-vote-m}
  Let Assumptions~\ref{assumption:x-decomposition} and
  \ref{assumption:z-density} hold for some $\beta \in (0, \infty)$ with
  $\int_0^\infty z^{\beta - 1} \sigma'(z) dz < \infty$
  and
  $t\opt = \ltwos{\theta\opt}$, and in addition that
  Assumption~\ref{assumption:condition-misspcfd-link-majority-vote} holds
  and $\sigma$ is symmetric.
  Then there are constants $a, b > 0$, depending only on $\beta$, $c_Z$,
  $\sigma$, and ${\wb{\sigma}\opt}'(0)$, such that there is a unique $t_m
  \geq t_1 = t^\star$ solving $h(t_m) = 0$, and for this $t_m$ we have both
  \begin{equation*}
    \what{\theta}\mv_{n,m} \cp t_m u\opt
    ~~ \mbox{and} ~~
    \lim_{m \to \infty} \frac{t_m}{t\opt \sqrt{m}} = a  .
  \end{equation*}
  Moreover, the covariance~\eqref{eqn:generic-majority-covariance} has the
  form $\varfunc(t) = \frac{b}{(t \sqrt m)^{2 - \beta}}(1 + o_m(1))$, and
  \begin{align*}
    \sqrt{n} \left(\what{u}\mv_{n,m} - u^\star \right)
    &\cd \normal\left(0, C_m(t\opt) \prn{\proj_{u\opt} \Sigma \proj_{u\opt}}^\dagger\right).
  \end{align*}
\end{proposition}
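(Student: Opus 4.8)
The statement is a consequence of Theorem~\ref{theorem:majority-vote-asymptotics}, which already delivers the consistency $\what{\theta}\mv_{n,m}\cas t_m u\opt$ and the limit law $\sqrt n(\what{u}\mv_{n,m}-u\opt)\cd\normal\big(0,\varfunc(t\opt)(\projperp\Sigma\projperp)^\dagger\big)$ with $\varfunc$ the scalar function defined implicitly in~\eqref{eqn:generic-majority-covariance}; since $\sigma$ is symmetric we also have $\hessfunc(z)=\sigma'(t_m z)\ge 0$, simplifying the denominator there to $\E[\sigma'(t_m Z)]^2$. Hence it remains only to (i) record that $t_m$ is well defined and nondecreasing in $m$ with $t_m\ge t_1=t\opt$, (ii) establish $t_m/(t\opt\sqrt m)\to a$, and (iii) pin down $\varfunc(t\opt)=\frac{b}{(t\opt\sqrt m)^{2-\beta}}(1+o_m(1))$. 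Part (i) is immediate: Lemma~\ref{lemma:generic-solutions} gives existence and uniqueness of the positive root $t_m$ of the calibration gap $h=h_{t\opt,m}$; and, writing $h(t)=\E\big[|Z|\big(\sigma(t|Z|)-\rho_m(t\opt|Z|)\big)\big]$ using symmetry of $\sigma$, the Condorcet jury inequality (the probability that a majority of $m$ independent better-than-fair labels is correct is nondecreasing in odd $m$) shows $\rho_m$ is nondecreasing in $m$, hence $h$ is nonincreasing in $m$ and increasing in $t$, so its root $t_m$ is nondecreasing in $m$.

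The main content is part (ii). I would plug the ansatz $t=\tau\sqrt m$ into $h(\tau\sqrt m)=\int_0^\infty z\big(\sigma(\tau\sqrt m\,z)-\rho_m(t\opt z)\big)p(z)\,dz$, where $p$ is the density of $|Z|$, and rescale by $w=\sqrt m\,z$. Two ingredients control the limit. First, Assumption~\ref{assumption:z-density} gives $p(z)=c_Z z^{\beta-1}(1+o(1))$ as $z\to0$, so the weight $z\,p(z)\,dz$ becomes $c_Z m^{-(\beta+1)/2}w^\beta\,dw$ up to lower-order terms. Second, $\rho_m(t\opt w/\sqrt m)$ converges to a limiting transition profile: Assumption~\ref{assumption:condition-misspcfd-link-majority-vote}(i) gives $\wb{\sigma}_m\opt(t\opt w/\sqrt m)=\tfrac12+{\wb{\sigma}\opt}'(0)\,t\opt w/\sqrt m+o(m^{-1/2})$, part~(iii) forces the per-labeler success probabilities to $1/2$ uniformly so that the Bernoulli variances all tend to $1/4$, and part~(ii) keeps $\rho_m\to1$ away from the window; a Lindeberg/Berry--Esseen central limit theorem for the sum of the $m$ independent but non-identical labels then gives $\rho_m(t\opt w/\sqrt m)\to\Phi\big(2{\wb{\sigma}\opt}'(0)\,t\opt w\big)=:G(w)$ uniformly on compacta, while $z$ bounded away from $0$ contributes only exponentially little. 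Dominated convergence (using the stated integrability of $z^{\beta-1}\sigma'$ to produce a dominating function) then yields $m^{(\beta+1)/2}h(\tau\sqrt m)\to c_Z\int_0^\infty w^\beta\big(\sigma(\tau w)-G(w)\big)\,dw$; the right-hand side is continuous and strictly increasing in $\tau$ with a unique zero, which together with the uniqueness of $t_m$ from Lemma~\ref{lemma:generic-solutions} forces $t_m/\sqrt m$ to converge to that zero, and we name it $a\,t\opt$.

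For part (iii) I would substitute $t_m=(a+o(1))t\opt\sqrt m$ into~\eqref{eqn:generic-majority-covariance} and again rescale $|Z|$ by $w=\sqrt m\,|Z|$. Both $\sigma'(t_m|Z|)$ and $\sigma(\pm t_m|Z|)^2$ are, to leading order, supported on $|Z|\asymp m^{-1/2}$, so each expectation collapses to $c_Z m^{-\beta/2}$ times a convergent integral against $w^{\beta-1}\,dw$: the denominator becomes $\E[\sigma'(t_m Z)]^2=\big(c_Z m^{-\beta/2}\!\int_0^\infty w^{\beta-1}\sigma'(a t\opt w)\,dw\big)^2(1+o(1))$, where the hypothesis $\int_0^\infty z^{\beta-1}\sigma'(z)\,dz<\infty$ gives a finite positive limit, and the numerator becomes $c_Z m^{-\beta/2}\!\int_0^\infty w^{\beta-1}\big(\sigma(-a t\opt w)^2 G(w)+\sigma(a t\opt w)^2(1-G(w))\big)\,dw\,(1+o(1))$ with the same profile $G$. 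Combining with the prefactor $1/t_m^2\asymp 1/(a t\opt)^2 m$, the powers of $m$ bookkeep as $\frac{1}{t_m^2}\cdot\frac{m^{-\beta/2}}{m^{-\beta}}=\frac{m^{\beta/2}}{t_m^2}\asymp m^{\beta/2-1}=(\sqrt m)^{-(2-\beta)}$, so $\varfunc(t\opt)=b\,(t\opt\sqrt m)^{-(2-\beta)}(1+o_m(1))$ with $b$ the ratio of the two $w$-integrals above divided by $a^2 c_Z$ (a constant depending only on $\beta$, $c_Z$, $\sigma$, and ${\wb{\sigma}\opt}'(0)$, the $t\opt$ dependence cancelling after the substitutions). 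Inserting this $\varfunc(t\opt)$ back into Theorem~\ref{theorem:majority-vote-asymptotics} gives the displayed central limit theorem.

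The main obstacle is the uniform multi-scale control of the majority-vote link $\rho_m(\cdot)$ across margins: one must run a CLT with Berry--Esseen-type error inside the $O(m^{-1/2})$ critical window (made delicate by the labels being independent but not identically distributed, which is exactly why Assumption~\ref{assumption:condition-misspcfd-link-majority-vote} is formulated through the averaged link $\wb{\sigma}_m\opt$), show via large-deviation bounds that margins outside that window are negligible, and then interchange these limits with the integrals defining $h$ and $\varfunc$ uniformly enough to retain the $(1+o_m(1))$ factors; the near-origin regularity of $Z$ (Assumption~\ref{assumption:z-density}) and the integrability $\int_0^\infty z^{\beta-1}\sigma'(z)\,dz<\infty$ are what license the dominated-convergence steps.
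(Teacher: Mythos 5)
Your plan is correct and follows essentially the same route as the paper: existence/uniqueness of $t_m$ via Lemma~\ref{lemma:generic-solutions}, the CLT from Theorem~\ref{theorem:majority-vote-asymptotics} with the symmetric-link simplification of the denominator, and then the large-$m$ asymptotics of $t_m$ and $\varfunc(t\opt)$ obtained by a Berry--Esseen Gaussian approximation of $\rho_m$ in the $O(m^{-1/2})$ margin window, Hoeffding-type bounds outside it, and dominated convergence driven by the $(\beta,c_Z)$-regularity of $Z$ and the integrability of $z^{\beta-1}\sigma'(z)$ --- this is exactly the content of the paper's Lemmas~\ref{lem:t-Z-integral-noise} and~\ref{lem:m-rho-integral-noise-general}, which the paper then combines through the ratio identity implied by $h(t_m)=0$ rather than by rescaling $h$ as a function of $\tau=t/\sqrt m$ and passing roots to the limit; both organizations work.

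One side claim in your part~(i) is wrong, however: with heterogeneous labelers the ``Condorcet jury'' monotonicity of $\rho_m$ in $m$ fails (e.g., one labeler with accuracy $0.9$ joined by two with accuracy $0.51$ yields majority accuracy about $0.71 < 0.9$), so you cannot conclude from it that $t_m$ is nondecreasing in $m$ or that $t_m\ge t_1$. This does not affect the substantive conclusions (the consistency, the $\sqrt m$ scaling of $t_m$, or the covariance asymptotics), and the paper's own proof likewise does not establish the ordering $t_m\ge t_1$ in the misspecified heterogeneous case, but you should either drop that claim or restrict it to identical links.
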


Proposition~\ref{prop:lowd-misspcfd-majority-vote-m} highlights the robustness
of the majority vote estimator: even when the link $\sigma$ is (more
or less) arbitrarily incorrect, the asymptotic covariance still
exhibits reasonable scaling. The noise parameter $\beta$ in
Assumption~\ref{assumption:z-density}, roughly equivalent to the
Mammen-Tsybakov noise exponent~\eqref{eqn:mammen-tsybakov}, also plays
an important role. In typical cases with $\beta = 1$ (e.g., when
$X \sim \normal(0, I_d)$), we see $\varfunc(t) \asymp \frac{1}{t \sqrt{m}}$.
In noisier cases, corresponding to $\beta \downarrow 0$, majority vote
provides substantial benefit approaching a well-specified model; conversely,
in ``easy'' cases where $\beta > 2$, majority vote estimators become
\emph{more} unstable, as they make the data (very nearly) separable,
which causes logistic-regression and other margin-type estimators to be
unstable~\cite{CandesSu19pnas}.


\newcommand{\fisher}{\mathsf{I}}
\newcommand{\fishermv}{\fisher\mv}

\section{Fundamental estimation limits of majority vote labels}
\label{sec:lower-bound}

We complement our convergence guarantees via lower bounds for labelers with
identical logistic links $\sigma_1\opt = \dots = \sigma_m\opt = \sigma\lr$.
We begin with a somewhat trivial observation that unless we know the number
of labels $m$ and link ahead of time, consistently estimating $\theta\opt$
from aggregated labels is impossible, which contrasts with algorithms using
non-aggregated data. Section~\ref{sec:lam-majority-vote} presents the more
precise local asymptotic efficiency limits for estimation from majority
labels $\majY$.

\subsection{Inconsistency from aggregate labels}
\label{sec:imp-mv}

%
To make clear the impossibility of estimation without some knowledge of the
link and number $m$ of labelers, we show that there exist distributions with
different linear classifiers $\theta\opt$ and $\wb{\theta}$ generating the
same data distribution.  Consider a generalized linear model for binary
classification, with link function $\sigma\opt \in \fsymlink$, $\theta\opt
\in \R^d$, define the model \eqref{eq:general-calibration-model} with
$\Prb_{\sigma\opt, \theta\opt}(Y = y \mid X = x) = \sigma\opt ( y \<
\theta\opt, x\>)$, and let $X_1, X_2, \ldots, X_n \simiid \normal(0, I_d)$,
where for each data point $X_i$ we generate $m$ labels $Y_{ij}\simiid
\Prb_{\sigma\opt,\theta\opt}(\cdot \mid X_i)$, $j = 1, \ldots, m$.  As
usual, let $\majY_i = \maj(Y_{i1}, \ldots, Y_{im})$ denote the majority-vote
(breaking ties randomly).  Then Proposition~\ref{prop:impossibility} shows
there always exists a calibration function $\wb{\sigma}$ and label size
$\wb{m}$ such that $\majY_i$ has identical distribution under both the
model~\eqref{eq:general-calibration-model} and also with $\wb{\sigma},
\wb{m}$ replacing $\sigma\opt$ and $m$.  Letting
$\P_{(X,\majY)}^{\sigma\opt, \theta\opt, m}$ denote the induced distribution
on $(X_i, \majY_i)$, we have the following formal statement, which we prove
in Appendix~\ref{proof:impossibility}.

\begin{proposition} \label{prop:impossibility}
  Suppose $\sigma\opt \in \fsymlink$ satisfies $\sigma\opt(t) > 0$ for all $ t > 0$. For any $\wb{\theta} \in \R^d$ such that $\wb{\theta}/\ltwo{\wb{\theta}} = \theta\opt/\ltwo{\theta\opt}$ and any positive integer $\wb{m}$, there is another link function $\wb{\sigma}$ such that 
  \begin{align*}
    \P_{(X,\majY)}^{\sigma\opt, \theta\opt, m} \stackrel{\textup{dist}}{=}
    \P_{(X,\majY)}^{\wb{\sigma}, \wb{\theta}, \wb{m}}.
  \end{align*}
\end{proposition}



\subsection{Fundamental asymptotic limits in the logistic case}
\label{sec:lam-majority-vote}

As most of our results repose on classical asymptotics, we provide lower
bounds in the same setting, using Hajek-Le Cam local asymptotic (minimax)
theory. We recall a typical result, which combines~\citet[Lemma
  6.6.5]{LeCamYa00} and \citet[Theorem 3.11.5]{VanDerVaartWe96}:
\begin{corollary}
  \label{corollary:lam}
  Let $\{\P_\theta\}$ be a family of distributions indexed by $\theta$
  with continuous Fisher Information $\fisher(\theta)$ in a neighborhood
  of $\theta\opt$.
  Then there exist
  probability densities measures $\pi_{c,n}$ supported on
  $\{\theta \in \R^d \mid \ltwo{\theta\opt - \theta} \le c/\sqrt{n}\}$
  such that for any quasi-convex, symmetric, and bounded
  loss $\varphi$,
  \begin{equation*}
    \liminf_{c \to \infty}
    \liminf_n \inf_{\what{\theta}_n}
    \int \E_\theta[\varphi(\sqrt{n}(\what{\theta}_n - \theta))]
    \pi_{c,n}(\theta) d\theta
    \ge \E[\varphi(W)],
  \end{equation*}
  where $W \sim \normal(0, \fisher(\theta\opt)^{-1})$.  If $T : \R^d \to
  \R^k$ is differentiable at $\theta\opt$ with derivative matrix
  $\dot{T}(\theta\opt) \in \R^{k \times d}$, then additionally
  \begin{equation*}
    \liminf_{c \to \infty}
    \liminf_n \inf_{\what{T}_n}
    \int \E_\theta[\varphi(\sqrt{n}(\what{T}_n - T(\theta)))]
    \pi_{c,n}(\theta) d\theta
    \ge \E[\varphi(\dot{T}(\theta\opt) W)].
  \end{equation*}
  The infima above are taken over all estimators $\what{\theta}_n$ and
  $\what{T}_n$ based on $n$ i.i.d.\ observations from $\P_\theta$.
\end{corollary}
\noindent
This corollary is the sense in which estimators, such as the MLE, achieving
asymptotic variance equal to the inverse Fisher Information are efficient.
It also makes clear that (except perhaps at a Lebesgue measure-zero set of
parameters $\theta$) any estimator $\what{\theta}_n$ with asymptotic
variance $\Sigma_\theta$ necessarily satisfies
$\Sigma_\theta \succeq \fisher(\theta)^{-1}$.

We therefore complete our theoretical analysis of fundamental limits by
giving the Fisher Information matrix for binary classification models with
aggregated majority vote data. We assume the links are identical,
$\sigma_1\opt = \dots = \sigma_m\opt = \sigma\lr$.  Recall (see
Prop.~\ref{prop:lowd-majority-vote-m-noise}) that in the case where $X$ is
normal, the majority vote estimator $\what{u}\mv_{n,m}$ has asymptotic
variance of order $m^{-\half}$. As we will present momentarily in
Theorem~\ref{thm:lower-bound-fisher-information}, $\what{u}\mv_{n,m}$ is
indeed rate optimal for classification as $m \to \infty$.

As usual, letting Assumption~\ref{assumption:x-decomposition} hold, we write
$X = Z u\opt + W$ with $ W \perp u\opt$. Let $t\opt = \ltwos{\theta\opt}$.
In this case, we may explicitly decompose the Fisher information.

\begin{lemma}
  \label{lem:fisher-information-decomposition}
  The Fisher information matrix $\fishermv_m(\theta)$ for the majority vote
  model $\{\P_{(X,\majY)}^{\sigma\lr, \theta, m}\}_{\theta \in
    \R^d}$ satisfies
  \begin{align*}
    \fishermv_m(\theta\opt) = \E \brk{\frac{\rho_m'(t\opt|Z|)^2
        Z^2}{\rho_m(t\opt|Z|) ( 1 - \rho_m(t\opt|Z|))}} \cdot u\opt
             {u\opt}^\top + \E \brk{\frac{\rho_m'(t\opt|Z|)^2
               }{\rho_m(t\opt|Z|) ( 1 - \rho_m(t\opt|Z|))}} \cdot
             \proj_{u\opt}^\perp \Sigma \proj_{u\opt}^\perp .
  \end{align*}
\end{lemma}
\begin{proof}
  For $p_\theta(x, y) = \Prb_\theta(\majY = y \mid X = x) q(x)$, where
  $q$ is the $d$-dimensional centered density of $X$,
  the Fisher information matrix at $\theta$ is
  \begin{align*}
    \fishermv_m(\theta) = \E \brk{\nabla_\theta \log p_\theta(X, \majY) \cdot \nabla_\theta \log p_\theta(X, \majY)^\top }.
  \end{align*}
  Substituting $\rho_m(|\<x, \theta^\star\>|) = \Prb(\majY =
  \sgn(\<x, \theta^\star\>) \mid X = x)$, we then have
	\begin{align*}
		& \E \brk{\nabla_\theta \log p_{\theta\opt}(X, \majY) \cdot  \nabla_\theta \log p_{\theta\opt}(X, \majY)^\top \mid X = x } \\
		& =  \rho_m(|\<x, \theta^\star\>|) \cdot \frac{\nabla_\theta \rho_m(|\<x, \theta^\star\>|) \cdot \nabla_\theta \rho_m(|\<x, \theta^\star\>|)^\top}{\rho_m(|\<x, \theta^\star\>|)^2} \nonumber \\
		& \qquad + (1- \rho_m(|\<x, \theta^\star\>|) \cdot \frac{\prn{\nabla_\theta(1 - \rho_m(|\<x, \theta^\star\>|)} \cdot \prn{\nabla_\theta(1 - \rho_m(|\<x, \theta^\star\>|)}^\top}{(1 - \rho_m(|\<x, \theta^\star\>|)^2} \nonumber \\
		& = \frac{\rho_m'(|\<x, \theta^\star\>|)^2}{\rho_m(|\<x, \theta^\star\>|) ( 1 - \rho_m(|\<x, \theta^\star\>|))} \cdot xx^\top
	\end{align*}
        as desired.
\end{proof}

By characterizing the large $m$ behavior of the Fisher information,
we can then apply Corollary~\ref{corollary:lam} to understand
the optimal efficiency of any estimator given aggregate (majority vote)
data. 

\begin{theorem}
  \label{thm:lower-bound-fisher-information}
  Let Assumption~\ref{assumption:x-decomposition} hold,
  Assumption~\ref{assumption:z-density} hold with
  constants $c_Z$ and $\beta$,
  and $t\opt = \ltwos{\theta\opt}$.
  Then defining
  the constants
  $a = \frac{c_Z}{ \pi} \int_0^\infty \frac{e^{-z^2} z^{\beta+1}}{\Phi(-z) \Phi(z)} dz$ \mbox{and} $b = \frac{c_Z}{4 \pi} \int_0^\infty \frac{e^{-z^2} z^{\beta-1}}{\Phi(-z) \Phi(z)} dz$,
  there are functions
  \begin{align*}
    A_m(t) = \frac{a}{t^2} \left(\frac{1}{t\sqrt{m}}\right)^{\beta}( 1 + o_m(1))
    ~~ \mbox{and} ~~
    B_m(t) = \frac{b}{t^2}\left(\frac{1}{t\sqrt{m}}\right)^{\beta-2} (1 + o_m(1))
  \end{align*}
  for which
  \begin{equation*}
    \fishermv_m(\theta\opt) = A_m (t\opt) \cdot u\opt {u\opt}^\top + B_m(t\opt) \cdot \proj_{u\opt}^\perp \Sigma \proj_{u\opt}^\perp.
  \end{equation*}
\end{theorem}
\noindent
See Appendix~\ref{proof:lower-bound-fisher-information} for the proof.

By combining Theorem~\ref{thm:lower-bound-fisher-information}
with the classical local asymptotic minimax bounds, we can obtain
optimality for estimators of both $\theta\opt$ and the direction
$u\opt$ as $m$ gets large.
Computing the inverse Fisher information for $\theta\opt$ and
for the normalized quantity $u = \phi(\theta) \defeq \theta / \ltwo{\theta}$,
which satisfies $\dot{\phi}(\theta) = \ltwo{\theta}^{-1}
(I_d - \phi(\theta) \phi(\theta)^\top)$, we thus have the
efficient limiting covariances
\begin{align*}
  \Sigma_{\theta\opt} & \defeq
  \fishermv_m(\theta\opt)^{-1}  \asymp {t\opt}^{\beta +2} m^{\frac{\beta}{2}} \cdot u\opt {u\opt}^\top +  {t\opt}^{\beta}m^{\frac{\beta-2}{2}}  \cdot \prn{\proj_{u\opt}^\perp \Sigma \proj_{u\opt}^\perp}^\dagger, \\
  \Sigma_{u\opt} & \defeq \dot{\phi}(\theta\opt)
  \fishermv_m(\theta\opt)^{-1} \dot{\phi}(\theta\opt)
  \asymp {t\opt}^{\beta-2 }m^{\frac{\beta-2}{2}}  \cdot \prn{\proj_{u\opt}^\perp \Sigma \proj_{u\opt}^\perp}^\dagger,
\end{align*}
where we recall the shorthand $t\opt = \ltwo{\theta\opt}$.
(Here we use that if $A, B$ are symmetric matrices with $AB = BA = 0$,
then $(A + B)^{-1} = A^\dagger + B^\dagger$; see Lemma~\ref{lem:Sigma-inverse}
in Appendix~\ref{sec:technical}.)
Then combining Theorem~\ref{thm:lower-bound-fisher-information} with
Corollary~\ref{corollary:lam} yields the following efficiency
lower bound. (Other lower bounds arising from, e.g., the convolution
and variants of the local asymptotic minimax theorem are possible;
we present only this to give the flavor of such results.)
\begin{corollary} \label{cor:LAM-lower-bounds}
  Let $\what{\theta}_n$ and $\what{u}_n$
  be arbitrary estimators of $\theta\opt$ and $u\opt = \phi(\theta\opt)$
  using the aggregated data $\{(X_i,
  \majY_i)\}_{i=1}^n$.
  For any bounded and symmetric quasi-convex loss $\varphi : \R^d \to \R$,
  \begin{equation*}
    \lim_{c \to \infty}
    \liminf_n \sup_{\ltwo{\theta - \theta\opt} \le c/\sqrt{n}}
    \E_\theta[\varphi(\sqrt{n}(\what{u}_n - \phi(\theta)))]
    \ge \E[\varphi(W)],
    ~~ \mbox{where} ~~
    W \sim \normal(0, \Sigma_{u\opt}).
  \end{equation*}
  If there exist random variables
  $T_\theta$ or $U_\theta$
  such that $\sqrt{n}(\what{\theta}_n - \theta\opt) \cd T_{\theta\opt}$
  (respectively, $\sqrt{n}(\what{u}_n - u\opt) \cd U_{\theta\opt}$),
  then
  $\cov(T_\theta) \succeq \Sigma_{\theta\opt}$ and $\cov(U_\theta) \succeq
  \Sigma_{u\opt}$ for Lebesgue almost every $\theta\opt \in \R^d$.
\end{corollary}

This result carries several implications. First, the scaling in both $t\opt
= \ltwo{\theta\opt}$ and $m$ for the asymptotic (achieved) covariance in
Proposition~\ref{prop:lowd-majority-vote-m-noise} are rate-optimal: we have
$C_m(t) \asymp t^{\beta - 2} m^{\frac{\beta - 2}{2}}$ for the majority vote
estimator $\mve$. Notably, this optimal scaling holds even though the model
may be mis-specified: estimators using the aggregated majority vote labels
$\majY$ are rate-optimal in the scale $\ltwo{\theta\opt}$ and the number of
labelers $m$ (and sample size $n$) no matter the margin-based loss.
(Achieving the optimal constant requires a well-specified loss, of course.)
This robustness of estimators using aggregated data may motivate the use of
strong data-cleaning measures in dataset construction when it is difficult
to model the particulars of the data generation process.

A perhaps less intuitive result is that for any noise scale $\beta > 0$ on
the margin variable $Z = \<X, u\opt\>$ arising from
Assumption~\ref{assumption:z-density}, the inverse information
$\Sigma_{\theta\opt} \to \infty$ as $m \to \infty$, meaning calibration
becomes fundamentally harder for estimators using only aggregated labels,
even with a (known) logistic link $\sigma\lr$. For example, in the Gaussian
case when $\beta = 1$, then $\Sigma_{\theta\opt} \asymp O(m^{\half})$ and
$\Sigma_{u\opt} \asymp O(m^{-\half})$, matching the scaling in
Corollary~\ref{cor:lowd-majority-vote-m}.  The same phenomenon holds even in
the ``very low noise'' regime where $\beta > 2$, meaning that the margin
variable $Z$ is typically far from 0 (recall the
exponent~\eqref{eqn:mammen-tsybakov}). In this case, $\Sigma_{u\opt} \to
\infty$ as $m \to \infty$, as a paucity of data align well with the true
margin $u\opt$; estimators (nearly) perfectly predict on training data and
become non-robust.


\providecommand{\lipconst}{\mathsf{L}}
\newcommand{\uinit}{u_n^{\textup{init}}}
\newcommand{\uerror}{\epsilon}

\section{Semi-parametric approaches}
\label{sec:semi-param}

The preceding analysis highlights distinctions between a fuller
likelihood-based approach---which uses all the labels, as
in~\eqref{eq:maximum-likelihood}---and the robust but somewhat slower rates
that majority vote estimators enjoy (as in
Proposition~\ref{prop:lowd-misspcfd-majority-vote-m}).  That full-label
estimators' performance so strongly depends on the fidelity of the link
(recall Corollary~\ref{corollary:lowd-misspcfd-logistic-regression})
suggests that we target estimators achieving the best of both worlds: learn
both a link function (or collection thereof) and refit the model using
\emph{all} the labels. While we mainly focus on efficiency bounds for
procedures we view as closer to practice---fitting a model based on
available data---in this section, we provide a few results targeting more
efficient estimation schemes through semiparametric estimation approaches.

As developing full semi-parametric efficiency bounds would unnecessarily
lengthen and dilute the paper, we develop a few relatively general and
simple convergence results into which we can essentially plug in
semiparametric estimators. In distinction from standard results in
semiparametric theory (e.g.~\cite[Ch.~25]{VanDerVaart98}
or~\cite{BickelKlRiWe98}), our results require little more than consistent
estimation of the links $\sigma_j\opt$ to recover $1/m$ (optimal) scaling in
the asymptotic covariance, as the special structure of our classification
problem allows more nuanced calculations; we assume each labeler (link
function) generates labels for each of the $n$ datapoints $X_1, \ldots,
X_n$, but we could relax the assumption at the expense of extraordinarily
cumbersome notation.  We give two example applications of the general
theory: the first (Sec.~\ref{sec:semiparametric}) analyzing a full pipeline
for a single index model setting, which robustly estimates direction $u\opt$,
the link $\sigma\opt$, and then re-estimates $\theta\opt$; the second
assuming a stylized black-box crowdsourcing mechanism that provides
estimates of labeler reliability, highlighting how even in some
crowdsourcing scenarios, there could be substantial advantages to using full
label information.

\subsection{Master results}
\label{sec:master-results-sem}

For our specializations, we first provide master results that
allow semi-parametric estimation of the link functions.  We consider
Lipschitz symmetric link functions, where for $\lipconst > 0$ we define
\begin{align*}
  \fsymlinkL := \{\sigma \mid \sigma \not \equiv 1/2\text{ is non-decreasing,
    symmetric, and} ~ \lipconst\text{-Lipschitz continuous} \}
  \subset \fsymlink.
\end{align*}
We consider the general case where there are $m$ distinct labeler link
functions $\sigma_1\opt, \ldots, \sigma_m\opt$. To eliminate ambiguity
in the links, we
assume the model is normalized, $\ltwo{\theta\opt} = 1$ so $\theta\opt
= u\opt$. To distinguish from the typical case,
we write $\vec{\sigma} = (\sigma_1, \ldots, \sigma_m)$, and for
$(x, y) \in \R^d \times \{\pm 1\}^m$ define
\begin{equation*}
  \loss_{\vec{\sigma}, \theta}(y \mid x)
  \defeq \frac{1}{m} \sum_{j = 1}^m \loss_{\sigma_j,\theta}(y_j \mid x),
\end{equation*}
which allows us to consider both the standard margin-based loss and the case
in which we learn separate measures of quality per labeler.
With this notation, we can then naturally define the population
loss
\begin{equation*}
	L(\theta, \vec{\sigma})
	\defeq \frac{1}{m} \sum_{j = 1}^m
	\E[\loss_{\sigma_j, \theta}(Y_j \mid X)].
\end{equation*}
For any sequence $\{\vec{\sigma}_n\} \subset (\fsymlinkL)^m$ of (estimated)
links
and data
$(X_i, Y_i)$ for $Y_i = (Y_{i1}, \ldots, Y_{im})$, we define the semi-parametric
estimator
\begin{align*}
  \spe \defeq \argmin_{\theta \in \R^d}
  \frac{1}{n} \sum_{i = 1}^n
  \loss_{\vec{\sigma}_n, \theta}(Y_i \mid X_i).
\end{align*}

We will demonstrate both consistency and
asymptotic normality under appropriate convergence and regularity
assumptions for the link functions. We assume there is a (semiparametric)
collection $\fsymlinksp \subset (\fsymlinkL)^m$ of link functions of
interest, which may coincide with $(\fsymlinkL)^m$ but may be smaller,
making estimation easier.  Define the distance $d_{\fsymlinksp}$ on $\R^d
\times \fsymlinksp$ by
\begin{align*}
  d_{\fsymlinksp}\left((\theta_1, \vec{\sigma}_1), (\theta_2, \vec{\sigma}_2)
  \right)
  \defeq
  \ltwo{\theta_1 - \theta_2}
  + \LtwoPbold{\vec{\sigma}_1(-Y\<X, u\opt\>)
    - \vec{\sigma}_2(-Y\<X, u\opt\>)}.
\end{align*}
We make the following assumption.
\begin{assumption}
  \label{assumption:sp-general}
  The links $\vec{\sigma}\opt \in \fsymlinksp$ are normalized so
  that $\P(Y_j = y \mid X = x) = \sigma\opt_j(y \<x, u\opt\>)$, and
  the sequence $\{\vec{\sigma}_n\} \subset \fsymlinksp$ is consistent:
  \begin{equation*}
    \LtwoPbold{\vec{\sigma}_n(-Y\<X, u\opt\>)
      - \vec{\sigma}\opt(-Y\<X, u\opt\>)}
    \cp 0.
  \end{equation*}
  Additionally, the mapping $(\theta, \vec{\sigma}) \mapsto
  \nabla_\theta^2 \poploss(\theta,
  \vec{\sigma})$ is continuous for $d_{\fsymlinksp}$ at $(u\opt,
  \vec{\sigma}\opt)$.
\end{assumption}
The continuity of $\nabla_\theta^2 \poploss(\theta, \vec{\sigma})$ at
$(u\opt, \vec{\sigma}\opt)$ allows us to develop local asymptotic normality.
To see that we may expect the assumption to hold, we give reasonably simple
conditions sufficient for it, including that the collection of links
$\fsymlinksp$ is sufficiently smooth or the data distribution is continuous
enough. (See Appendix~\ref{proof:d-continuity-guarantees} for a proof.)

\begin{lemma} \label{lem:d-continuity-guarantees}
  Let $d_{\fsymlinksp}$ be the distance in
  Assumption~\ref{assumption:sp-general}. Let
  Assumption~\ref{assumption:x-decomposition} hold, where $|Z| > 0$ with
  probability one, has nonzero and continuously differentiable density
  $p(z)$ on $(0, \infty)$ satisfying $\lim_{z \to s} z^2 p(z) = 0$ for $s
  \in \{0, \infty\}$. The mapping $(\theta, \vec{\sigma}) \mapsto
  \nabla_\theta^2 \poploss(\theta, \vec{\sigma})$ is continuous for
  $d_{\fsymlinksp}$ at $(u\opt, \vec{\sigma}\opt)$ whenever $\Ep [\ltwo{X}^4] <
  \infty$ and either of the following conditions holds:
  \begin{enumerate}[leftmargin=2em,label=(\arabic*)]
  \item For any  $\vec{\sigma} = (\sigma_1, \ldots, \sigma_m) \in \fsymlinksp$, $\sigma_j'$ are Lipschitz continuous.
  \item $X$ has continuous density on $\R^d$.
  \end{enumerate}
\end{lemma}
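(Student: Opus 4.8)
The plan is to fix an arbitrary sequence $(\theta_n, \vec{\sigma}_n) \to (u\opt, \vec{\sigma}\opt)$ in $d_{\fsymlinksp}$ and prove $\nabla^2_\theta \poploss(\theta_n, \vec{\sigma}_n) \to \nabla^2_\theta \poploss(u\opt, \vec{\sigma}\opt)$ in operator norm. The first step is to record the representation $\nabla^2_\theta \poploss(\theta, \vec{\sigma}) = \frac{1}{m}\sum_{j=1}^m \E[\sigma_j'(-Y_j \<\theta, X\>) X X^\top]$, valid in a neighborhood of $(u\opt,\vec{\sigma}\opt)$, since $\nabla^2_\theta \loss_{\sigma,\theta}(y\mid x) = \sigma'(-y\<\theta,x\>) x x^\top$ and each $\sigma_j$ is monotone (hence $\sigma_j'$ exists a.e.); under the first listed condition each $\sigma_j$ is $C^{1,1}$ and differentiation under the expectation is immediate from $\E\ltwo{X}^4 < \infty$, while under the second the link is only Lipschitz, so one instead conditions on the one-dimensional statistic $\<\theta, X\>$, which has a continuous density as a marginal of the continuous density of $X$, and verifies that $\theta \mapsto \nabla_\theta \poploss(\theta, \vec{\sigma})$ is $C^1$ with the claimed Hessian. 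Plugging in the decomposition $X = Z u\opt + W$ of Assumption~\ref{assumption:x-decomposition}, so that $\<u\opt, X\> = Z$, $\E[XX^\top\mid Z = z] = z^2 u\opt{u\opt}^\top + \projperp \Sigma \projperp$, and $\P(Y_j = y \mid Z = z) = \sigma_j\opt(yz)$, then integrating out $W$ and $Y_j$ and using that $\sigma_j,\sigma_j\opt$ are symmetric (so $\sigma_j'$ is even and $\sigma_j\opt(z)+\sigma_j\opt(-z) = 1$) collapses the Hessian at $\theta = u\opt$ to the scalar-weighted form $\nabla^2_\theta \poploss(u\opt, \vec{\sigma}) = \big(\frac{1}{m}\sum_j \int_0^\infty \sigma_j'(z)\, z^2 p(z)\, dz\big) u\opt{u\opt}^\top + \big(\frac{1}{m}\sum_j \int_0^\infty \sigma_j'(z)\, p(z)\, dz\big)\projperp \Sigma \projperp$, where $p$ is the density of $|Z|$.

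Next I would split $\|\nabla^2_\theta \poploss(\theta_n,\vec{\sigma}_n) - \nabla^2_\theta \poploss(u\opt,\vec{\sigma}\opt)\|$ into the ``$\theta$-perturbation at fixed link'' term $\|\nabla^2_\theta \poploss(\theta_n,\vec{\sigma}_n) - \nabla^2_\theta \poploss(u\opt,\vec{\sigma}_n)\|$ and the ``$\vec{\sigma}$-perturbation at $\theta = u\opt$'' term $\|\nabla^2_\theta \poploss(u\opt,\vec{\sigma}_n) - \nabla^2_\theta \poploss(u\opt,\vec{\sigma}\opt)\|$. The first is routine: under the first condition it is bounded by $\frac1m\sum_j \lipconst' \E[|\<\theta_n - u\opt, X\>|\, \ltwo{X}^2] \lesssim \ltwo{\theta_n - u\opt}\,\E\ltwo{X}^3$ using that $\sigma_j'$ is $\lipconst'$-Lipschitz; under the second condition, where $\sigma_j'$ need not even be continuous, I would write $\E[\sigma_{n,j}'(-Y_j\<\theta,X\>)XX^\top \mid Y_j = y]$ as an integral of $\sigma_{n,j}'(-ys)$ against a matrix kernel $g^{(j)}_{\theta,y}(s)\, ds$ assembled from the continuous density and conditional expectations of $XX^\top$, note this kernel is continuous in $\theta$ in $L^1(ds)$ (using $\E\ltwo{X}^4<\infty$ and the decay $\lim_{z \to s} z^2 p(z) = 0$ to control tails), and bound the term by $\lipconst\max_j \|g^{(j)}_{\theta_n,y} - g^{(j)}_{u\opt,y}\|_{L^1} \to 0$, since the link enters only through its sup-norm $\le \lipconst$. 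Either way this piece vanishes, uniformly over $\vec{\sigma}_n \in \fsymlinksp$.

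\textbf{The main obstacle} is the second term, the continuity in $\vec{\sigma}$: the metric $d_{\fsymlinksp}$ controls the links only through $\int_0^\infty(\sigma_{n,j} - \sigma_j\opt)(z)^2 p(z)\, dz \to 0$ for each $j$ (a short computation combining the $\pm$ signs and using symmetry identifies the $L^2(\P)$ term in $d_{\fsymlinksp}$ with these quantities), i.e.\ it gives only $L^2(p)$-closeness of the link \emph{functions}, whereas the reduced Hessian above depends on the \emph{derivatives} $\sigma_j'$, and $L^2$-closeness of functions does not imply closeness of derivatives. The resolution I would use: each $\sigma_{n,j}$ is uniformly bounded, non-decreasing and $\lipconst$-Lipschitz and $\sigma_j\opt$ is continuous, so $L^2(p)$-convergence — which, because $p > 0$ on $(0,\infty)$, is convergence in Lebesgue measure — upgrades through Helly's selection theorem and uniqueness of a.e.\ limits to pointwise (hence locally uniform) convergence $\sigma_{n,j} \to \sigma_j\opt$ on $\R$; since every $\sigma_{n,j}'$ is the distributional derivative and all are bounded by $\lipconst$, this forces $\sigma_{n,j}' \to (\sigma_j\opt)'$ in the weak-$*$ topology of $L^\infty(0,\infty)$. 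Pairing this against the two fixed kernels $z \mapsto z^2 p(z)$ and $z \mapsto p(z)$, which lie in $L^1(0,\infty)$ because $\int_0^\infty z^2 p(z)\, dz = \E[Z^2] \le \E\ltwo{X}^2 < \infty$ and $\int_0^\infty p = 1$, yields $\nabla^2_\theta \poploss(u\opt, \vec{\sigma}_n) \to \nabla^2_\theta \poploss(u\opt, \vec{\sigma}\opt)$; a standard argument (every subsequence admits a further subsequence along which all the above holds, with the same limit) passes from the subsequential statements to convergence of the full sequence. The genuine technical burden is thus concentrated in establishing the Hessian representation and the $L^1(ds)$-continuity of the kernel $g^{(j)}_{\theta,y}$ under the weaker continuous-density hypothesis; the rest is the Helly/weak-$*$ bookkeeping just described.
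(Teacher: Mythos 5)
Your proposal is correct, and the decomposition into a ``$\theta$-perturbation at fixed link'' term plus a ``link-perturbation at $\theta = u\opt$'' term matches the paper's (the paper's $\xi_j$ is exactly your first term, and its treatment of that term under each of the two conditions --- Lipschitz derivative, respectively a change of variables against the continuous density of $X$ --- is essentially your kernel argument). Where you genuinely diverge is in the key step, continuity in $\vec{\sigma}$. The paper integrates by parts: it writes $\E[\sigma_j'(Z) Z^2 \ind\{\epsilon \le |Z| \le M\}]$ as boundary terms plus $-\int_\epsilon^M \sigma_j(z)(2zp(z) + z^2 p'(z))\,dz$, thereby moving the derivative off the link and onto the density, and then controls the difference of these integrals for $\sigma_j$ versus $\sigma_j\opt$ by Cauchy--Schwarz against $\LtwoP{\sigma_j - \sigma_j\opt}$ (using $\sup_{[\epsilon,M]}|p'/p|$ and $\E[Z^4]<\infty$), with the boundary and tail terms killed by $z^2 p(z) \to 0$ at $0$ and $\infty$. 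This is precisely why the lemma hypothesizes that $p$ is continuously differentiable with that boundary decay. Your route --- upgrading $L^2(p)$-convergence of uniformly Lipschitz monotone links to locally uniform convergence, hence weak-$*$ convergence of the derivatives in $L^\infty$, then pairing against the fixed $L^1$ kernels $z^2 p(z)$ and $p(z)$ --- avoids differentiating $p$ entirely, so it proves the conclusion under strictly weaker hypotheses on the density (continuity and positivity suffice). The trade-offs: your argument leans essentially on the monotone/equi-Lipschitz structure of $\fsymlinksp$ and is purely qualitative (a subsequence argument with no rate), whereas the paper's integration by parts yields an explicit modulus of continuity in terms of $\LtwoP{\sigma_j - \sigma_j\opt}$; since the lemma is only invoked to verify the qualitative continuity in Assumption~\ref{assumption:sp-general}, either suffices. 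Two small points to tidy if you write this up: justify the Hessian representation $\nabla^2_\theta L(\theta,\vec{\sigma}) = \frac{1}{m}\sum_j \E[\sigma_j'(-Y_j\<\theta,X\>)XX^\top]$ for merely Lipschitz links by noting that $\<\theta, X\>$ has a density near $\theta = u\opt$ (so the a.e.\ non-differentiability set of $\sigma_j$ is not charged), and remember that the link-perturbation term carries both the coefficient $\E[(\sigma_{n,j}' - {\sigma_j\opt}')(Z) Z^2]$ on $u\opt {u\opt}^\top$ and the coefficient $\E[(\sigma_{n,j}' - {\sigma_j\opt}')(Z)]$ on $\projperp \Sigma \projperp$; your weak-$*$ pairing handles both, but the paper's displayed decomposition of $\delta_j$ records only the first.
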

We can now present the master result for semi-parametric
approaches, which characterizes the asymptotic behavior of the
semi-parametric estimator with the variance function
\begin{align*}
  C_{m, \vec{\sigma}\opt} := \frac{1}{m} \cdot
  \frac{\frac{1}{m} \sum_{j = 1}^m \E[\sigma_j\opt(Z) (1 - \sigma_j\opt(Z))]}{
    (\frac{1}{m} \sum_{j = 1}^m
    \E[{\sigma_j\opt}'(Z)])^2}.
\end{align*} 
\begin{theorem}
  \label{theorem:semiparametric-master}
  Let Assumption~\ref{assumption:x-decomposition} hold and assume $|Z| >
  0$ with probability one and has nonzero and continuous density $p(z)$ on
  $(0, \infty)$.  Let Assumption~\ref{assumption:sp-general} hold and assume
  that $\Ep [\ltwo{X}^4] < \infty$.  Then $\sqrt{n}(\spe - u\opt)$ is
  asymptotically normal, and the normalized estimator $\what{u}\sem_{n,m} =
  \spe / \ltwos{\spe}$ satisfies
	\begin{equation*}
		\sqrt{n} \prn{\what{u}\sem_{n,m} - u\opt}
		\cd \normal\prn{0, 	C_{m, \vec{\sigma}\opt}
			\prn{\proj_{u\opt}^\perp \Sigma \proj_{u\opt}^\perp}^\dagger}.
	\end{equation*}
\end{theorem}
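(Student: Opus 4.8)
The plan is to treat this as a standard $Z$-estimator (M-estimator) asymptotics argument, with the twist that the link functions $\vec{\sigma}$ are infinite-dimensional nuisance parameters that we only know to be consistently estimated. The first step is to establish consistency $\spe \cp u\opt$. The population loss $L(\theta, \vec{\sigma}\opt) = \frac1m \sum_j \E[\loss_{\sigma_j\opt, \theta}(Y_j \mid X)]$ is minimized at $\theta = u\opt$: by Assumption~\ref{assumption:sp-general} the model is well-specified and normalized, so each $\loss_{\sigma_j\opt, \cdot}$ is a proper loss whose population minimizer is $u\opt$ (this is exactly the $h$-function calculation of Lemma~\ref{lemma:generic-solutions}, which gives a unique zero, specialized to the well-specified symmetric case $t_m = t\opt = 1$). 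I would combine a uniform law of large numbers over $\theta$ with the consistency of $\vec{\sigma}_n$ in the $\LtwoP{\cdot}$ metric to pass from $\frac1n\sum_i \loss_{\vec\sigma_n,\theta}(Y_i\mid X_i)$ to $L(\theta,\vec\sigma\opt)$; the uniformity in $\vec\sigma$ comes from the Lipschitz bound on $\fsymlinkL$ and the moment bound $\E\ltwo{X}^4 < \infty$, which controls the envelope. Then the usual argmin-continuous-mapping argument gives $\spe \cp u\opt$.

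Next, the asymptotic normality. Since $\spe$ minimizes an empirical criterion, I would write the first-order stationarity condition $0 = \frac1n\sum_i \nabla_\theta \loss_{\vec\sigma_n, \spe}(Y_i\mid X_i)$ and Taylor-expand in $\theta$ around $u\opt$, yielding
\begin{equation*}
  \sqrt n(\spe - u\opt) = -\brk{\nabla_\theta^2 \poploss(u\opt, \vec\sigma\opt)}^\dagger \cdot \frac{1}{\sqrt n}\sum_{i=1}^n \nabla_\theta \loss_{\vec\sigma\opt, u\opt}(Y_i\mid X_i) + o_P(1),
\end{equation*}
restricted to the subspace orthogonal to $u\opt$ (the score has no component along $u\opt$ because of the norm constraint / the block structure of the Hessian in~\eqref{eqn:parameterized-hessian}, which is why the pseudoinverse and the projection $\projperp$ appear). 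Two things need justification here: (a) the Hessian at the estimated link $\vec\sigma_n$ converges to $\nabla_\theta^2\poploss(u\opt,\vec\sigma\opt)$ — this is precisely where Assumption~\ref{assumption:sp-general}'s continuity of $(\theta,\vec\sigma)\mapsto\nabla_\theta^2\poploss$ in $d_{\fsymlinksp}$ is used, combined with $\spe\cp u\opt$ and $\vec\sigma_n\cp\vec\sigma\opt$; and (b) the score evaluated at $\vec\sigma_n$ can be replaced by the score at $\vec\sigma\opt$ without changing the $\sqrt n$-limit — i.e., the nuisance estimation is asymptotically negligible. The CLT for the $\vec\sigma\opt$-score is then routine: $\nabla_\theta \loss_{\sigma_j\opt,u\opt}(Y_j\mid X) = -\sigma_j\opt(-Y_j\<u\opt,X\>) Y_j X$, which has mean zero and, using $X = Zu\opt + W$ and $\E[Y_j\mid Z] = 2\sigma_j\opt(Z) - 1$, covariance (in the $\projperp$ block) equal to $\frac1m \sum_j \E[\sigma_j\opt(Z)(1-\sigma_j\opt(Z))]\,\projperp\Sigma\projperp$; dividing by the squared Hessian eigenvalue $\big(\frac1m\sum_j\E[{\sigma_j\opt}'(Z)]\big)^2$ and the extra $1/m$ from averaging over labelers' losses produces exactly $C_{m,\vec\sigma\opt}$. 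Finally, $\sqrt n(\what u\sem_{n,m} - u\opt)$ follows from $\sqrt n(\spe - u\opt)$ by the delta method for the normalization map $\theta\mapsto\theta/\ltwo\theta$, whose derivative at the unit vector $u\opt$ is $\projperp$, and since the limit already lives in $\mathrm{span}(u\opt)^\perp$ the map acts as the identity on it.

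The main obstacle is step (b): showing that plugging in the estimated links $\vec\sigma_n$ in place of $\vec\sigma\opt$ in the score does not perturb the $\sqrt n$-scale limit. The standard route is a stochastic equicontinuity / Donsker argument: one shows the empirical process $\theta,\vec\sigma \mapsto \G_n\brk{\nabla_\theta\loss_{\vec\sigma,\theta}(Y\mid X)}$ is asymptotically equicontinuous at $(u\opt,\vec\sigma\opt)$ in the metric $d_{\fsymlinksp}$, so that $\G_n[\nabla_\theta\loss_{\vec\sigma_n,u\opt}] - \G_n[\nabla_\theta\loss_{\vec\sigma\opt,u\opt}] = o_P(1)$, together with the fact that the \emph{population} derivative of $\vec\sigma\mapsto \E[\nabla_\theta\loss_{\vec\sigma,u\opt}]$ vanishes at $\vec\sigma\opt$ — this orthogonality (Neyman orthogonality of the score to the nuisance direction) is the structural reason only consistency, not a rate, is needed for $\vec\sigma_n$, and it should hold here because at $\theta=u\opt$ the gradient $-\E[\sigma_j(-Y_j\<u\opt,X\>)Y_j X]$ is stationary in $\sigma_j$ around $\sigma_j\opt$ by the well-specified first-order condition. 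Establishing the Donsker/equicontinuity property requires a bracketing or uniform-entropy bound on the class $\{\nabla_\theta\loss_{\vec\sigma,\theta} : \ltwo{\theta - u\opt}\le\delta, d_{\fsymlinksp}(\vec\sigma,\vec\sigma\opt)\le\delta\}$, which is where the Lipschitz restriction defining $\fsymlinkL$ (giving a manageable entropy for monotone Lipschitz functions) and the fourth-moment condition on $X$ (controlling envelopes) do the work; I would isolate this as a lemma. Everything else — consistency, the Hessian convergence via Assumption~\ref{assumption:sp-general}, the score CLT, and the delta method — is comparatively routine.
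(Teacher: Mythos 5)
Your overall architecture---consistency, a Donsker/stochastic-equicontinuity step to swap $\vec{\sigma}_n$ for $\vec{\sigma}\opt$ inside the empirical process, Hessian continuity via Assumption~\ref{assumption:sp-general}, and a final delta method---is the paper's architecture, and your equicontinuity step is exactly its Lemma~\ref{lemma:donsker-sp}. But there is a genuine gap in how you handle the nuisance bias. You center the expansion at $u\opt$ and justify discarding the plug-in error by ``Neyman orthogonality of the score to the nuisance direction,'' claiming the gradient is stationary in $\sigma_j$ at $\sigma_j\opt$ because of the first-order condition. That is false: under Assumption~\ref{assumption:x-decomposition} and the symmetry of the links, the population score at $\theta = u\opt$ is
\begin{equation*}
  \E\brk{\nabla_\theta \loss_{\sigma_j, u\opt}(Y_j \mid X)}
  = \E\brk{\prn{\sigma_j(Z) - \sigma_j\opt(Z)} Z}\, u\opt,
\end{equation*}
which is \emph{linear} in $\sigma_j$ with nonzero Gateaux derivative $\delta\sigma \mapsto \E[\delta\sigma(Z) Z]$; it vanishes at $\sigma_j = \sigma_j\opt$ but is not stationary there. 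Since Assumption~\ref{assumption:sp-general} gives only consistency of $\vec{\sigma}_n$---no $o_P(n^{-1/2})$ rate---the induced bias $\sqrt{n}\,\E[(\sigma_{n,j}(Z) - \sigma_j\opt(Z))Z]$ need not vanish, and your displayed expansion for $\sqrt{n}(\spe - u\opt)$ fails in the $u\opt$ direction: the scale of $\spe$ converges to $t_{\vec{\sigma}_n}$, the zero of the calibration gap for the \emph{estimated} links, which differs from $1$ by an amount that is only $o_P(1)$.

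The fix---which is what the paper does---is to center the M-estimation expansion at the drifting population minimizer $\theta\opt_{\vec{\sigma}_n} = t_{\vec{\sigma}_n} u\opt$ of $L(\cdot, \vec{\sigma}_n)$, so that the exact identity $\nabla L(\theta\opt_{\vec{\sigma}_n}, \vec{\sigma}_n) = 0$ replaces any orthogonality argument; one obtains asymptotic normality of $\sqrt{n}(\spe - \theta\opt_{\vec{\sigma}_n})$, and the bias, which lies entirely along $u\opt$ because the population score always points in that direction, is annihilated only at the last step by the normalization $\spe/\ltwos{\spe}$ (the delta-method derivative is $\projperp$, which kills the $u\opt$ component). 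You gesture at ``restricting to the subspace orthogonal to $u\opt$,'' which is the right instinct, but the reason this works is the structural fact that $\projperp \nabla L(t u\opt, \vec{\sigma}) = 0$ for every $\vec{\sigma}$ and $t$, not any stationarity of the score in the link. Your Taylor expansion around $u\opt$ needs repair for the same reason: its remainder involves $\ltwos{\spe - u\opt}$, whose component along $u\opt$ is $|t_{\vec{\sigma}_n} - 1| \neq o_P(n^{-1/2})$, so the linearization is not valid as written. The Donsker lemma, the score CLT, the identification of $C_{m,\vec{\sigma}\opt}$, and the consistency argument are otherwise sound.
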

\noindent
See Appendix~\ref{proof:semiparametric-master} for proof details.
Notably, Theorem~\ref{theorem:semiparametric-master} exhibits optimal
$1/m$ scaling in the covariance whenever $\E[{\sigma\opt_j}'(Z)] \gtrsim 1$.

\subsection{A single index model}
\label{sec:semiparametric}

Our first example application of Theorem~\ref{theorem:semiparametric-master}
is to a single index model. We present a multi-phase estimator that first
estimates the direction $u\opt = \theta\opt / \ltwos{\theta\opt}$, then uses
this estimate to find a (consistent) estimate of the link $\sigma\opt$,
which we can then substitute directly into
Theorem~\ref{theorem:semiparametric-master}.
We defer all proofs of this section to Appendix~\ref{sec:proof-nonparametric},
which also includes a few auxiliary results that we use to prove the
results proper.

We present the the abstract convergence result for link functions first,
considering a scenario where we have an initial guess $\uinit$ of the
direction $u\opt$, independent of $(X_i, Y_{ij})_{i \le n,j\le m}$, for
example constructed via a small held-out subset of the data. We set
\begin{equation*}
  \vec{\sigma}_n = \argmin_{\vec{\sigma} \in \fsymlinksp}
  \sum_{i = 1}^n\sum_{j = 1}^m
  \left(\sigma_j(\<\uinit, X_i\>) - Y_{ij}\right)^2,
\end{equation*}
where $\fsymlinksp \subset (\fsymlinkL)^m$ and so it consists of
nondecreasing $\lipconst$-Lipschitz link functions with $\sigma(0) = \half$.
We assume that for all $n$, there exists a (potentially random) $\epsilon_n$
such that
\begin{equation*}
  \ltwos{\uinit - u\opt} \le \uerror_n.
\end{equation*}
\begin{proposition}
  \label{proposition:consistency-of-sigma}
  Let $X_i$ be vectors with $\E[\ltwo{X}^k] < \infty$, where
  $k \ge 2$. Then with probability $1$,
  there is a finite (random)
  $C < \infty$ such that for all large enough $n$,
  \begin{equation*}
    \LtwoPbold{\vec{\sigma}_n(Y\<u\opt, X\>)
      - \vec{\sigma}\opt(Y\<u\opt, X\>)}^2
    \le C \left[
      n^{\frac{2}{3k} - \frac{2}{3}}
      + \uerror_n^2
      + \lipconst n^{-\frac{k}{2(k + 1)}}
      \right].
  \end{equation*}
\end{proposition}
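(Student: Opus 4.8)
The estimator $\vec{\sigma}_n$ is a shape-constrained least-squares regression of the labels on the one-dimensional index $\<\uinit, X_i\>$, over the class $\fsymlinksp \subset (\fsymlinkL)^m$ of symmetric, nondecreasing, $\lipconst$-Lipschitz, $[0,1]$-valued link vectors, so the plan is to run a nonparametric $M$-estimation analysis while tracking two nonstandard features: the index direction $\uinit$ is only an $\uerror_n$-accurate surrogate for $u\opt$, and the covariates have only $k$ finite moments rather than being bounded. First I would make several structural reductions. Since the links are symmetric, $\abs{\sigma_j(Y_j t) - \sigma_j'(Y_j t)} = \abs{\sigma_j(t) - \sigma_j'(t)}$, so the quantity of interest equals $\LtwoPbold{\vec{\sigma}_n(\<u\opt, X\>) - \vec{\sigma}\opt(\<u\opt, X\>)}^2$; since $\uinit$ is independent of the fitting data $(X_i, Y_{ij})$, we may condition on $\uinit$ and treat $t_i \defeq \<\uinit, X_i\>$ as an i.i.d.\ real design; and under the model $\vec{\sigma}\opt$ is the population squared-loss minimizer over $\fsymlinksp$. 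Finally, by the triangle inequality and $\lipconst$-Lipschitzness of $\vec{\sigma}_n$ and $\vec{\sigma}\opt$,
\[
  \LtwoPbold{\vec{\sigma}_n(\<u\opt, X\>) - \vec{\sigma}\opt(\<u\opt, X\>)}
  \le \LtwoPbold{\vec{\sigma}_n(\<\uinit, X\>) - \vec{\sigma}\opt(\<\uinit, X\>)}
  + 2 \lipconst\, \uerror_n\, \E[\ltwo{X}^2]^{1/2} ,
\]
so after squaring, the second term contributes the $\uerror_n^2$ term in the bound, and it remains to control the estimation error at the fitted index $\uinit$.

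\textbf{Basic inequality.} Writing $g \defeq \vec{\sigma}_n - \vec{\sigma}\opt$, which lies in the fixed translate $\fsymlinksp - \vec{\sigma}\opt$ (hence is $[-1,1]$-valued, $2\lipconst$-Lipschitz, and inherits the metric-entropy bounds of $\fsymlinksp$), optimality of $\vec{\sigma}_n$ for the empirical squared loss indexed by $\uinit$ gives, after the usual rearrangement,
\[
  \LtwoPn{g(\<\uinit, X\>)}^2
  \lesssim \frac 1n \sum_{i = 1}^n \<\xi_i, g(\<\uinit, X_i\>)\>
  + \frac 1n \sum_{i = 1}^n \<b_i, g(\<\uinit, X_i\>)\> ,
\]
where $\xi_i \defeq Y_i - \vec{\sigma}\opt(\<u\opt, X_i\>)$ is bounded, mean-zero noise given $X_i$, and $b_i \defeq \vec{\sigma}\opt(\<u\opt, X_i\>) - \vec{\sigma}\opt(\<\uinit, X_i\>)$ is a deterministic perturbation bias with $\linfs{b_i} \le \lipconst \uerror_n \ltwos{X_i}$. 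The bias term is handled by Cauchy--Schwarz and the strong law $\frac 1n \sum_i \ltwo{X_i}^2 \to \E[\ltwo{X}^2]$, giving a contribution $\lesssim \uerror_n \LtwoPn{g(\<\uinit, X\>)}$ and hence, by the arithmetic--geometric-mean inequality, again an $\uerror_n^2$ term. The substantive work is to bound the multiplier process $\sup_g \frac 1n \sum_i \<\xi_i, g(\<\uinit, X_i\>)\>$ over the localized class.

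\textbf{Truncation and chaining.} Fix a truncation level $\tau_n \to \infty$ and split each summand according to whether $\ltwos{X_i} \le \tau_n$. On the exceptional set the contribution is bounded almost surely through $\frac 1n \sum_i \ltwos{X_i} \ind\{\ltwos{X_i} > \tau_n\}$, whose size is controlled by the $k$-th moment of $\ltwos{X}$. On the complementary set $\abs{\<\uinit, X_i\>} \le \tau_n$, so the localized class $\{t \mapsto g(t) : g \in \fsymlinksp - \vec{\sigma}\opt,\ \LtwoPn{g} \le \delta\}$ consists of uniformly bounded $2\lipconst$-Lipschitz functions on an interval of length $2\tau_n$, hence has $L^2(\Prb_n)$ bracketing entropy $\log N(\varepsilon) \lesssim \lipconst \tau_n / \varepsilon$. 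A localized maximal inequality for bounded multiplier empirical processes (standard empirical-process machinery, cf.\ \cite{VanDerVaart98}, combined with peeling) then bounds the process on $\{\LtwoPn{g} \le \delta\}$ by $\sqrt{\lipconst \tau_n}\, \delta^{1/2} n^{-1/2}$ up to logarithmic factors, so the basic inequality closes as $\LtwoPn{g}^2 \lesssim (\lipconst \tau_n / n)^{2/3}$. Choosing $\tau_n \asymp n^{1/k}$ then yields the $n^{2/(3k) - 2/3}$ term, while the truncation tails together with the passage from $\LtwoPn{\cdot}$ back to $\LtwoPbold{\cdot}$ (both only under $k$ moments, via a uniform law of large numbers over the bounded class $\fsymlinksp - \vec{\sigma}\opt$) produce the $M n^{-k/(2(k+1))}$ term with $M$ a moment-dependent constant; combining with the Lipschitz bias contributions gives the stated bound.

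\textbf{The main obstacle.} To reach the almost-sure conclusion with a finite random constant $C$ rather than an in-probability bound, each high-probability step---the multiplier maximal inequality, the truncation estimate, and the uniform law of large numbers---must be made quantitative with tail probabilities summable in $n$ (via Bernstein-type concentration), after which Borel--Cantelli yields the result; the requirement that these tails be summable is precisely what pins down the logarithmic factors and the exact rate exponents. The hardest part will be the interaction in the chaining step between the heavy-tailed design---which forces truncation and hence an entropy bound that grows polynomially in $\tau_n$---and the perturbed index $\uinit$, which must be carried through the chaining argument while retaining the independence of $\uinit$ from the data; once the localized maximal inequality is set up, the remaining steps are routine.
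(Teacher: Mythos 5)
Your plan is correct and follows essentially the same route as the paper: an in-sample localized least-squares rate over the Lipschitz link class (critical radius $\asymp(\lipconst \tau_n/n)^{1/3}$ with the design effectively confined to a window of width $\tau_n \asymp n^{1/k}$, giving $n^{2/(3k)-2/3}$), the index perturbation $\uinit$ versus $u\opt$ entering as a bias term of order $\uerror_n^2$, and a uniform-law-of-large-numbers plus moment-tail argument to pass from the empirical to the population $L^2$ norm, which produces the $M n^{-k/(2(k+1))}$ term after optimizing the window. The only substantive difference is technical packaging: where you truncate the multiplier process at $\tau_n$ and track the exceptional set, the paper invokes the fact that $\max_{i\le n}\ltwo{X_i}\le B n^{1/k}$ eventually almost surely for a random $B$, so the design is bounded outright and the random constant is absorbed into $C$ before Borel--Cantelli; both resolve the summability issue you correctly flag as the main obstacle.
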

\noindent
The proof is more or less a consequence of standard convergence results
for nonparametric function estimation, though we include
it for completeness in Appendix~\ref{sec:proof-consistency-of-sigma}
as it includes a few additional technicalities because
of the initial estimate of $u\opt$.

Summarizing, we see that a natural procedure is available: if we have models
powerful enough to accurately estimate the conditional label probabilities
$Y \mid X$, then Proposition~\ref{proposition:consistency-of-sigma} coupled
with Theorem~\ref{theorem:semiparametric-master} shows that we can achieve
estimation with near-optimal asymptotic covariance. In particular, if
$\uinit$ is consistent (so $\uerror_n \cp 0$), then $\spe$ induces a
normalized estimator $\what{u}\sem_n = \spe / \ltwos{\spe}$ satisfying
$\sqrt{n}(\what{u}\sem_n - u\opt) \cd \normal(0, C_{m,\vec{\sigma}}
\prns{\proj_{u\opt}^\perp \Sigma \proj_{u\opt}^\perp}^\dagger)$.

\subsection{Crowdsourcing model}
\label{sec:crowdsourcing}


Crowdsourcing typically targets estimating rater reliability, then using
these reliability estimates to recover ground truth labels as accurately as
possible, with versions of this approach central since at least
\citeauthor{DawidSk79}'s Expectation-Maximization-based
approaches~\cite{DawidSk79, WhitehillWuBeMoRu09, RaykarYuZhVaFlBoMo10}.  We
focus here on a simple model of rater reliability,
highlighting how---at least in our stylized model of classifier
learning---by combining a crowdsourcing reliability model and still using
all labels in estimating a classifier, we can achieve asymptotically
efficient estimates of $\theta\opt$, rather than the robust but slower
estimates $\what{\theta}_{n,m}\mv$ arising from ``cleaned'' labels.


We adopt \citeauthor{WhitehillWuBeMoRu09}'s roughly ``low-rank'' model for
label generation~\citep{WhitehillWuBeMoRu09}: for binary classification with
$m$ labelers and distinct link functions $\sigma_i\opt$, model the
difficulty of $X_i$ by $\beta_i \in (-\infty, \infty)$, where
$\sgn(\beta_i)$ denotes the true class $X_i$ belongs to.  A parameter
$\alpha_j$ models the expertise of annotator $j$, and the probability
labeler $j$ correctly classifies $X_i$ is
\begin{align*}
  \Prb(Y_{ij} = 1) = \frac{1}{1 +\exp(-\alpha_i \beta_j)}.
\end{align*}
(See also \citet{RaykarYuZhVaFlBoMo10}.) 
The focus in these papers was to construct gold-standard labels
and datasets $(X_i, Y_i)$; here, we take the alternative perspective
we have so far advocated to show how using all labels can yield strong
performance.


We thus adopt a semiparametric approach: we model the labelers, assuming a
black-box crowdsourcing model that can infer each labeler's ability, then
fit the classifier.  We represent labeler $j$'s expertise by a scalar
$\alpha_j\opt \in (0, \infty)$. Given data $X_i = X$ and the normalized
$\theta\opt = u\opt$, we assume a modified logistic link
\begin{align*}
  \Prb(Y_{ij} = 1 \mid X_i = x) = \frac{1}{1 + \exp (-\alpha\opt_j  \< \theta\opt, x \>)} = \sigma\lr (\alpha_j\opt  \< u \opt, x \>),
\end{align*}
so $\alpha_j\opt = \infty$ represents an omniscient labeler while
$\alpha_j\opt = 0$ means the labeler chooses random labels regardless of the
data. Let $\alpha\opt = (\alpha_1\opt, \ldots, \alpha_m\opt) \in
\R^m_{+}$. Then, in keeping with the plug-in approach of the master
Theorem~\ref{theorem:semiparametric-master}, we assume the blackbox
crowdsourcing model generates an estimate $\alpha_n = (\alpha_{n,1},
\ldots, \alpha_{n,m}) \in \R_+^m$ of $\alpha^\star$ from the data $\{(X_i,
(Y_{i1}, \ldots, Y_{im}))\}_{i=1}^n$.

We consider the algorithm using the blackbox crowdsourcing model and
empirical risk minimization with the margin-based loss
$\loss_{\vec{\sigma}_n, \theta}$ as in Section~\ref{sec:master-results-sem},
with
$\vec{\sigma}_n(t)
\defeq (\sigma\lr(\alpha_{n,1}t),\dots, \sigma\lr(\alpha_{n,m}t))$,
or equivalently using the rescaled logistic loss
\begin{align*}
  \loss_{\vec{\sigma}_n, \theta}(y \mid x)
  = \frac{1}{m} \sum_{j=1}^m \loss\lr_\theta (y_j \mid \alpha_{n,j} x) .
\end{align*}
This allows us to apply our general semiparametric
Theorem~\ref{theorem:semiparametric-master} as long as the crowdsourcing
model produces a consistent estimate $\alpha_n \cp \alpha^\star$ (see
Appendix~\ref{proof:crowd-sourcing-estimator} for a proof):
\begin{proposition}
  \label{prop:crowd-sourcing-estimator}
  Let Assumption~\ref{assumption:x-decomposition} hold, $|Z| > 0$ have
  nonzero and continuous density $p(z)$ on $(0, \infty)$, and $\Ep
  [\ltwo{X}^4] < \infty$. If $\alpha_n \cp \alpha\opt \in \R^m$, then
  $\sqrt{n}(\spe - u\opt)$ is asymptotically normal, and the normalized
  estimator $\what{u}\sem_{n,m} = \spe / \ltwos{\spe}$ satisfies
  \begin{equation*}
    \sqrt{n} \prn{\what{u}\sem_{n,m} - u\opt}
    \cd \normal\prn{0,
      \frac{1}{\sum_{j = 1}^m \E[\sigma\lr(\alpha_j\opt Z)
          (1 - \sigma\lr(\alpha_j\opt Z))]}
      \prn{\proj_{u\opt}^\perp \Sigma \proj_{u\opt}^\perp}^\dagger}.
  \end{equation*}
\end{proposition}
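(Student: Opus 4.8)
The plan is to derive Proposition~\ref{prop:crowd-sourcing-estimator} from the semiparametric master Theorem~\ref{theorem:semiparametric-master}, instantiated with the per-labeler family $\vec{\sigma}_n(t) = (\sigma\lr(\alpha_{n,1}t), \ldots, \sigma\lr(\alpha_{n,m}t))$, so that the true links are $\sigma_j\opt(t) = \sigma\lr(\alpha_j\opt t)$ and $\P(Y_j = 1 \mid X) = \sigma_j\opt(\<X, u\opt\>)$. Essentially all of the work lies in verifying the hypotheses of that theorem---that is, Assumption~\ref{assumption:sp-general}---for this family; once they hold, the asymptotic covariance follows by evaluating the variance function $C_{m,\vec{\sigma}\opt}$ at these links.

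First I would fix the semiparametric class and check link consistency. Let $\underline{\alpha} = \tfrac12 \min_j \alpha_j\opt > 0$ and $\bar{\alpha} = 2\max_j \alpha_j\opt$, and write $\vec{\sigma}(a) = (\sigma\lr(a_1\cdot), \ldots, \sigma\lr(a_m\cdot))$ for $a \in [\underline{\alpha}, \bar{\alpha}]^m$. On this box each coordinate $\sigma\lr(a_j\cdot)$ is symmetric, strictly nondecreasing, not identically $1/2$, and $(\bar{\alpha}/4)$-Lipschitz, so $\fsymlinksp \defeq \{\vec{\sigma}(a) : a \in [\underline{\alpha}, \bar{\alpha}]^m\} \subset (\fsymlinkL)^m$ with $\lipconst = \bar{\alpha}/4$. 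Since $\alpha_n \cp \alpha\opt$, we have $\alpha_n \in [\underline{\alpha},\bar{\alpha}]^m$ with probability tending to one, so (after replacing $\vec{\sigma}_n$ by $\vec{\sigma}(\Pi_{[\underline{\alpha},\bar{\alpha}]^m}\alpha_n)$ on the vanishing-probability complement, which does not change the limit distribution) we may assume $\{\vec{\sigma}_n\} \subset \fsymlinksp$. The consistency clause of Assumption~\ref{assumption:sp-general} then follows from $|\sigma\lr(at) - \sigma\lr(a't)| \le \tfrac14 |a - a'||t|$: writing $Z = \<X, u\opt\>$,
\begin{equation*}
  \LtwoPbold{\vec{\sigma}_n(-Y\<X,u\opt\>) - \vec{\sigma}\opt(-Y\<X,u\opt\>)}^2
  \le \frac{1}{16m}\sum_{j=1}^m (\alpha_{n,j} - \alpha_j\opt)^2 \, \E[Z^2],
\end{equation*}
and the right-hand side converges to zero in probability because $\alpha_n \cp \alpha\opt$ and $\E[Z^2] \le \E\ltwo{X}^2 < \infty$.

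The remaining---and main---obstacle is the continuity of $(\theta, \vec{\sigma}) \mapsto \nabla_\theta^2 L(\theta, \vec{\sigma})$ at $(u\opt, \vec{\sigma}\opt)$ in the metric $d_{\fsymlinksp}$. Here Lemma~\ref{lem:d-continuity-guarantees} is not directly available (it assumes a continuously differentiable density with $z^2 p(z) \to 0$, while the proposition assumes only a continuous density), so I would argue from the smoothness of the rescaled logistic family. Conditioning on $X$ gives $\nabla_\theta^2 L(\theta, \vec{\sigma}) = \frac1m \sum_j \E[XX^\top(\sigma_j'(-\<\theta,X\>)\sigma_j\opt(Z) + \sigma_j'(\<\theta,X\>)(1 - \sigma_j\opt(Z)))]$, and on $\fsymlinksp$ the derivatives $(\sigma\lr(a\cdot))' = a\,\sigma\lr{}'(a\cdot)$ are uniformly bounded by $\bar{\alpha}/4$ and jointly continuous in $(a,\cdot)$; since $\E\ltwo{X}^2 < \infty$, dominated convergence yields joint continuity of $(\theta,a) \mapsto \nabla_\theta^2 L(\theta,\vec{\sigma}(a))$. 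Moreover, on the compact box $[\underline{\alpha},\bar{\alpha}]^m$ the map $a \mapsto \vec{\sigma}(a)$ is a continuous injection into $L^2(\P)$ (injectivity uses that $|Z|$ has a density on $(0,\infty)$, so $\sigma\lr(a_j\cdot) = \sigma\lr(a_j'\cdot)$ a.s.\ forces $a_j = a_j'$), hence a homeomorphism onto its image, so $d_{\fsymlinksp}$-convergence within $\fsymlinksp$ is equivalent to convergence of $a$; together with the joint continuity this gives the required $d_{\fsymlinksp}$-continuity. With Assumptions~\ref{assumption:x-decomposition} and \ref{assumption:sp-general} verified, $|Z| > 0$ a.s.\ with continuous density, and $\E\ltwo{X}^4 < \infty$, Theorem~\ref{theorem:semiparametric-master} applies and gives $\sqrt{n}(\what{u}\sem_{n,m} - u\opt) \cd \normal(0, C_{m,\vec{\sigma}\opt}(\projperp \Sigma \projperp)^\dagger)$; a final short computation, substituting $\sigma_j\opt = \sigma\lr(\alpha_j\opt\cdot)$ and using the logistic identity $\sigma\lr{}'(s) = \sigma\lr(s)(1 - \sigma\lr(s))$, evaluates $C_{m,\vec{\sigma}\opt}$ and yields the covariance claimed in the statement.
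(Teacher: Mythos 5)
Your proof is correct and follows the same overall route as the paper's: both reduce the proposition to Theorem~\ref{theorem:semiparametric-master} applied to the rescaled logistic links, verify the $L^2(\Prb)$-consistency of $\vec{\sigma}_n$ from $|{\sigma\lr}'| \le 1/4$ and $\alpha_n \cp \alpha\opt$, and then evaluate $C_{m,\vec{\sigma}\opt}$. Where you genuinely differ is the verification of the $d_{\fsymlinksp}$-continuity of $(\theta,\vec{\sigma}) \mapsto \nabla_\theta^2 L(\theta,\vec{\sigma})$: the paper simply cites Lemma~\ref{lem:d-continuity-guarantees} (Condition~1, Lipschitz derivatives), whose hypotheses on the density of $|Z|$ (continuously differentiable with $z^2 p(z) \to 0$ at $0$ and $\infty$) are strictly stronger than the proposition's ``nonzero and continuous density''; you correctly notice this and replace the citation with a direct argument tailored to the finite-dimensional family --- restrict to a compact parameter box, obtain joint continuity of $(\theta,a)\mapsto \nabla_\theta^2 L(\theta,\vec{\sigma}(a))$ by dominated convergence, and use compactness plus injectivity of $a \mapsto \vec{\sigma}(a)$ into $L^2(\Prb)$ to identify $d_{\fsymlinksp}$-convergence within $\fsymlinksp$ with convergence of the parameter. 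This buys a proof valid under exactly the stated hypotheses and quietly repairs a small mismatch in the paper's own citation, at the cost of the (correctly handled) bookkeeping of truncating $\alpha_n$ to the box off a vanishing-probability event. One caveat you share with the paper: the ``final short computation'' of $C_{m,\vec{\sigma}\opt}$ is asserted rather than carried out, and it is not entirely innocuous --- with $\sigma_j\opt(t) = \sigma\lr(\alpha_j\opt t)$ one has ${\sigma_j\opt}'(t) = \alpha_j\opt\, {\sigma\lr}'(\alpha_j\opt t)$, so the denominator of $C_{m,\vec{\sigma}\opt}$ carries chain-rule factors $\alpha_j\opt$ that do not cancel against the numerator unless the normalization of the per-labeler losses (the paper's $\frac{1}{m}\sum_j \loss\lr_\theta(y_j \mid \alpha_{n,j} x)$ versus $\frac{1}{m}\sum_j \loss_{\sigma_j,\theta}(y_j \mid x)$) is tracked carefully; spelling this out is the one step where your write-up, like the paper's, is thinner than it should be.
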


By Proposition~\ref{prop:crowd-sourcing-estimator}, the semiparametric
estimator $\what{u}\sem_{n,m}$ is efficient when the rater reliability
estimates $\alpha_n \in \R^m$ are consistent.  It is also immediate that if
$\alpha_j\opt \le \alpha_{\max} < \infty$ are bounded, then the asymptotic
covariance multiplier $C_{m,\alpha\opt} = (\sum_{j = 1}^m
\E[\sigma\lr(\alpha_j\opt Z) (1 - \sigma\lr(\alpha_j\opt Z))])^{-1} =
O(1/m)$, so we recover the $1/m$ scaling of the MLE, as opposed to the
slower rates of the majority vote estimators in
Section~\ref{sec:robustness-model-misspecification}.  At this point, the
refrain is perhaps unsurprising: using all the label information can yield
much stronger convergence guarantees.


\newcommand{\logit}{\mathop{\rm logit}}

\section{Experiments}
\label{sec:experiments}



We conclude the paper with several experiments to 
test whether the (sometimes implicit)
methodological suggestions hold merit.  Before delving into our experimental
results, we detail a few of the expected behaviors our theory suggests; if
we fail to see them, then the model we have proposed is too unrealistic to
inform practice.  First, based on the results
in~\Cref{sec:well-sepcified-mod}, we expect the classification error to be
better for the non-aggregated algorithm $\tmle$, and the gap between the two
algorithms to become larger for less noisy problems. Moreover, we only
expect $\tmle$ to be calibrated, and our theory predicts that the
majority-vote estimator's calibration worsens as the number of labels $m$
increases. Generally,
so long as we model uncertainty with enough fidelity,
Corollary~\ref{corollary:lowd-misspcfd-logistic-regression} suggests
that multilabel estimators should exhibit better performance than
those using majority vote labels $\majY$.

To that end, we provide experiments on two real datasets and one
semi-synthetic dataset: the BlueBirds dataset (\Cref{sec:BB}), CIFAR-10H
(\Cref{sec:cifar}), and a modified version of the CIFAR-10 dataset
(\Cref{sec:semi-synthetic}). We consider our two main algorithmic models:
\begin{enumerate}[label=(\roman*),leftmargin=*]
  \setlength{\itemsep}{0pt}
\item The maximum likelihood estimator $\tmle$ based on non-aggregated data in
  Eq.~\eqref{eq:maximum-likelihood}.
\item The majority-vote based estimator $\mve$ of
  Eq.~\eqref{eq:logistic-majority}, our proxy for modern data
  pipelines.
\end{enumerate}
Unfortunately, a paucity of large-scale multi-label datasets that we know of
precludes experiments on fully real datasets; the ImageNet
creators~\cite{DengDoSoLiLiFe09, RussakovskyDeSuKrSaMaHuKaKhBeBeFe15} no
longer have any intermediate label information from their construction. To
simulate the larger scale data collection process, we create a large scale
semisynthetic dataset (Section~\ref{sec:semi-synthetic}) using the raw
CIFAR-10 data and using trained neural networks as labelers, giving us a
semi-synthetic dataset with multiple labels. As the ground truth model is
well-specified and we know the link function, beyond the aforementioned
estimators with full information and majority vote, it is also of practical
interest to see how the aggregated labels from other crowdsourcing methods
compare to majority vote, and to knowing all individual labels.

\subsection{BlueBirds}
\label{sec:BB}

We begin with the
\href{https://github.com/eaplatanios/noisy-labels}{BlueBirds}
dataset~\cite{WelinderBrPeBe10}, which is a relatively small dataset
consisting of 108 images with ResNet features.  The classification problem
is challenging, and the task is to classify each image as one of
\textit{Indigo Bunting} or \textit{Blue Grosbeak} (two similar-looking blue
bird species). For each image, we have $39$ labels, obtained through Amazon
Mechanical Turk workers.  We use a pretrained (on ImageNet)
ResNet50 model to generate image
features, then apply PCA to reduce the dimensionality from
$d_{\textup{init}} = 2048$ to $d=25$.

We repeat the following experiment $T = 100$ times.
For each number $m = 1, \ldots, 35$ of labelers, we fit
the multilabel logistic model~\eqref{eq:maximum-likelihood}
and the majority vote estimator~\eqref{eq:logistic-majority},
finding calibration and classification errors using 10-fold cross validation.
We measure calibration error on a held-out example $x$
by $|\logit(\wt{p}(x)) -
\logit(\what{p}(x))|$, where $\wt{p}(x)$ is the predicted probability
and $\what{p}(x)$ is the empirical probability (over the labelers),
where $\logit(p) = \log\frac{p}{1 - p}$; we measure classification
error on example $x$ with
labels $(y_1, \ldots, y_m)$ by $\frac{1}{m} \sum_{j = 1}^m \ind\{y_j \neq
\sgn(\wt{p}(x) - \half)\}$, giving an inherent noise floor because
of labeler uncertainty. We
report the results in Figure~\ref{fig:bluebird}.
These plots corroborate
our theoretical predictions: as the number of labelers $m$ increases, both
the majority vote method and the full label method exhibit improved
classification error, but considering all labels gives a (significant)
improvement in accuracy and in calibration error.

%
%

\begin{figure}[ht]
  \begin{center}
    \begin{tabular}{cc}
      \begin{overpic}[width=.48\columnwidth]{
	  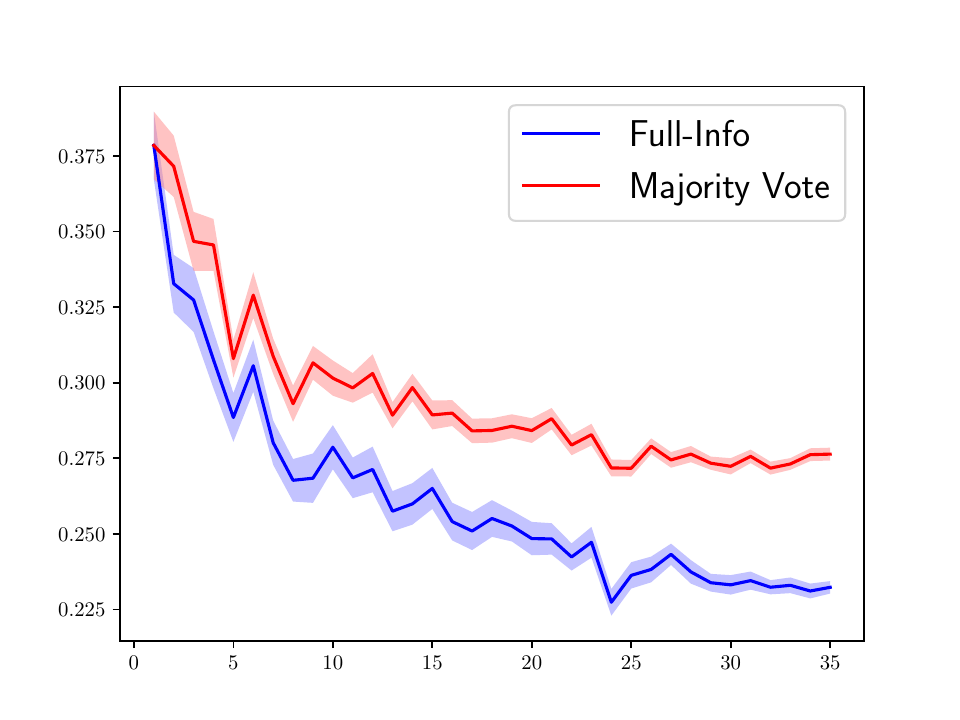}
	\put(30,-1){{\small Number of labelers $m$}}
	\put(-3,20){
	  \rotatebox{90}{{\small Classification error}}}
      \end{overpic} &
      \hspace{-.3cm}
      \begin{overpic}[width=.48\columnwidth]{
	  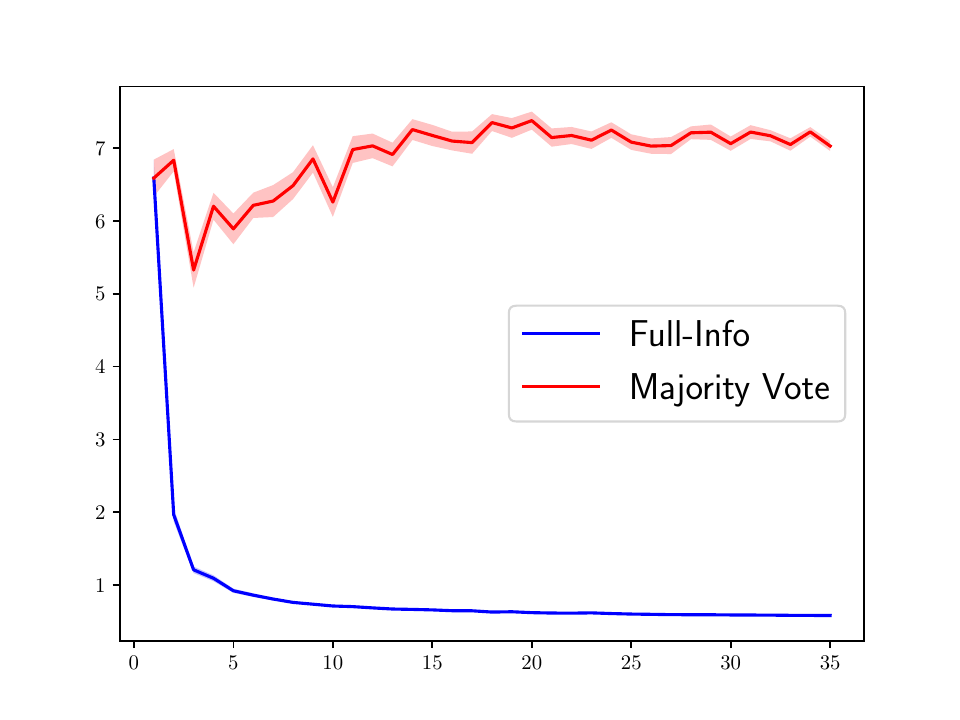}
	\put(30,-1){{\small Number of labelers $m$}}
	\put(-3,20){
	  \rotatebox{90}{{\small Calibration error}}}
      \end{overpic} \\
      (a) & (b)
    \end{tabular}
    \caption{\label{fig:bluebird} Experiments on BlueBirds dataset. (a)
      Classification error.  (b) Calibration error $|\logit(\wt{p}) -
      \logit(p)|$ with ResNet features reduced via PCA to dimension
      $d=25$. Error bars show 2 standard error confidence
      bands over $T = 100$ trials.}
  \end{center}
\end{figure}

\subsection{CIFAR-10H}
\label{sec:cifar}

\begin{figure}[ht!]
  \begin{center}
    \begin{tabular}{cc}
      \begin{overpic}[width=.48\columnwidth]{
	  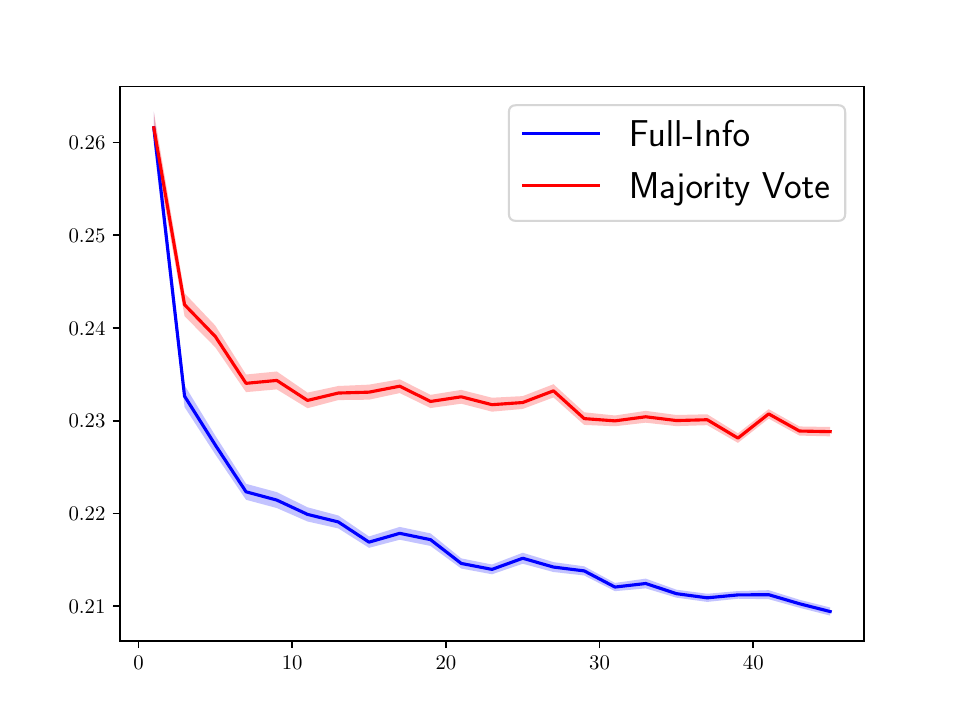}
	\put(30,-1){{\small Number of labelers $m$}}
	\put(-3,20){
	  \rotatebox{90}{{\small Classification error}}}
      \end{overpic} &
      \hspace{-.3cm}
      \begin{overpic}[width=.48\columnwidth]{
	  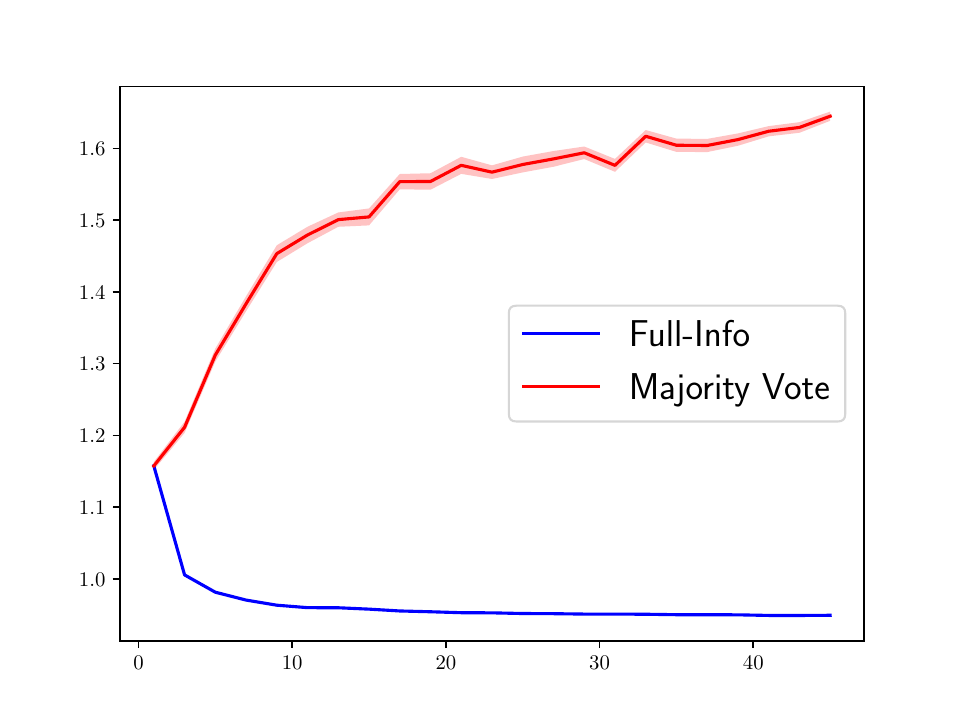}
	\put(30,-1){{\small Number of labelers $m$}}
	\put(-3,20){
	  \rotatebox{90}{{\small Calibration error}}}
      \end{overpic} \\
      (a) & (b)
    \end{tabular}
    \caption{\label{fig:cifar} Experiments on CIFAR-10H dataset. (a)
      Classification error. (b) Calibration error $|\logit(\wt{p}) -
      \logit(p)|$ with ResNet features reduced via PCA to dimension
      $d=40$.
      Error bars show 2 standard error confidence bands over
      $T = 100$ trials.}
  \end{center}
\end{figure}

For our second experiment, we consider \citeauthor{PetersonBaGrRu19}'s
\href{https://github.com/jcpeterson/cifar-10h}{CIFAR-10H}
dataset~\citep{PetersonBaGrRu19}, which consists of $10,\!000$ images from
CIFAR-10 test set with soft labeling in that for each image, we have
approximately $50$ labels from different annotators.  Each $32 \times 32$
image in the dataset belongs to one of the ten classes \texttt{airplane},
\texttt{automobile}, \texttt{bird}, \texttt{cat}, \texttt{dog},
\texttt{frog}, \texttt{horse}, \texttt{ship}, or \texttt{truck}; labelers
assign each image to one of the classes. To maintain some fidelity to the
binary classification setting we analyze throughout the paper, we transform
the problem into a set of 10 binary classification problems. For each class
$c$, we take each initial image/label pair $(x, y) \in \R^{32 \times 32}
\times \{1, \ldots, 10\}$, assigning binary label $1$ if $y = c$ and $0$
otherwise (so the annotator labels it as an alternative class $y \neq c$).
Most of the images in the dataset are very easy to classify: more than 80\%
have a unanimous label from each of the $m = 50$ labelers, meaning that the
MLE and majority vote estimators~\eqref{eq:maximum-likelihood}
and~\eqref{eq:logistic-majority} coincide for these.  (In experiments with
this full dataset, we saw little difference between the two estimators.)

As our theoretical results highlight the importance of classifier
difficulty, we therefore balance the dataset by considering subsets
of harder images as follows. For each fixed target $c$ (e.g., \texttt{cat})
and for image $i$, let $\what{p}_i$ be the empirical
probability of the target among the 50 annotator labels.
Then for $p \in [\frac{1}{2}, 1]$, define the subsets
\begin{align*}
  \mathcal{S}_p = \left\{i \in [n]
  : \max\left\{\what{p}_i, 1 - \what{p}_i\right\} \leq p\right\},
\end{align*}
so that $p = \frac{1}{2}$ corresponds to images with substantial confusion,
and $p = 1$ to all images (most of which are easy).  We test on
$\mathcal{S}_{0.9}$ (labelers have at most 90\% agreement), which consists
of with $441$ images. For image $i$, we again generate features $x_i \in
\R^d$ by by taking the last layer of a pretrained ResNet50 neural network
$\tilde{x}_i \in \R^{d_{\textup{init}}}$, using PCA to reduce
to a $d = 40$-dimensional feature.
We follow the same procedure as in Sec.~\ref{sec:BB}, subsampling
$m = 1, 2, \ldots, 45$ labelers and using 10-fold cross validation
to evaluate classification and calibration error.
We report the results
in~\Cref{fig:cifar}. Again we see that---as the number of labelers
increases---both aggregated and non-aggregated methods evidence improved
classification error, but the majority vote procedure (cleaned data) yields
less improvement than one with access to all (uncertain) labels.  These
results are again consistent with our theoretical predictions.


\subsection{Crowdsourcing methods on semisynthetic CIFAR-10 labels} \label{sec:semi-synthetic}

In our final set of experiments, we adapt the original
CIFAR-10~\cite{KrizhevskyHi09} dataset, which consists of 6000 $32 \times
32$ images from each of $k = 10$ classes (60,000 total images).  To mimic
collecting and cleaning data with noisy labelers---rather than the
single-label ``gold standard'' in the base CIFAR-10 data---we construct
pseudo-labelers using a ResNet18 network whose accuracy we can directly
adjust. This allows us to compare the maximum-likelihood estimator
$\what{\theta}^{\textup{mle}}_{n,m}$, the majority vote estimator $\mve$,
and, to test if the predictions of our stylized models still hold
for more advanced aggregation strategies, we include
Dawid-Skene~\citep{DawidSk79} and GLAD~\citep{WhitehillWuBeMoRu09}
crowdsouring estimators for the labels.


\newcommand{\softmax}{\textup{softmax}}

To generate the semisynthetic labelers and labels, we use a pretrained
ResNet18 model~\cite{HeZhReSu16}, an eighteen layer residual network, $f:
\mc{X} \to \R^k$. For $s \in \R^k$, define the softmax mapping $\softmax(s)
= [e^{s_y} / \sum_{l = 1}^k e^{s_l}]_{y = 1}^k$.  Then for an input $x \in
\mc{X}$, the model $f$ outputs a score $f(x) \in \R^k$, where
$\softmax(f(x))$ indicates the probabilities $\P(Y = y \mid X = x)$.
The model $f$ has the form
\begin{equation*}
  f(x) = {\Theta\opt}^\top \phi(x),
\end{equation*}
where $\Theta\opt = [\theta\opt_1 ~ \cdots ~ \theta\opt_k] \in \R^{d \times
  k}$ is a matrix of per-class weights, and $\phi : \mc{X} \to \R^d$
represents the second-to-last layer outputs of the neural network.  As we
fit linear models to the data, we therefore take these outputs as our data,
so that for the $i$th image $x_i$, we have $X_i = \phi(x_i)$, and
$\Theta\opt$ is the ground-truth parameter.  We construct semi-synthetic
labels $Y_{ij} \in \{1, \ldots, k\}$, $j = 1, \ldots, m$ of varying
accuracy/labeler expertise as follows. Letting $\alpha > 0$
be a chosen constant we vary to model labeler expertise, we draw
\begin{equation*}
  \P(Y_{ij} = y \mid X_i = x)
  = \softmax(\alpha f(x))_y
  = \frac{\exp(\alpha \<\theta\opt_y, \phi(x)\>)}{
    \sum_{l = 1}^k \exp(\alpha \<\theta\opt_l, \phi(x)\>)}.
\end{equation*}
We vary $\alpha$ in experiments to adjust that the median value
of $p_i \defeq \max_{y \in [m]} \softmax(\alpha f(x_i))_y$, where
$p_i \in (1/k, 1)$. In the ``expert labelers'' case, we take
$\alpha \to \infty$ so that $\mbox{median}(\{p_i\}) \to 1$,
while the ``crowd labelers'' case corresponds to
$\alpha \downarrow 0$ and $\mbox{median}(\{p_i\}) \to 1/k$.

Given a semisynthetic dataset $\{(X_i, (Y_{i1}, \dots, Y_{im}))\}_{i=1}^n$,
$\what{\theta} \in \R^{d \times k}$ estimates $\Theta\opt$. We fit $\mve$
and $\what{\theta}^{\textup{mle}}_{n,m}$ using the multiclass logistic
loss. We also investigate two standard crowdsourcing approaches for
aggregating labels---the Dawid-Skene and GLAD methods~\citep{DawidSk79,
  WhitehillWuBeMoRu09}---which also produce soft labels. Both methods
parameterize the ability of labelers, and GLAD additionally parameterizes
task hardness, using the Expectation-Maximization algorithm to estimate the
latent parameters.
Both methods output estimated (hard) labels $\what{Y}_i$ as well as
soft-labels $\what{p}_i \in \R_+^k$ for each example $i$,
where $\what{p}_{iy}$ indicates the crowdsourcing model's estimated
probability that example $i$ is of class $y$.
Given these imputed labels, we can fit estimators
\begin{equation*}
  \what{\theta}
  = \argmin_{\theta_1, \ldots, \theta_k}
  \sum_{i = 1}^n \log\left(\sum_{l = 1}^k \exp(\<X_i, \theta_l - \theta_{\what{Y}_i})\right)
  ~~ \mbox{or} ~~
  \what{\theta}
  = \argmin_{\theta_1, \ldots, \theta_k}
  \sum_{i = 1}^n
  \sum_{y = 1}^k
  \what{p}_{iy} \log\left(\sum_{l = 1}^k \exp(\<X_i, \theta_l - \theta_y)\right),
\end{equation*}
the former the hard label estimator and the latter the soft. We let $\dse$
and $\glade$ denote the corresponding estimators using one-hot hard labels,
and $\dspe$ and $\gladpe$ denote those trained using the estimated
per-example label probabilities.

\begin{figure}[ht!]
  \begin{center}
    \begin{tabular}{cc}
      \hspace{-2cm}
      \begin{overpic}[width=.65\columnwidth]{
	  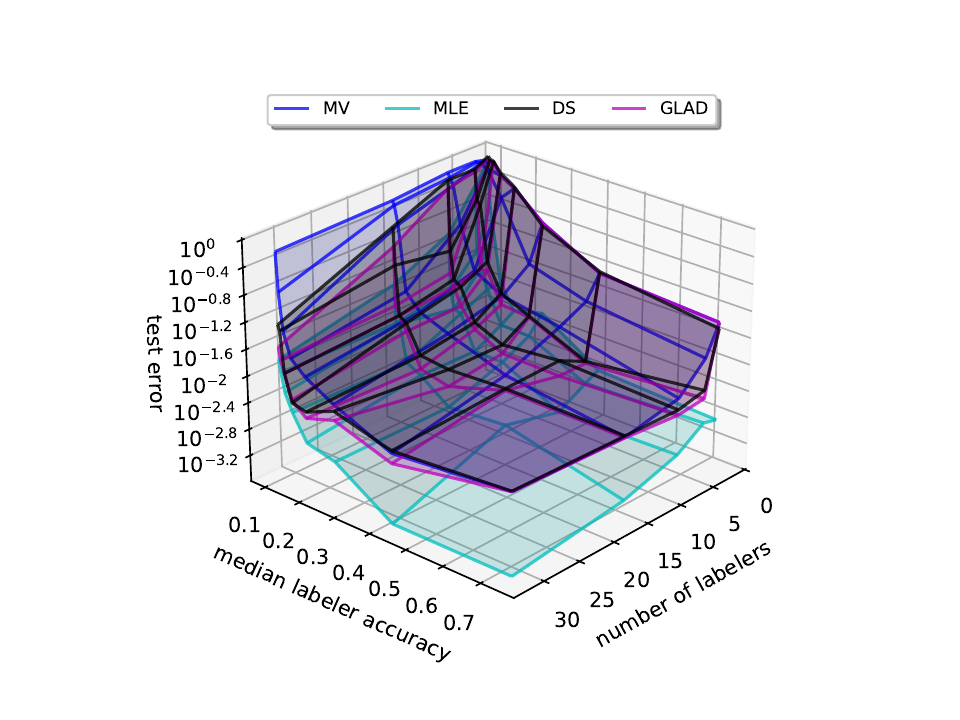}
      \end{overpic} &
      \hspace{-2.5cm}
      \begin{overpic}[width=.65\columnwidth]{
	  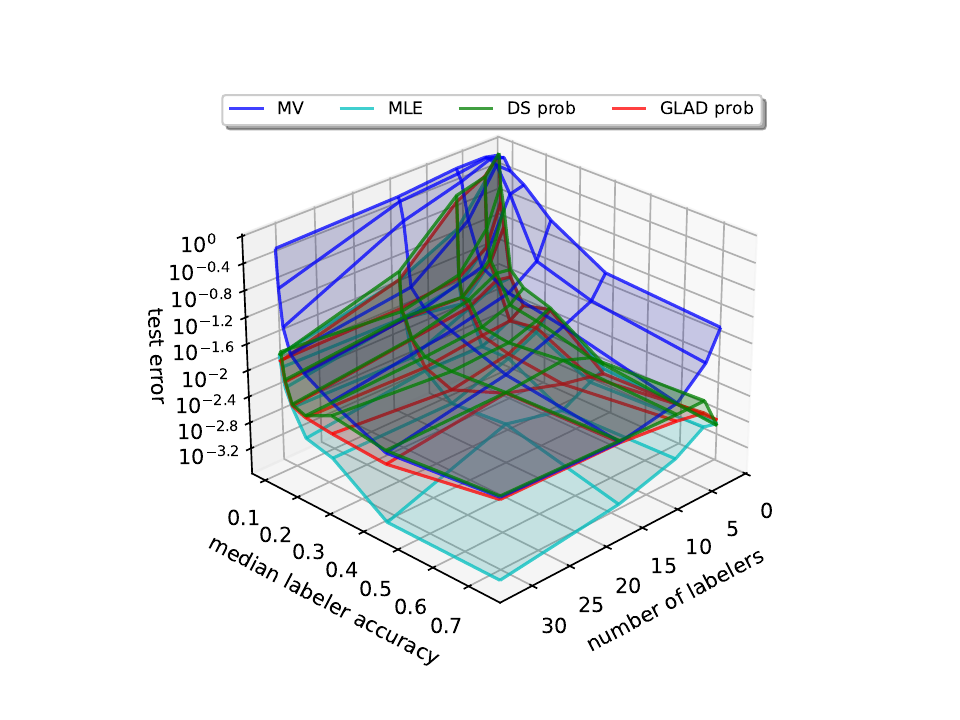}
      \end{overpic} \\
      \hspace{-2cm} (a) &
      \hspace{-2.5cm} (b) \\
      \hspace{-2cm}
      \begin{overpic}[width=.5\columnwidth]{
	  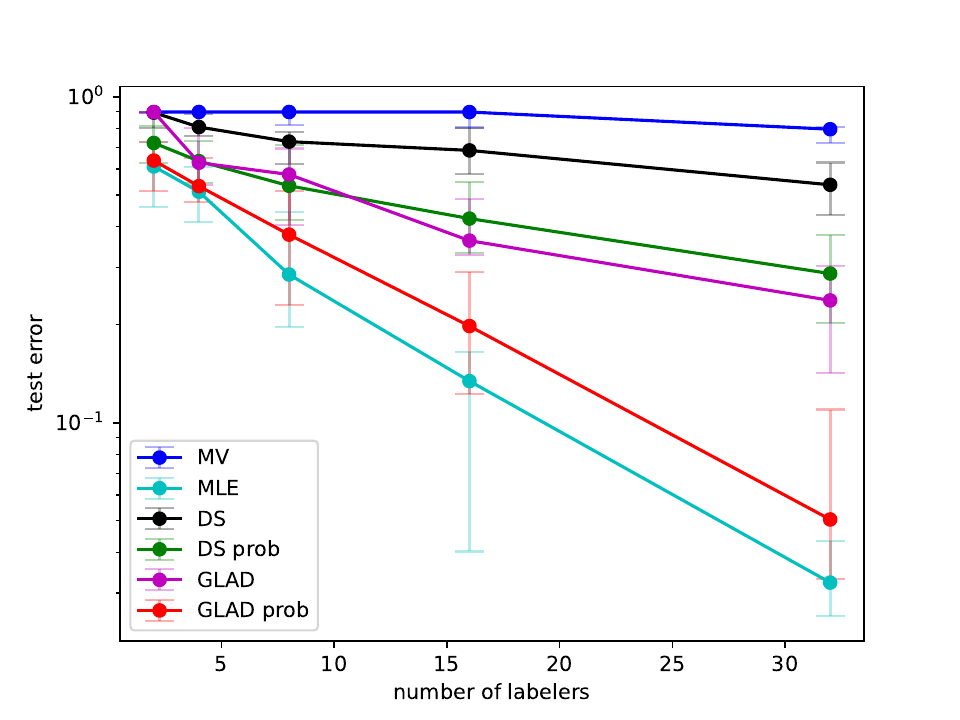}
      \end{overpic} &
      \hspace{-2.5cm}
      \begin{overpic}[width=.5\columnwidth]{
	  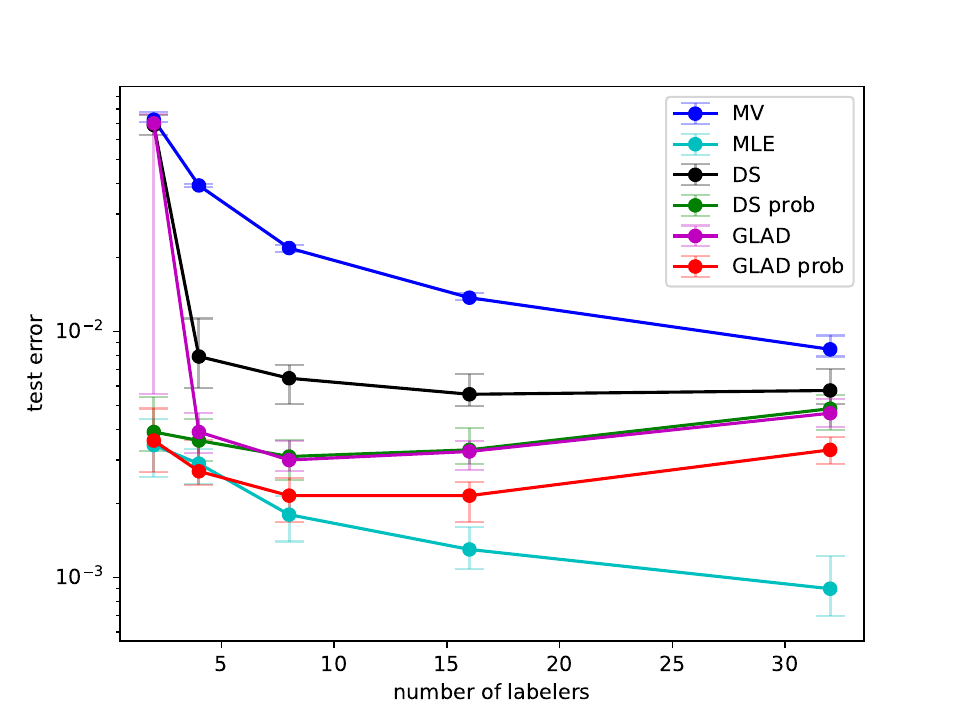}
      \end{overpic} \\
      \hspace{-2cm} (c) & \hspace{-2.5cm} (d)
    \end{tabular}
    \caption{\label{fig:semisynthetic} Experiments on the semisynthetic
      CIFAR-10 dataset. The median labeler accuracy represents the
      median $p$ of $p_i = \max_y \softmax(\alpha f(x_i))_y$
      the training data. Results are
      averaged over $20$ trials, and vertical axes
      give mis-classification rate from (synthetic) ground-truth labels
      on held-out test set. Legend keys correspond to
      maximum likelihood $\what{\theta}^{\textup{mle}}_{n,m}$ (MLE),
      majority vote $\mve$ (MV), hard-labeled Dawid-Skene (DS) and
      GLAD crowdsourced estimators $\dse$ and $\glade$,
      and soft-labaled Dawid-Skene (DS prob) and GLAD (GLAD prob)
      estimators $\dspe$ and $\gladpe$.
      (a) Comparison of methods using hard labels.
      (b) Comparison with crowdsourced estimated soft-labels.
      (c) Number of labelers $m$ versus test error for
      fixed median accuracy $p = .105$ (noisy labelers). (d)
      Number of labelers $m$ versus test error for
      fixed median accuracy $p = .4$. Both (c) and (d) report
      95\% error bars over the trials.}
  \end{center}
\end{figure}

We report the results in Fig.~\ref{fig:semisynthetic}. As we see, having the
ground truth model well-specified, using the full information of soft
labels, the MLE $\what{\theta}^{\textup{mle}}_{n,m}$ outperforms not only
all aggregation methods, but also black-box crowdsourcing models that
generate soft labels. The more advanced aggregation methods $\dse$ and
$\glade$ yield smaller test error than $\mve$, and using the soft labels,
$\dspe$ and $\gladpe$ further reduce the error, suggesting the benefits
of using soft labels from crowdsourcing methods for training even if the
underlying model is unknown.

\section{Discussion}

In spite of the technical detail we require to prove our results,
we view this work as almost preliminary and hope that it inspires further
work on the full pipeline of statistical machine learning,
from dataset creation to model release. Many questions remain
both on the theoretical and applied sides of the work.

On the theoretical side, our main focus has been on a stylized model of
label aggregation, with majority vote mostly---with the exception of the
crowdsourcing model in Sec.~\ref{sec:semi-param}--functioning as the
stand-in for more sophisticated aggregation strategies. It seems challenging
to show that \emph{no} aggregation strategy can work as well as multi-label
strategies; it would be interesting to more precisely delineate the benefits
and drawbacks of more sophisticated denoising and whether it is useful.  We
focus throughout on low-dimensional asymptotics, using asymptotic normality
to compare estimators. While these insights are valuable, and make
predictions consistent with the experimental work we provide, investigating
how things may change with high-dimensional scaling or via non-asymptotic
results might yield new insights both for theory and methodology. As one
example, with high-dimensional scaling, classification datasets often become
separable~\cite{CandesSu19pnas, CandesSu19}, which is consistent with modern
applied machine learning~\cite{ZhangBeHaReVi21} but makes the asymptotics we
derive impossible. One option
is to investigate the asymptotics of maximum-margin estimators,
which coincide with the limits of ridge-regularized logistic
regression~\cite{RossetZhHa04, SoudryHoNaGuSr18}.

On the methodological and applied side, given the extent to which the
test/challenge dataset methodology drives progress in machine
learning~\cite{Donoho17}, it seems that developing newer datasets to
incorporate labeler uncertainty could yield substantial benefits.  A
particular refrain is that modern deep learning methods are overconfident in
their predictions~\cite{GoodfellowViSa15, ZhangBeHaReVi21}; perhaps by
calibrating them to \emph{labeler} uncertainty we could substantially
improve their robustness and performance. Additionally, recent large-scale
models now frequently use only distantly supervised
labels~\cite{RadfordKiHaRaGoAgSaAsMiClKrSu21,
  SchuhmannBeVeGoWiChCoKaMuWoScKuCrScKaJi22, GadreEtAl23}, making
the study appropriate data cleaning and collection more timely.
We look forward to deeper
investigations of the intricacies and intellectual foundations of the full
practice of statistical machine learning.

\subsection*{Support}

This research was partially supported by the Office of Naval Research under
awards N00014-19-2288 and N00014-22-1-2669, the Stanford DAWN Consortium,
and the National Science Foundation grants IIS-2006777 and HDR-1934578.

\bibliography{bib}
\bibliographystyle{abbrvnat}
\newpage

\appendix
\section{Technical lemmas} \label{sec:technical}

We collect several technical lemmas and their proofs in this section, which
will be helpful in the main proofs. See
Appendices~\ref{proof:Sigma-inverse}, \ref{proof:t-Z-integral-noise},
\ref{proof:m-rho-integral-noise-general} and
\ref{proof:rademacher-symmetrization} for their proofs.

\begin{lemma} \label{lem:Sigma-inverse}
  Suppose $A, B \in \mathbb{R}^{d \times d}$ are symmetric, $AB = BA = 0$
  and the matrix $A+B$ is invertible. Then
  \begin{align*}
    \left(A +B\right)^{-1} = A^\dagger + B^\dagger.
  \end{align*}
\end{lemma}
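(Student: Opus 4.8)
The plan is to verify directly that $A^\dagger + B^\dagger$ is a left inverse of $A + B$, which (given that $A+B$ is invertible) is all that is needed. First I would record the structural consequences of the hypotheses. Since $A$ is symmetric, its Moore--Penrose pseudoinverse $A^\dagger$ is symmetric as well, satisfies $A^\dagger A = A A^\dagger = \proj_A$ (the orthogonal projector onto $\mathrm{range}(A)$), and has $\ker(A^\dagger) = \ker(A^\top) = \ker(A)$; likewise for $B$. The identity $AB = 0$ forces $\mathrm{range}(B) \subseteq \ker(A)$, and symmetrically $BA = 0$ forces $\mathrm{range}(A) \subseteq \ker(B)$, so in particular $\mathrm{range}(A) \perp \mathrm{range}(B)$.

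Next I would show $\proj_A + \proj_B = I$. The orthogonality of the two ranges makes $\proj_A + \proj_B$ the orthogonal projector onto the (direct, because orthogonal) sum $\mathrm{range}(A) \oplus \mathrm{range}(B)$. Since $\mathrm{range}(A+B) \subseteq \mathrm{range}(A) + \mathrm{range}(B)$ always holds, and $A+B$ is invertible so $\mathrm{range}(A+B) = \R^d$, this subspace must be all of $\R^d$, whence $\proj_A + \proj_B = I$. This is the one place the invertibility hypothesis is used, and it is essentially the only step requiring a moment's care.

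Then the computation closes immediately: each column of $B$ lies in $\mathrm{range}(B) \subseteq \ker(A) = \ker(A^\dagger)$, so $A^\dagger B = 0$, and symmetrically $B^\dagger A = 0$. Hence
\[
 (A^\dagger + B^\dagger)(A+B) = A^\dagger A + A^\dagger B + B^\dagger A + B^\dagger B = \proj_A + \proj_B = I,
\]
and invertibility of $A+B$ gives $(A+B)^{-1} = A^\dagger + B^\dagger$. (An equivalent route, if one prefers: pick orthonormal eigenbases for $\mathrm{range}(A)$ and $\mathrm{range}(B)$; orthogonality of the ranges together with invertibility of $A+B$ shows their union is an orthonormal basis of $\R^d$ in which $A+B$, $A^\dagger$, and $B^\dagger$ are simultaneously diagonal, and the identity is read off coordinatewise.) I do not anticipate a genuine obstacle here; the proof is elementary once the kernel/range relations and the identity $\proj_A + \proj_B = I$ are in place.
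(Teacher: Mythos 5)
Your proof is correct, and it takes a genuinely different route from the paper's. The paper's argument observes that $AB=BA$ makes the two symmetric matrices simultaneously orthogonally diagonalizable, then uses $AB=0$ and the invertibility of $A+B$ to put $A$ and $B$ in block-diagonal form $U\,\mathrm{diag}(\Lambda_1,0)\,U^\top$ and $U\,\mathrm{diag}(0,\Lambda_2)\,U^\top$ with $\Lambda_1,\Lambda_2$ invertible, after which the identity is read off blockwise --- essentially the parenthetical ``equivalent route'' you mention at the end. Your main argument instead stays coordinate-free: it uses only the pseudoinverse identities $A^\dagger A = \proj_{\mathrm{range}(A)}$ and $\ker(A^\dagger)=\ker(A)$ for symmetric $A$, derives $\mathrm{range}(B)\subseteq\ker(A)$ and hence orthogonality of the two ranges from $AB=BA=0$, and isolates the single place invertibility enters (forcing $\proj_{\mathrm{range}(A)}+\proj_{\mathrm{range}(B)}=I$) before closing with $A^\dagger B = B^\dagger A = 0$. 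What your version buys is that it avoids invoking the simultaneous-diagonalization theorem and makes the role of each hypothesis explicit; what the paper's version buys is brevity and a concrete basis in which all three matrices are visibly diagonal. Both are complete and correct.
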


The next two lemmas characterize asymptotic behaviors of expectations
involving a fixed function $f$ and some random variable $Z$ that satisfies
Assumption~\ref{assumption:z-density} for given $\beta>0$ and $c_Z <
\infty$. To facilitate stating the theorems, we recall
that such $Z$ are $(\beta,
c_Z)$-regular.
\begin{lemma} \label{lem:t-Z-integral-noise}
  Let $\beta > 0$ and $f$ be a function on $\R_+$ such that
  $z^{\beta-1}f(z)$ is integrable. If $Z$ is $(\beta,
  c_Z)$-regular (Assumption~\ref{assumption:z-density}), then
  \begin{align*}
    \lim_{t \to \infty} t^\beta \Ep [f(t|Z|)] =  c_Z \int_{0}^{\infty} z^{\beta - 1} f(z) dz.
  \end{align*}
\end{lemma}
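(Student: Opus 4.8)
The plan is to reduce the statement to a single application of the dominated convergence theorem after an elementary rescaling. First I would write the expectation as an integral against the law of $|Z|$: since $|Z|$ has density $p$ on $(0,\infty)$ and no point mass at $0$,
\[
  t^\beta \Ep[f(t|Z|)] = t^\beta \int_0^\infty f(tz)\, p(z)\, dz
  = \int_0^\infty f(u)\, t^{\beta - 1} p(u/t)\, du,
\]
where the second equality is the change of variables $u = tz$, $dz = du/t$ (a diffeomorphism of $(0,\infty)$, so this is legitimate).

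Next I would identify the pointwise limit of the integrand. For each fixed $u > 0$ we have $u/t \to 0$ as $t \to \infty$, so the regularity hypothesis $\lim_{z \to 0} z^{1-\beta} p(z) = c_Z$ gives
\[
  t^{\beta-1} p(u/t) = u^{\beta-1} \cdot (u/t)^{1-\beta} p(u/t) \longrightarrow c_Z\, u^{\beta - 1}
\]
as $t \to \infty$. Hence the integrand $f(u)\, t^{\beta-1} p(u/t)$ converges pointwise on $(0,\infty)$ to $c_Z\, u^{\beta-1} f(u)$, whose integral over $(0,\infty)$ is exactly the asserted right-hand side, so it only remains to justify passing the limit inside the integral.

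That is the one place needing care, and it is handled by the \emph{other} half of Assumption~\ref{assumption:z-density}: with $M := \sup_{z > 0} z^{1-\beta} p(z) < \infty$ we have $p(z) \le M z^{\beta-1}$ for all $z > 0$, and substituting $z = u/t$ yields $t^{\beta-1} p(u/t) \le M u^{\beta-1}$ uniformly in $t > 0$. Therefore $\bigl| f(u)\, t^{\beta-1} p(u/t) \bigr| \le M\, |u^{\beta-1} f(u)|$, which is integrable by the hypothesis that $z^{\beta-1} f(z)$ is integrable. Dominated convergence then gives $\lim_{t\to\infty} t^\beta \Ep[f(t|Z|)] = c_Z \int_0^\infty z^{\beta-1} f(z)\, dz$. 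I do not expect a genuine obstacle; the only subtlety worth flagging is that the single bound $p(z) \le M z^{\beta-1}$ simultaneously controls the integrand both near $u = 0$ (where $u^{\beta-1}$ can blow up when $\beta < 1$) and near $u = \infty$, so one dominating function suffices and no splitting of the domain or separate small-$t$ argument is required.
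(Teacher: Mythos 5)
Your proof is correct and follows essentially the same route as the paper's: the change of variables $u = tz$, the pointwise limit from $\lim_{z\to 0} z^{1-\beta}p(z) = c_Z$, and domination via $t^{\beta-1}p(u/t) \le \bigl(\sup_{z>0} z^{1-\beta}p(z)\bigr)\, u^{\beta-1}$ followed by dominated convergence. No gaps.
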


\begin{lemma} \label{lem:m-rho-integral-noise-general}
  Let $\beta > 0$ and $c_Z < \infty$, and $Z$ be $(\beta, c_Z)$-regular, let
  $f : \R_+ \to \R$ satisfy $|f(z)| \leq a_0 + a_p z^p$ for some $a_0, a_p <
  \infty$ and all $z \in \R$, and assume $|Z|$ has finite $p$th
  moment. Additionally let $\rho_m(t)$ be the majority vote prediction
  function~\eqref{eq:link-function-m-majority-vote-general} and
  Assumption~\ref{assumption:condition-misspcfd-link-majority-vote} hold for
  each $\sigma_j\opt$ with limiting average derivative
  ${\wb{\sigma}\opt}'(0)$ at zero. Then for any $c > 0$
  \begin{align*}
    \lim_{m \to \infty} m^{\frac{\beta}{2}}\Ep \left[f(\sqrt{m}|Z|) (1 - \rho_m(cZ))\right] = c_Z \int_0^\infty z^{\beta - 1} f(z) \Phi \left(-2 {\wb{\sigma}\opt}'(0) cz\right) dz,
  \end{align*}
  where $\Phi(z) = \int_{-\infty}^z \frac{1}{\sqrt{2 \pi}}
  e^{-\frac{t^2}{2}}dt$ is the standard normal cumulative distribution function.
\end{lemma}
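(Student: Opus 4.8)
The plan is a change-of-variables argument that zooms into the regime $|Z| \asymp m^{-1/2}$, where the majority-vote error probability $1-\rho_m(cZ)$ has a Gaussian limit, followed by a dominated-convergence passage to the limit; the polynomial growth of $f$ together with a Hoeffding bound on the Poisson--binomial tail controls the rest. Throughout I use that $\rho_m(t)$ depends on $t$ only through $|t|$: conditionally on $\<X,\theta\opt\> = t$, labeler $j$ reports the correct class with probability $\sigma_j\opt(|t|)$, so the number $S_m$ of correct votes is Poisson--binomial with parameters $\sigma_1\opt(|t|), \dots, \sigma_m\opt(|t|)$, and up to negligible tie-breaking terms (of order $\binom{m}{m/2}2^{-m}$) we have $1-\rho_m(t) = \P(S_m \le \lfloor m/2 \rfloor)$.

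First I would fix $\delta \in (0, 1/c]$ and split $\E[f(\sqrt m |Z|)(1-\rho_m(cZ))]$ at $|Z| = \delta$. On $\{|Z| > \delta\}$, Hoeffding's inequality and monotonicity of $\wb\sigma_m\opt$ give $1-\rho_m(cZ) \le \exp(-2m(\wb\sigma_m\opt(c\delta) - \tfrac12)^2)$, and Assumption~\ref{assumption:condition-misspcfd-link-majority-vote}(ii) bounds $\wb\sigma_m\opt(c\delta) - \tfrac12 \ge \kappa\min\{c\delta,1\}$ for all large $m$; since $|f(\sqrt m|Z|)| \le a_0 + a_p m^{p/2}|Z|^p$ and $|Z|$ has finite $p$th moment, the contribution of this event, even after multiplying by $m^{\beta/2}$, is $O(m^{(\beta+p)/2}e^{-2m\kappa^2\min\{c\delta,1\}^2}) \to 0$.

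On $\{|Z| \le \delta\}$, substituting $w = \sqrt m z$ rewrites $m^{\beta/2}\E[\,\cdot\,\ind\{|Z|\le\delta\}]$ as $\int_0^\infty f(w)\,(1-\rho_m(cw/\sqrt m))\, m^{(\beta-1)/2}p(w/\sqrt m)\, \ind\{w \le \delta\sqrt m\}\, dw$. For fixed $w$, Assumption~\ref{assumption:z-density} gives $m^{(\beta-1)/2}p(w/\sqrt m) = w^{\beta-1}\cdot(w/\sqrt m)^{1-\beta}p(w/\sqrt m) \to c_Z w^{\beta-1}$. For the middle factor set $t_m = cw/\sqrt m$: Assumption~\ref{assumption:condition-misspcfd-link-majority-vote}(i) gives $\E[S_m] - \tfrac m2 = m(\wb\sigma_m\opt(t_m) - \tfrac12) = \sqrt m\,{\wb\sigma\opt}'(0)cw\,(1 + o(1))$, while Assumption~\ref{assumption:condition-misspcfd-link-majority-vote}(iii) forces $\sup_j|\sigma_j\opt(t_m)-\tfrac12| \to 0$, so $\mathrm{Var}(S_m) = \sum_j \sigma_j\opt(t_m)(1-\sigma_j\opt(t_m)) = \tfrac m4(1+o(1))$. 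The bounded independent indicator array satisfies the Lindeberg condition trivially (its total variance diverges), so $(S_m - \E S_m)/\sqrt{\mathrm{Var}(S_m)} \cd \normal(0,1)$; the standardized threshold $(\lfloor m/2\rfloor - \E S_m)/\sqrt{\mathrm{Var}(S_m)} \to -2{\wb\sigma\opt}'(0)cw$, so by Polya's theorem $1-\rho_m(cw/\sqrt m) \to \Phi(-2{\wb\sigma\opt}'(0)cw)$. For a dominating function, $m^{(\beta-1)/2}p(w/\sqrt m) \le w^{\beta-1}\sup_{z>0}z^{1-\beta}p(z)$, and for $w \le \delta\sqrt m$ (so $cw/\sqrt m \le 1$), Hoeffding with Assumption~\ref{assumption:condition-misspcfd-link-majority-vote}(ii) gives $1-\rho_m(cw/\sqrt m) \le \exp(-2\kappa^2 c^2 w^2)$; hence the integrand is bounded by $C w^{\beta-1}(a_0 + a_p w^p)\exp(-2\kappa^2 c^2 w^2)$, which is integrable on $(0,\infty)$. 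Dominated convergence then delivers $c_Z\int_0^\infty w^{\beta-1}f(w)\Phi(-2{\wb\sigma\opt}'(0)cw)\,dw$, and combining with the negligible tail of the previous step completes the proof.

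The main obstacle is the local Gaussian approximation of $1-\rho_m(cw/\sqrt m)$: this needs a central limit theorem for a non-identically-distributed (Poisson--binomial) triangular array whose success probabilities all drift to $1/2$ at rate $m^{-1/2}$, with the mean offset and variance tracked precisely enough to produce exactly the constant $2{\wb\sigma\opt}'(0)cw$, and with the $m$-dependent, only-asymptotically-constant threshold $\lfloor m/2\rfloor$ handled carefully (e.g.\ via uniform convergence of the distribution functions). Assumption~\ref{assumption:condition-misspcfd-link-majority-vote}(i)--(iii) are tailored to supply, respectively, this mean offset, the Hoeffding-type lower bound used both for the tail truncation and for the dominating function, and the uniform vanishing needed for the variance computation; Assumption~\ref{assumption:z-density} supplies the matching local behavior and the uniform bound on $z^{1-\beta}p(z)$ that make the dominated-convergence step go through.
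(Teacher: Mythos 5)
Your proof is correct, and it reaches the same limit by a genuinely different organization of the argument. The paper splits at $|Z| = M/\sqrt{m}$, applies the Berry--Esseen theorem on the inner region to replace $1-\rho_m$ by the Gaussian cdf with an explicit $O(1/\sqrt m)$ error (which it must then separately show is negligible), handles the outer region with two Hoeffding bounds (one for $M/\sqrt m < Z \le 1$, one for $Z > 1$), and only recovers the full integral by sending $M \to \infty$ at the very end. You instead split at a fixed $\delta$, kill the outer piece outright with Hoeffding plus Assumption~\ref{assumption:condition-misspcfd-link-majority-vote}(ii), and on the inner piece run a single dominated-convergence pass over all of $(0,\infty)$ in the rescaled variable: pointwise convergence of $1-\rho_m(cw/\sqrt m)$ comes from a soft Lindeberg CLT for the Poisson--binomial triangular array (with Polya's theorem absorbing the drifting threshold $\lfloor m/2\rfloor$), and the dominating function $Cw^{\beta-1}(a_0+a_pw^p)e^{-2\kappa^2c^2w^2}$ comes from the same Hoeffding bound valid uniformly for $w \le \delta\sqrt m$. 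This buys you a cleaner proof with no quantitative normal approximation and no final $M\to\infty$ limit, at the (modest) cost of invoking the triangular-array CLT rather than an elementary Berry--Esseen estimate; your mean-offset and variance computations match the paper's and produce the same constant $2{\wb{\sigma}\opt}'(0)cw$. Two small cosmetic points: the links in $\flink$ are not assumed monotone, so on $\{|Z|>\delta\}$ you should appeal to Assumption~\ref{assumption:condition-misspcfd-link-majority-vote}(ii) directly at each $|t| \ge c\delta$ rather than via ``monotonicity of $\wb{\sigma}_m\opt$'' (the conclusion is unchanged); and the even-$m$ tie-breaking term is $\tfrac12\P(S_m = m/2) = O(1/\sqrt m)$ pointwise and is covered by the same Hoeffding dominating function, so your treatment of it as negligible is fine but deserves the one-line justification you gesture at.
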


The fourth lemma is a uniform convergence result for the empirical risk in
the case that we have potentially distinct link functions
(cf.~Sec.~\ref{sec:master-results-sem}).
\begin{lemma}
  \label{lemma:rademacher-symmetrization}
  Assume $\E[\ltwo{X}^{\gamma}] \le M^\gamma$ for some $M \ge 1$ and $\gamma
  \ge 2$ and let the radius $1 \le r < \infty$. Let $\Flink \subset \{\sigma
  : \R \to [0, 1], \lipnorm{\sigma} \le \lipconst\}$.  Then for a constant
  $C \lesssim \sqrt{d \lipconst}$, we have
  \begin{equation*}
    \E\left[\sup_{\ltwo{\theta} \le r}
      \sup_{\vec{\sigma} \in \Flink^m} |P_n \loss_{\vec{\sigma},\theta} - L(\theta,
      \vec{\sigma})|\right]
    \le C \cdot
    (M r)^\frac{4\gamma}{3\gamma + 1} 
    \left(\frac{m}{n}\right)^\frac{\gamma}{3\gamma + 1}
    \sqrt{\log(rn)}.
  \end{equation*}
\end{lemma}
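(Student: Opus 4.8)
The plan is a truncated symmetrization argument; the one step that genuinely uses the structure of the problem is a covering-number bound for the $m$-fold class of link losses exploiting the integral form $\loss_{\sigma,\theta}(y\mid x)=-\int_0^{y\<\theta,x\>}\sigma(-v)\,dv$. First I would apply the standard symmetrization inequality to the $n$ i.i.d.\ observations $(X_i,Y_i)$ (with $Y_i=(Y_{i1},\dots,Y_{im})$), reducing the quantity of interest to (twice) the empirical Rademacher complexity $\E[\sup_{\ltwo\theta\le r,\,\vec\sigma\in\Flink^m}|\frac1n\sum_i\varepsilon_i\loss_{\vec\sigma,\theta}(Y_i\mid X_i)|]$. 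Since every $\sigma_j$ is $[0,1]$-valued, $|\loss_{\vec\sigma,\theta}(y\mid x)|=|\frac1m\sum_j\int_0^{y_j\<\theta,x\>}\sigma_j(-v)\,dv|\le|\<\theta,x\>|\le r\ltwo x$, so the class has envelope $r\ltwo x$, which has only $\gamma$ moments ($\E[(r\ltwo X)^\gamma]\le(rM)^\gamma$); this is what prevents a clean $1/\sqrt n$ rate. To handle the heavy envelope I would split each $f=\loss_{\vec\sigma,\theta}$ as $f\ind\{\ltwo X\le T\}+f\ind\{\ltwo X>T\}$ at a level $T$ to be optimized: by Markov (using $\ltwo X\ind\{\ltwo X>T\}\le\ltwo X^\gamma T^{1-\gamma}$, valid as $\gamma\ge2$) the two tail pieces contribute at most $2rM^\gamma T^{1-\gamma}$, while on $\{\ltwo X\le T\}$ the truncated class $\mathcal F_T$ is uniformly bounded by $rT$.

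The crux is the sup-norm metric entropy of $\mathcal F_T$, which is indexed by $(\theta,\vec\sigma)\in B_r\times\Flink^m$ ($B_r$ the Euclidean ball of radius $r$). Two Lipschitz estimates control it. First, $s\mapsto\int_0^s\sigma(-v)\,dv$ is $1$-Lipschitz because $\sigma\le1$, so perturbing $\theta$ moves each summand $\loss_{\sigma_j,\theta}(y\mid x)$ by at most $|\<\theta-\theta',x\>|\le T\ltwo{\theta-\theta'}$, contributing a Euclidean-ball term $d\log(rT/\eta)$ to $\log N$. Second, for fixed $\theta$ and $\ltwo x\le T$, $|\loss_{\sigma,\theta}(y\mid x)-\loss_{\sigma',\theta}(y\mid x)|\le\int_0^{|\<\theta,x\>|}|\sigma-\sigma'|\le rT\,\norm{\sigma-\sigma'}_{L^\infty[-rT,rT]}$; since $\lipconst$-Lipschitz, $[0,1]$-valued functions on an interval of length $2rT$ have sup-norm $\delta$-entropy $\lesssim\lipconst rT/\delta$, and the $m$ labelers enter as a product, this contributes $\lesssim m\lipconst(rT)^2/\eta$. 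Altogether $\log N(\eta,\mathcal F_T,\norm{\cdot}_\infty)\lesssim m\lipconst(rT)^2/\eta+d\log(rT/\eta)$. The essential point is that using the one-dimensional projection structure keeps this polynomial in $d$; treating $x\mapsto\loss_{\sigma,\theta}(y\mid x)$ as a generic Lipschitz function on $\R^d$ would be exponentially worse.

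Finally I would feed this entropy into a one-scale (Dudley-type) bound for the Rademacher complexity of a class bounded by $rT$—take an $\eta_0$-net, control the maximum over the net by a finite-class sub-Gaussian maximal inequality and the residual by $\eta_0$, then optimize $\eta_0$—where the $1/\eta$ growth of the entropy forces a cube-root term, roughly $\E[\mathrm{Rad}(\mathcal F_T)]\lesssim(rT)^{4/3}(m\lipconst)^{1/3}n^{-1/3}+rT\sqrt{d\log(rTn)}\,n^{-1/2}$. Adding the truncation contribution $\asymp rM^\gamma T^{1-\gamma}$ and optimizing over $T$, the binding balance is between the $T^{4/3}$ complexity term and the $T^{1-\gamma}$ tail, and since $\tfrac43-(1-\gamma)=\gamma+\tfrac13$, the exponent $3\gamma+1=3(\gamma+\tfrac13)$ appears in the denominators; tracking the remaining factors of $M,r,m,n,d,\lipconst$ through the substitution, and checking that the $n^{-1/2}$-rate residual is lower order in this regime, recovers the stated bound $C(Mr)^{4\gamma/(3\gamma+1)}(m/n)^{\gamma/(3\gamma+1)}\sqrt{\log(rn)}$ with $C\lesssim\sqrt{d\lipconst}$. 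The delicate parts are exactly the entropy bound for the $m$-fold composed class—polynomial in $d$, explicit in $T,m,\lipconst$—and the three-way trade-off among truncation level, discretization scale, and sample size that produces the exponent $3\gamma+1$; the remaining steps (symmetrization, Markov, the finite-class inequality) are routine.
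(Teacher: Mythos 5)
Your overall architecture is the same as the paper's: symmetrize, build a product cover (a Euclidean net for $\theta$ times sup-norm covers of the $m$ Lipschitz links, exploiting that the loss depends on $x$ only through the margin $\<\theta,x\>$), handle the heavy envelope $r\ltwo{X}$ by truncation, apply a single-scale finite-class maximal inequality over the net, and optimize the truncation level against the complexity term. The gap is in the final accounting: with the intermediate bounds you state, the optimization does \emph{not} recover the claimed exponent. Balancing your main term $(rT)^{4/3}(m\lipconst/n)^{1/3}$ against your tail term $rM^\gamma T^{1-\gamma}$ gives $T^{\gamma+1/3}\propto M^\gamma r^{-1/3}(n/m)^{1/3}$ and hence a bound of order $(Mr)^{4\gamma/(3\gamma+1)}\,(m/n)^{(\gamma-1)/(3\gamma+1)}$ up to the $\lipconst$ power: the $(Mr)$ exponent happens to match, but the $(m/n)$ exponent is $(\gamma-1)/(3\gamma+1)$ rather than $\gamma/(3\gamma+1)$, which is strictly weaker (e.g.\ $(m/n)^{1/7}$ versus $(m/n)^{2/7}$ at $\gamma=2$). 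So the closing claim that ``tracking the remaining factors through the substitution recovers the stated bound'' is not substantiated as written.

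The loss comes from carrying the truncation level into the \emph{scale} of the maximal inequality. You bound the truncated class uniformly by $B=rT$ and use $B\sqrt{\log N/n}$, so your main term grows like $T^{4/3}$; the paper instead keeps the random sub-Gaussian parameter $\frac{r^2}{n}\sum_i\ltwo{X_i}^2$ of the symmetrized sum over the net, whose expectation is at most $r^2M^2/n$ \emph{independently of the truncation level} (this is where $\gamma\ge 2$ enters), so its main term is $Mr(Rm/n)^{1/3}$, growing only like $R^{1/3}$ in the truncation level $R$. The paper also charges the link discretization more cheaply: it covers $\Flink$ at sup-norm scale $\epsilon$ on $[-R,R]$ and accounts the resulting per-sample error as $\epsilon(1+\ltwo{X_i})$ plus an indicator of $\{\ltwo{X_i}\ge R/r\}$, so the link entropy enters as $mR\lipconst/\epsilon$ rather than your $m\lipconst(rT)^2/\eta$. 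Both differences are needed for the three-way optimization to land on $\epsilon=(Rm/n)^{1/3}$ and $R=((Mr)^{3(\gamma-1)}n/m)^{1/(3\gamma+1)}$ and produce the stated rate. The rest of your plan --- the symmetrization, the Markov/truncation bound, the $d\log(1+2r/\epsilon)$ contribution of the $\theta$-net, and the check that the $n^{-1/2}$ residual is lower order --- is consistent with the paper's argument.
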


\subsection{Proof of Lemma~\ref{lem:Sigma-inverse}}
\label{proof:Sigma-inverse}

As $AB=BA=0$, the symmetric matrices $A$ and $B$ commute and so are
simultaneously orthogonally diagonalizable~\cite[Thm.~4.5.15]{HornJo85}. As
$AB=BA=0$, we can thus write
\begin{align*}
	A = U \begin{bmatrix} \Lambda_1 & 0 \\ 0 & 0 \end{bmatrix} U^\top  , \qquad B = U \begin{bmatrix} 0 & 0 \\ 0 & \Lambda_2 \end{bmatrix} U^\top  ,
\end{align*}
for some orthogonal $U \in \R^{d \times d}$, and as $A+B$ is invertible,  $\Lambda_1, \Lambda_2$ are invertible diagonal matrices. We conclude the proof by writing
\begin{align*}
	(A+ B)^{-1} = U \begin{bmatrix} \Lambda_1^{-1} & 0 \\ 0 & \Lambda_2^{-1} \end{bmatrix} U^\top = U \begin{bmatrix} \Lambda_1^{-1} & 0 \\ 0 & 0 \end{bmatrix} U^\top + U \begin{bmatrix} 0 & 0 \\ 0 & \Lambda_2^{-1} \end{bmatrix} U^\top = A^\dagger + B^\dagger .
\end{align*}

\subsection{Proof of Lemma~\ref{lem:t-Z-integral-noise}}  \label{proof:t-Z-integral-noise}
By the change of variables $w =t z$, we have
\begin{align*}
	t^\beta \Ep [f(t|Z|)] & = \int_{0}^{\infty} f(t z) \cdot t^{\beta - 1} p(z)  \cdot td z =  \int_{0}^{\infty} f(w) \cdot t^{\beta-1} p (w/t) d w \\
	& =  \int_{0}^{\infty} w^{\beta - 1} f(w) \cdot (w/t)^{1-\beta} p (w/t) d w.
\end{align*}
As $|  w^{\beta - 1} f(w) \cdot (w/t)^{1-\beta} p (w/t)| \leq \sup_{z \in (0, \infty)} z^{1-\beta} p(z) \cdot |w^{\beta - 1} f(w)|$, where $w^{\beta - 1} f(w)$ is integrable by assumption, we can invoke dominated convergence to see that
\begin{align*}
	\lim_{t \to \infty} t^\beta \Ep [f(t|Z|)] = \int_{0}^{\infty} w^{\beta - 1} f(w) \cdot \lim_{t \to \infty} (w/t)^{1-\beta} p (w/t) d w = c_Z \int_{0}^{\infty} w^{\beta - 1} f(w) dw.
\end{align*}

\subsection{Proof of Lemma~\ref{lem:m-rho-integral-noise-general}} \label{proof:m-rho-integral-noise-general}
By rescaling arguments, it suffices to prove the theorem for any function
$f$ satisfying
$|f(z)| \leq 1 + z^p$ on $\R_+$ for any $p \in \N$ such that $|Z|$
has finite $p$th moment.  For such $f$, we wish to
show
\begin{align*}
  \lim_{m \to \infty} m^{\frac{\beta}{2}} \Ep \left[f(\sqrt{m}|Z|) (1 - \rho_m(c Z))\right] = c_Z \int_0^\infty z^{\beta-1} f(z) \Phi \left(-2 {\wb{\sigma}\opt}'(0) c z\right) dz,
\end{align*}
where $\Phi$ is the standard normal cdf. The key insight is that we can
approximate $1 - \rho_m(t)$ by a suitable Gaussian cumulative distribution
function (recognizing that $\rho_m(t) > \half$ for $t \neq 0$ by
definition~\eqref{eq:link-function-m-majority-vote-general} as the
probability the majority vote is correct given margin
$\<\theta\opt, X\> = t$).

We first assume $Z \geq 0$ with probability $1$, as the general result
follows by writing $Z = (Z)_+ - (-Z)_+$.  We decompose into two
expectations, depending on $Z$ being large or small:
\begin{align}
  \lefteqn{m^{\frac{\beta}{2}} \Ep \left[f(\sqrt m |Z|) (1 - \rho_m(cZ)) \right]}
  \nonumber \\
  & = \underbrace{m^{\frac{\beta}{2}} \Ep \left[f(\sqrt m Z) (1 - \rho_m(cZ)) \ind\left\{0 \leq Z \leq \frac{M}{\sqrt m}\right\} \right]}_{\mathrm{(I)}} + \underbrace{m^{\frac{\beta}{2}} \Ep \left[f(\sqrt m Z) (1 - \rho_m(cZ)) \ind\left\{Z > \frac{M}{\sqrt m}\right\} \right]}_{\mathrm{(II)}}   .  \label{eq:normal-mid-main}
\end{align}
The proof consists of three main parts. 
\begin{enumerate}[leftmargin=2em,label=\arabic*.]
\item We approximate $1-\rho_m(t)$ by a Gaussian cdf.
\item We can approximate term (I) by replacing $1-\rho_m(t)$ with
  the Gaussian cdf, showing that
  \begin{align}
    \left|\lim_{m \to \infty} \mathrm{(I)} - c_Z \int_0^\infty z^{\beta - 1} f(z) \Phi \left( - 2{\wb{\sigma}\opt}'(0) cz \right) dz \right| = o_M(1)   . \label{eq:normal-mid-3}
  \end{align} 
\item For term (II), we show $1- \rho_m(cz)$ is small when $Z > M/\sqrt{m}$, which allows us to show that
  \begin{equation*}
    \limsup_{m \to \infty}
    \left|\mathrm{(II)} \right| = o_M(1).
  \end{equation*}
\end{enumerate} 
Thus by adding the two preceding displays and taking $M \to \infty$, we
obtain the lemma.

\newcommand{\linkstd}{\sigma_m^{\textup{std}}}

Before we dive into further details, we use the shorthand functions
\begin{equation}
  \linkstd(z)
  \defeq, \frac{\sum_{j=1}^m \prns{\sigma_j\opt(cz) - \half}}{\sqrt{\sum_{j=1}^m \sigma_j\opt(cz)\prns{1 - \sigma_j\opt(cz)}}},
  ~~~
  \Delta_m(z) \defeq 1 - \rho_m(cz) - \Phi(- \linkstd(z))
  \label{eqn:std-link},
\end{equation}
and we also write $p_\infty(\beta) := \sup_{z \in (0, \infty)} z^{1-\beta}
p(z) < \infty$.

\paragraph{Part 1. Normal approximation for $1- \rho_m(t)$ when $t = O(1/\sqrt{m})$.} Let $p_j = \sigma_j\opt(t)$ for shorthand and $Y_j \sim \mathrm{Bernoulli}(p_j)$ be independent random variables. For $t > 0$, then
\begin{align*}
  1 - \rho_m(t) & = \Prb \prn{Y_1 + \cdots + Y_m < \half} = \Prb
  \prn{\frac{\sum_{j=1}^m (Y_j - p_j)}{\sqrt{\sum_{j=1}^m p_j
        (1-p_j)}} < - \frac{\sum_{j=1}^m (p_j - \half)}{
      \sqrt{\sum_{j=1}^m p_j (1-p_j)}}} .
\end{align*}
Consider the centered and standardized random variables $\xi_j = (Y_j - p_j)/\sqrt{\sum_{j=1}^m p_j (1-p_j)} $ so that $\xi_1, \dots, \xi_m$ are zero mean, mutually independent, and satisfy
\begin{align*}
  \sum_{j=1}^m \var \prn{\xi_j} & = 1  , \\
  \sum_{j=1}^m \Ep \brk{\left|\xi_j\right|^3} & =
  \frac{\sum_{j=1}^m p_j (1-p_j)(p_j^2 + (1-p_j)^2)}{\prns{\sum_{j=1}^m p_j (1-p_j)}^{3/2}}
  \leq \frac{\max_{1 \leq j \leq m} (p_j^2 + (1-p_j)^2)}{\sqrt{\sum_{j=1}^m p_j(1-p_j)}}  .
\end{align*} 
By the Berry-Esseen theorem (cf.~\citet{ChenGoSh10, Shevtsova14}), for all
$t > 0$
\begin{align*}
	\left|1 - \rho_m(t) - \Phi \left(-\frac{ \sum_{j=1}^m \left( p_j - \half\right)}{\sqrt{\sum_{j=1}^m p_j (1-p_j)}}\right)\right| \leq \frac{3}{4} \cdot \frac{\max_{1 \leq j \leq m} (p_j^2 + (1-p_j)^2)}{\sqrt{\sum_{j=1}^m p_j (1-p_j)}}  .
\end{align*}
Fix any $M <\infty$. Then for $0 \leq t \leq cM /\sqrt m$ and large enough
$m$, the right hand side of the preceding display has the upper bound
$2/\sqrt{m}$, as in numerator we have $p_j^2 + (1-p_j)^2 \leq 1$, and in
denominator $\min_{1 \leq j \leq m}p_j(1-p_j) \to 1/4$ for all $j$ as $m \to
\infty$, which follows from
Assumption~\ref{assumption:condition-misspcfd-link-majority-vote}
that
\begin{equation*}
  \half \leq \limsup_{m \to \infty} \max_{1 \leq j \leq m} p_j \leq \limsup_{m \to \infty} \sup_{1 \leq j < \infty} {\sigma_j\opt} \prn{\frac{cM}{\sqrt m} } =
  \half.
\end{equation*}
By repeating the same argument for $-cM /\sqrt m \leq t < 0$, we obtain that for large $m$ and $|t| \leq cM / \sqrt m$,
\begin{align}
  \left|1 - \rho_m(t) - \Phi \left(-\left|\frac{ \sum_{j=1}^m \left( \sigma_j\opt(t) - \half\right)}{\sqrt{\sum_{j=1}^m \sigma_j\opt(t) \prn{1 - \sigma_j\opt(t)}}}\right|\right)\right| \leq \frac{2}{\sqrt m}   . \label{eq:BE-approximation}
\end{align}

\paragraph{Part 2. Approximating (I) by Gaussian cdf.}
For the first term (I) in~\eqref{eq:normal-mid-main}, we further decompose into a normal approximation term and an error term,
\begin{align*}
  \mathrm{(I)} = \underbrace{m^{\frac{\beta}{2}}
    \Ep \left[f(\sqrt m Z)\Phi(-\linkstd(z))
      \ind\left\{0 \leq Z \leq \frac{M}{\sqrt m}\right\} \right]}_{\mathrm{(III)}} + \underbrace{m^{\frac{\beta}{2}} \Ep \left[f(\sqrt m Z)\Delta_m(z) \ind\left\{0 \leq Z \leq \frac{M}{\sqrt m}\right\} \right]}_{\mathrm{(IV)}}, 
\end{align*}
where $\Delta_m(z) = 1 - \rho_m(cz) - \Phi(-\linkstd(z))$ as in
def.~\eqref{eqn:std-link}. We will show $\mathrm{(IV)} \to 0$ and so
$\mathrm{(III)}$ dominates. By the change of variables $w = \sqrt{m}z$, we can
further write (III) as
\begin{align*}
  \mathrm{(III)} 
  & = m^{\frac{\beta}{2}} \int_0^{\frac{M}{\sqrt m}} f(\sqrt m z) \Phi
  ( -\linkstd(z)) \cdot p(z) d z \nonumber \\
  & = \int_0^{M} w^{\beta - 1} f(w) \Phi  \left( - \linkstd\prn{\frac{w}{\sqrt m}}\right) \cdot  (w/\sqrt{m})^{1-\beta} p(w/\sqrt{m}) d w   .
\end{align*}
We want to take the limit $m \to \infty$ and apply dominated convergence
theorem. Because $(w/\sqrt{m})^{1-\beta} p(w/\sqrt{m}) \leq p_\infty(\beta)
< \infty$ and $\linkstd(w/\sqrt m) \geq 0$, we have
\begin{align*}
	&  w^{\beta - 1} f(w) \Phi  \left( - \linkstd\prn{\frac{w}{\sqrt m}}\right) \cdot  \left( - \linkstd\prn{\frac{w}{\sqrt m}}\right) \cdot  (w/\sqrt{m})^{1-\beta} p(w/\sqrt{m}) \leq w^{\beta - 1} f(w) \cdot \Phi(0) p_\infty(\beta)   .
\end{align*} 
As $\beta > 0$ and $|f(w)| \leq 1 + w^p$, $w^{\beta - 1} f(w)$ is integrable
on $[0, M]$, and by~\eqref{ass:misspcfd-link-1} and \eqref{ass:misspcfd-link-3}
  in Assumption~\ref{assumption:condition-misspcfd-link-majority-vote},
\begin{align*}
	\lim_{m \to \infty} \linkstd\prn{\frac{w}{\sqrt m}} & = \lim_{m \to \infty}\frac{\sqrt{m}\left(\wb{\sigma}_m\opt\prn{\frac{cw}{\sqrt m}}-\frac{1}{2}\right)}{\sqrt{ \frac{1}{m} \sum_{j=1}^m \sigma_j\opt\prn{\frac{cw}{\sqrt m}}\prn{1 - \sigma_j\opt\prn{\frac{cw}{\sqrt m}}}}} =  2 {\wb{\sigma}\opt}'(0) cw   .
\end{align*}
Using the above display and that $\lim_{m \to \infty} (w/\sqrt{m})^{1-\beta}
p(w/\sqrt{m}) = c_Z$, we can thus apply dominated convergence theorem
to conclude that
\begin{align*}
	\lim_{m \to \infty}\mathrm{(III)} 
	&  = c_Z \int_0^M w^{\beta - 1} f(w) \cdot  \Phi  \left( - 2 {\wb{\sigma}\opt}'(0) cw\right) dw = c_Z \int_0^\infty w^{\beta - 1} f(w) \Phi \left( - 2{\wb{\sigma}\opt}'(0) cw \right) dw + o_M(1). 
\end{align*} 

Next we turn to the error term (IV). By the
bound~\eqref{eq:BE-approximation}, $|\Delta_m(z)| \leq 2/\sqrt{m}$ when $|z|
\leq M/\sqrt{m}$ for large enough $m$, and substituting $w = \sqrt{m}z$,
\begin{align*}
	|\mathrm{(IV)}| & \leq m^{\frac{\beta}{2}} \Ep \left[|f(\sqrt m Z)| \cdot \frac{2}{\sqrt m} \cdot \ind\left\{0 \leq Z \leq \frac{M}{\sqrt m}\right\}   \right] \nonumber \\
	& = \frac{2}{\sqrt m} \int_0^{\frac{M}{\sqrt m}} \sqrt m \cdot |f(\sqrt m z)| \cdot m^{\frac{\beta-1}{2}} p(z) dz  = \frac{2}{\sqrt m} \int_0^M w^{\beta - 1}|f(w)| \cdot(w/\sqrt{m})^{1-\beta} p(w/\sqrt{m}) dw.
\end{align*}
By using Assumption~\ref{assumption:z-density} again that $(w/\sqrt{m})^{1-\beta} p(w/\sqrt{m}) \leq p_\infty(\beta) < \infty$, we further have
\begin{equation*}
  |\mathrm{(IV)}| 
  \leq \frac{2p_\infty(\beta) }{\sqrt m} \cdot \int_0^M w^{\beta - 1}|f(w)| dw
  \leq  \frac{2p_\infty(\beta) }{\sqrt m} \cdot
  \prn{\frac{M^\beta}{\beta} +\frac{M^{p+\beta}}{p+\beta}}
  \to 0,
\end{equation*}
where we use $|f(w)| \leq 1+w^p$. We have thus shown the
limit~\eqref{eq:normal-mid-3}.

\paragraph{Part 3. Upper bounding (II).}
In term~(II), when $Z$ is large, the key is that the quantity
$1-\rho_m(t)$ is
small when $|t| \geq cM /\sqrt{m}$: Hoeffding's
inequality implies the tail bound
\begin{align*}
  0 \leq 1 - \rho_m(t) \leq e^{-2 \left(\wb{\sigma}_m\opt(t) - \half\right)^2 m}  .
\end{align*}
Thus
\begin{align}
	|\mathrm{(II)}| &\leq  \int_{\frac{M}{\sqrt m}}^1 \sqrt m |f(\sqrt m z)|  e^{-2\left(\wb{\sigma}_m\opt(cz) - \half\right)^2m}\cdot  m^{\frac{\beta-1}{2}} p(z) dz + \int_{1}^\infty m^{\frac \beta 2} |f(\sqrt m z)|  e^{-2\left(\wb{\sigma}_m\opt(cz) - \half\right)^2m}\cdot  p(z) dz   . \nonumber  
\end{align}
Using the assumption that $\gamma \defeq\liminf_m \inf_{t \geq c}
\prn{\wb{\sigma}_m\opt(t) - \half} > 0$ from~\eqref{ass:misspcfd-link-2},
that $|f(z)| \leq 1 +z^p$ and $|Z|$ has finite $p$th moment, we observe
that
\begin{align*}
	\int_{1}^\infty m^{\frac \beta 2} |f(\sqrt m z)|  e^{-2\left(\wb{\sigma}_m\opt(cz) - \half\right)^2m}\cdot  p(z) dz\leq  m^{\frac{1}{2}(\beta + p)}  e^{-2\gamma^2m} \int_1^\infty (1 + z^p) p(z) dz \to 0  ,
\end{align*}
and consequently
\begin{align*} 
	\limsup_{m \to \infty} |\mathrm{(II)}|  & \leq  \limsup_{m \to \infty} \int_{\frac{M}{\sqrt m}}^1 \sqrt m |f(\sqrt m z)|  e^{-2\left(\wb{\sigma}_m\opt(cz) - \half\right)^2m}\cdot  m^{\frac{\beta-1}{2}} p(z) dz \nonumber \\
	& = \limsup_{m \to \infty}   \int_{M}^{\sqrt{m}}  w^{\beta - 1} |f(w)| e^{-2\left(\wb{\sigma}_m\opt\prn{\frac{cw}{\sqrt m}} - \half\right)^2m} \cdot (w/\sqrt{m})^{1-\beta} p(w/\sqrt{m}) d w   .
\end{align*}
For $w \in [M, \sqrt m]$, we have
\begin{align*}
	\left(\wb{\sigma}_m\opt\prn{\frac{cw}{\sqrt m}} - \half\right)^2m & = \left(\frac{\wb{\sigma}_m\opt\prn{\frac{cw}{\sqrt m}} - \half}{\frac{cw}{\sqrt m}}\right)^2 c^2 w^2 \geq \left(\inf_{0 < t \leq c} \frac{\wb{\sigma}_m\opt(t) - \half}{t}\right)^2 c^2 w^2   ,
\end{align*}
while Assumption~\eqref{ass:misspcfd-link-2} gives $\delta:=\inf_{0 < t \leq c} \frac{\wb{\sigma}_m\opt(t) - \half}{t} > 0$,  so
\begin{align*}
\limsup_{m \to \infty} |\mathrm{(II)}| & \leq \int_{M}^{\sqrt{m}}  w^{\beta - 1} |f(w)| e^{-\delta w^2} \cdot (w/\sqrt{m})^{1-\beta} p(w/\sqrt{m}) d w. 
\end{align*}
Using the inequality
$(w/\sqrt{m})^{1-\beta} p(w/\sqrt{m}) \leq p_\infty(\beta) < \infty$, we
apply dominated convergence:
\begin{align*}
  \lim_{m \to \infty} \int_{M}^{\sqrt{m}}  w^{\beta - 1} |f(w)| e^{-\delta w^2} \cdot (w/\sqrt{m})^{1-\beta} p(w/\sqrt{m}) d w  = c_Z \int_M^\infty  w^{\beta - 1}  |f(w)| e^{-\delta w^2} dw = o_M(1).
\end{align*}

\subsection{Proof of Lemma~\ref{lemma:rademacher-symmetrization}} \label{proof:rademacher-symmetrization}
We follow a typical symmetrization approach, then construct a covering
that we use to prove the lemma.
Let $P_n^0 = n^{-1} \sum_{i = 1}^n \varepsilon_i 1_{X_i, Y_i}$ be the
(random) symmetrized measure with point masses at $(X_i, Y_i)$ for
$Y_i = (Y_{i1}, \ldots, Y_{im})$. Then by a standard symmetrization
argument, we have
\begin{equation}
	\label{eqn:symmetrization-step}
	\E\left[\sup_{\ltwo{\theta} \le r}
	\sup_{\vec{\sigma} \in \Flink^m} |P_n \loss_{\vec{\sigma},\theta} - L(\theta,
	\vec{\sigma})|\right]
	\le 2 \E\left[\sup_{\ltwo{\theta} \le r}
	\sup_{\vec{\sigma} \in \Flink^m} \left|P_n^0 \loss_{\vec{\sigma}, \theta}
	\right|\right].
\end{equation}

We use a covering argument to bound the symmetrized
expectation~\eqref{eqn:symmetrization-step}.  Let $R < \infty$ to be
chosen, and for an (again, to be determined) $\epsilon > 0$ let $\mc{G}
\subset \Flink$
denote an $\epsilon$-cover of $\Flink$ in the supremum norm on $[-R, R]$,
that is, $\norm{g - \sigma} = \sup_{t \in [-R,R]} |g(t) - \sigma(t)|$, and
so for each $\sigma \in \Flink$ there exists $g \in \mc{G}$ such that
$\norm{g - \sigma} \le \epsilon$. Then~\cite[Ch.~2.7]{VanDerVaartWe96} we
have $\log \card(\mc{G}) \le O(1) \frac{R \lipconst}{\epsilon}$. Let
$\Theta_\epsilon$ be a minimal $\epsilon$-cover of $\{\theta \mid
\ltwo{\theta} \le r\}$ in $\ltwo{\cdot}$, so that $\log
\card(\Theta_\epsilon) \le d \log(1 + \frac{2r}{\epsilon})$
and $\max_{\theta \in \Theta_\epsilon} \ltwo{\theta} \le r$.
We claim that for each $\ltwo{\theta} \le r$ and $\sigma
\in \Flink$, there exists
$v \in \Theta_\epsilon$ and $g \in \mc{G}$ such that
\begin{equation}
	\left|\frac{1}{n} \sum_{i = 1}^n \varepsilon_i
	\left(\loss_{\sigma, \theta}(Y_{ij} \mid X_i)
	-
	\loss_{g, v}(Y_{ij} \mid X_i)\right)
	\right|
	\le \epsilon +
	\frac{1}{n}\sum_{i = 1}^n \ltwo{X_i} \epsilon
	+ \frac{1}{n} \sum_{i = 1}^n\indic{\ltwo{X_i} \ge R / r}.
	\label{eqn:covering-diff}
\end{equation}
Indeed, for any $g \in \Flink$ and $\theta, v \in \R^d$, we have
\begin{align*}
	\left|\loss_{\sigma, \theta}(y \mid x)
	- \loss_{g, v}(y \mid x)\right|
	& = \left|\int_0^{y \<x, \theta\>} \sigma(-t) dt
	- \int_0^{y \<x, v\>} g(-t) dt\right| \\
	& \le \sup_{|t| \le r \ltwo{x}} |\sigma(t) - g(t)|
	+ \left|\int_{y\<x, v\>}^{y\<x, \theta\>}
	|\sigma(-t) - g(-t)| dt \right| \\
	& \le \norm{\sigma - g} + \indic{\ltwo{x} \ge R/r}
	+ |\<x, \theta - v\>| \\
	& \le
	\norm{\sigma - g} + \ltwo{x} \ltwo{\theta - v}
	+ \indic{\ltwo{x} \ge R/r},
\end{align*}
where we have used that $\sigma, g \in [0, 1]$.
Taking the elements $g, v$ in the respective coverings to minimize
the above bound gives the guarantee~\eqref{eqn:covering-diff}.

We now leverage inequality~\eqref{eqn:covering-diff}
in the symmetrization step~\eqref{eqn:symmetrization-step}.
We have
\begin{align*}
	\lefteqn{\E\left[\sup_{\ltwo{\theta} \le r}
		\sup_{\vec{\sigma} \in \Flink^m} |P_n \loss_{\vec{\sigma},\theta} - L(\theta,
		\vec{\sigma})|\right]} \\
	& \lesssim
	\E\left[\max_{\theta \in \Theta_\epsilon}
	\max_{\vec{g} \in \mc{G}^m}
	\left|P_n^0 \loss_{\vec{g}, \theta}\right|
	\right]
	+ \epsilon + \E[\ltwo{X_1}] \epsilon
	+ \frac{1}{n} \sum_{i = 1}^n \P(\ltwo{X_i} \ge R / r) \\
	& \stackrel{(i)}{\lesssim}
	\sqrt{\frac{d m R \lipconst}{\epsilon}
		\log\left(1 + \frac{2r}{\epsilon}\right)}
	\E\left[\frac{r^2}{n}\sum_{i = 1}^n \ltwo{X_i}^2\right]^{1/2}
	+ \epsilon + \E[\ltwo{X_1}] \epsilon
	+ \frac{\E[\ltwo{X_i}^{\gamma}]}{(R/r)^{\gamma}} \\
	& \le
	M r \sqrt{\frac{dm R \lipconst}{n \epsilon}
		\log \left(1 + \frac{2r}{\epsilon}\right)}
	+ (M + 1) \epsilon
	+ \frac{M^{\gamma}}{(R/r)^{\gamma}},
\end{align*}
where inequality $(i)$ uses that if $Z_i$ are $\tau^2$-sub-Gaussian, then
$\E[\max_{i \le N} |Z_i|] \le \sqrt{2 \tau^2 \log N}$, and that
conditional on $\{X_i, Y_i\}_{i = 1}^n$, the symmetrized sum $\sum_{i =
	1}^n \varepsilon_i \frac{1}{m} \sum_{j = 1}^m \loss_{g_j, \theta}(Y_{ij}
\mid X_i)$ is $r^2 \sum_{i = 1}^n \ltwo{X_i}^2$-sub-Gaussian, as
$|\loss_{g_j, \theta}(Y_{ij} \mid X_i)| \le |\<X_i, \theta\>| \le r
\ltwo{X_i}$.  We optimize this bound to get the final guarantee of the
lemma: set $\epsilon = (Rm / n)^{1/3}$ (and note that we will
choose $R \ge 1$) to obtain
\begin{equation*}
	\E\left[\sup_{\ltwo{\theta} \le r}
	\sup_{\vec{\sigma} \in \Flink^m} |P_n \loss_{\vec{\sigma},\theta} - L(\theta,
	\vec{\sigma})|\right]
	\lesssim \sqrt{d \lipconst \log(rn)}
	M r \left(\frac{R m}{n}\right)^{1/3}
	+ \frac{(Mr)^\gamma}{R^\gamma}.
\end{equation*}
Choose $R
= ((Mr)^{3(\gamma - 1)} n / m)^\frac{1}{3\gamma + 1}$.

\section{Proof of Lemma~\ref{lemma:generic-solutions}}
\label{sec:proof-generic-solutions}

Recall that $h = h_{t\opt, m}$ is the calibration
gap function~\eqref{eqn:h-m-func}.
We see that because $\E[Z] = 0$, we have
$h(0) = -2 \E[Z \varphi_m(t\opt Z)] < 0$,
while using Assumption~\ref{assumption:model-link} and that
$\E[|Z|] < \infty$, we apply dominated convergence and that
$\lim_{t \to \infty} (\sigma(t Z) - \half) Z = c|Z|$ with probability 1
to obtain
\begin{align*}
	\lim_{t \to \infty} h(t) = \E[c|Z| (1 - \varphi_m(t\opt Z))]
	+ \E[c |Z| \varphi_m(t\opt Z)] = c \E[|Z|] > 0.
\end{align*}
Because $h'(t) = \E[\sigma'(tZ) Z^2 (1 - \varphi_m(t\opt Z))]
+ \E[\sigma'(-tZ) Z^2 \varphi_m(t\opt Z)] > 0$, we see that there
is a unique $t_m$ solving $h(t_m) = 0$, and evidently
$\theta\opt_L = t_m u\opt$ is a minimizer of $L$.

We compute the Hessian of $L$. For this,
we again let $\theta = t u\opt$ to write
\begin{align*}
	\nabla^2 L(\theta, \sigma) & = \E[\sigma'(-\<\theta, X\>) XX^\top \varphi_m(\<X, \theta\opt\>)]
	+ \E[\sigma'(\<\theta, X\>) XX^\top (1 - \varphi_m(\<X, \theta\opt\>))] \\
	& = \E\left[\sigma'(-t Z) \varphi_m(t\opt Z)
	\left(Z^2 u\opt {u\opt}^\top + WW^\top\right)\right]
	+ \E\left[\sigma'(t Z) (1 - \varphi_m(t\opt Z))
	\left(Z^2 u\opt {u\opt}^\top + WW^\top \right)\right] \\
	& = \E\left[\left(\sigma'(-tZ) \varphi_m(t\opt Z)
	+ \sigma'(tZ) (1 - \varphi_m(t\opt Z)) \right) Z^2\right]
	u\opt {u\opt}^\top \\
	& \quad + \E\left[\sigma'(-tZ) \varphi_m(t\opt Z)
	+ \sigma'(tZ) (1 - \varphi_m(t\opt Z))\right] \E[WW^\top],
\end{align*}
which gives $\nabla^2 L(\theta, \sigma) \succ 0$ and so $\theta\opt_L$ is
unique.  Finally, the desired form of the Hessian follows as $\E[WW^\top] =
\projperp \Sigma \projperp$.

\section{Proof of Theorem~\ref{theorem:multilabel-asymptotics}}
\label{sec:proof-multilabel-asymptotics}

Let $t_m$ be the solution to $h_{t\opt,m}(t) = 0$ as in
Lemma~\ref{lemma:generic-solutions}. The consistency argument is
immediate: the losses $\loss$ are convex, continuous, and locally Lipschitz,
so~\citet[Thm.~5.4]{ShapiroDeRu09} gives $\what{\theta}_n
\cas \theta\opt_L$. By an appeal to standard
M-estimator theory~\cite[e.g.][Thm.~5.23]{VanDerVaart98},
we thus obtain
\begin{equation}
	\label{eqn:multilabel-basic-asymptote}
	\sqrt{n} \big(\what{\theta}_n - \theta\opt_L\big)
	\cd \normal\left(0, \nabla^2 L(\theta\opt_L, \sigma)^{-1}
	\cov\bigg(\frac{1}{m} \sum_{j = 1}^m
	\nabla \loss_{\sigma,\theta\opt_L}(Y_j \mid X)\bigg)
	\nabla^2 L(\theta\opt_L, \sigma)^{-1} \right).
\end{equation}
We expand the covariance term to obtain the first main result of the
theorem. For shorthand, let $G_j = \nabla \loss_{\sigma,\theta\opt_L}(Y_j
\mid X)$, so that $\sum_{j = 1}^m \E[G_j] = 0$ and the $G_j$ are
conditionally independent given $X$. Applying the law of total covariance,
we have
\begin{align*}
	\cov\bigg(\frac{1}{m} \sum_{j = 1}^m G_j\bigg)
	& = \cov\bigg(\frac{1}{m} \sum_{j = 1}^m
	\E[G_j \mid X]\bigg)
	+ \E\left[\cov\bigg(
	\frac{1}{m} \sum_{j = 1}^m G_j \mid X\bigg)\right] \\
	& =
	\underbrace{\cov\bigg(\frac{1}{m} \sum_{j = 1}^m \E[G_j \mid X] \bigg)}_{
		\mathrm{(I)}}
	+ \frac{1}{m^2} \sum_{j = 1}^m
	\underbrace{\E\left[\cov(G_j \mid X)\right]}_{\mathrm{(II)}},
\end{align*}
where we have used the conditional independence of the $Y_j$ conditional on
$X$.  We control each of terms (I) and (II) in turn.

For the first, we have by the independent decomposition
$X = Z u\opt + \projperp W$ that
\begin{equation*}
	\E[G_j \mid X]
	= 
	\left(\sigma(t_m Z)
	(1 - \sigma_j\opt(t\opt Z))
	- \sigma(- t_m Z) \sigma_j\opt(t\opt Z)\right)
	X,
\end{equation*}
and so
\begin{align*}
	\mathrm{(I)} & =
	\E\left[\left(\sigma(t_m Z) (1 - \wb{\sigma}\opt(t\opt Z))
	- \sigma(-t_m Z) \wb{\sigma}\opt(t\opt Z)\right)^2 XX^\top \right] \\
	& =
	\E\left[\left(\sigma(t_m Z) (1 - \wb{\sigma}\opt(t\opt Z))
	- \sigma(-t_m Z) \wb{\sigma}\opt(t\opt Z)\right)^2 Z^2 \right]
	u\opt {u\opt}^\top
	\\ & \qquad ~ + 
	\E\left[\left(\sigma(t_m Z) (1 - \wb{\sigma}\opt(t\opt Z))
	- \sigma(-t_m Z) \wb{\sigma}\opt(t\opt Z)\right)^2\right]
	\projperp \Sigma \projperp \\
	& = \E[\linkerr(Z)^2 Z^2] u\opt {u\opt}^\top
	+ \E[\linkerr(Z)^2]   \projperp \Sigma \projperp,
\end{align*}
where we used the independence of $W$ and $Z$.
For term (II) above, we see that conditional on $X$,
$\nabla \loss_{\sigma,\theta}(Y_j \mid X)$ is a binary
random variable taking values in
$\{-\sigma(-t_m Z) X, \sigma(t_m Z) X\}$ with probabilities
$\sigma_j\opt(t\opt Z)$ and
$1 - \sigma_j\opt(t\opt Z)$, respectively, so that
a calculation leveraging the variance of Bernoulli random variables
yields
\begin{align*}
	\cov(G_j \mid X)
	& = \sigma_j\opt(t\opt Z) (1 - \sigma_j\opt(t\opt Z))
	\left(\sigma(t_m Z) + \sigma(-t_m Z)\right)^2 XX^\top
	= v_j(Z) XX^\top,
\end{align*}
where we used the definition~\eqref{eqn:error-functions} of the
variance terms.
A similar calculation to that we used for term (I) then gives that
\begin{align*}
	\mathrm{(II)} & = \frac{1}{m^2}
	\sum_{j = 1}^m \E[v_j(Z) Z^2] u\opt {u\opt}^\top
	+ \frac{1}{m^2}
	\sum_{j = 1}^m \E[v_j(Z)] \projperp \Sigma \projperp.
\end{align*}
Applying Lemma~\ref{lem:Sigma-inverse} then allows us to decompose the the
covariance in expression~\eqref{eqn:multilabel-basic-asymptote} into
terms in the span of $u\opt {u\opt}^\top$ and those perpendicular to
it, so that the asymptotic covariance is
\begin{align*}
	\frac{\E[\linkerr(Z)^2 Z^2] + \frac{1}{m^2}
		\sum_{j = 1}^m \E[v_j(Z) Z^2]}{
		\E[\hessfunc(Z) Z^2]^2} u\opt {u\opt}^\top
	+ \frac{\E[\linkerr(Z)^2] + \frac{1}{m^2}
		\sum_{j = 1}^m \E[v_j(Z)]}{\E[\hessfunc(Z)]^2}
	\prn{\projperp \Sigma \projperp}^\dagger,
\end{align*}
by applying Lemma~\ref{lemma:generic-solutions} for the form of the Hessian
$\nabla^2 L(\theta_L\opt, \sigma)$ and Lemma~\ref{lem:Sigma-inverse} for the
inverse, giving the first result of the theorem.

To obtain the second result, we apply the delta method with
the mapping $\phi(x) = x / \ltwo{x}$, which satisfies
$\nabla \phi(x) = (I - \phi(x) \phi(x)^\top) / \ltwo{x}$,
so that for $\what{u}_n = \what{\theta}_n / \ltwos{\what{\theta}_n}$ we have
\begin{align*}
  \sqrt{n}(\what{u}_n - u\opt)
  & \cd
  \normal\left(0, \frac{1}{t_m^2}
  \frac{\E[\linkerr(Z)^2] + \frac{1}{m^2}
    \sum_{j = 1}^m \E[v_j(Z)]}{\E[\hessfunc(Z)]^2}
  \prn{\projperp \Sigma \projperp}^\dagger
  \right)
\end{align*}
as desired.

\section{Proof of Theorem~\ref{theorem:majority-vote-asymptotics}}
\label{sec:proof-majority-vote-asymptotics}

As in the proof of Theorem~\ref{theorem:multilabel-asymptotics}, we begin
with a consistency result. Let $t_m$ be the solution to $h_{t\opt, m}(t) =
0$ as in Lemma~\ref{lemma:generic-solutions}.
The once again, \citet[Thm.~5.4]{ShapiroDeRu09}
shows that $\what{\theta}_n \cas \theta\opt_L$.
As previously, appealing to standard
M-estimator theory~\cite[e.g.][Thm.~5.23]{VanDerVaart98},
we obtain
\begin{equation}
	\label{eqn:vote-basic-asymptote}
	\sqrt{n} \big(\what{\theta}_n - \theta\opt_L\big)
	\cd \normal\left(0, \nabla^2 L(\theta\opt_L, \sigma)^{-1}
	\cov \left(\nabla \loss_{\sigma,\theta\opt_L}(\majY \mid X)\right)
	\nabla^2 L(\theta\opt_L, \sigma)^{-1} \right),
\end{equation}
the difference from the asymptotic~\eqref{eqn:multilabel-basic-asymptote}
appearing in the covariance term.
Here, we recognize that conditional on $X = Z u\opt + W$, the vector
$\nabla \loss_{\sigma,\theta\opt_L}(\majY \mid X)$
takes on the values
$\{-\sigma(-t_m Z) X, \sigma(t_m Z) X\}$ each with probabilities
$\varphi_m(t\opt Z)$ and $1 - \varphi_m(t\opt Z)$, respectively,
while $\nabla \loss_{\sigma,\theta\opt_L}(\majY \mid X)$ is (unconditionally)
mean zero.
Thus
we have
\begin{align*}
	\cov \left(\nabla \loss_{\sigma,\theta\opt_L}(\majY \mid X)\right)
	& = \E\left[\sigma(-t_m Z)^2 \varphi_m(t\opt Z) XX^\top
	+ \sigma(t_m Z)^2 (1 - \varphi_m(t\opt Z)) XX^\top\right] \\
	& = \E\left[\left(\sigma(- t_m Z)^2 \varphi_m(t\opt Z)
	+ \sigma(t_m Z)^2 (1 - \varphi_m(t\opt Z))\right) Z^2\right]
	u\opt{u\opt}^\top \\
	& \qquad ~ +
	\E\left[\sigma(- t_m Z)^2 \varphi_m(t\opt Z)
	+ \sigma(t_m Z)^2 (1 - \varphi_m(t\opt Z))\right]
	\projperp \Sigma \projperp.
\end{align*}

Applying Lemma~\ref{lem:Sigma-inverse}
as in the proof of Theorem~\ref{theorem:multilabel-asymptotics}
to  decompose the covariance
terms in the asymptotic~\eqref{eqn:vote-basic-asymptote}, and substituting in $\rho_m(t) = \sigma(t) \ind \{t \geq 0\} + (1- \sigma(t)) \ind \{t < 0\}$,
the limiting covariance in expression~\eqref{eqn:vote-basic-asymptote}
becomes
\begin{align*}
  & \frac{\E[(\sigma(-t_m |Z|)^2 \rho_m(t\opt Z) + \sigma(t_m |Z|)^2
      (1 - \rho_m(t\opt Z))) Z^2]}{
    \E[\hessfunc(Z) Z^2]^2}
  u\opt {u\opt}^\top \\
  & \qquad ~ + 
  \frac{\E[\sigma(-t_m |Z|)^2 \rho_m(t\opt Z) + \sigma(t_m |Z|)^2
      (1 - \rho_m(t\opt Z))]}{
    \E[\hessfunc(Z)]^2}
  \prn{\projperp \Sigma \projperp}^\dagger.
\end{align*}
Lastly, we apply the delta method to $\phi(x) = x / \ltwo{x}$,
exactly as in the proof of Theorem~\ref{theorem:multilabel-asymptotics},
which gives the theorem.


\section{Proofs of asymptotic normality}

In this appendix, we include proofs of the convergence results in
Propositions~\ref{prop:lowd-maximum-likelihood},
\ref{prop:lowd-majority-vote-m-noise}, and
\ref{prop:lowd-misspcfd-majority-vote-m}. In each, we divide the proof into
three steps: we characterize the loss minimizer, apply one of the master
Theorems~\ref{theorem:multilabel-asymptotics}
or~\ref{theorem:majority-vote-asymptotics} to obtain asymptotic normality,
and then characterize the behavior of the asymptotic covariance as $m \to
\infty$.

\subsection{Proof of Proposition~\ref{prop:lowd-maximum-likelihood}}
\label{proof:lowd-maximum-likelihood}

\paragraph{Asymptotic normality of the MLE.} The asymptotic
normality result is an
immediate consequence of the classical asymptotics for maximum likelihood
estimators~\cite[Thm.~5.29]{VanDerVaart98}.

\paragraph{Normalized estimator.}
For the normalized estimator, we appeal to the master results developed in Section~\ref{sec:master-results}. In particular, since we are in the well-specified logistic model, we can invoke Corollary~\ref{cor:well-specified-normalized-cov} and write directly that
\begin{equation*}
	\sqrt{n}(\what{u}_{n,m}\lr- u\opt)
	\cd \normal\left(0, \frac{1}{m} \cdot
	\frac{1}{{t\opt}^2}
	\frac{\E[\sigma\lr(t\opt Z) (1 - \sigma\lr(t\opt Z))]}{\E[{\sigma\lr}'(t\opt Z)]^2}
	\projperp \Sigma \projperp \right),
\end{equation*}
which immediately implies
\begin{align*}
	C(t) & = \frac{1}{t^2}
	\frac{\E[\sigma\lr(t Z) (1 - \sigma\lr(t Z))]}{\E[{\sigma\lr}'(t Z)]^2} = \frac{1}{t^2\Ep\left[\frac{e^{t Z} }{(1 + e^{t Z})^2}\right]},
\end{align*}
and that further $ C(t) {t}^{2 - \beta} = {t}^{-\beta} \Ep[\frac{e^{t|Z|}
  }{(1 + e^{t |Z|})^2}]^{-1}$. To compute the limit when $t \to
\infty$, we invoke Lemma~\ref{lem:t-Z-integral-noise} and
we conclude that
\begin{align*}
  \lim_{t \to \infty} C(t) t^{2 - \beta} = \lim_{t\to \infty}
  \frac{1}{t^\beta \Ep[\frac{e^{t |Z|} }{(1 + e^{t |Z|})^2}]}
  = \frac{1}{ c_Z \int_{0}^{\infty} \frac{z^{\beta - 1} e^{z} }{(1 + e^{z})^2}
    dz}.
\end{align*}

\subsection{Proof of Proposition~\ref{prop:lowd-majority-vote-m-noise}}
\label{proof:lowd-majority-vote-m-noise}

\paragraph{Minimizer of the population loss.}
We can see identity~\eqref{eq:majority-vote-root} still holds with the
calibration gap $h(t) = h_m(t) = \E[|Z|(1 - \rho_m(t\opt |Z|))] -
\E[\frac{|Z|}{1 + e^{t|Z|}}]$ in Eq.~\eqref{eqn:h-centering} as $X-u\opt Z$
and $u\opt Z$ are independent. The function $h(t)$ is monotonically
increasing in $t$ with $h(\infty) =\Ep \brk{|Z|(1-\rho_m(t^\star |Z|))} >
0$, while $1 - \rho_m(t|Z|) \leq 1 - \rho_1(t|Z|) = \frac{1}{1 + e^{t|Z|}}$,
we must have $h(t^\star) \leq 0$. Therefore there must be a unique zero
point $t_m \ge t\opt$ of $h(t)$, and so $t_m u\opt$ is the unique minimizer
of the population loss $\poploss\mv_m(\theta)$.

\paragraph{Asymptotic variance.}
As $t_m$ solves $h_m(t_m) = 0$, 
Eq.~\eqref{eq:majority-vote-root} guarantees that $t_m u\opt$ is the global
minimizer of the population loss $\poploss\mv_m$. Appealing to
Theorem~\ref{theorem:majority-vote-asymptotics}, it follows that
$\what{\theta}\mv_{n,m} \cp t_m u\opt$, and
\begin{align*}
  \sqrt{n} (\what{u}\mv_{n,m} -  u^\star)&\cd \normal\left(0, \varfunc(t\opt)
  \prn{\proj_{u\opt} \Sigma \proj_{u\opt}}^\dagger\right)
\end{align*}
for the variance function~\eqref{eqn:generic-majority-covariance},
which in this case simplifies to
\begin{align*}
  \varfunc(t\opt)
  & = \frac{\Ep[\frac{1}{(1 + e^{t_m|Z|})^2} \rho_m(t\opt |Z|)
      + \frac{1}{(1 + e^{-t_m|Z|})^2} (1-\rho_m(t\opt |Z|)) ]}{
    t_m^2 \Ep[\frac{e^{t_m Z} }{ (1 + e^{t_m Z})^2}]^2}
\end{align*}
via the symmetry $\rho_m(t) = \rho_m(-t)$.

\paragraph{Large $m$ behavior.} The remainder of the
proof is to characterize the behavior of $\varfunc(t\opt)$ as $m\to\infty$.
We first derive asymptotics for $t_m$. To simplify notation, we let
$\ltwo{\theta} = t = t\opt$. Because $t_m$ solves $h_m(t_m) = 0$ we have
\begin{align}
  \Ep \brk{\frac{|Z|}{1 + e^{t_m|Z|}}} = \Ep \brk{|Z|(1-\rho_m(t |Z|))},
  \label{eqn:solving-for-t-m}
\end{align}
we must have $t_m \to \infty$ as $m \to \infty$ because $\rho_m(t) \to 1$
for any $t > 0$ as $m \to \infty$, so the right side of
equality~\eqref{eqn:solving-for-t-m} converges to 0 by the dominated
convergence theorem, and hence so must the left hand side.  Invoking
Lemma~\ref{lem:t-Z-integral-noise} for the left hand side, it follows that
\begin{align*}
  \lim_{m \to \infty} t_m^{\beta + 1} \Ep \brk{\frac{|Z|}{1 + e^{t_m|Z|}}} & = \lim_{m \to \infty} t_m^{\beta} \Ep \brk{\frac{t_m|Z|}{1 + e^{t_m|Z|}}}  =  c_Z \int_{0}^{\infty} \frac{z^\beta}{1 + e^{z}} dz, 
\end{align*}
while invoking Lemma~\ref{lem:m-rho-integral-noise-general} for the right
hand side of~\eqref{eqn:solving-for-t-m}, it follows that
\begin{align*}
  \lim_{m \to \infty} m^{\frac{\beta+1}{2}}\Ep \brk{|Z|(1-\rho_m(t  |Z|))} & = \lim_{m \to \infty} m^{\frac{\beta}{2}} \Ep \brk{\sqrt m|Z|(1-\rho_m(t  |Z|))} \nonumber \\
  &  = c_Z \int_0^\infty z^\beta \Phi \left(-\frac{tz}{2}\right) dz = c_Z {t}^{-\beta-1}\int_0^\infty z^\beta \Phi \left(-\frac{z}{2}\right) dz, 
\end{align*}
where the last line follows from change of variables $tz \mapsto z$. The
identity~\eqref{eqn:solving-for-t-m}
implies that the ratio $\Ep [|Z|/(1 + e^{t_m|Z|})] / \Ep [|Z|(1-\rho_m(t
  |Z|))] = 1$ and so we have that as $m \to \infty$,
\begin{align*}
		\frac{t_m}{\sqrt{m}} & = \left(\frac{	t_m^{\beta + 1}}{ m^{\frac{\beta+1}{2}}}\right)^{\frac{1}{\beta +1}} =  \left(\frac{	t_m^{\beta + 1}\Ep \brk{\frac{|Z|}{1 + e^{t_m|Z|}}}}{ m^{\frac{\beta+1}{2}} \Ep \brk{|Z|(1-\rho_m(t |Z|))} }\right)^{\frac{1}{\beta +1}}  \to  \left(\frac{\int_{0}^{\infty} \frac{z^\beta}{1 + e^{z}} dz}{\int_0^\infty z^\beta \Phi \left(-\frac{z}{2}\right) dz}\right)^{\frac{1}{\beta +1}} \cdot t= : a t .  \label{eq:Cm-asymp-noise-tm}
\end{align*}
In particular, $t_m / t\opt \sqrt{m} = a(1 + o_m(1))$.

We finally proceed to compute asymptotic behavior of $\varfunc(t\opt)$, the
variance~\eqref{eqn:generic-majority-covariance}. By
Lemma~\ref{lem:t-Z-integral-noise} the limit of its denominator as $t_m \to
\infty$ satisfies
\begin{align*}
  \lim_{m \to \infty}
  \underbrace{t_m^{2 \beta}
    \Ep\left[\frac{e^{t_m Z} }{ (1 + e^{t_m Z})^2}\right]^2}_{
    :=\mathsf{den}(C_m(t))}
  & = \lim_{t \to \infty} \left( t^\beta \Ep\left[\frac{e^{t Z} }{ (1 + e^{t Z})^2}\right]\right)^2 =
  \left(c_Z\int_{0}^\infty \frac{z^{\beta - 1} e^{z} }{ (1 + e^{z})^2} dz\right)^2.
\end{align*}
We decompose the numerator into the two parts
\begin{align*}
  \lefteqn{m^{\frac{\beta}{2}} \Ep \left[\frac{1}{\left(1 + e^{t_m|Z|}\right)^2} \rho_m(t |Z|) + \frac{1}{\left(1 + e^{-t_m|Z|}\right)^2} (1-\rho_m(t|Z|)) \right]} \nonumber \\
  & = \underbrace{m^{\frac{\beta}{2}} \Ep \left[\left(\frac{1}{\left(1 + e^{-t_m|Z|}\right)^2} - \frac{1}{\left(1 + e^{t_m|Z|}\right)^2} \right)(1-\rho_m(t|Z|)) \right]}_{\mathrm{(I)}} + \underbrace{m^{\frac{\beta}{2}} \Ep \left[\frac{1}{\left(1 + e^{t_m|Z|}\right)^2}\right]}_{\mathrm{(II)}}  .
\end{align*}
As we have already shown that $m^{-\frac{1}{2}} t_m \to at$, we know for any
$\epsilon > 0$ that for large enough $m$, $(1-\epsilon) a t \sqrt m
\leq t_m \leq (1 +\epsilon) at \sqrt{m}$. We can thus invoke
Lemma~\ref{lem:t-Z-integral-noise} to get
\begin{align*}
  \lim_{m \to \infty}  \mathrm{(II)} = \lim_{m \to \infty} m^{\frac \beta 2} \Ep \left[\frac{1}{\left(1 + e^{\sqrt m at|Z|}\right)^2}\right] &  = c_Z \int_0^\infty  \frac{z^{\beta - 1}}{\left(1 + e^{a t z}\right)^2} dz  = c_Z {t}^{-\beta} \int_0^\infty \frac{z^{\beta - 1}}{\left(1 + e^{a z}\right)^2} dz  . 
\end{align*}
With the same argument, we apply Lemma~\ref{lem:m-rho-integral-noise-general}
to establish the convergence
\begin{align*}
  \lim_{m \to \infty} \mathrm{(I)} 
  & = c_Z \int_0^\infty z^{\beta - 1}\left(\frac{1}{\left(1 + e^{-a tz}\right)^2} - \frac{1}{\left(1 + e^{a t z}\right)^2} \right) \Phi \left(-\frac{tz}{2}\right) dz \\
  & = c_Z \int_0^\infty z^{\beta - 1}\frac{e^{a tz} - 1}{e^{a t z} + 1} \Phi \left(-\frac{tz}{2}\right) dz
  = c_Z {t}^{-\beta} \int_0^\infty z^{\beta - 1}\frac{e^{a z} - 1}{e^{a z} + 1} \Phi \left(-\frac{z}{2}\right) dz    , 
\end{align*}
where we use the change of variables $tz \mapsto z$. Taking limits, we have
\begin{align*}
  \lim_{m\to \infty} m^{1 - \frac{1}{2} \beta} C_m(t)
  & = \lim_{m\to \infty}  \frac{m^{\frac \beta 2}
    \Ep[\frac{1}{(1 + e^{t_m|Z|})^2} \rho_m(t |Z|)
      + \frac{1}{(1 + e^{-t_m|Z|})^2}(1-\rho_m(t|Z|))]}{
    m^{\beta - 1} t_m^{2 - 2 \beta} \cdot t_m^{2 \beta} \Ep[\frac{e^{t_m Z} }{ (1 + e^{t_m Z})^2}]^2} \\
  & = \lim_{m \to \infty}
  \left(\frac{t_m}{\sqrt{m}}\right)^{2 \beta - 2}
  \cdot \frac{\lim_{m\to \infty}\mathrm{(I)} + \lim_{m\to \infty}\mathrm{(II)}}{\mathsf{den}(C_m(t))} \\
  & =
  (at)^{2 \beta - 2}
  \cdot \frac{c_Z {t}^{-\beta} \int_0^\infty z^{\beta - 1}\left(\frac{1}{\left(1 + e^{a z}\right)^2} + \frac{e^{a z} - 1}{e^{a z} + 1} \Phi \left(-\frac{z}{2}\right)\right) dz}{  (c_Z\int_{0}^\infty \frac{z^{\beta - 1} e^{z} }{ (1 + e^{z})^2} dz)^2},
\end{align*}
where we used that $t_m/\sqrt{m} \to at$ as above. 


\subsection{Proof of Proposition~\ref{prop:lowd-misspcfd-majority-vote-m}}  \label{proof:lowd-misspcfd-majority-vote-m}

\paragraph{Minimizer of the population loss.} 
By Lemma~\ref{lemma:generic-solutions}, we know the gap $h(t)=0$ has a
unique solution $t_m$, with $t_m u\opt$ minimizing the population loss
$L(\theta, \sigma)$.

\paragraph{Asymptotic variance.} 
Directly invoking Theorem~\ref{theorem:majority-vote-asymptotics} yields
asymptotic normality:
\begin{equation*}
  \sqrt{n}\left(\what{u}\mv_{n,m} - u\opt\right)
  \cd \normal\prn{0, \varfunc(t\opt) \prn{\projperp \Sigma \projperp}^\dagger},
\end{equation*}
where the covariance function~\eqref{eqn:generic-majority-covariance} has the
form
\begin{equation}
  \varfunc(t\opt) = \frac{1}{t_m^2} \frac{\E[\sigma(-t_m |Z|)^2
      \rho_m(t\opt Z) + \sigma(t_m |Z|)^2 (1 - \rho_m(t\opt Z))]}{
    \E[\sigma'(t_m Z)]^2} \label{eqn:mis-majority-covariance}
\end{equation}
and again $t_m$ is the implicitly defined zero of $h(t) = 0$.

\paragraph{Large $m$ behavior.}
We derive the large $m$ asymptotics of $t_m$ and $\varfunc$ under
Assumption~\ref{assumption:condition-misspcfd-link-majority-vote} and using
the shorthand $\ltwo{\theta\opt} = t$. The proof is essentially identical to
that of Proposition~\ref{prop:lowd-majority-vote-m-noise} in
Appendix~\ref{proof:lowd-majority-vote-m-noise}.
First, recalling
the probability~\eqref{eq:link-function-m-majority-vote-general},
$\rho_m(t) = \Prb(\majY = \sgn(\<X, \theta\opt\>) \mid \<X,\theta\opt\> = t)$,
we see that $1 - \rho_m(t z) \to 0$ for
any $z \neq 0$ as $m \to \infty$, and thus by dominated convergence,
$\Ep \brk{|Z|\prn{1 - \rho_m(t Z)}} \to 0$.
The analogue of the identity~\eqref{eqn:solving-for-t-m}
in the proof of Proposition~\ref{prop:lowd-majority-vote-m-noise}, that
$t_m$ is the zero of $h(t) = \E[\sigma(t|Z|)|Z|(1 - \rho_m(t\opt Z))]
- \E[\sigma(-t|Z|) |Z| \rho_m(t\opt Z)]$, implies
\begin{equation}
  \Ep[\sigma(t_m|Z|) |Z|(1 - \rho_m(t\opt Z))]
  = \Ep \brk{\sigma(-t_m |Z|) |Z| \rho_m(t\opt Z)}.
  \label{eqn:mis-spec-solving-for-t-m}
\end{equation}
As $\sigma$ is bounded and $\E[\sigma(-t|Z|) |Z| \rho_m(t\opt |Z|)] \to
\E[\sigma(-t|Z|) |Z|]$ for any $t$, the convergence of the left hand side of
equality~\eqref{eqn:mis-spec-solving-for-t-m} to 0 as $m \to \infty$ means
we must have $t_m \to \infty$.
Invoking
Lemma~\ref{lem:t-Z-integral-noise} yields
\begin{align*}
  \lim_{m \to \infty} t_m^{\beta + 1} \Ep \brk{|Z| \sigma(-t_m |Z|)}
  =
  \lim_{m \to \infty} t_m^{\beta} \Ep \brk{t_m|Z| \sigma(-t_m |Z|)}
  = c_Z \int_0^\infty z^\beta \sigma(-z) dz.
\end{align*}
Applying Lemma~\ref{lem:m-rho-integral-noise-general} gives
\begin{align*}
  m^{\frac{\beta+1}{2}}\Ep \brk{|Z|(1-\rho_m(t Z))}
  =  m^{\frac{\beta}{2}} \Ep \brk{\sqrt m|Z|(1-\rho_m(t  Z))}
  \to c_Z {t}^{-\beta-1}\int_0^\infty z^\beta \Phi \left(-2{\wb{\sigma}\opt}'(0)z\right) dz.
\end{align*}
Rewriting the identity~\eqref{eqn:mis-spec-solving-for-t-m}
using the symmetry of $\sigma$, so that $\sigma(t) + \sigma(-t) = 1$,
we have the equivalent statement that
$\E[|Z|(1 - \rho_m(t\opt Z))] = \E[\sigma(-t_m |Z|) |Z|]$,
or $\E[\sigma(-t_m |Z|)|Z|] / \E[|Z|(1 - \rho_m(t\opt Z))] = 1$.
Using this identity ratio, we find that
\begin{equation*}
  \frac{t_m}{\sqrt{m}}
  =
  \left(\frac{	t_m^{\beta + 1}\Ep \brk{|Z| \sigma(-t_m|Z|)}}{
    m^{\frac{\beta+1}{2}} \Ep \brk{|Z|(1-\rho_m(t  Z))} }\right)^\frac{1}{\beta +1}
  \to
  \left(\frac{\int_{0}^{\infty} z^\beta \sigma(-z) dz}{
    \int_0^\infty z^\beta \Phi(-2{\wb{\sigma}\opt}'(0)z) dz}\right)^{
    \frac{1}{\beta +1}} \cdot t\opt
  = : a t\opt.
\end{equation*}
This concludes the asymptotic characterization that $t_m = \sqrt{m} a t\opt
\cdot(1 + o_m(1))$.

Finally, we turn to the asymptotics for $\varfunc(t\opt)$
in~\eqref{eqn:mis-majority-covariance}.
By Lemma~\ref{lem:t-Z-integral-noise}
its denominator has limit
\begin{align*}
  \lim_{m \to \infty}
  \underbrace{t_m^{2 \beta} \Ep\left[\sigma'(t_m Z)\right]^2}_{:=
    \mathsf{den}{\varfunc(t\opt)}}
  = \lim_{t \to \infty}
  \left(t^\beta \Ep\left[\sigma'(t Z)\right]\right)^2
  = \left(c_Z\int_{0}^\infty z^{\beta - 1} \sigma'(z)dz\right)^2.  
\end{align*}
We decompose the (rescaled) numerator of the
variance~\eqref{eqn:mis-majority-covariance} into the two parts
\begin{align*}
  \underbrace{m^{\frac{\beta}{2}} \Ep \left[\left(
      \sigma(t_m|Z|)^2 - \sigma(-t_m |Z|)^2\right)
      (1-\rho_m(t|Z|)) \right]}_{\mathrm{(I)}}
  + \underbrace{m^{\frac{\beta}{2}} \Ep \left[\sigma(-t_m|Z|)^2\right]}_{
    \mathrm{(II)}}.
\end{align*}
Lemmas~\ref{lem:t-Z-integral-noise} and that $t_m = a \sqrt{m} t\opt(1 +
o_m(1)) \to \infty$, coupled with the dominated
convergence theorem, establishes the convergence
\begin{align*}
  \lim_{m \to \infty}  \mathrm{(II)} = \lim_{m \to \infty} m^{\frac \beta 2}
  \Ep \left[\sigma(-\sqrt{m} a t\opt |Z|)^2\right]
  = c_Z {t}^{-\beta} \int_0^\infty \frac{z^{\beta - 1}}{\left(1 + e^{a z}\right)^2}
  dz.
\end{align*}
Similarly,
Lemma~\ref{lem:m-rho-integral-noise-general} and that
$t_m = a \sqrt{m} t\opt(1 + o_m(1))$
gives that
\begin{align*}
  \lim_{m \to \infty} \mathrm{(I)} 
  & = \lim_{m \to \infty} m^{\frac \beta 2} \Ep \left[
    \left(\sigma(at\opt \sqrt{m}|Z|)^2 - \sigma(-a t\opt\sqrt{m}|Z|)^2
    \right) (1 - \rho_m(t Z)) \right] \\
  & = c_Z \int_0^\infty z^{\beta - 1}
  \left(\sigma(a t\opt z)^2 - \sigma(-at\opt z)^2\right)
  \Phi \left(-2{\wb{\sigma}\opt}'(0) t z\right) dz \\
  & = c_Z {t\opt}^{-\beta} \int_0^\infty z^{\beta - 1}
  \left(\sigma(az)^2 - \sigma(-az)^2\right)
  \Phi \left(-2{\wb{\sigma}\opt}'(0) z\right) dz,
\end{align*}
where in the last line we use change of variables $tz \mapsto z$. Hence we have
\begin{align*}
  \lim_{m\to \infty} m^{1 - \frac{1}{2} \beta} \varfunc(t\opt)
  & = \lim_{m \to \infty} \frac{1}{m^{\beta - 1} t_m^{2 - 2 \beta}}
  \cdot \frac{\lim_{m\to \infty}\mathrm{(I)} + \lim_{m\to \infty}\mathrm{(II)}}{
    \mathsf{den}(C_m(t\opt))} \\
  & =  \lim_{m\to \infty} \prn{\frac{t_m}{\sqrt m}}^{2\beta - 2}
  \cdot \frac{c_Z {t\opt}^{-\beta} \int_0^\infty z^{\beta - 1}
    \prns{\sigma(az)^2
      + (\sigma(az)^2 - \sigma(-az)^2) \Phi(-2{\wb{\sigma}\opt}'(0) z)}
    dz}{
    (c_Z\int_{0}^\infty \frac{z^{\beta - 1} e^{z} }{ (1 + e^{z})^2} dz)^2}.
\end{align*}
Finally, as $t_m / \sqrt{m} = a t\opt (1 + o_m(1))$ we obtain that
$m^{1 - \beta/2} \varfunc(t\opt) = {t\opt}^{\beta - 2} b$ for
some constant $b$ depending only on $\beta, c_Z, {\wb{\sigma}\opt}'(0)$,
and $\sigma$.

\section{Proofs of fundamental limits of estimation with aggregate labels} 

\subsection{Proof of Proposition~\ref{prop:impossibility}} \label{proof:impossibility}
We assume without loss of generality $m$ is odd. When $m$ is even the proof
is identical, except that we randomize $\majY$ when we have equal votes for
both classes. As the marginal distribution of $X$ is $\normal(0, I_d)$ for
both $\P_{(X,\majY)}^{\sigma\opt, \theta\opt, m}$ and
$\P_{(X,\majY)}^{\wb{\sigma}, \wb{\theta}, \wb{m}}$, we only have to show
the existence of a link $\wb{\sigma} \in \fsymlink$ such that the
conditional distribution on any $X=x$ is the same, i.e.,
\begin{align*}
  \Prb_{\sigma\opt, \theta\opt, m} (\majY = 1 \mid X=x) =
  \Prb_{\wb{\sigma}, \wb{\theta}, \wb{m}}(\majY = 1 \mid X=x) .
\end{align*}
For $m \in \mathbb{N}$, define the probability that a $\bindist(m, t)$ is at
least $\lceil m / 2 \rceil$ through the one-to-one transformation
\begin{align*}
  P_m(t) := \sum_{i=\lceil m/2 \rceil}^m \binom{m}{i} t^m(1-t)^{m-i}   ,
\end{align*}
 For any $m$, $P_m(t)$ monotonically increases in $t$, and $P_m(0)= 0, P_m(\frac{1}{2}) = \frac{1}{2}, P_m(1) = 1$, and by symmetry $P_m(t) + P_m(1-t) = 1$. By the definition of majority vote
\begin{align*}
	\Prb_{\sigma\opt, \theta\opt, m} (\majY = 1 \mid X=x) & = \sum_{i=\lceil m/2 \rceil}^m \binom{m}{i} \sigma\opt(\< \theta\opt, x \>)^m(1-\sigma\opt(\< \theta\opt, x \>))^{m-i} = P_m \circ \sigma\opt(\< \theta\opt, x \>).
\end{align*}
Therefore, the link
\begin{align*}
	\wb{\sigma}(t) := P_{\wb{m}}^{-1} \circ P_{m} \circ \sigma\opt \left(\frac{\ltwo{\theta}}{\ltwo{\wb{\theta}}} t \right)
\end{align*}
satisfies $\wb{\sigma}(0) = \frac{1}{2}$ and $\wb{\sigma}(t) - \frac{1}{2} > 0$ for all $t > 0$, since $P_m$ maps $(\frac 1 2, 1]$ to $(\frac 1 2 , 1]$. We also have
\begin{align*}
	\wb{\sigma}(t) + \wb{\sigma}(-t) & = P_{\wb{m}}^{-1} \circ P_{m} \circ \sigma\opt \left(\frac{\ltwo{\theta}}{\ltwo{\wb{\theta}}} t \right) + P_{\wb{m}}^{-1} \circ P_{m} \circ \sigma\opt \left(-\frac{\ltwo{\theta}}{\ltwo{\wb{\theta}}} t \right) \nonumber \\
	& \stackrel{\mathrm{(i)}}{=} P_{\wb{m}}^{-1} \circ P_{m} \circ \sigma\opt \left(\frac{\ltwo{\theta}}{\ltwo{\wb{\theta}}} t \right) + P_{\wb{m}}^{-1} \circ P_{m} \circ \left(1 - \sigma\opt \left(\frac{\ltwo{\theta}}{\ltwo{\wb{\theta}}} t \right) \right) \nonumber \\
	& \stackrel{\mathrm{(ii)}}{=} P_{\wb{m}}^{-1} \circ P_{m} \circ \sigma\opt \left(\frac{\ltwo{\theta}}{\ltwo{\wb{\theta}}} t \right) + P_{\wb{m}}^{-1} \circ \left(1 -  P_m \circ \sigma\opt \left(\frac{\ltwo{\theta}}{\ltwo{\wb{\theta}}} t \right)\right) \nonumber \\
	& = 1   ,
\end{align*}
where (i) and (ii) follow from symmetry of $\sigma\opt$ and $P_m$, respectively. Thus $\wb{\sigma}$ belongs to $\fsymlink$ and is a valid link function. This $\bar{\sigma}$ yields the desired equality:
\begin{align*}
	\Prb_{\wb{\sigma}, \wb{\theta}, \wb{m}}(\majY = 1 \mid X=x) &   = P_{\wb{m}} \circ P_{\wb{m}}^{-1} \circ P_{m} \circ \sigma\opt \left(\frac{\ltwo{\theta}}{\ltwo{\wb{\theta}}} \cdot \< \wb{\theta}, x \> \right)  = \Prb_{\sigma\opt, \theta\opt, m} (\majY = 1 \mid X=x).
\end{align*}

\subsection{Proof of Theorem~\ref{thm:lower-bound-fisher-information}} \label{proof:lower-bound-fisher-information}

Throughout this proof, we use $a_m \sim b_m$ to mean that
$a_m/b_m \to 1$ as $m \to \infty$, or equivalantly that
$a_m = b_m \cdot(1 + o_m(1))$.
By Lemma~\ref{lem:fisher-information-decomposition},
we have 
\begin{align*} 
	\fisher_m(\theta\opt) = A_m (t\opt) \cdot u\opt {u\opt}^\top +   B_m (t\opt) \cdot \proj_{u\opt}^\perp \Sigma \proj_{u\opt}^\perp 
\end{align*}
with
\begin{subequations}
\begin{align}
	A_m(t) & := \E \brk{\frac{\rho_m'(t|Z|)^2 Z^2}{\rho_m(t|Z|) ( 1 - \rho_m(t|Z|))}}, \label{eq:fisher-information-Am} \\
	B_m(t)& := \E \brk{\frac{\rho_m'(t|Z|)^2 }{\rho_m(t|Z|) ( 1 - \rho_m(t|Z|))}}. \label{eq:fisher-information-Bm}
\end{align}
\end{subequations}
It is crucial to understand the behavior of
$\rho_m$ and $\rho_m'$, which have the exact forms
\begin{align*}
  \rho_m(t) & 
  = \sum_{k>m/2}\binom{m}{k} \frac{e^{kt}}{(1+e^t)^m}, \\
  \rho_m'(t) & 
  = \sum_{k > m/2} \binom{m}{k} \frac{ke^{kt} (1+e^t) - me^{(k+1)t}}{(1+e^t)^{m+1}}.
\end{align*}
Throughout this proof, we assume without loss of generality that $m$ is
odd. When $m$ is even, we replace $\binom{m}{\frac{m+1}{2}}$ with
$\binom{m}{\frac{m}{2}+1}$, but otherwise all arguments are identical.
\begin{lemma} \label{lem:rho-m-prime-asymptotics}
	The function $\rho_m'$ is even, and for any $t > 0$,
	\begin{align*}
		\rho_m'(t) &= \binom{m}{\frac{m+1}{2}} \cdot \frac{m+1}{2} \frac{e^{\frac{m+1}{2}t}}{(1+e^t)^{m+1}}.
	\end{align*}
\end{lemma}
\begin{proof}
	By direct calculations
	\begin{align*}
		\rho_m'(t) &  = \sum_{k > m/2} \binom{m}{k} \frac{ke^{kt} (1+e^t) - me^{(k+1)t}}{(1+e^t)^{m+1}} \\
		&  \stackrel{(i)}{=} \sum_{k > m/2} \left\{  \binom{m}{k} \frac{ke^{kt}(1+e^t)}{(1+e^t)^{m+1}} - \binom{m}{k} \frac{ke^{(k+1)t}}{(1+e^t)^{m+1}} - \binom{m}{k+1} \frac{(k+1)e^{(k+1)t}}{(1+e^t)^{m+1}} \right\} \nonumber \\
		& = \frac{1}{(1+e^t)^{m+1}}\sum_{k > m/2} \left\{ \binom{m}{k} ke^{kt} - \binom{m}{k+1} (k+1)e^{(k+1)t}\right\}, 
	\end{align*}
	where (i) uses the combinatorial identity
	\begin{align*}
		m \binom{m}{k} = k \binom{m}{k} + (k+1)\binom{m}{k+1}.
	\end{align*}
	This yields a telescoping sum and establishes the result.
\end{proof}
Next,  we define the function
\begin{align*}
	\eta_m(t) = \frac{\rho_m'(t)^2}{\rho_m(t)(1-\rho_m(t))}.
\end{align*}
We can then write for the Fisher information $\fisher_m(\theta\opt) = A_m (t\opt) \cdot u\opt {u\opt}^\top +   B_m (t\opt) \cdot \proj_{u\opt}^\perp \Sigma \proj_{u\opt}^\perp $ via
\begin{align*}
	A_m(t) = \E \brk{\eta_m(tZ)Z^2}, \qquad B_m(t) = \E \brk{\eta_m(tZ)},
\end{align*}
as $\eta_m$ is a even function. The next lemma provides the key asymptotic
guarantees for $\eta_m(t)$ in the above displays. See
Appendix~\ref{proof:rho-m-fisher-asymptotics} for a proof.

\begin{lemma} \label{lem:rho-m-fisher-asymptotics}
  There exists a function $c_m(t)$ satisfying the following:
  \begin{enumerate}[leftmargin=*,label=(\roman*)]
  \item \label{item:eta-via-c}
    For any $t > 0$, $c_m(t) \geq 1 - e^{-t}$ and
    \begin{align*}
      \eta_m(t) = \frac{(m+1)^2}{4} \binom{m}{\frac{m+1}{2}} \cdot \frac{e^{\frac{m+3}{2}t}}{(1+e^t)^{m+2}} \cdot c_m(t),
    \end{align*}
  \item \label{item:c-m-by-delta} For any $\delta \in (0,1)$, there exist
    $C=C(\delta) > 0$ and $M=M(\delta)$ such that for all $m \geq M$
    and all $t > 0$,
    \begin{align*}
      c_m(t) \leq
      \frac{C \cdot(1 - e^{-t})}{1 - e^{-(\lfloor \delta \sqrt{m} \rfloor + 1) t}}.
    \end{align*}
  \item \label{item:normal-ratio-limit}
    For any fixed $w > 0$,
    \begin{align*}
      \lim_{m \to \infty} \sqrt{m} c_m \prn{\frac{w}{\sqrt{m}}}  = e^{-\frac{w^2}{8}} \sqrt{\frac{2}{\pi}} \cdot \frac{1}{\Phi(-w/2) \Phi(w/2)}.
    \end{align*}
  \end{enumerate}
\end{lemma}

We use the asymptotics in Lemma~\ref{lem:rho-m-fisher-asymptotics} to
compute $A_m(t)$ and $B_m(t)$, respectively.

\paragraph{Part I: Asymptotics for $B_m(t)$.}
We begin by showing the claimed asymptotic formula for $B_m(t)$ in
the theorem's statement.
By Stirling's formula, the multiplicative factor in $\eta_m$ satisfies
\begin{align*}
	\frac{(m+1)^2}{4} \binom{m}{\frac{m+1}{2}} \sim \frac{m^2}{4} \cdot 2^m \sqrt{\frac{2}{\pi m}},
\end{align*}
and thus
\begin{align*}
	B_m(t) &\sim \frac{m^2}{2 \sqrt{2 \pi m}} \E \brk{2^m \frac{e^{\frac{m+3}{2}tZ}}{(1+e^{tZ})^{m+2}} \cdot c_m(tZ)} = \frac{m^2}{8 \sqrt{2 \pi m}} \E \brk{2^{m+2} \frac{e^{\frac{m+3}{2}tZ}}{(1+e^{tZ})^{m+2}} \cdot c_m(tZ)} \nonumber \\
	& = \frac{m^2}{8 \sqrt{2 \pi m}} \E \brk{\prn{\frac{2 e^{\frac{tZ}{2}}}{1 + e^{tZ}}}^{m+2}\cdot e^{\frac{tZ}{2}} c_m(tZ)} \sim \frac{\sqrt{m}}{8\sqrt{2\pi}} \cdot \underbrace{(m+2) \E \brk{\prn{\frac{2 e^{\frac{tZ}{2}}}{1 + e^{tZ}}}^{m+2}\cdot e^{\frac{tZ}{2}} c_m(tZ)}}_{=:J_{m+2}(t)}.
\end{align*}
Our goal now is to show that
\begin{equation}
  \label{eqn:J-m-limit-term}
  \lim_{m \to \infty} m^{\frac{\beta - 1}{2}}
  J_m(t)
  = \frac{c_Z}{t^\beta} \sqrt{\frac{8}{\pi}}
  \cdot \int_0^\infty \frac{e^{-z^2} z^{\beta - 1}}{\Phi(-z) \Phi(z)} dz,
\end{equation}
which then immediately implies the asymptotic
$B_m(t) \sim b m (t \sqrt{m})^{-\beta}$
for $b = \frac{c_Z}{4\pi} \int_0^\infty \frac{e^{-z^2} z^{\beta - 1}}{
  \Phi(-z) \Phi(z)} dz$, as claimed in the theorem.

The proof of the limit~\eqref{eqn:J-m-limit-term} involves
an argument via dominated convergence.
By the change of variables $w := \sqrt{m} u$,
\begin{align*}
	 J_m(t) & =  \int_0^\infty m \prn{\frac{2e^{\frac{tu}{2}}}{1 + e^{tu}}}^{m} e^{\frac{tu}{2}} c_{m-2}(tu) p(u) du \nonumber \\
	 & =  \int_0^\infty \sqrt{m} \prn{\frac{2e^{\frac{tw}{2\sqrt{m}}}}{1 + e^{\frac{tw}{\sqrt{m}}}}}^{m} e^{\frac{tw}{\sqrt{m}}} c_{m-2}\prn{\frac{tw}{\sqrt{m}}} p \prn{\frac{w}{\sqrt{m}}} dw \nonumber  \\
	 & = \prn{\frac{1}{\sqrt{m}}}^{\beta-1} \int_0^\infty \prn{\frac{2e^{\frac{tw}{2\sqrt{m}}}}{1 + e^{\frac{tw}{\sqrt{m}}}}}^{m} e^{\frac{tw}{\sqrt{m}}} w^{\beta-1} \cdot \sqrt{m}  \prn{\frac{w}{\sqrt{m}}}^{1-\beta} c_{m-2}\prn{\frac{tw}{\sqrt{m}}} p \prn{\frac{w}{\sqrt{m}}} dw.
\end{align*}
We now demonstrate dominating functions for the various terms in the above
integrand. To bound the lsat several terms,
we use Assumption~\ref{assumption:z-density} that $\kappa := \sup_{z
  \in (0, \infty)} z^{1-\beta} p(z) < \infty$ and
Lemma~\ref{lem:rho-m-fisher-asymptotics}, which combine
to give the upper bound
\begin{align*}
  \sqrt{m}  \prn{\frac{w}{\sqrt{m}}}^{1-\beta} c_{m-2}\prn{\frac{tw}{\sqrt{m}}} p \prn{\frac{w}{\sqrt{m}}}
  & \leq  \kappa  \sqrt{m} c_{m-2} \prn{\frac{tw}{\sqrt{m}}} \\
  & \leq  \frac{\kappa  c\sqrt{m}\prn{1 - e^{-\frac{tw}{\sqrt{m}}}}}{1 - e^{-(\lfloor \delta \sqrt{m-2} \rfloor + 1) \frac{tw}{\sqrt{m}}}}
  \leq c' \sup_{w \in (0, \infty)} \frac{ tw}{1 - e^{-\delta tw}} < \infty
\end{align*}
for some $c' = c(\delta, \kappa)$ and all sufficiently large $m$.  The next
lemma controls the first terms in the integral above. (We defer its proof to
Appendix~\ref{proof:Jm-integral-term-upper-bound}.)
\begin{lemma} \label{lem:Jm-integral-term-upper-bound}
  There exists a universal constant $0 < C$ such that for all $m$,
  \begin{align*}
    \prn{\frac{2e^{\frac{tw}{2\sqrt{m}}}}{1 + e^{\frac{tw}{\sqrt{m}}}}}^{m} e^{\frac{tw}{\sqrt{m}}} w^\beta   & \leq \frac{1}{C}
    w^\beta \exp \brc{-C \min \{t^2w^2, \sqrt{tw}\}}.
  \end{align*}
\end{lemma}

Certainly the right hand side term in
Lemma~\ref{lem:Jm-integral-term-upper-bound} is integrable on $(0, \infty)$.
Rewriting $J_m(t)$, we can therefore invoke dominated convergence to
exchange the limit in $m$ and integral to obtain
\begin{align*}
	\lim_{m \to \infty} m^{\frac{\beta-1}{2}} J_m(t) & = \int_0^\infty \lim_{m \to \infty} \prn{\frac{2e^{\frac{tw}{2\sqrt{m}}}}{1 + e^{\frac{tw}{\sqrt{m}}}}}^{m} e^{\frac{tw}{\sqrt{m}}} w^{\beta-1}  \cdot \sqrt{m}  \prn{\frac{w}{\sqrt{m}}}^{1-\beta} c_{m-2}\prn{\frac{tw}{\sqrt{m}}} p \prn{\frac{w}{\sqrt{m}}} dw \nonumber \\
	& \stackrel{(i)}{=}  c_Z \int_0^\infty \lim_{m \to \infty} w^{\beta-1} \exp \brc{-\frac{t^2w^2}{2} \cdot \prn{\frac{\sqrt{m}}{tw} \cdot \prn{1-e^{-\frac{tw}{2\sqrt{m}}}}}^2 } \cdot \lim_{m \to \infty} \sqrt{m} c_{m-2}\prn{\frac{tw}{\sqrt{m}}} dw \nonumber \\
	& \stackrel{(ii)}{=} c_Z \int_0^\infty e^{-\frac{1}{8}t^2w^2} \cdot  e^{-\frac{t^2 w^2}{8}} \sqrt{\frac{2}{\pi}} \cdot \frac{w^{\beta-1}}{\Phi(-tw/2) \Phi(tw/2)} dw \nonumber \\
	& = \frac{c_Z}{t^\beta} \sqrt{\frac{8}{\pi}} \cdot \int_0^\infty \frac{e^{-z^2} z^{\beta-1}}{\Phi(-z) \Phi(z)} dz,
\end{align*}
where in equality $(i)$ we invoke Assumption~\ref{assumption:z-density} and
in $(ii)$ we use Lemma~\ref{lem:rho-m-fisher-asymptotics}. This
gives the desired limit~\eqref{eqn:J-m-limit-term}.

\paragraph{Part II: Asymptotics for $A_m(t)$.}
The calculations are similar to those we have done to approximate
$B_m(t)$. In this case
\begin{align*}
	A_m(t) \sim \frac{1}{8\sqrt{2\pi m}} \cdot \underbrace{(m+2)^2 \E \brk{\prn{\frac{2 e^{\frac{tZ}{2}}}{1 + e^{tZ}}}^{m+2}\cdot e^{\frac{tZ}{2}} c_m(tZ) Z^2}}_{=:I_{m+2}(t)},
\end{align*}
and again invoking dominated convergence,
\begin{align*}
  \lim_{m \to \infty}  m^{\frac{\beta-1}{2}} I_m(t) & = \int_0^\infty \lim_{m \to \infty} \prn{\frac{2e^{\frac{tw}{2\sqrt{m}}}}{1 + e^{\frac{tw}{\sqrt{m}}}}}^{m} e^{\frac{tw}{\sqrt{m}}} w^{\beta-1}  \cdot \sqrt{m}  \prn{\frac{w}{\sqrt{m}}}^{1-\beta} c_{m-2}\prn{\frac{tw}{\sqrt{m}}} p \prn{\frac{w}{\sqrt{m}}} w^2 dw \nonumber \\
  & = c_Z \int_0^\infty e^{-\frac{1}{8}t^2w^2} \cdot e^{-\frac{t^2 w^2}{8}} \sqrt{\frac{2}{\pi}} \cdot \frac{w^{\beta+1}}{\Phi(-tw/2) \Phi(tw/2)} dw \nonumber  \\
  &  = \frac{8 c_Z}{t^{\beta+2}} \sqrt{\frac{2}{\pi}} \cdot \int_0^\infty \frac{e^{-z^2} z^{\beta+1}}{\Phi(-z) \Phi(z)} dz.
\end{align*}
Hence
\begin{align*}
	A_m(t) = \frac{a}{t^2} \prn{\frac{1}{t\sqrt{m}}}^{\beta} (1 + o_m(1)), \qquad a = \frac{c_Z}{ \pi} \int_0^\infty \frac{e^{-z^2} z^{\beta+1}}{\Phi(-z) \Phi(z)} dz.
\end{align*}

\subsection{Proof of Lemma~\ref{lem:rho-m-fisher-asymptotics}} \label{proof:rho-m-fisher-asymptotics}

We begin by proving part~\ref{item:eta-via-c}. Define
\begin{equation*}
  s_m(t) = \sum_{k <
    m/2} \binom{m}{k} e^{kt} \le \half (1+e^t)^m,
\end{equation*}
where the inequality is valid for $t \ge 0$.
Invoking Lemma~\ref{lem:rho-m-prime-asymptotics} yields
\begin{align*}
	\eta_m(t) & = \frac{\rho_m'(t)^2}{\rho_m(t)(1-\rho_m(t))} = \frac{\frac{(m+1)^2}{4} \binom{m}{\frac{m+1}{2}}^2 \cdot \frac{e^{(m+1)t}}{(1+e^t)^{2m+2}}}{\prn{\sum_{k > m/2} \binom{m}{k} e^{kt}} \prn{\sum_{k < m/2} \binom{m}{k} e^{kt}} \cdot \frac{1}{(1+e^t)^{2m}}} \nonumber \\
	& = \frac{(m+1)^2}{4} \binom{m}{\frac{m+1}{2}}^2 \cdot \frac{e^{(m+1)t}}{(1+e^t)^{m+2}} \cdot \frac{1}{(1 - s_m(t)/(1+e^t)^m) s_m(t)} \nonumber \\
	& = \frac{(m+1)^2}{4} \binom{m}{\frac{m+1}{2}} \cdot \frac{e^{\frac{m+3}{2}t}}{(1+e^t)^{m+2}} \cdot \underbrace{\frac{ \binom{m}{\frac{m+1}{2}} e^{\frac{m-1}{2}t}}{(1 - s_m(t)/(1+e^t)^m) s_m(t)}}_{:=c_m(t)},
\end{align*}
which gives the equality in part~\ref{item:eta-via-c}.
It then remains to show $c_m(t) \geq 1 - e^{-t}$. For this,
we upper bound
\begin{align*}
  e^{-\frac{m-1}{2}t} s_m(t) \leq \sum_{k < m/2}\binom{m}{\frac{m+1}{2}} e^{-\prn{\frac{m-1}{2}-k}t} \leq \binom{m}{\frac{m+1}{2}} \frac{1}{1-e^{-t}},
\end{align*}
and noting the inequality
$\half \le 1 - \frac{s_m(t)}{(1+e^t)^m} \leq 1$, it then follows
that
\begin{align*}
  c_m(t) \geq \frac{\binom{m}{\frac{m+1}{2}} e^{\frac{m-1}{2}t}}{s_m(t)} \geq 1 -e^{-t}.
\end{align*}

Proceeding to the proof of part~\ref{item:c-m-by-delta},
for any $\delta \in (0, 1)$,
\begin{align*}
  \binom{m}{\frac{m+1}{2}}^{-1} e^{-\frac{m-1}{2}t} s_m(t) & = \sum_{l=0}^\infty \binom{m}{\frac{m+1}{2}}^{-1} \binom{m}{\frac{m+1}{2} - l} e^{-lt} \geq  \underbrace{\binom{m}{\frac{m+1}{2}}^{-1} \binom{m}{\frac{m+1}{2} - \lfloor \delta \sqrt{m} \rfloor}}_{:=q_m(\delta)} \sum_{l=0}^{\lfloor \delta \sqrt{m} \rfloor} e^{-lt} \nonumber 	\\
  & =  q_m(\delta) \cdot \frac{1 - e^{-(\lfloor \delta \sqrt{m} \rfloor + 1) t}}{1 - e^{-t}},
\end{align*}
where the inequality uses that $\binom{m}{\frac{m+1}{2} - l}$ is decreasing
in $l$. We next establish that $q_m(\delta) \sim e^{-2\delta^2}$. Indeed,
applying Stirling's formula yields
\begin{align*}
 	q_m(\delta) &= \frac{\prn{\frac{m}{2}}^{\frac{m}{2}} \cdot \prn{\frac{m}{2}}^{\frac{m}{2}}}{m^m} \cdot  \frac{m^m}{\prn{\frac{m}{2}-\delta \sqrt{m}}^{\frac{m}{2}-\delta \sqrt{m}} \cdot \prn{\frac{m}{2}+\delta \sqrt{m}}^{\frac{m}{2}+\delta \sqrt{m}}} \cdot  (1+o_m(1)) \nonumber \\
 	& = \prn{1-\frac{2\delta}{\sqrt{m}}}^{-\frac{m}{2}+\delta \sqrt{m}} \cdot \prn{1+\frac{2\delta}{\sqrt{m}}}^{-\frac{m}{2}-\delta \sqrt{m}} \cdot (1+o_m(1)) \nonumber \\
 	& = \prn{1-\frac{4\delta^2}{m}}^{-\frac{m}{2}} \cdot \prn{1-\frac{2\delta}{\sqrt{m}}}^{\delta \sqrt{m}} \cdot \prn{1+\frac{2\delta}{\sqrt{m}}}^{-\delta \sqrt{m}} \cdot (1+o_m(1)) \nonumber \\
 	& = e^{2\delta^2} \cdot e^{-2\delta^2} \cdot e^{-2\delta^2} \cdot (1+o_m(1)) = e^{-2 \delta^2} \cdot (1+o_m(1)).
\end{align*}
Substituting the above displays into the definition $c_m(t) =
\binom{m}{\frac{m + 1}{2}} e^{\frac{m - 1}{2} t} s_m(t)^{-1} (1 -
\frac{s_m(t)}{(1 + e^t)^m})^{-1}$, we can then establish the upper bound
using $1 - s_m(t)/(1+e^t)^m \ge \half$, so
\begin{align*}
c_m(t) \leq \frac{2\binom{m}{\frac{m+1}{2}} e^{\frac{m-1}{2}t}}{s_m(t)} \leq \frac{2(1-e^{-t})}{q_m(\delta) \cdot (1 - e^{-(\lfloor \delta \sqrt{m} \rfloor + 1) t})}.
\end{align*}
Since $q_m(\delta) \sim e^{-2\delta^2}$, this inequality
establishes part~\ref{item:c-m-by-delta}.

For part~\ref{item:normal-ratio-limit}, we compute $\lim_{m \to \infty}
\sqrt{m} c_m(w/\sqrt{m})$. For any fixed $\delta > 0$, we explicitly expand
$s_m(w/\sqrt{m})$ to obtain
\begin{align*}
	\binom{m}{\frac{m+1}{2}}^{-1} e^{-\frac{m-1}{2} \cdot \frac{w}{\sqrt{m}}} s_m(w/\sqrt{m}) & = \sum_{l=0}^\infty \binom{m}{\frac{m+1}{2}}^{-1} \binom{m}{\frac{m+1}{2} - l} e^{-lw/\sqrt{m}} \nonumber \\
	& = \sqrt{m}  \sum_{k=0}^{\infty} \underbrace{\frac{1}{\sqrt{m} }\sum_{\lfloor k \delta \sqrt{m} \rfloor \leq l < \lfloor (k+1) \delta \sqrt{m}  \rfloor} \binom{m}{\frac{m+1}{2}}^{-1} \binom{m}{\frac{m+1}{2} - l} e^{-lw/\sqrt{m}}}_{:=a_{m,k}(\delta)}.
\end{align*}
Here, we partition the set of all nonnegative integers $\{l \in \N \mid 0
\leq l < \infty\}$ into sets $L_k = \{\lfloor k \delta \sqrt{m} \rfloor
\leq l < \lfloor (k+1) \delta \sqrt{m} \rfloor \}$ for $k = 0, 1, \ldots$.
Using the same asymptotics derived
above for $q_m(\delta)$ and that $e^{-(k+1) \delta w} \leq e^{-lw/\sqrt{m}}
\leq e^{-k \delta w} $ for all $l \in L_k$, it holds for some
remainder term $r_{m,k}$ satisfying
$|r_{m,k}(\delta)| \leq 1 - e^{-\delta w}$ that
\begin{align*}
	a_{m,k}(\delta) & = \delta \cdot \frac{1}{\delta \sqrt{m}} \sum_{\lfloor k \delta \sqrt{m} \rfloor \leq l < \lfloor (k+1) \delta \sqrt{m}  \rfloor} e^{-2k^2 \delta^2}  \cdot (1 + o_m(1)) \cdot  e^{-lw/\sqrt{m}} \nonumber \\
	& = \delta \cdot \frac{1 + r_{m,k}(\delta)}{\delta \sqrt{m}} \sum_{\lfloor k \delta \sqrt{m} \rfloor \leq l < \lfloor (k+1) \delta \sqrt{m}  \rfloor} e^{-2k^2 \delta^2}  \cdot (1 + o_m(1)) \cdot  e^{-k \delta w} \nonumber \\
	& = \prn{1 + r_{m,k}(\delta)} \cdot \delta e^{-2k^2 \delta^2-k\delta w} \cdot (1 + o_m(1)).
\end{align*}

We will invoke dominated convergence for series to allow
us to exchange summation and limits in $m$.
We observe the following termwise domination:
\begin{align*}
  a_{m,k}(\delta) & \leq \frac{1}{\sqrt{m} } \cdot \prn{\lfloor (k+1) \delta \sqrt{m}  \rfloor - \lfloor k \delta \sqrt{m} \rfloor } e^{-\lfloor k \delta \sqrt{m} \rfloor w / \sqrt{m}} \\
  & \leq \frac{\delta \sqrt{m} + 1}{\sqrt{m}} e^{-k\delta w + w/\sqrt{m}} \nonumber
  \leq (\delta +1) e^{-k\delta w + w},
\end{align*}
which is summable in $k$.  We can then invoke dominated convergence theorem
for infinite series to obtain that for some $|r(\delta)| \leq 1 - e^{-\delta
  w}$,
\begin{align*}
  \lim_{m \to \infty} \sum_{k=0}^\infty a_{m,k}(\delta) = (1 + r(\delta)) \cdot \sum_{k=0}^\infty \delta e^{-2k^2 \delta^2 - k\delta w},
\end{align*}
implying that for any $\delta > 0$,
\begin{align*}
	\binom{m}{\frac{m+1}{2}}^{-1} e^{-\frac{m-1}{2} \cdot \frac{w}{\sqrt{m}}} s_m(w/\sqrt{m}) \sim \sqrt{m} \cdot (1 + r(\delta)) \cdot \sum_{k=0}^\infty \delta e^{-2k^2 \delta^2 - k\delta w}.
\end{align*}
Since the left hand side does not depend on $\delta$, we can then send $\delta$ to zero on the right hand side and use the definition of Riemann integral to obtain
\begin{align*}	
 	\binom{m}{\frac{m+1}{2}}^{-1} e^{-\frac{m-1}{2} \cdot \frac{w}{\sqrt{m}}} s_m(w/\sqrt{m}) 
	& \sim \sqrt{m} \lim_{\delta \to 0} \sum_{k=0}^\infty \delta e^{-2 k^2 \delta^2 - k\delta w } \nonumber \\
	& = \sqrt{m} \int_0^\infty e^{-2x^2 -wx} dx = \sqrt{\frac{ \pi m}{2}} e^{\frac{w^2}{8}} \Phi(-w/2).
\end{align*}
%
%
Now we compute the limit of the remaining term in $c_m(t)$,
\begin{align*}
	s_m(w/\sqrt{m})/(1+e^{w/\sqrt{m}})^m & \sim \prn{\frac{1}{1 + e^{w/\sqrt{m}}}}^m  \cdot \binom{m}{\frac{m+1}{2}} e^{-w \sqrt{m}/2} \cdot \sqrt{\frac{ \pi m}{2}} e^{\frac{w^2}{8}} \Phi(-w/2) \nonumber \\
	& \sim \prn{\frac{2}{1 + e^{w/\sqrt{m}}}}^m \cdot  e^{-w\sqrt{m}/2} e^{\frac{w^2}{8}} \Phi(-w/2) \nonumber \\
	& =  \prn{\frac{2 e^{-\frac{w}{2\sqrt{m}}}}{1 + e^{\frac{w}{\sqrt{m}}}}}^m \cdot e^{\frac{w^2}{8}} \Phi(-w/2) = \prn{1 - \frac{\prn{1 - e^{-\frac{w}{2\sqrt{m}}}}^2}{1 + e^{\frac{w}{\sqrt{m}}}}}^m \cdot e^{\frac{w^2}{8}} \Phi(-w/2) \nonumber \\
	& \sim \prn{1 - \frac{1}{8} \frac{w^2}{m}}^m \cdot e^{\frac{w^2}{8}} \Phi(-w/2) \sim  \Phi(-w/2).
\end{align*}
Taking the above displays collectively into
\begin{align*}
	c_m(w/\sqrt{m}) = \frac{\binom{m}{\frac{m+1}{2}} e^{\frac{m-1}{2}\cdot \frac{w}{\sqrt{m}}}}{s_m(w/\sqrt{m})} \cdot \frac{1}{1 - s_m(w/\sqrt{m})/(1+e^{w/\sqrt{m}})^m}
\end{align*}
we establish part~\ref{item:normal-ratio-limit} of the lemma, that is,
\begin{align*}
	\lim_{m \to \infty} \sqrt{m} c_m \prn{\frac{w}{\sqrt{m}}} = \lim_{m \to \infty} \frac{\sqrt{m}}{\sqrt{\frac{ \pi m}{2}} e^{\frac{w^2}{8}} \Phi(-w/2)} \cdot \frac{1}{1 - \Phi(-w/2)} = e^{-\frac{w^2}{8}} \sqrt{\frac{2}{\pi}} \cdot \frac{1}{\Phi(-w/2) \Phi(w/2)}.
\end{align*}

\subsection{Proof of Lemma~\ref{lem:Jm-integral-term-upper-bound}} \label{proof:Jm-integral-term-upper-bound}
We pick some constant $C > 0$. When $w \leq C\sqrt{m}/t$,
\begin{align*}
	& \prn{\frac{2e^{\frac{tw}{2\sqrt{m}}}}{1 + e^{\frac{tw}{\sqrt{m}}}}}^{m} e^{\frac{tw}{\sqrt{m}}} w^\beta \nonumber \\
	& = \prn{1 - \frac{\prn{1-e^{-\frac{tw}{2\sqrt{m}}}}^2}{1 + e^{-\frac{tw}{\sqrt{m}}}}}^{m} e^{\frac{tw}{\sqrt{m}}} w^\beta \leq e^C  w^\beta  \exp \brc{-\frac{m\prn{1-e^{-\frac{tw}{2\sqrt{m}}}}^2}{1 + e^{-\frac{tw}{\sqrt{m}}}} }  \nonumber \\
	&  \leq e^C  w^\beta \exp \brc{-\frac{1}{2} \cdot m\prn{1-e^{-\frac{tw}{2\sqrt{m}}}}^2 } = e^C  w^\beta \exp \brc{-\frac{t^2w^2}{2} \cdot \prn{\frac{\sqrt{m}}{tw} \cdot \prn{1-e^{-\frac{tw}{2\sqrt{m}}}}}^2 }  \nonumber \\
	& \leq e^C w^\beta \cdot \exp \left\{-\frac{t^2 w^2}{2} \cdot \prn{\inf_{z \in (0, C]} \frac{1-e^{-z/2}}{z} }^2 \right\} ,
\end{align*}
and when $w > C\sqrt{m}/t$,
\begin{align*}
	\prn{\frac{2e^{\frac{tw}{2\sqrt{m}}}}{1 + e^{\frac{tw}{\sqrt{m}}}}}^{m} e^{\frac{tw}{\sqrt{m}}} w^\beta & = \prn{\frac{2e^{-\frac{tw}{2\sqrt{m}}}}{1 + e^{-\frac{tw}{\sqrt{m}}}}}^{m} e^{\frac{tw}{\sqrt{m}}} w^\beta \nonumber \\
	& \leq \prn{2e^{-\frac{tw}{4\sqrt{m}}}}^m \cdot \exp \brc{-\frac{\sqrt{tw}}{4\sqrt{m}} \cdot \sqrt{tw}} e^{\frac{tw}{\sqrt{m}}}w^\beta \nonumber \\
	& \leq \prn{2e^{-\frac{tw}{4\sqrt{m}} + \frac{tw}{m\sqrt{m}}}}^m \cdot w^\beta \exp \brc{-\frac{C}{4} \sqrt{tw}} \nonumber \\
	& \leq \prn{2e^{-C\prn{\frac{1}{4} - \frac{1}{m}}}}^m  w^\beta \exp \brc{-\frac{C}{4} \sqrt{tw}} .
\end{align*}
Then for $m \geq 8$ and $C > 8 > \log 2$, it holds $2e^{-C\prn{\frac{1}{4} - \frac{1}{m}}} < 1$ and consequently
\begin{align*}
	\prn{\frac{2e^{\frac{tw}{2\sqrt{m}}}}{1 + e^{\frac{tw}{\sqrt{m}}}}}^{m} e^{\frac{tw}{\sqrt{m}}} w^\beta \le w^\beta \exp \brc{-\frac{C}{4} \sqrt{tw}}.
\end{align*}

\subsection{Proof of Corollary~\ref{cor:LAM-lower-bounds}} \label{proof:LAM-lower-bounds}
As we have the explicit formula for the density $p_\theta(X,\majY) = \Prb(\majY \mid X) q(X)$, with $\Prb(\majY = \sgn(\<x, \theta^\star\>) \mid X = x) = \rho_m(|\<x, \theta^\star\>|) $ and $q$ being the $d$-dimensional centered density with covariance $\Sigma$. It is differentiable in quadratic mean and allows us to invoke \citet[Thm.~8.9]{VanDerVaart98}. It only remains to show the asymptotic rate in $m$ for $\Sigma_{\theta\opt}$ and $\Sigma_{u\opt}$. Indeed, by Lemma~\ref{lem:Sigma-inverse},
\begin{align*}
	\Sigma_{\theta\opt} = \fisher_m(\theta\opt)^{-1} = A_m (t\opt)^{-1} \cdot u\opt {u\opt}^\top + B_m(t\opt)^{-1} \cdot \prn{\proj_{u\opt}^\perp \Sigma \proj_{u\opt}^\perp}^\dagger,
\end{align*}
and  Theorem~\ref{thm:lower-bound-fisher-information} implies
\begin{align*}
	\Sigma_{\theta\opt} \asymp {t\opt}^{\beta +2} m^{\frac{\beta}{2}} \cdot u\opt {u\opt}^\top +  {t\opt}^{\beta}m^{\frac{\beta-2}{2}}  \cdot \prn{\proj_{u\opt}^\perp \Sigma \proj_{u\opt}^\perp}^\dagger.
\end{align*}
Analogously we establish the proof for $\Sigma_{u\opt}$.

\section{Proof of Theorem~\ref{theorem:semiparametric-master}}
\label{proof:semiparametric-master}

We divide the theorem into two main parts, consistent with the typical
division of asymptotic normality results into a consistency result and a
distributional result. Recall the notation $L(\theta,
\vec{\sigma}) = \E[\loss_{\vec{\sigma},\theta}(Y \mid X)]$
and
$\LtwoP{\vec{\sigma} - \vec{g}}^2
= \E[\ltwo{\vec{\sigma}(Z) - \vec{g}(Z)}^2]$,
where $Z$ has the distribution
that Assumption~\ref{assumption:x-decomposition} specifies.

\subsection{Proof of consistency}

We demonstrate the consistency $\spe \cp u\opt$ in three parts, which we
present as Lemmas~\ref{lemma:super-generic-solutions},
\ref{lemma:vec-sigma-theta-continuity}, and
\ref{lemma:vec-sigma-probability}. The first presents an analogue of
Lemma~\ref{lemma:generic-solutions} generalized to the case in
which there are $m$ distinct link functions, allowing us to characterize
the link-dependent minimizers
\begin{equation*}
  \theta\opt_{\vec{\sigma}} \defeq \argmin_\theta L(\theta, \vec{\sigma})
\end{equation*}
via a one-dimensional scalar on the line $\{t u\opt \mid t \in \R_+\}$.  The
second, Lemma~\ref{lemma:vec-sigma-theta-continuity}, then shows that
$\theta\opt_{\vec{\sigma}} \to u\opt$ as $\vec{\sigma}$ approaches
$\vec{\sigma}\opt$, where the scaling that
$\ltwos{\theta\opt_{\vec{\sigma}}} \to 1$ follows by the normalization
Assumption~\ref{assumption:sp-general}.
Finally, the third lemma
demonstrates the probabilistic convergence $\spe -
\theta\opt_{\vec{\sigma}_n} \cp 0$ whenever $\vec{\sigma}_n \cp
\vec{\sigma}\opt$ in $L^2(P)$.

We begin with the promised analogue of
Lemma~\ref{lemma:generic-solutions}.
\begin{lemma}
  \label{lemma:super-generic-solutions}
  Define the calibration gap function
  \begin{equation}
    \label{eqn:general-h-m-func}
    h_{\vec{\sigma}}(t)
    \defeq \frac{1}{m} \sum_{j = 1}^m
    \E\left[\left(\sigma_j(tZ) - \sigma_j\opt(Z)\right) Z\right].
  \end{equation}
  Then the loss $L(\theta, \vec{\sigma})$ has unique
  minimizer $\theta\opt_{\vec{\sigma}} = t_{\vec{\sigma}} u\opt$ for the
  unique $t_{\vec{\sigma}} \in (0, \infty)$ solving
  $h_{\vec{\sigma}}(t) = 0$. Additionally,
  taking
  $\hessfunc_j(t, z) \defeq
  \sigma_j'(-tz) \sigma_j\opt(z) + \sigma_j'(tz) \sigma_j\opt(-z)$,
  we have
  \begin{equation*}
    \nabla^2 L(t u\opt, \vec{\sigma})
    = \frac{1}{m} \sum_{j = 1}^m \left(
    \E[\hessfunc_j(t, Z) Z^2] u\opt {u\opt}^\top
    + \E[\hessfunc_j(t, Z)] \projperp \Sigma \projperp \right)
    \succ 0.
  \end{equation*}
\end{lemma}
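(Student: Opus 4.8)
The plan is to mirror the proof of Lemma~\ref{lemma:generic-solutions}: exploit the decomposition $X = Z u\opt + W$ of Assumption~\ref{assumption:x-decomposition} to reduce the minimization over $\theta \in \R^d$ to a one-dimensional problem on the ray $\{t u\opt : t \ge 0\}$, locate the unique stationary point of the reduced problem, and then use convexity of $\theta \mapsto L(\theta,\vec{\sigma})$ to upgrade this stationary point to the unique global minimizer; the Hessian formula drops out of the same computation.

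First I would compute the gradient along the ray. From $\loss_{\sigma_j,\theta}(y_j \mid x) = -\int_0^{y_j\<\theta,x\>}\sigma_j(-v)\,dv$ we get $\nabla_\theta \loss_{\sigma_j,\theta}(y_j \mid x) = -y_j\,\sigma_j(-y_j\<\theta,x\>)\,x$. Taking the conditional expectation given $X$ with $\P(Y_j = 1 \mid X = x) = \sigma_j\opt(\<x,u\opt\>)$ (Assumption~\ref{assumption:sp-general}, recalling $\theta\opt = u\opt$ under the normalization) and then using $\sigma_j(-s) = 1 - \sigma_j(s)$, the bracketed coefficient telescopes to $\sigma_j(\<\theta,X\>) - \sigma_j\opt(\<X,u\opt\>)$, so $\nabla_\theta L(\theta,\vec{\sigma}) = \frac{1}{m}\sum_{j}\E[(\sigma_j(\<\theta,X\>) - \sigma_j\opt(\<X,u\opt\>))X]$. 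Substituting $\theta = t u\opt$ so $\<\theta,X\> = tZ$ and $\<X,u\opt\> = Z$, and using $X = Z u\opt + W$ with $W \perp Z$, $\E[W] = 0$ to annihilate the $W$-component, I obtain $\nabla_\theta L(t u\opt,\vec{\sigma}) = h_{\vec{\sigma}}(t)\,u\opt$ with $h_{\vec{\sigma}}$ exactly the calibration gap~\eqref{eqn:general-h-m-func}.

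Next I would show $h_{\vec{\sigma}}$ has a unique positive zero. Since $Z$ is mean zero with no atom at $0$ and each $\sigma_j\opt \in \fsymlinkL$ is symmetric, non-decreasing, and $\not\equiv \half$, the quantity $(\sigma_j\opt(Z)-\half)Z$ is nonnegative and strictly positive on a set of positive probability, so $h_{\vec{\sigma}}(0) = -\frac{1}{m}\sum_j \E[(\sigma_j\opt(Z)-\half)Z] < 0$. Differentiating under the integral, $h_{\vec{\sigma}}'(t) = \frac{1}{m}\sum_j \E[\sigma_j'(tZ)Z^2] > 0$, so $h_{\vec{\sigma}}$ is strictly increasing. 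Finally, because each $\sigma_j$ is symmetric, non-decreasing, and non-constant, $\sigma_j(tZ) \to \half + \bar{c}_j\,\sgn(Z)$ as $t\to\infty$ for some $\bar{c}_j > 0$, and dominated convergence (valid since $\E[|Z|] < \infty$, from non-singularity of $\Sigma$) shows $\lim_{t\to\infty} h_{\vec{\sigma}}(t) > 0$. The intermediate value theorem then gives the unique $t_{\vec{\sigma}}\in(0,\infty)$ with $h_{\vec{\sigma}}(t_{\vec{\sigma}}) = 0$. Convexity of $L(\cdot,\vec{\sigma})$—each summand $s\mapsto -\int_0^s\sigma_j(-v)\,dv$ has second derivative $\sigma_j'(-s)\ge 0$—then certifies that the stationary point $\theta\opt_{\vec{\sigma}} := t_{\vec{\sigma}}u\opt$ is a global minimizer.

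For the Hessian, I would differentiate the pre-collapse form of the gradient (keeping the pair $-\sigma_j(-\<\theta,X\>)\sigma_j\opt(\<X,u\opt\>) + \sigma_j(\<\theta,X\>)(1-\sigma_j\opt(\<X,u\opt\>))$, noting $\sigma_j\opt$ is $\theta$-free) and write $1-\sigma_j\opt(Z) = \sigma_j\opt(-Z)$, obtaining $\nabla_\theta^2 L(t u\opt,\vec{\sigma}) = \frac{1}{m}\sum_j \E[\hessfunc_j(t,Z)\,XX^\top]$ with $\hessfunc_j(t,z) = \sigma_j'(-tz)\sigma_j\opt(z) + \sigma_j'(tz)\sigma_j\opt(-z)$ (which by evenness of $\sigma_j'$ and symmetry of $\sigma_j\opt$ also equals $\sigma_j'(tz)$). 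Expanding $XX^\top$ through $X = Z u\opt + W$ and using $\E[WW^\top] = \projperp\Sigma\projperp$ yields the claimed split into a $u\opt{u\opt}^\top$ block with coefficient $\frac{1}{m}\sum_j\E[\hessfunc_j(t,Z)Z^2]$ and a $\projperp\Sigma\projperp$ block with coefficient $\frac{1}{m}\sum_j\E[\hessfunc_j(t,Z)]$. Positive definiteness follows because $\hessfunc_j\ge 0$ and each $\E[\hessfunc_j(t,Z)] > 0$ (the links are non-constant and $\sigma_j\opt > 0$ near the origin), so both coefficients are strictly positive and $\projperp\Sigma\projperp$ is positive definite on $u\opt{}^\perp$; this strict convexity at $\theta\opt_{\vec{\sigma}}$ also gives uniqueness of the minimizer. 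The one genuinely delicate step will be the $t\to\infty$ behavior of $h_{\vec{\sigma}}$ in the third paragraph—equivalently, coercivity of $L(\cdot,\vec{\sigma})$ along the ray—where the symmetry and monotonicity of both $\sigma_j$ and $\sigma_j\opt$ together with the non-degeneracy of $|Z|$ near $0$ must be combined; everything else is a routine differentiation-under-the-integral computation paralleling Lemma~\ref{lemma:generic-solutions}.
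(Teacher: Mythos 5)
Your proof follows essentially the same route as the paper's: reduce to the ray $\{tu\opt\}$ via the decomposition $X = Zu\opt + W$, identify the gradient as $h_{\vec{\sigma}}(t)u\opt$ after the symmetry-based telescoping, establish a unique zero of $h_{\vec{\sigma}}$ from $h_{\vec{\sigma}}(0) < 0$, strict monotonicity, and positivity of the limit at infinity, and then read off the Hessian by expanding $XX^\top$. Your treatment of the $t\to\infty$ limit is asserted rather than carried out, but the paper's own proof is no more detailed on that point (and your argument for $h_{\vec{\sigma}}(0)<0$ via $(\sigma_j\opt(Z)-\tfrac12)Z \ge 0$ is in fact slightly more careful than the paper's).
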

\begin{proof}
	We perform a derivation similar to that we used
	to derive the gap function~\eqref{eqn:h-m-func}, with a few
	modifications to allow collections of $m$ link functions.
	Note that
	\begin{align*}
		L(\theta, \vec{\sigma})
		& = \frac{1}{m} \sum_{j = 1}^m
		\E[\loss_{\sigma_j, \theta}(1 \mid X) \sigma\opt_j(\<X, u\opt\>)
		+ \loss_{\sigma_j,\theta}(-1 \mid X) \sigma\opt_j(-\<X, u\opt\>)],
	\end{align*}
	so that leveraging the usual ansatz that $\theta = t u\opt$, we have
	\begin{align*}
		\nabla L(\theta, \vec{\sigma})
		& = \E\left[\frac{1}{m}
		\sum_{j = 1}^m \left(\sigma\opt_j(-\<X, u\opt\>) \sigma_j(\<\theta, X\>)
		- \sigma\opt_j(\<X, u\opt\>) \sigma_j(-\<\theta, X\>)\right) X \right] \\
		& = \frac{1}{m} \sum_{j = 1}^m
		\E\left[\left(\sigma_j\opt(-Z) \sigma_j(t Z)
		- \sigma_j\opt(Z) \sigma_j(-tZ) Z\right)\right] u\opt \\
		& = h_{\vec{\sigma}}(t) u\opt,
	\end{align*}
	where $h_{\vec{\sigma}}(t) = \frac{1}{m} \sum_{j = 1}^m
	\E[(\sigma_j\opt(-Z) \sigma_j(tZ) - \sigma_j\opt(Z) \sigma_j(-tZ)) Z]$ is
	the immediate generalization of the gap~\eqref{eqn:h-m-func}. We
	now simplify it to the form~\eqref{eqn:general-h-m-func}. Using the
	symmetries $\sigma\opt(z) = 1 - \sigma\opt(-z)$ and $\sigma(z) = 1 -
	\sigma(-z)$ for any $\sigma, \sigma\opt \in \Flink$, we observe that
	\begin{equation*}
		\sigma(tz) (1 - \sigma\opt(z))
		- \sigma(-tz) \sigma\opt(z)
		= \sigma(tz) - (\sigma(tz) + \sigma(-tz)) \sigma\opt(z)
		= \sigma(tz) - \sigma\opt(z),
	\end{equation*}
	and so
	$h_{\vec{\sigma}}(t) = \frac{1}{m} \sum_{j = 1}^m\E[(\sigma_j(tZ) -
	\sigma_j\opt(Z))Z]$.
	Of course, $h_{\vec{\sigma}}(0) = -\frac{1}{m}
	\sum_{j = 1}^m \E[\sigma_j\opt(Z) Z] < 0$, as $\E[Z] = 0$, while
	$\lim_{t \to \infty} h_{\vec{\sigma}}(t) =
	\frac{1}{m} \sum_{j = 1}^m \E[|Z| - \sigma_j\opt(Z) Z] > 0$.
	Then as $h_{\vec{\sigma}}'(t) =
	\frac{1}{m} \sum_{j = 1}^m \E[\sigma_j'(tZ) Z^2] > 0$, there
	exists a unique $t_{\vec{\sigma}} \in (0, \infty)$ satisfying
	$h_{\vec{\sigma}}(t_{\vec{\sigma}}) = 0$.
	
	We turn to the Hessian derivation, where as in the proof of
	Lemma~\ref{lemma:generic-solutions}, we
	write for $\theta = tu\opt$ that
	\begin{align*}
		\nabla^2 L(\theta, \vec{\sigma})
		& = \frac{1}{m} \sum_{j = 1}^m \E\left[
		(\sigma_j'(-t Z) \sigma\opt_j(Z)
		+ \sigma_j'(tZ) \sigma\opt_j(-Z)) Z^2\right] u\opt{u\opt}^\top \\
		& \qquad ~ +
		\frac{1}{m} \sum_{j = 1}^m \E\left[
		(\sigma_j'(-t Z) \sigma\opt_j(Z)
		+ \sigma_j'(tZ) \sigma\opt_j(-Z))\right] \projperp \Sigma
		\projperp,
	\end{align*}
	and so $\nabla^2 L(\theta, \vec{\sigma}) \succ 0$ and $L(\theta,
	\vec{\sigma})$ has unique minimizer $\theta_{\vec{\sigma}}\opt =
	t_{\vec{\sigma}} u\opt$.
\end{proof}

With Lemma~\ref{lemma:super-generic-solutions} serving as the analogue
of Lemma~\ref{lemma:generic-solutions}, we can now show
the continuity of the optimizing parameter $\theta\opt_{\vec{\sigma}}$
in $\vec{\sigma}$:
\begin{lemma}
	\label{lemma:vec-sigma-theta-continuity}
	As $\LtwoP{\vec{\sigma} - \vec{\sigma}\opt} \to 0$,
	we have $\theta\opt_{\vec{\sigma}} - u\opt \to 0$.
\end{lemma}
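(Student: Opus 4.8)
The plan is to reduce the statement to one--dimensional scalars. By Lemma~\ref{lemma:super-generic-solutions} we have $\theta\opt_{\vec\sigma} = t_{\vec\sigma} u\opt$, where $t_{\vec\sigma}$ is the unique positive zero of the gap function $h_{\vec\sigma}$ in~\eqref{eqn:general-h-m-func}, and the normalization in Assumption~\ref{assumption:sp-general} gives $\P(Y_j = y \mid X = x) = \sigma_j\opt(y\<x, u\opt\>)$, so that $h_{\vec\sigma\opt}(1) = \frac{1}{m}\sum_{j=1}^m \E[(\sigma_j\opt(Z) - \sigma_j\opt(Z)) Z] = 0$; hence $t_{\vec\sigma\opt} = 1$, $\theta\opt_{\vec\sigma\opt} = u\opt$, and $\ltwo{\theta\opt_{\vec\sigma} - u\opt} = |t_{\vec\sigma} - 1|$. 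It therefore suffices to show $t_{\vec\sigma} \to 1$ as $\LtwoP{\vec\sigma - \vec\sigma\opt} \to 0$.

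First I would record the shape of $h_{\vec\sigma\opt}$: it is continuous (indeed Lipschitz in $t$, since each $\sigma_j$ is $\lipconst$--Lipschitz and $\E[Z^2] \le \E[\ltwo{X}^2] < \infty$), and by Lemma~\ref{lemma:super-generic-solutions} it satisfies $h_{\vec\sigma\opt}(0) < 0$, $\lim_{t\to\infty} h_{\vec\sigma\opt}(t) > 0$, and has $1$ as its only zero; the intermediate value theorem then forces $h_{\vec\sigma\opt} < 0$ on $(0,1)$ and $h_{\vec\sigma\opt} > 0$ on $(1,\infty)$. Now fix $\epsilon \in (0, 1/2)$ and set $\delta := \min\{h_{\vec\sigma\opt}(1+\epsilon),\, -h_{\vec\sigma\opt}(1-\epsilon)\} > 0$. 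For any $\vec\sigma$ with $\sup_{t \in [1-\epsilon, 1+\epsilon]} |h_{\vec\sigma}(t) - h_{\vec\sigma\opt}(t)| < \delta$ we then get $h_{\vec\sigma}(1-\epsilon) < 0 < h_{\vec\sigma}(1+\epsilon)$, so by continuity of $h_{\vec\sigma}$ and uniqueness of its zero, $t_{\vec\sigma} \in (1-\epsilon, 1+\epsilon)$. Thus the lemma follows once we establish that $\sup_{t \in [1-\epsilon, 1+\epsilon]} |h_{\vec\sigma}(t) - h_{\vec\sigma\opt}(t)| \to 0$ as $\LtwoP{\vec\sigma - \vec\sigma\opt} \to 0$.

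For this last step I would argue by the subsequence principle together with compactness. Writing $g_j := \sigma_j - \sigma_j\opt$, which is odd, bounded by $1$, $2\lipconst$--Lipschitz, and vanishes at $0$, the symmetry of the links gives $h_{\vec\sigma}(t) - h_{\vec\sigma\opt}(t) = \frac1m\sum_{j} \E[g_j(t|Z|)\,|Z|]$, hence $|h_{\vec\sigma}(t) - h_{\vec\sigma\opt}(t)| \le \frac1m\sum_j \E[|g_j(t|Z|)|\,|Z|]$. Given a sequence $\vec\sigma_n$ with $\LtwoP{\vec\sigma_n - \vec\sigma\opt} \to 0$ --- equivalently $\E[g_{n,j}(Z)^2] \to 0$ for each $j$ --- suppose the supremum fails to vanish; pass to a subsequence along which it stays bounded below by some $c > 0$ and, by Arzel\`a--Ascoli (the $g_{n,j}$ are uniformly bounded and equi--Lipschitz), along which $g_{n,j} \to g_j$ uniformly on compact sets for each $j$. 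Bounded convergence on $\{\,|Z| \le R\,\}$ together with $\E[g_{n,j}(Z)^2] \to 0$ gives $\E[g_j(Z)^2 \indic{|Z| \le R}] = 0$ for every $R$, and since $|Z| > 0$ almost surely with density nonzero on $(0,\infty)$, this forces $g_j = 0$ on $(0,\infty)$. A final truncation --- splitting $\E[|g_{n,j}(t|Z|)|\,|Z|]$ over $\{|Z| \le R\}$ and its complement, bounding the first part by $\sup_{|w|\le (1+\epsilon)R} |g_{n,j}(w)|\cdot\E[|Z|] \to 0$ and the second by $\E[|Z|\indic{|Z|>R}]$, then letting $R\to\infty$ --- shows $\sup_{t \in [1-\epsilon, 1+\epsilon]} \E[|g_{n,j}(t|Z|)|\,|Z|] \to 0$, contradicting the choice of subsequence.

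The main obstacle is the dilation mismatch: $\LtwoP{\cdot}$ measures distance of the links at the argument $Z$, whereas $h_{\vec\sigma}$ evaluates them at $tZ$ with $t$ near (but not equal to) $1$. The compactness argument above avoids any quantitative comparison of the laws of $Z$ and $tZ$; the price is that it leans on the full--support hypothesis for $|Z|$ and on the symmetry of the links (used both in the identity for $h_{\vec\sigma}(t)-h_{\vec\sigma\opt}(t)$ and in passing from $L^2$--nullity to $g_j = 0$). A more computational route --- change of variables $w = tz$ followed by Cauchy--Schwarz --- would instead demand a uniform bound on $\int_0^\infty w^2 p(w/t)^2/(t^4 p(w))\,dw$ for $t$ near $1$, which is less robust, so I would avoid it.
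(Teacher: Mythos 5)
Your proof is correct, and for the crux of the argument it takes a genuinely different route from the paper. Both proofs make the same reduction via Lemma~\ref{lemma:super-generic-solutions} to showing $t_{\vec{\sigma}} \to 1$, and both ultimately rest on the strict monotonicity of the gap function together with uniform control of $h_{\vec{\sigma}} - h_{\vec{\sigma}\opt}$ on a compact interval of $t$ around $1$ (the paper states this sandwich step more tersely than you do, but it is the same idea). Where you diverge is in how that uniform control is obtained. The paper proves a quantitative claim: for $t$ in a compact interval, $\LtwoP{\sigma(tZ) - \sigma\opt(tZ)} \lesssim K\,\LtwoP{\sigma - \sigma\opt} + \P(|Z| \notin [M_0, M_1])$, established by truncating to $|Z| \in [M_0, M_1]$, bounding the density ratio $p(z)/p(tz)$ there (this is exactly the ``change of variables'' route you chose to avoid, but restricted to a compact annulus where the continuous nonzero density makes the ratio harmlessly bounded), and then applying Cauchy--Schwarz against $Z$. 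You instead run a qualitative compactness argument: Arzel\`a--Ascoli on the equi-Lipschitz differences $g_{n,j} = \sigma_{n,j} - \sigma_j\opt$, identification of the locally uniform limit as zero via the $L^2(\P)$ convergence and the full support of $|Z|$, and a final truncation in $|Z|$. Your route leans on the Lipschitz bound in $\fsymlinkL$ (for equicontinuity) and the oddness of the $g_j$, neither of which the paper's claim needs, while the paper's route leans on the bounded density ratio; in the setting of Theorem~\ref{theorem:semiparametric-master} all of these hypotheses are available, so both arguments go through. The paper's version has the mild advantage of being reusable as a standalone quantitative estimate (it is stated as Claim~\ref{claim:taco} and holds per-link without Lipschitz assumptions); yours has the advantage of avoiding any comparison between the laws of $Z$ and $tZ$.
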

\begin{proof}
	Via Lemma~\ref{lemma:super-generic-solutions}, it is evidently
	sufficient to show that the solution $t_{\vec{\sigma}}$ to
	$h_{\vec{\sigma}}(t) = 0$ converges to $1$.
	To show this,
	note the expansion
	\begin{equation}
		\label{eqn:expand-h-m}
		m h_{\vec{\sigma}}(t) =
		\sum_{j = 1}^m \left(\E[(\sigma_j(tZ) - \sigma_j\opt(tZ)) Z]
		+ \E[(\sigma_j\opt(tZ) - \sigma_j\opt(Z))Z]
		\right).
	\end{equation}
	We use the following claim, which shows that
	the first term tends to 0 uniformly in $t$ near $1$:
	\begin{claim}
		\label{claim:taco}
		For any $\sigma, \sigma\opt \in \Flink$, we have
		$\sup_{t \in [\half,2]} \LtwoP{\sigma(tZ) - \sigma\opt(tZ)}
		\to 0$ whenever $\LtwoP{\sigma\opt - \sigma} \to 0$.
	\end{claim}
	\begin{proof}
		We use that the density $p(z)$ of $|Z|$ is continuous
		and nonzero on $(0, \infty)$. Take any $0 < M_0 < M_1 < \infty$.
		Then
		\begin{align*}
			\lefteqn{\sup_{t \in [\half, 2]}
				\LtwoP{\sigma\opt(tZ) - \sigma(tZ)}
				=  \sup_{t \in [\half, 2]} \sqrt{\int_0^\infty \prn{\sigma\opt(tz) - \sigma(tz)}^2 p(z) dz}} \\
			& \stackrel{(i)}{\leq}
			\Prb(|Z| \leq M_0) + \Prb(|Z| \geq M_1) +
			\sup_{t \in [\half, 2]} \sqrt{\int_{M_0}^{M_1}
				\prn{\sigma\opt(tz) - \sigma(tz)}^2 p(tz) \cdot
				\frac{p(z)}{p(tz)} dz} \nonumber \\
			& \leq \Prb(|Z| \leq M_0) + \Prb(|Z| \geq M_1) + \sup_{
				\frac{M_0}{2} \le z, z'
				\le 2 M_1}
			\sqrt{\frac{p(z)}{p(z')}}
			\sup_{t \in [\half, 2]} \sqrt{\int_{M_0}^{M_1}
				\prn{\sigma\opt(tz) - \sigma(tz)}^2 p(tz) dz}  ,
		\end{align*}
		where in $(i)$ we use that the link functions $\sigma\opt$ and $\sigma$
		are bounded within $[0,1]$. For any fixed $0 < M_0 \le M_1 < \infty$,
		the ratio $\frac{p(z)}{p(z')}$ is bounded
		for $z, z' \in [\half M_0, 2 M_1]$, and using
		the substitution $tz \mapsto z$ we have
		\begin{align*}
			\sup_{t \in [\half, 2]} \sqrt{
				\int_{M_0}^{M_1}
				\prn{\sigma\opt(tz) - \sigma(tz)}^2 p(tz) dz}
			\leq \sqrt{2} \cdot \LtwoP{\sigma\opt - \sigma}.
		\end{align*}
		We thus have that
		$\LtwoP{\sigma(tZ) - \sigma\opt(tZ)}
		\le K \LtwoP{\sigma - \sigma\opt}
		+ \P(|Z| \not \in [M_0, M_1])$,
		where $K$ depends only on $M_0, M_1$ and the
		distribution of $Z$.
		Take $M_0 \downarrow 0$ and $M_1 \uparrow \infty$.
	\end{proof}
	
	Leveraging the expansion~\eqref{eqn:expand-h-m} preceding
	Claim~\ref{claim:taco} and the claim itself, we see that
	\begin{equation*}
		h_{\vec{\sigma}}(t) = \E[Z^2] \cdot o(1) +
		\frac{1}{m} \sum_{j = 1}^m
		\E[(\sigma_j\opt(tZ) - \sigma_j\opt(Z))Z]
	\end{equation*}
	uniformly in $t \in [\half, 2]$ as $\LtwoP{\vec{\sigma}\opt -
		\vec{\sigma}} \to 0$. The monotonicity of each $\sigma_j\opt$ guarantees
	that if $f_j(t) = \E[(\sigma_j\opt(tZ) - \sigma_j\opt(Z))Z]$, then
	$f'_j(t) = \E[{\sigma_j\opt}'(tZ) Z^2] > 0$, and so $t = 1$ uniquely
	solves $f_j(t) = 0$ and we must have $t_{\vec{\sigma}} \to 1$ as
	$\LtwoP{\vec{\sigma} - \vec{\sigma}\opt} \to 0$.
\end{proof}

Finally, we proceed to the third part of the consistency argument:
the convergence in probability.
\begin{lemma}
	\label{lemma:vec-sigma-probability}
	If $\LtwoP{\vec{\sigma}_n - \vec{\sigma}} \cp 0$,
	then $\spe - \theta_{\vec{\sigma}_n}\opt \cp 0$.
\end{lemma}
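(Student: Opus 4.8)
The plan is to deduce the claim from three ingredients: a uniform law of large numbers that is uniform over $\theta$ in a ball \emph{and} over all admissible link vectors (Lemma~\ref{lemma:rademacher-symmetrization}); continuity of the population criterion under $L^2(\Prb)$-perturbation of the links; and the standard convexity trick for consistency of $M$-estimators. Throughout write $Q_n(\theta)\defeq P_n\loss_{\vec{\sigma}_n,\theta}$ for the empirical criterion with the (random, estimated) links and $\bar{Q}_n(\theta)\defeq L(\theta,\vec{\sigma}_n)$ for its population counterpart, so that $\spe=\argmin_\theta Q_n(\theta)$ while $\theta\opt_{\vec{\sigma}_n}=t_{\vec{\sigma}_n}u\opt=\argmin_\theta\bar{Q}_n(\theta)$ by Lemma~\ref{lemma:super-generic-solutions}; each of $Q_n$ and $\bar Q_n$ is convex in $\theta$, and since $|\loss_{\sigma_j,\theta}(y\mid x)|\le\ltwo{\theta}\ltwo{x}$ and $\E[\ltwo{X}^2]<\infty$, the fixed function $L(\cdot,\vec{\sigma})$ is finite and continuous with unique minimizer $\theta\opt_{\vec{\sigma}}$ (Lemma~\ref{lemma:super-generic-solutions} again).

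First I would reduce to convergence to a fixed point. By the argument of Lemma~\ref{lemma:vec-sigma-theta-continuity} (the gap function $h_{\vec{\sigma}}$ of Lemma~\ref{lemma:super-generic-solutions} converges uniformly near its root and has strictly positive derivative there), $\LtwoP{\vec{\sigma}_n-\vec{\sigma}}\cp 0$ forces $t_{\vec{\sigma}_n}\cp t_{\vec{\sigma}}$ and hence $\theta\opt_{\vec{\sigma}_n}\cp\theta\opt_{\vec{\sigma}}$. Thus it suffices to prove $\spe\cp\theta\opt_{\vec{\sigma}}$, after which $\spe-\theta\opt_{\vec{\sigma}_n}=(\spe-\theta\opt_{\vec{\sigma}})-(\theta\opt_{\vec{\sigma}_n}-\theta\opt_{\vec{\sigma}})\cp 0$.

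Next I would establish $\sup_{\ltwo{\theta}\le r}|Q_n(\theta)-L(\theta,\vec{\sigma})|\cp 0$ for every fixed $r$, splitting it via the triangle inequality into $|Q_n(\theta)-\bar{Q}_n(\theta)|+|L(\theta,\vec{\sigma}_n)-L(\theta,\vec{\sigma})|$. For the first piece, Lemma~\ref{lemma:rademacher-symmetrization} (with $\Flink=\fsymlinkL$ and $\gamma=4$, using $\E[\ltwo{X}^4]<\infty$) bounds $\E[\sup_{\ltwo{\theta}\le r}\sup_{\vec{\sigma}\in(\fsymlinkL)^m}|P_n\loss_{\vec{\sigma},\theta}-L(\theta,\vec{\sigma})|]$ by an $O((m/n)^{4/13}\sqrt{\log(rn)})$ quantity that vanishes; since $\vec{\sigma}_n\in\fsymlinksp\subset(\fsymlinkL)^m$ almost surely, the left side dominates $\sup_{\ltwo{\theta}\le r}|Q_n-\bar{Q}_n|$ on every sample path, so $\sup_{\ltwo{\theta}\le r}|Q_n-\bar{Q}_n|\cp 0$ by Markov. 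For the second piece, I would mimic Claim~\ref{claim:taco}: for $\ltwo{\theta}\le r$ one has $|\loss_{\tau,\theta}(y\mid x)-\loss_{\sigma,\theta}(y\mid x)|=|\int_0^{y\<\theta,x\>}(\tau(-v)-\sigma(-v))\,dv|\le\int_{-r\ltwo{x}}^{r\ltwo{x}}|\sigma(v)-\tau(v)|\,dv$, so (using symmetry of the links and a Markov bound on $\P(\ltwo{X}\ge v/r)$) $\E|\loss_{\sigma_{n,j},\theta}(Y_j\mid X)-\loss_{\sigma_j,\theta}(Y_j\mid X)|\le 2\int_0^\infty|\sigma_{n,j}(v)-\sigma_j(v)|\,\P(\ltwo{X}\ge v/r)\,dv$; splitting the integral at large $M$ (tail controlled by $r^2\E[\ltwo{X}^2]/M$), at small $\epsilon$ (controlled by $\epsilon$), and on $[\epsilon,M]$ (controlled by Cauchy--Schwarz together with the strict positivity and continuity of the density $p$ of $|Z|$ on $[\epsilon,M]$, giving a bound $C_{\epsilon,M}\LtwoP{\vec{\sigma}_n-\vec{\sigma}}$) yields $\sup_{\ltwo{\theta}\le r}|L(\theta,\vec{\sigma}_n)-L(\theta,\vec{\sigma})|\cp 0$.

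Finally I would run the convexity sandwich. Fix $\delta>0$. Since $L(\cdot,\vec{\sigma})$ is convex and continuous with unique minimizer $\theta\opt_{\vec{\sigma}}$, $\eta(\delta)\defeq\inf_{\ltwo{\theta-\theta\opt_{\vec{\sigma}}}=\delta}L(\theta,\vec{\sigma})-L(\theta\opt_{\vec{\sigma}},\vec{\sigma})>0$ (the infimum is over a compact sphere of a continuous function whose value strictly exceeds that at the center). Choose $r$ so the sphere $\{\ltwo{\theta-\theta\opt_{\vec{\sigma}}}=\delta\}$ lies in $\{\ltwo{\theta}\le r\}$; on the event $\{\sup_{\ltwo{\theta}\le r}|Q_n(\theta)-L(\theta,\vec{\sigma})|<\eta(\delta)/3\}$, whose probability tends to one by the previous step, we get $Q_n(\theta)>Q_n(\theta\opt_{\vec{\sigma}})$ for every $\theta$ with $\ltwo{\theta-\theta\opt_{\vec{\sigma}}}=\delta$, and convexity of $Q_n$ (comparing along the segment from $\theta\opt_{\vec{\sigma}}$) propagates this to all $\ltwo{\theta-\theta\opt_{\vec{\sigma}}}\ge\delta$; hence $Q_n$ attains its minimum (so $\spe$ is well-defined) inside the open ball, with $\ltwo{\spe-\theta\opt_{\vec{\sigma}}}<\delta$. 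As $\delta>0$ was arbitrary, $\spe\cp\theta\opt_{\vec{\sigma}}$, which completes the proof via the reduction above. The main obstacle is the second piece of the uniform-convergence step --- converting $L^2(\Prb)$-closeness of the \emph{randomly estimated} links into closeness of the integrated losses uniformly over a $\theta$-ball --- which is exactly where the density regularity of $|Z|$ and the truncation bookkeeping sketched above are needed; the remaining steps are routine applications of the cited uniform LLN and the convex $M$-estimation principle.
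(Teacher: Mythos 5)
Your proof is correct, but it takes a genuinely different route through the middle of the argument than the paper does. The paper never passes to a fixed population target: it compares $P_n \loss_{\vec{\sigma}_n,\theta}$ directly to $L(\theta,\vec{\sigma}_n)$ (both evaluated at the \emph{estimated} links, using the same uniform LLN over $(\theta,\vec{\sigma})$ from Lemma~\ref{lemma:rademacher-symmetrization}) and then invokes the continuity of $(\theta,\vec{\sigma})\mapsto\nabla^2_\theta L(\theta,\vec{\sigma})$ from Assumption~\ref{assumption:sp-general} to obtain a \emph{quantitative} quadratic growth bound $L(\theta,\vec{\sigma})\ge L(\theta\opt_{\vec{\sigma}},\vec{\sigma})+\frac{\lambda}{2}\ltwo{\theta-\theta\opt_{\vec{\sigma}}}^2$ that holds uniformly over links within $L^2(\Prb)$-distance $\delta$ of $\vec{\sigma}\opt$; the convexity sandwich is then run around the moving center $\theta\opt_{\vec{\sigma}_n}$. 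You instead first reduce to the fixed target $\theta\opt_{\vec{\sigma}}$ via Lemma~\ref{lemma:vec-sigma-theta-continuity}, which forces you to prove the extra continuity statement $\sup_{\ltwo{\theta}\le r}|L(\theta,\vec{\sigma}_n)-L(\theta,\vec{\sigma})|\cp 0$; your truncation-plus-Cauchy--Schwarz argument for that step is sound, but note it leans on the strict positivity and continuity of the density $p$ of $|Z|$, which is not needed in the paper's proof of this lemma (it is, however, among the hypotheses of Theorem~\ref{theorem:semiparametric-master} and is what the paper itself uses in Claim~\ref{claim:taco}, so you are not assuming anything out of scope). In exchange, you only need a qualitative positive gap $\eta(\delta)>0$ on a compact sphere around a fixed center, so you never touch the Hessian-continuity assumption in this step. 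Both arguments close the same way, via convexity of $\theta\mapsto P_n\loss_{\vec{\sigma}_n,\theta}$ propagating the sphere inequality outward. Net effect: the paper's route is shorter because the growth constant does the work; yours is slightly longer but isolates where the link estimate enters, at the cost of an extra regularity input on the distribution of $Z$.
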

\begin{proof}
	By Lemma~\ref{lemma:super-generic-solutions}
	and the assumed continuity of the population Hessian,
	there exists $\delta > 0$ such that
	\begin{equation}
		L(\theta, \vec{\sigma}) \ge L(\theta\opt_{\vec{\sigma}}, \vec{\sigma})
		+ \frac{\lambda}{2} \ltwo{\theta - \theta\opt_{\vec{\sigma}}}^2
		\label{eqn:hessian-lb}
	\end{equation}
	whenever both $\LtwoP{\vec{\sigma} - \vec{\sigma}\opt} \le \delta$ and
	$\ltwo{\theta\opt_{\vec{\sigma}} - \theta} \le \delta$. Applying
	the uniform convergence
	Lemma~\ref{lemma:rademacher-symmetrization}, we see that
	for any $r < \infty$, we have
	\begin{equation*}
		\sup_{\ltwo{\theta} \le r, \vec{\sigma} \in \Flink^m}
		|P_n \loss_{\vec{\sigma},\theta} - L(\theta, \vec{\sigma})| \cp 0.
	\end{equation*}
	For $\delta > 0$, define the events
	\begin{equation*}
		\event_n(\delta) \defeq \left \{
		\ltwo{\theta\opt_{\vec{\sigma}} - u\opt} \le \delta,
		\LtwoP{\vec{\sigma}_n - \vec{\sigma}\opt} \le \delta \right\},
	\end{equation*}
	where
	Lemma~\ref{lemma:vec-sigma-theta-continuity}
	and the assumption that $\LtwoP{\vec{\sigma}_n - \vec{\sigma}\opt}
	\cp 0$ imply that $\P(\event_n(\delta)) \to 1$ for all $\delta > 0$.
	By the growth condition~\eqref{eqn:hessian-lb}
	and uniform convergence
	$P_n \loss_{\vec{\sigma},\theta} - L(\theta,\vec{\sigma}) \cp 0$
	over $\ltwo{\theta} \le r$,
	we therefore have that with probability tending to 1, 
	\begin{equation*}
		\inf_{\ltwos{\theta - \theta\opt_{\vec{\sigma}_n}} = \delta}
		\left\{P_n \loss_{\theta,\vec{\sigma}_n} - P_n \loss_{\theta\opt_{\vec{\sigma}_n},
			\vec{\sigma}_n} \right\}
		\ge \frac{\lambda}{4} \delta^2.
	\end{equation*}
	The convexity of the losses $\loss_{\theta,\vec{\sigma}}$ in $\theta$ and that
	$\spe$ minimizes $P_n \loss_{\theta,\vec{\sigma}_n}$ then
	guarantee the desired convergence $\spe - \theta\opt_{\vec{\sigma}_n} \cp 0$.
\end{proof}

\subsection{Asymptotic normality via Donsker classes}

\providecommand{\G}{\mathbb{G}}

While we do not have $\LtwoP{\vec{\sigma}_n - \vec{\sigma}\opt} =
o_P(n^{-1/2})$, which allows the cleanest and simplest asymptotic normality
results with nuisance parameters
(e.g.~\cite[Thm.~25.54]{VanDerVaart98}), we still expect
$\sqrt{n}(\spe - \theta\opt_{\vec{\sigma}_n})$ to be asymptotically
normal, and therefore, as $\theta\opt_{\vec{\sigma}} = t u\opt$ for some
$t > 0$, the normalized estimators $\spe / \ltwos{\spe}$
should be asymptotically normal to $u\opt$.
To develop the asymptotic normality results, we perform an analysis
of the empirical process centered at the \emph{estimators}
$\spe$, rather than the ``true'' parameter $u\opt$ as would
be typical.

We begin with an expansion. We let $\theta\opt_n =
\theta\opt_{\vec{\sigma}_n}$ for shorthand.
Then as $\P_{n,m} \nabla_\theta \ell_{\spe,
	\vec{\sigma}_n} = 0$ and $\P \nabla_\theta \ell_{\theta\opt_n,
	\vec{\sigma}_n} = 0$, we can derive
\begin{align*}
	\G_{n,m} \nabla_\theta\ell_{\spe, \vec{\sigma}_n} & = \sqrt{n} \prn{\P_{n,m} \nabla_\theta \ell_{\spe, \vec{\sigma}_n} - \P \nabla_\theta \ell_{\spe, \vec{\sigma}_n}} \nonumber \\
	& = \sqrt{n} \prn{\P \nabla_\theta\ell_{\theta\opt_n, \vec{\sigma}_n} - \P \nabla_\theta\ell_{\spe, \vec{\sigma}_n}} \nonumber \\
	& = \sqrt{n} \prn{\nabla_\theta \poploss(\theta\opt_n, \vec{\sigma}_n) - \nabla_\theta \poploss (\spe, \vec{\sigma}_n) } \nonumber \\
	& = \prn{\int_0^1 \nabla_{\theta}^2 \poploss((1-t) \spe + t \theta\opt_n , \vec{\sigma}_n ) d t} \cdot \sqrt{n}(\theta\opt_n - \spe).
\end{align*}
The assumed continuity of $\nabla_\theta^2 \poploss(\theta, \vec{\sigma})$
at $(u\opt, \vec{\sigma}\opt)$ (recall
Assumption~\ref{assumption:sp-general}) then implies that
\begin{align}
	\sqrt{n}(\theta\opt_n - \spe)
	& = \prn{\int_0^1 \nabla_{\theta}^2 \poploss((1-t) \theta\opt_n + t \spe, \vec{\sigma}_n ) d t}^{-1}
	\cdot\, \G_{n,m} \nabla_\theta\loss_{\spe, \vec{\sigma}_n} \nonumber \\
	& =
	\prn{\nabla_\theta^2 \poploss(u\opt, \vec{\sigma}\opt)  + o_P(1)}^{-1}
	\cdot \G_{n,m} \nabla_\theta \loss_{\spe, \vec{\sigma}_n},
	\label{eqn:sp-mle-mid-1}
\end{align}
where we have used the consistency guarantees $\theta\opt_n \cp u\opt, \spe
\cp u\opt$ by Lemmas~\ref{lemma:vec-sigma-theta-continuity}
and~\ref{lemma:vec-sigma-probability} and that $\LtwoP{\vec{\sigma}_n -
	\vec{\sigma}} \cp 0$ by assumption.

The expansion~\eqref{eqn:sp-mle-mid-1} forms the basis of our asymptotic
normality result; while $\spe$ and $\vec{\sigma}_n$ may be data
dependent, by leveraging uniform central limit theorems and the
theory of Donsker function classes, we can show that
$\G_{n,m} \nabla \loss$ has an appropriate normal limit.
To that end,
define the
function classes
\begin{align*}
	\mc{F}_{\delta}
	\defeq \left\{\nabla_\theta \loss_{\theta, \sigma}
	\mid \ltwo{\theta - u\opt} \leq \delta,
	\sigma \in \Flink \right\},
\end{align*}
where we leave the Lipschitz constant $\lipconst$ in $\Flink$ tacit.
By Assumption~\ref{assumption:sp-general} and that $\spe \cp u\opt$, we know
with probability tending to $1$ we have the membership $
\nabla_\theta\ell_{\spe, \vec{\sigma}_n} \in \mc{F}_\delta$. The key
result is then that $\mathcal{F}_\delta$ is a Donsker class:
\begin{lemma}
  \label{lemma:donsker-sp}
  Assume that $\Ep [\ltwo{X}^4] < \infty$.  Then $\mc{F}_\delta$ is a
  Donsker class, and moreover, if $d_{\Flink}((\spe, \vec{\sigma}_n),
  (u\opt, \vec{\sigma}\opt)) \cp 0$, then
  \begin{equation*}
    \G_{n,m} \nabla_\theta\ell_{\spe, \vec{\sigma}_n}
    - \G_{n,m} \nabla_\theta\ell_{u\opt, \vec{\sigma}\opt} \cp 0.
  \end{equation*}
\end{lemma}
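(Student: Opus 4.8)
The plan is to establish that $\mc{F}_\delta$ is a $\Prb$-Donsker class via a uniform (VC-type) entropy bound that exploits the monotonicity of the symmetric Lipschitz links in $\Flink$, and then to read off the displayed convergence from the asymptotic equicontinuity of Donsker classes together with an $L^2(\Prb)$-continuity estimate for the gradient. First I would record the explicit gradient: since $\loss_{\vec{\sigma},\theta}(y\mid x) = \frac1m\sum_{j=1}^m \loss_{\sigma_j,\theta}(y_j\mid x)$ with $\loss_{\sigma,\theta}(y\mid x) = -\int_0^{y\<\theta,x\>}\sigma(-v)\,dv$,
\begin{equation*}
  \nabla_\theta\loss_{\vec{\sigma},\theta}(y\mid x)
  = -\,x\cdot\frac1m\sum_{j=1}^m y_j\,\sigma_j(-y_j\<\theta,x\>).
\end{equation*}
Thus $\mc{F}_\delta$ has the square-integrable envelope $F(x) = \ltwo{x}$ (using $\Ep\ltwo{X}^2\le(\Ep\ltwo{X}^4)^{1/2}<\infty$), and every element of $\mc{F}_\delta$ is a fixed coordinate map $x_p$ times a member of the scalar class $\Psi := \{(x,y)\mapsto\frac1m\sum_{j=1}^m y_j\sigma_j(-y_j\<\theta,x\>): \sigma_j\in\Flink,\ \ltwos{\theta-u\opt}\le\delta\}$, which is bounded in $[-1,1]$.

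The crux — and the step I expect to be the main obstacle — is that $\Psi$ has a finite uniform-entropy integral even though it is infinite-dimensional and composed with unbounded features (a naive bracketing count yields only $\log N_{[]}(\epsilon)\asymp\epsilon^{-2}$, just outside the Donsker range). Because each $\sigma_j$ is nondecreasing and $[0,1]$-valued, $\sigma_j(-y_j\<\theta,x\>)$ equals a constant in $[0,\tfrac12]$ plus a subprobability mixture (in $t$) of halfspace indicators $\ind\{-y_j\<\theta,x\>>t\}$; consequently each member of $\Psi$ is a bounded function of $y$ alone plus an element of the symmetric convex hull of the class $\mc{H} := \{(x,y)\mapsto y_j\ind\{-y_j\<\theta,x\>>t\}: j\le m,\ \ltwos{\theta}\le1+\delta,\ t\in\R\}$, which is VC-subgraph of index $O(d)$ (it is $\pm$ a halfspace indicator in $x$ on each of the two cells $\{y_j=\pm1\}$). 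By the convex-hull entropy bound~\cite[Thm.~2.6.9]{VanDerVaartWe96}, $\sup_Q\log N(\epsilon,\Psi,L^2(Q))\lesssim\epsilon^{-r}$ for some $r<2$, so $\int_0^1\sup_Q\sqrt{\log N(\epsilon,\Psi,L^2(Q))}\,d\epsilon<\infty$. Multiplying a uniformly bounded class with a finite uniform-entropy integral by the fixed function $x_p\in L^2(\Prb)$ preserves the uniform-entropy condition (via the change of dominating measure $d\wt{Q}\propto x_p^2\,dQ$, which matches covering numbers of $x_p\Psi$ under $L^2(Q)$ with those of $\Psi$ under the probability measure $\wt{Q}$), and finite sums and finite Cartesian products of Donsker classes are again Donsker; since $m$ and $d$ are fixed, assembling the $d$ coordinates shows $\mc{F}_\delta$ is $\Prb$-Donsker~\cite[Thm.~2.5.2 and Sec.~2.10]{VanDerVaartWe96}.

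For the ``moreover'' claim, recall that a $\Prb$-Donsker class is asymptotically equicontinuous: for any fixed $f_0\in\mc{F}_\delta$ and any (possibly data-dependent) sequence $\what{f}_n$ taking values in $\mc{F}_\delta$ with $\LtwoP{\what{f}_n-f_0}\cp0$, one has $\G_{n,m}\what{f}_n-\G_{n,m}f_0\cp0$~\citep[see e.g.][]{VanDerVaart98}. Take $f_0=\nabla_\theta\loss_{u\opt,\vec{\sigma}\opt}$ and $\what{f}_n=\nabla_\theta\loss_{\spe,\vec{\sigma}_n}$; since the hypothesis $d_{\Flink}((\spe,\vec{\sigma}_n),(u\opt,\vec{\sigma}\opt))\cp0$ gives $\ltwos{\spe-u\opt}\cp0$ and $\vec{\sigma}_n\in\Flink^m$, the function $\what{f}_n$ lies in $\mc{F}_\delta$ with probability tending to $1$. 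It then remains only to verify $\LtwoP{\what{f}_n-f_0}\cp0$, for which the explicit gradient and the $\lipconst$-Lipschitzness of the links yield
\begin{equation*}
  \LtwoP{\what{f}_n-f_0}
  \le \lipconst\,\ltwos{\spe-u\opt}\,(\Ep\ltwo{X}^4)^{1/2}
  + \Big(\Ep\big[\ltwo{X}^2\,\wb{\Delta}_n(X,Y)^2\big]\Big)^{1/2},
\end{equation*}
where $\wb{\Delta}_n := \frac1m\sum_{j=1}^m|\sigma_{n,j}(-Y_j\<u\opt,X\>)-\sigma_j\opt(-Y_j\<u\opt,X\>)|\le1$. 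The first term vanishes because $\spe\cp u\opt$ (this is where $\Ep\ltwo{X}^4<\infty$ is used); for the second, split on $\{\ltwo{X}\le M\}$, where it is at most $M(\Ep\wb{\Delta}_n^2)^{1/2}\cp0$ for fixed $M$ by Assumption~\ref{assumption:sp-general}, and on $\{\ltwo{X}>M\}$, where it is at most $(\Ep[\ltwo{X}^2\ind\{\ltwo{X}>M\}])^{1/2}\to0$ as $M\to\infty$, and then let $M\to\infty$. Combining, $\G_{n,m}\nabla_\theta\loss_{\spe,\vec{\sigma}_n}-\G_{n,m}\nabla_\theta\loss_{u\opt,\vec{\sigma}\opt}\cp0$. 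Everything after the entropy bound is routine: preservation of Donsker-ness and a standard truncation argument for the $L^2(\Prb)$ continuity.
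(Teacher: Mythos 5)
Your proof is correct, but the way you establish the Donsker property differs genuinely from the paper's. The paper works with \emph{bracketing} entropy: it covers the $\theta$-ball, and for each center builds $L^4(\mu_\theta)$-brackets of the monotone link class (via van der Vaart--Wellner Thm.~2.7.5, giving entropy $K/\gamma$), then absorbs the unbounded multiplier $v_1$ by Cauchy--Schwarz ($\|v_1(u-l)\|_{L^2}\le\|v_1\|_{L^4}\|u-l\|_{L^4}$) and the $\theta$-perturbation by the Lipschitz continuity of $\sigma$; this yields $\log N_{[\,]}(t)\lesssim 1/t$ and a finite bracketing integral --- so your remark that a ``naive bracketing count'' lands just outside the Donsker range undersells what bracketing actually achieves here. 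You instead use \emph{uniform} entropy: writing each monotone link as a subprobability mixture of halfspace indicators, invoking the VC-subgraph property of $\mc{H}$ and the convex-hull entropy theorem (VdV--W Thm.~2.6.9) to get $\sup_Q\log N(\epsilon)\lesssim\epsilon^{-r}$ with $r<2$, and handling the coordinate multiplier $x_p$ by a change of dominating measure. Your route avoids using Lipschitzness in the entropy argument altogether (the $\theta$-dependence is swallowed into the halfspace class) and needs only a square-integrable envelope there, at the cost of heavier off-the-shelf machinery and an unstated (but standard) pointwise-separability check required for the uniform-entropy CLT. The ``moreover'' step --- asymptotic equicontinuity of the Donsker class plus an $L^2(\Prb)$-continuity estimate proved by Lipschitzness in $\theta$ and truncation at level $M$ for the link error --- is essentially identical to the paper's argument.
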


Temporarily deferring the proof of Lemma~\ref{lemma:donsker-sp}, let us see
how it leads to the proof of Theorem~\ref{theorem:semiparametric-master}.
Using Lemma~\ref{lemma:donsker-sp} and Slutsky's lemmas in the
equality~\eqref{eqn:sp-mle-mid-1}, we obtain
\begin{align*}
	\sqrt{n}(\spe - \theta_n\opt)
	& = 
	-\prn{\nabla_\theta^2 \poploss(u\opt, \vec{\sigma}\opt)  + o_P(1)}^{-1}
	\cdot \G_{n,m} \nabla_\theta \loss_{u\opt, \vec{\sigma}\opt}
	+ o_P(1) \\
	& \cd \normal\left(0,
	\nabla^2 L(u\opt, \vec{\sigma}\opt)
	\cov\bigg(\frac{1}{m}
	\sum_{j = 1}^m \nabla \loss_{u\opt, \sigma_j\opt}(Y_j \mid X)\bigg)
	\nabla^2 L(u\opt, \vec{\sigma}\opt)\right).
\end{align*}
Calculations completely similar to those we use in the proof of
Theorem~\ref{theorem:multilabel-asymptotics} then give
Theorem~\ref{theorem:semiparametric-master}: because $\P(Y_j = y \mid X = x)
= \sigma_j\opt(y \<u\opt, x\>)$ by assumption,
\begin{equation*}
	\cov\bigg(\frac{1}{m}
	\sum_{j = 1}^m \nabla \loss_{u\opt, \sigma_j\opt}(Y_j \mid X)\bigg)
	= \frac{1}{m^2} \sum_{j = 1}^m
	\cov(\nabla \loss_{u\opt, \sigma_j\opt}(Y_j \mid X))
\end{equation*}
because
$Y_j \mid X$ are conditionally independent, while
\begin{align*}
	\cov(\nabla \loss_{u\opt,\sigma_j\opt}(Y_j \mid X))
	& = \E[\sigma_j\opt(Z)(1 - \sigma_j\opt(Z)) XX^\top] \\
	& = \E[\sigma_j\opt(Z)(1 - \sigma_j\opt(Z)) Z^2]
	u\opt{u\opt}^\top
	+ \E[\sigma_j\opt(Z)(1 - \sigma_j\opt(Z))]
	\projperp \Sigma \projperp.
\end{align*}
When
$\sigma_j = \sigma_j\opt$, the Hessian function $\hessfunc_j$
in Lemma~\ref{lemma:super-generic-solutions}
simplifies to $\hessfunc_j(1, z) = {\sigma_j\opt}'(z)$ as
$\sigma_j\opt$ is symmetric about 0.
We then apply the delta method
as in the proof of Theorem~\ref{theorem:multilabel-asymptotics}.

Finally, we return to the proof of Lemma~\ref{lemma:donsker-sp}.

\paragraph{Proof of Lemma~\ref{lemma:donsker-sp}.}

\newcommand{\ball}{\mathbb{B}}
\newcommand{\Fflip}{\mc{F}_{\mathsf{flip}}}

To prove $\mathcal{F}_\delta$ is Donsker, we show that each coordinate of
$\nabla_{\theta} \loss_{\theta, \sigma} \in \mathcal{F}_\delta$ is, and as
$\nabla_\theta \loss_{\theta, \sigma} (y \mid x) = -y\sigma(-y\<x,
\theta\>)x$, this amounts to showing the coordinate functions $f_{\theta,
	\sigma}^{(i)}(v) = v_i\sigma(-\<v, \theta\>)$ form a Donsker class when
$v$ has distribution $V = YX$. Let
\begin{align*}
	\mathcal{F}_\delta^{(i)}
	& \defeq \left\{ f_{\theta, \sigma}^{(i)}(\cdot) \mid
	\ltwo{\theta - u\opt} \leq \delta, ~
	\sigma \in \Flink \right\},
\end{align*}
so it is evidently sufficient to prove
that $\mathcal{F}_\delta^{(1)}$ forms a Donsker class.

We use bracketing and entropy numbers~\cite{VanDerVaartWe96} to control the
$\mc{F}_\delta^{(i)}$. Recall that for a function class $\mc{F}$, an
\emph{$\epsilon$-bracket} of $\mc{F}$ in $L^q(\P)$ is a collection of
functions $\{(l_i, u_i)\}$ such that for each $f \in \mc{F}$, there exists
$i$ such that $\ell_i \le f \le u_i$ and $\norm{u_i - l_i}_{L^q(\P)} \le
\epsilon$. The \emph{bracketing number} $N_{[\,]}(\epsilon, \mc{F}, L^q(\P))$
is the cardinality of the smallest such $\epsilon$-bracket, and
the \emph{bracketing entropy} is
\begin{align*}
	J_{[\,]}(\mc{F}, L^q(\P))
	\defeq \int_0^\infty
	\sqrt{\log N_{[\,]}(\epsilon , \mathcal{F}, L^q(\Prb))} d \epsilon.
\end{align*}
To show that $\mc{F}_\delta^{(i)}$ is Donsker, it is
sufficient~\cite[Ch.~2.5.2]{VanDerVaartWe96} to show that
$J_{[\,]}(\mc{F}_\delta^{(i)}, L^2(\P)) < \infty$.
Our approach to demonstrate that $\mc{F}_\delta^{(1)}$ is Donsker is thus to
construct an appropriate $\epsilon$-bracket of $\mc{F}_\delta^{(1)}$, which
we do
by first covering $\ell_2$-balls in $\R^d$, then for vectors
$\theta \in \R^d$, constructing a bracketing of the induced function class
$\{f_{\theta,\sigma}^{(1)}\}_{\sigma \in \Flink}$, which we combine
to give the final bracketing of $\mc{F}_\delta^{(1)}$.

We proceed with this two-stage covering and bracketing.
Let $\epsilon, \gamma > 0$ be small numbers whose values we determine
later.
Define the $\ell_2$ ball $\ball_2^d = \{x \in \R^d \mid  \ltwo{x} \le 1\}$,
and for any $0 < \epsilon < \delta$, let
$\mc{N}_\epsilon$ be a minimal $\epsilon$-cover of $\delta \ball_2^d$
in the Euclidean norm
of size $N = N(\epsilon, \delta \ball_2^d, \ltwo{\cdot})$,
$\mc{N}_\epsilon = \{\theta_1, \ldots, \theta_N\}$, so that for
any $\theta$ with $\ltwo{\theta} \le \delta$ there exists
$\theta_i \in \mc{N}_\epsilon$ with $\ltwo{\theta - \theta_i} \le \epsilon$.
Standard bounds~\cite[Lemma~5.7]{Wainwright19}) give
\begin{align}
	\label{eqn:ball-covering-number}
	\log N(\epsilon, \delta \ball_2^d, \ltwo{\cdot})
	\le d \log \prn{1 + \frac{2\delta}{\epsilon}}.
\end{align}
For simplicity of notation and to avoid certain tedious negations, we define
the ``flipped'' monotone function family
\begin{equation*}
	\Fflip \defeq \{g : \R \to \R \mid g(t) = \sigma(-t)\}_{
		\sigma \in \Flink}.
\end{equation*}
Now, for any $\theta \in \R^d$, let
$\mu_\theta$
denote the pushforward measure of $\<V, \theta\> = Y\<X, \theta\>$.
For $\theta \in \R^d$, we then let
$\mathcal{N}_{[\,],\gamma, \theta}$ be a minimal
$\gamma$-bracketing of
$\Fflip$
in the  $L^4(\mu_{\theta})$ norm. That is,
for $N = N_{[\,]}(\gamma, \Fflip, L^2(\mu_\theta))$, we have
$\mathcal{N}_{[\,], \gamma, \theta}
= \{(l_{\theta, i}, u_{\theta, i})\}_{i = 1}^{N}$,
and for each $\sigma \in \Flink$,
there exists $i = i(\sigma)$ such that
\begin{align*}
	l_{\theta, i}(t) \leq
	\sigma(-t) \leq
	u_{\theta, i}(t)
	~~ \mbox{and} ~~ \norm{u_{\theta, i} - l_{\theta, i}}_{L^4(\mu_{\theta})}
	\leq \gamma.
\end{align*}
Because elements of $\Fflip \subset \R \to [0, 1]$ are monotone,
\citet[Thm.~2.7.5]{VanDerVaartWe96} guarantee there
exists a universal constant $K < \infty$ such that
\begin{align}
	\label{eqn:monotone-covering-bound}
	\sup_Q \log N_{[\,]}(\gamma, \Fflip, L^4(Q)) \leq \frac{K}{\gamma},
\end{align}
and in particular,
$\log N_{[\,]}(\gamma, \Fflip, L^4(\mu_\theta)) \le \frac{K}{\gamma}$ for
each $\theta \in \R^d$.

With the covering $\mc{N}_\epsilon$ and induced bracketing collections
$\mc{N}_{[\,],\gamma,\theta}$, we now turn to a construction of the
actual bracketing of the class $\mc{F}_\delta^{(1)}$.
For any $\theta \in \R^d$ and bracket $(l_{\theta,i},
u_{\theta, i}) \in \mathcal{N}_{[\,], \gamma, \theta}$, define
the functionals $\what{l}_{\theta,j}, \what{u}_{\theta,j} : \R^d \to \R$ by
\begin{align*}
	\what{l}_{\theta, j}(v)
	& \defeq \hinge{v_1}
	\max\left\{l_{\theta,j}(\<v, \theta\>) - \mathsf{L} \ltwo{v} \epsilon,
	0 \right\}
	- \hinge{-v_1} \min\left\{u_{\theta, j}(\<v, \theta\>) + \mathsf{L} \ltwo{v} \epsilon, 1
	\right\}  , \\
	\what{u}_{\theta, j}(v)
	& \defeq \hinge{v_1} \min\left\{u_{\theta, j}(\<v, \theta\>)
	+ \lipconst \ltwo{v} \epsilon, 1 \right\}
	- \hinge{-v_1} \max\left\{l_{\theta, j}(\<v, \theta\>) -
	\lipconst \ltwo{v} \epsilon, 0 \right\}.
\end{align*}
The key is that these functions form a bracketing
of $\mc{F}_\delta^{(1)}$:
\begin{lemma}
	\label{lemma:bracketing-construction}
	Define the set
	\begin{equation*}
		\mc{B}_{\epsilon,\gamma}
		\defeq
		\left\{ \left(\what{l}_{\theta_i, j},
		\what{u}_{\theta_i, j}\right)
		\mid
		\theta_i \in \mc{N}_\epsilon,
		~ 1 \le j \le N_{[\,]}(\gamma, \Fflip, L^4(\mu_{\theta_i}))\right\}.
	\end{equation*}
	Then
	$\mc{B}_{\epsilon,\gamma}$ is a
	\begin{equation*}
		2 \lipconst \E[\ltwo{X}^4]^{1/2} \cdot \epsilon
		+ \E[\ltwo{X}^4]^{1/4} \cdot \gamma
	\end{equation*}
	bracketing
	of $\mc{F}_\delta^{(1)}$ with cardinality
	at most
	$\log \card(\mc{B}_{\epsilon,\gamma})
	\le \frac{K}{\gamma} + d \log(1 + \frac{\delta}{\epsilon})$.
\end{lemma}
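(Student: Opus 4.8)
The plan is to verify, in order, the three asserted properties of $\mc{B}_{\epsilon,\gamma}$: that it is a genuine collection of brackets for $\mc{F}_\delta^{(1)}$, that each bracket has the stated $L^2(\P)$ width (with $\P$ the law of $V = YX$, which is the norm relevant to the bracketing integral $J_{[\,]}(\mc{F}_\delta^{(1)}, L^2(\P))$ in the proof of Lemma~\ref{lemma:donsker-sp}), and that it has the stated cardinality. Throughout I abbreviate $l = l_{\theta_i,j}$, $u = u_{\theta_i,j}$, and use that every $\sigma\in\Flink$ is $\lipconst$-Lipschitz with range in $[0,1]$, that elements of $\Fflip$ are monotone with range in $[0,1]$, and that $|Y|=1$ so $\ltwo{V} = \ltwo{X}$.

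First I would establish the bracketing property. Fix $f_{\theta,\sigma}^{(1)}\in\mc{F}_\delta^{(1)}$, pick $\theta_i\in\mc{N}_\epsilon$ with $\ltwo{\theta-\theta_i}\le\epsilon$, and pick a bracket $(l,u)\in\mc{N}_{[\,],\gamma,\theta_i}$ for $\sigma(-\cdot)\in\Fflip$, so $l(t)\le\sigma(-t)\le u(t)$ for all $t$ and $\norm{u-l}_{L^4(\mu_{\theta_i})}\le\gamma$. The key deterministic inequality is
\begin{equation*}
  \max\{l(\<v,\theta_i\>) - \lipconst\ltwo{v}\epsilon,\, 0\}
  \;\le\; \sigma(-\<v,\theta\>)
  \;\le\; \min\{u(\<v,\theta_i\>) + \lipconst\ltwo{v}\epsilon,\, 1\}
  \qquad\text{for every }v,
\end{equation*}
in which the $\lipconst\ltwo{v}\epsilon$ slack absorbs the error of replacing $\theta$ by $\theta_i$ (by Lipschitzness of $\sigma$ and $|\<v,\theta-\theta_i\>|\le\ltwo{v}\ltwo{\theta-\theta_i}$), and the clipping into $[0,1]$ is legitimate since $\sigma(-\<v,\theta\>)\in[0,1]$ already. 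Then I would split $v_1 = \hinge{v_1} - \hinge{-v_1}$ with both parts nonnegative, multiply the two-sided bound by $\hinge{v_1}$ and by $-\hinge{-v_1}$ separately, and add; this reproduces exactly $\what{l}_{\theta_i,j}(v)\le v_1\sigma(-\<v,\theta\>)\le\what{u}_{\theta_i,j}(v)$, so $(\what{l}_{\theta_i,j},\what{u}_{\theta_i,j})\in\mc{B}_{\epsilon,\gamma}$ brackets $f_{\theta,\sigma}^{(1)}$.

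Next I would bound the width. Since $\hinge{v_1}+\hinge{-v_1}=|v_1|$,
\begin{equation*}
  \what{u}_{\theta_i,j}(v) - \what{l}_{\theta_i,j}(v)
  = |v_1|\left(\min\{u(\<v,\theta_i\>)+\lipconst\ltwo{v}\epsilon,1\} - \max\{l(\<v,\theta_i\>)-\lipconst\ltwo{v}\epsilon,0\}\right)
  \le |v_1|\left((u-l)(\<v,\theta_i\>) + 2\lipconst\ltwo{v}\epsilon\right),
\end{equation*}
using $\min\{a,1\}-\max\{b,0\}\le a-b$. Taking $L^2(\P)$ norms and the triangle inequality, the term $2\lipconst\epsilon\norm{|v_1|\ltwo{V}}_{L^2(\P)}$ is at most $2\lipconst\epsilon\,\E[\ltwo{X}^4]^{1/2}$ since $|v_1|\ltwo{v}\le\ltwo{v}^2$; and for $\norm{|v_1|(u-l)(\<V,\theta_i\>)}_{L^2(\P)}$, Cauchy--Schwarz gives $\E[v_1^2(u-l)(\<V,\theta_i\>)^2]\le\E[v_1^4]^{1/2}\E[(u-l)(\<V,\theta_i\>)^4]^{1/2}\le\E[\ltwo{X}^4]^{1/2}\gamma^2$, where I used $v_1^2\le\ltwo{X}^2$ and $\E[(u-l)(\<V,\theta_i\>)^4]=\norm{u-l}_{L^4(\mu_{\theta_i})}^4\le\gamma^4$ (this is precisely why the $\theta$-dependent brackets were taken in $L^4$, not $L^2$). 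Hence the width is at most $2\lipconst\E[\ltwo{X}^4]^{1/2}\epsilon + \E[\ltwo{X}^4]^{1/4}\gamma$. For the cardinality, $\card(\mc{B}_{\epsilon,\gamma})\le\card(\mc{N}_\epsilon)\cdot\sup_\theta N_{[\,]}(\gamma,\Fflip,L^4(\mu_\theta))$, and taking logs and invoking~\eqref{eqn:ball-covering-number} and~\eqref{eqn:monotone-covering-bound} yields $\log\card(\mc{B}_{\epsilon,\gamma})\le\frac{K}{\gamma}+d\log(1+\frac{2\delta}{\epsilon})$, which is the claimed bound up to the harmless constant inside the logarithm.

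The step I expect to be the main obstacle — routine-looking but easy to botch — is the deterministic sandwich inequality together with its propagation through the $\hinge{\pm v_1}$ decomposition: one must keep straight that the monotone bracket $(l,u)$ is evaluated at $\<v,\theta_i\>$, the point at which it was constructed (with respect to $\mu_{\theta_i}$), not at $\<v,\theta\>$, with the entire discrepancy paid for by the $\lipconst\ltwo{v}\epsilon$ term, and that the clipping into $[0,1]$ interacts correctly with that slack on both sides. The only other place a hypothesis is genuinely used is the moment pairing in the width bound, which is exactly where $\E[\ltwo{X}^4]<\infty$ is needed to combine an $L^4$ control on $u-l$ with a fourth-moment control on $\ltwo{X}$.
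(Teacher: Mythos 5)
Your proposal is correct and follows essentially the same route as the paper's proof: the same deterministic sandwich obtained from Lipschitzness of $\sigma$, Cauchy--Schwarz for $|\<v,\theta-\theta_i\>|$, and the monotone bracket evaluated at $\<v,\theta_i\>$, followed by the split through $\hinge{v_1}$ and $\hinge{-v_1}$, the same pointwise width bound $|v_1|\bigl((u-l)(\<v,\theta_i\>)+2\lipconst\ltwo{v}\epsilon\bigr)$, the same $L^4$/$L^4$ pairing to get the stated $L^2$ width, and the same product count for the cardinality. The only deviations are cosmetic (you bound the $\epsilon$-term via $|v_1|\ltwo{v}\le\ltwo{v}^2$ where the paper factors out $\norm{V_1}_{L^4}$ from both terms, and you correctly note the harmless factor of $2$ inside the logarithm that the lemma statement drops).
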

\begin{proof}
	Let $f_{\theta, \sigma}^{(1)}(v) \in \mathcal{F}_\delta^{(1)}$.
	Take $\theta_i \in \mathcal{N}_{\epsilon}$ satisfying
	$\ltwo{\theta - \theta_i} \leq \epsilon$ and
	$(l_{\theta_i, j}, u_{\theta,j}) \in \mc{N}_{[\,], \gamma, \theta_i}$
	such that $l_{\theta_i, j}(t) \leq  \sigma(-t) \leq u_{\theta_i, j}(t)$
	for all $t$, where $\norm{u_{\theta_i,j} - l_{\theta_i,j}}_{L^4(\mu_{\theta_i})}
	\le \gamma$. We first demonstrate the bracketing guarantee
	\begin{align*}
		\what{l}_{\theta_i, j}(v)
		\leq f_{\theta, \sigma}^{(1)}(v) = v_1 \sigma(-\<v, \theta\>)
		\leq \what{u}_{\theta_i, j}(v)
		~~ \mbox{for~all~} v \in \R^d.
	\end{align*}
	For the upper bound, we have
	\begin{align*}
		\lefteqn{f_{\theta, \sigma}^{(1)}(v) = v_1 \sigma(-\<v, \theta\>)}
		\\
		& \stackrel{\mathrm{(i)}}{\leq}
		\hinge{v_1} \min \left\{\sigma(-\<v, \theta_i\>) + \lipconst |\<v, \theta_i - \theta\>|, 1\right\}
		- \hinge{-v_1} \max \left\{\sigma(-\<v, \theta_i\>) - \lipconst |\<v, \theta_i - \theta\>|, 0\right\} \\
		& \stackrel{\mathrm{(ii)}}{\leq} \hinge{v_1}
		\min \left\{\sigma(-\<v, \theta_i\>) + \lipconst \ltwo{v} \epsilon, 1\right\}
		- \hinge{-v_1} \max \left\{\sigma(-\<v, \theta_i\>) - \lipconst \ltwo{v} \epsilon, 0\right\} \\
		& \stackrel{\mathrm{(iii)}}{\leq}
		\hinge{v_1} \min\left\{u_{\theta_i, j}(\<v, \theta_i\>) + \lipconst \ltwo{v} \epsilon, 1 \right\}
		- \hinge{-v_1} \max\left\{l_{\theta_i, j}(\<v, \theta_i\>) - \lipconst \ltwo{v} \epsilon, 0 \right\} \\
		& = \what{u}_{\theta_i, j}(v),
	\end{align*}
	where step (i) follows from the $\lipconst$-Lipschitz continuity of
	$\sigma$, (ii) from the Cauchy-Schwarz inequality and
	that $\ltwo{\theta - \theta_i} \le \epsilon$,
	while step (iii) follows
	by the construction that
	$l_{\theta_i, j}(t) \leq \sigma(-t) \leq u_{\theta_i,
		j}(t)$ for all $t \in \R$.
	Similarly, we obtain the lower bound
	\begin{align*}
		f_{\theta, \sigma}^{(1)}(v) = v_1 \sigma(-\<v, \theta\>) 
		\geq \what{l}_{\theta_i, j}(v),
	\end{align*}
	again valid for all $v \in \R^d$.
	
	The second part of the proof is to bound the distance between the upper
	and lower elements in the bracketing.  By definition,
	$\what{u}_{\theta_i, j} - \what{l}_{\theta_i, j}$ has the pointwise
	upper bound
	\begin{align*}
		\prn{\what{u}_{\theta_i, j}(v) - \what{l}_{\theta_i, j}(v)}^2 &\leq \prn{|v_1| \prn{u_{\theta_i, j}(\<v, \theta_i\>) - l_{\theta_i, j}(\<v, \theta_i\>) + 2 \lipconst \ltwo{v} \epsilon }}^2   .
	\end{align*}
	Recalling that $V = YX$,
	by the Minkowski and Cauchy-Schwarz inequalities, we thus obtain
	\begin{align*}
		\norm{\what{u}_{\theta_i, j}(V) - \what{l}_{\theta_i, j}(V)}_{L^2(\Prb)} & \leq \norm{|V_1| \prn{u_{\theta_i, j}(\<V, \theta_i\>) - l_{\theta_i, j}(\<V, \theta_i\>)} }_{L^2(\Prb)} + \norm{|V_1| \cdot 2 \lipconst \ltwo{V} \epsilon }_{L^2(\Prb)} \nonumber \\
		& \leq \norm{|V_1|}_{L^4(\Prb)} \cdot \prn{\norm{u_{\theta_i, j}(\<V, \theta_i\>) - l_{\theta_i, j}(\<V, \theta_i\>)}_{L^4(\Prb)} + 2\lipconst \epsilon \cdot \norm{\ltwo{V}}_{L^4(\Prb)}  }  .
	\end{align*}
	Noting the trivial bounds $ \norm{|V_1|}_{L^4(\Prb)} \le
	\norm{X}_{L^4(\Prb)} < \infty$ and the assumed bracketing distance
	\begin{align*}
		\norm{u_{\theta_i, j}(\<V, \theta_i\>) - l_{\theta_i, j}(\<V, \theta_i\>)}_{L^4(\Prb)} 	& = \norm{u_{\theta_i, j} - l_{\theta_i, j}}_{L^4(\mu_{\theta_i})}
		\le \gamma,
	\end{align*}
	we have
	the desired bracketing
	distance $\norms{\what{u}_{\theta_i, j} - \what{l}_{\theta_i, j}}_{L^2(\Prb)}
	\leq 2\lipconst \Ep[\ltwo{X}^4]^{1/2} \epsilon
	+ \Ep[\ltwo{X}^4]^{1/4} \gamma$.
	
	The final cardinality bound is immediate via
	inequalities~\eqref{eqn:ball-covering-number}
	and~\eqref{eqn:monotone-covering-bound}.
\end{proof}

Lemma~\ref{lemma:bracketing-construction} will yield the desired entropy
integral bound. Fix any $t > 0$, and note that if we take $\epsilon
=\epsilon(t) \defeq t / (4 \lipconst \E[\ltwo{X}^4]^{1/2})$ and $\gamma =
\gamma(t) \defeq t / (2 \E[\ltwo{X}^4]^{1/4})$, then the set
$\mc{B}_{\epsilon, \gamma}$ is a $t$-bracketing of $\mc{F}_\delta^{(1)}$ in
$L^2$, and moreover, we have the cardinality bound
\begin{align*}
	\log N_{[\,]}(t, \mathcal{F}_{\delta}^{(1)} , L^2(\Prb))
	&
	\leq d \log \left(1 + \frac{2 \delta}{\epsilon(t)}\right)
	+ \frac{K}{\gamma(t)}
	\leq \frac{8\lipconst d \delta \cdot \Ep [\ltwo{X}^4]^{1/2}
		+ 2K \Ep[\ltwo{X}^4]^{1/4}}{t}.
\end{align*}
Additionally, as covering numbers are necessarily integer,
we have $\log N_{[\,]}(t) = 0$
whenever
$t > (8\lipconst d \delta \cdot \Ep [\ltwo{X}^4]^{1/2}
+ 2K \Ep[\ltwo{X}^4]^{1/4}) / \log 2$.
This gives the entropy integral bound
\begin{align*}
	J_{[\, ]}(\mathcal{F}_{\delta}^{(1)}, L^2(\Prb))
	& = \int_0^\infty \sqrt{\log N_{[\,]}(t ,\mathcal{F}_{\delta}^{(1)} ,
		L^2(\Prb))} d t < \infty,
\end{align*}
and consequently (cf.~\cite[Thm.~19.5]{VanDerVaart98} or
\cite[Ch.~2.5.2]{VanDerVaartWe96}), $\mathcal{F}_{\delta}^{(1)}$ is a
Donsker class.  A completely identical argument shows that
$\mathcal{F}_{\delta}^{(i)}, i=2,3,\dots, d$ are Donsker, and so
$\mathcal{F}_\delta$ is a Donsker class, completing the proof of the
first claim in Lemma~\ref{lemma:donsker-sp}.

To complete the proof of Lemma~\ref{lemma:donsker-sp},
we need to show that
\begin{align*}
	\G_{n,m} (\nabla_\theta\loss_{\spe, \vec{\sigma}_n}
	- \nabla_\theta\loss_{u\opt, \vec{\sigma}\opt})
	=
	\frac{1}{m}
	\sum_{j = 1}^m
	\G_n^{(j)} (\nabla_\theta\loss_{\spe, \sigma_{n,j}}
	- \nabla_\theta \loss_{u\opt, \sigma\opt_j})
	\cp 0,
\end{align*}
where $\G_n^{(j)}$ denotes the empirical process on
$(X_i, Y_{ij})_{i=  1}^n$. Notably, because $m$ is finite, it is sufficient
to show that
\begin{equation*}
	\G_n^{(j)} (\nabla_\theta\loss_{\spe, \sigma_{n,j}}
	- \nabla_\theta \loss_{u\opt, \sigma\opt_j})
	\cp 0,
	~~~
	j = 1, \ldots, m.
\end{equation*}
To that end, we suppress dependence on $j$ for notational
simplicity and simply write $\G_n$ and $\loss_{\spe, \sigma_n}$, where
$\LtwoP{\sigma_n - \sigma\opt} \cp 0$.
For any Donsker class $\mc{F} \subset \mc{X} \to \R^d$ and
$\epsilon > 0$,
we have
\begin{equation*}
	\limsup_{\delta \downarrow 0}
	\limsup_{n \to \infty}
	\P\left(\sup_{\LtwoP{f - g} \le \delta}
	\G_n(f - g) \ge \epsilon\right) = 0,
\end{equation*}
(see~\cite[Thm.~3.7.31]{GineNi21}), and so in turn it is sufficient to prove
that
\begin{equation}
	\label{eqn:ltwo-p-convergence-sp}
	\LtwoPbold{\nabla \loss_{\spe, \sigma_n} - \nabla\loss_{u\opt, \sigma\opt}}
	\cp 0.
\end{equation}

To demonstrate the convergence~\eqref{eqn:ltwo-p-convergence-sp}, let $M$ be
finite, and note that for any fixed $\theta, \sigma \in \Flink$ that for $V
= YX$ we have
\begin{align*}
	\LtwoPbold{\nabla \loss_{\theta, \sigma}
		- \nabla \loss_{u\opt, \sigma\opt}}^2
	& = \Ep \brk{\ltwo{V\sigma(-\<V, \theta\>) -
			V\sigma\opt(-\<V,  u\opt\>)}^2} \\
	& \leq \Ep \brk{\ltwo{V}^2 \indic{\ltwo{V} \geq M}}
	+ M^2 \LtwoPbold{\sigma(-\<V, \theta\>) - \sigma\opt(-\<V, u\opt\>)}^2 \\
	& \leq \Ep \brk{\ltwo{X}^2 \indic{\ltwo{X} \geq M}}
	+ 2M^2 \LtwoPbold{\sigma(-\<V, u\opt\>) - \sigma\opt(-\<V, u\opt\>)}^2 \\
	& \qquad\qquad\qquad\qquad\qquad\qquad ~
	+ 2M^2 \LtwoPbold{\sigma(-\<V, u\opt\>) - \sigma(-\<V, \theta\>)}^2
\end{align*}
by the triangle inequality.
As 
\begin{align*}
	\LtwoPbold{\sigma(-\<V, u\opt\>) - \sigma(-\<V, \theta\>)}
	\leq \lipconst \ltwo{\theta - u\opt} \cdot \LtwoPbold{V}
	= \lipconst \ltwo{\theta - u\opt} \cdot \LtwoPbold{X}
\end{align*}
and $\spe - u\opt \cp 0$ and
\begin{align*}
	\LtwoPbold{\sigma_n(-\<V, u\opt\>) - \sigma\opt(-\<V, u\opt\>)}
	= \LtwoPbold{\sigma_n - \sigma\opt} \cp 0
\end{align*}
by assumption,
it follows that for any $\epsilon > 0$ that
\begin{align*}
	\P\left(\LtwoPbold{\nabla_{\theta} \loss_{\spe, \sigma_n}
		- \nabla_{\theta} \loss_{u\opt, \sigma\opt}}
	\ge \E[\ltwo{X}^2 \indic{\ltwo{X} \geq M}] + \epsilon
	\right) \to 0.
\end{align*}
Taking $M \uparrow \infty$ gives the
convergence~\eqref{eqn:ltwo-p-convergence-sp}, completing the proof.

%
%

\section{Proofs for semiparametric approaches}

\subsection{Proof of Lemma~\ref{lem:d-continuity-guarantees}}
\label{proof:d-continuity-guarantees}

As $\sigma$ and $\sigma\opt$ are $\lipconst$-Lipschitz, we may without loss
of generality assume that $\lipconst = 1$ and so $\linf{\sigma_j'} \le 1$
and $\linf{{\sigma\opt_j}'} \le 1$.  We can compute the Hessian at any
$\theta \in \R^d$ and $\vec{\sigma} = (\sigma_1, \dots, \sigma_m)$,
\begin{align*}
  \nabla^2 L(\theta, \vec{\sigma}) & = \E\left[\frac{1}{m}
    \sum_{j = 1}^m \left(\sigma\opt_j(-\<X, u\opt\>) \sigma_j'(\<\theta, X\>)
    + \sigma\opt_j(\<X, u\opt\>) \sigma_j'(-\<\theta, X\>)\right) X X^\top \right] \\
  & = \frac{1}{m} \sum_{j = 1}^m \E\left[\sigma_j'(-Y\<X, \theta\>) XX^\top\right],
\end{align*}
where in the last line we use that $\sigma_j\opt$ and $\sigma_j$ are symmetric for $j=1,\dots, m$. Therefore we can upper bound the distance between Hessians by
\begin{align*}
	\norm{\nabla^2 L(\theta, \vec{\sigma}) - \nabla^2 L(u\opt, \vec{\sigma}\opt)} & \leq \frac{1}{m} \sum_{j = 1}^m \underbrace{\norm{\E\left[\sigma_j'(-Y\<X, \theta\>) XX^\top\right] - \E\left[{\sigma_j\opt}'(-Y\<X, u\opt\>) XX^\top\right]  }}_{:=\delta_j},
\end{align*}
and thus we only need to prove each quantity $\delta_j \to 0$ if
$d_{\fsymlinksp}((\theta, \vec{\sigma}), (u\opt, \vec{\sigma}\opt)) \to 0$,
that is, if $\ltwo{\theta - u\opt} \to 0$ and $\LtwoP{\sigma_j(-Y\<X,
  u\opt\>) - \sigma_j\opt(-Y\<X, u\opt\>)} \to 0$. In the following, we will
show $\delta_j \to 0$ under the two different conditions. To further
simplify the quantity, we claim it is sufficient to show $\xi_j \defeq
\norm{\E[\sigma_j'(-Y\<X, \theta\>) XX^\top] -
  \E[\sigma_j'(-Y\<X, u\opt\>) XX^\top]} \to 0$. Indeed, we have
\begin{lemma}
  If $\xi_j \to 0$ and $\LtwoP{\sigma_j(-Y\<X, u\opt\>) -
    \sigma_j\opt(-Y\<X, u\opt\>)} \to 0$, then $\delta_j \to 0$.
\end{lemma}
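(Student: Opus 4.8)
\emph{Overview.} The plan is to split $\delta_j$ into the term $\xi_j$ (assumed to vanish) plus a ``link-derivative'' remainder $\eta_j$, and then to show $\eta_j\to0$ using only the $L^2$-convergence of $\sigma_j$ to $\sigma_j\opt$ together with whichever of the two regularity conditions of Lemma~\ref{lem:d-continuity-guarantees} is in force. I expect the latter point to be the crux, and I flag it below: $L^2$-closeness of the links does not by itself control their \emph{derivatives}, let alone their derivatives weighted by $XX^\top$, so one of the structural hypotheses must be invoked, and the argument under the ``continuous density of $X$'' hypothesis is the delicate one.

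\emph{Step 1: reduction.} Write $Z=\<X,u\opt\>$. Recalling $\delta_j=\norm{\Ep[\sigma_j'(-Y\<X,\theta\>)XX^\top]-\Ep[{\sigma_j\opt}'(-Y\<X,u\opt\>)XX^\top]}$ and the analogous formula for $\xi_j$, add and subtract $\Ep[\sigma_j'(-Y\<X,u\opt\>)XX^\top]$ and use the triangle inequality for the spectral norm. Since the links are symmetric, $\sigma_j'$ is even, so $\sigma_j'(-Y\<X,u\opt\>)=\sigma_j'(Z)$ and likewise for ${\sigma_j\opt}'$, giving
\begin{align*}
  \delta_j\le \xi_j+\eta_j,\qquad
  \eta_j\defeq\normbig{\Ep[(\sigma_j'(Z)-{\sigma_j\opt}'(Z))XX^\top]}
  \le\Ep\brk{\,|\sigma_j'(Z)-{\sigma_j\opt}'(Z)|\,\ltwo{X}^2}.
\end{align*}
As $\xi_j\to0$ by hypothesis, it remains to prove $\eta_j\to0$. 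It helps to first restate the other hypothesis: conditioning on $X$, using $\Prb(Y=1\mid X)=\sigma_j\opt(Z)$ and the symmetry relations $\sigma_j(t)+\sigma_j(-t)=1$ (and the same for $\sigma_j\opt$), one computes $\Ep[(\sigma_j(-Y\<X,u\opt\>)-\sigma_j\opt(-Y\<X,u\opt\>))^2\mid X]=(\sigma_j(Z)-\sigma_j\opt(Z))^2$, so the hypothesis says exactly that $\Ep[(\sigma_j(Z)-\sigma_j\opt(Z))^2]=\int_0^\infty(\sigma_j(z)-\sigma_j\opt(z))^2 p(z)\,dz\to0$, i.e.\ $\sigma_j\to\sigma_j\opt$ in $L^2$ of the law of $|Z|$.

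\emph{Step 2: the uniformly Lipschitz-derivative case.} Under condition~1 of Lemma~\ref{lem:d-continuity-guarantees} each $\sigma_j'$ is $\lipconst'$-Lipschitz, and I would show $\sigma_j'(z)\to{\sigma_j\opt}'(z)$ for every $z\in(0,\infty)$ (hence a.e., by evenness). If this failed at some $z_0>0$ along a subsequence, with $|\sigma_j'(z_0)-{\sigma_j\opt}'(z_0)|\ge\epsilon$, then $\lipconst'$-Lipschitzness of both derivatives forces $|g'|\ge\epsilon/2$ on a fixed compact interval $I\subset(0,\infty)$ containing $z_0$, where $g\defeq\sigma_j-\sigma_j\opt$; thus $g$ is strictly monotone on $I$ with total variation $\ge(\epsilon/2)|I|$, and since $g$ is $2\lipconst$-Lipschitz we get $|g|\ge(\epsilon/8)|I|$ on a fixed subinterval $I'\subset I$, whence $\Ep[g(Z)^2]\ge(\epsilon|I|/8)^2|I'|\inf_{I'}p>0$ because $p$ is continuous and strictly positive on $(0,\infty)$. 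This contradicts the $L^2$-convergence, so the pointwise convergence holds, and dominated convergence with dominating function $2\lipconst\ltwo{X}^2\in L^1$ (using $\Ep[\ltwo{X}^2]<\infty$) gives $\eta_j\to0$.

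\emph{Step 3: the continuous-density case, and the obstacle.} Under condition~2, $X$ has a continuous density on $\R^d$, so $Z$ has a density $p_Z$ on $\R$; set $\Psi(z)\defeq\Ep[XX^\top\mid Z=z]\,p_Z(z)$, which a routine argument (using the continuous density and $\Ep[\ltwo{X}^4]<\infty$) shows to be continuous and integrable with $\int\Psi=\Sigma$, and $\Ep[(\sigma_j'(Z)-{\sigma_j\opt}'(Z))XX^\top]=\int(\sigma_j'(z)-{\sigma_j\opt}'(z))\Psi(z)\,dz$. It suffices to bound one scalar entry $\psi$ of $\Psi$. Given $R<\infty$, I would approximate $\ind_{[-R,R]}\psi$ uniformly up to error $\delta$ by a $C^1$ function $\psi_\delta$ with compact support; since $\sigma_j,\sigma_j\opt$ are absolutely continuous, integration by parts gives $\int(\sigma_j'-{\sigma_j\opt}')\psi_\delta=-\int(\sigma_j-\sigma_j\opt)\psi_\delta'\to0$, because $\sigma_j\to\sigma_j\opt$ in $L^1_{\mathrm{loc}}$---away from $0$ from the $L^2$-convergence and positivity of $p$, near $0$ from the uniform bound $|\sigma_j-\sigma_j\opt|\le2\lipconst|z|$ (valid since $\sigma_j(0)=\sigma_j\opt(0)=1/2$)---while $\psi_\delta'$ is bounded; the $\delta$-approximation error is at most $4R\lipconst\delta$ and the tail is at most $2\lipconst\int_{|z|>R}|\psi|$. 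Letting $\delta\downarrow0$ and then $R\uparrow\infty$ yields $\eta_j\to0$, and with Step~1 this proves $\delta_j\to0$. The main difficulty is precisely this step: one is passing from $L^2$-control of the links to control of their derivatives against a weight that is only continuous and integrable, and the argument must be organized around the non-smoothness of $\Psi$ near the origin and its tail behavior, whereas the Lipschitz-derivative case of Step~2 is comparatively soft.
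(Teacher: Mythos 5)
Your Step 1 matches the paper's reduction exactly, but from there you and the paper diverge. The paper proves this claim \emph{without} invoking either of the two conditions of Lemma~\ref{lem:d-continuity-guarantees}: using Assumption~\ref{assumption:x-decomposition} it reduces the remainder to the scalar $\E[(\sigma_j'(Z)-{\sigma_j\opt}'(Z))Z^2]$, truncates to $\epsilon\le|Z|\le M$, and integrates by parts to trade $\sigma_j'$ for $\sigma_j$ against the weight $2zp(z)+z^2p'(z)$; Cauchy--Schwarz plus boundedness of $p'/p$ on $[\epsilon,M]$ then reduces everything to the hypothesis $\LtwoP{\sigma_j-\sigma_j\opt}\to0$, and the boundary and tail terms vanish because $z^2p(z)\to0$ as $z\to 0,\infty$. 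Conditions 1 and 2 are reserved entirely for the subsequent verification that $\xi_j\to0$. You instead case-split on those conditions already at this stage. Your Step 2 (pointwise convergence of the derivatives by contradiction, then dominated convergence) is a correct alternative under condition 1. Your Step 3 is also at bottom an integration by parts, but run against a smooth compactly supported approximation of the weight rather than against the density itself; what that buys is that you never need $p$ to be differentiable, only continuous and positive.

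The one genuine soft spot is the ``routine'' continuity of $\Psi(z)=\E[XX^\top\mid Z=z]\,p_Z(z)$ in Step 3: continuity of the density of $X$ together with $\E[\ltwo{X}^4]<\infty$ does not give continuity of the hyperplane integrals $z\mapsto\int_{\<x,u\opt\>=z}xx^\top q(x)$ (one can place continuous spikes of vanishing mass but non-vanishing second-moment contribution on hyperplanes $\<x,u\opt\>=z_k\to z_0$), so the uniform approximation of $\ind_{[-R,R]}\psi$ is not available as claimed. Fortunately you do not need it: since $|\sigma_j'-{\sigma_j\opt}'|\le 2\lipconst$ a.e., an $L^1$-approximation of $\Psi$ by $C_c^1$ functions suffices, and integrability of $\Psi$ is free from $\E[\ltwo{X}^2]<\infty$ (under Assumption~\ref{assumption:x-decomposition} one even has the explicit form $\Psi(z)=(z^2u\opt{u\opt}^\top+\E[WW^\top])p_Z(z)$). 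With that repair your Step 3 uses neither condition 1 nor condition 2 and becomes a standalone proof that the remainder $\eta_j\to0$ under the ambient hypotheses alone---which makes Step 2 redundant and restores the paper's division of labor, in which only the term $\xi_j$ requires the extra structural conditions.
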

\begin{proof}
  By the triangle inequality and the independent decomposition $X = Z u\opt
  + W$, we have
  \begin{align*}
    \delta_j & \leq \xi_j + \norm{\E\left[\sigma_j'(-Y\<X, u\opt\>) XX^\top\right] - \E\left[{\sigma_j\opt}'(-Y\<X, u\opt\>) XX^\top\right] } \nonumber \\
    & = \xi_j + \norm{\E\left[(\sigma_j'(Z)-{\sigma_j\opt}'(Z)) Z^2\right] \cdot u\opt {u\opt}^\top } = \xi_j + \left|\E\left[(\sigma_j'(Z)-{\sigma_j\opt}'(Z)) Z^2\right] \right|.
  \end{align*}
  It remains to show $\E\left[(\sigma_j'(Z)-{\sigma_j\opt}'(Z)) Z^2\right]
  \to 0$. Using the symmetry of $\sigma_j$ and $\sigma\opt_j$, so
  $\sigma_j'(t) = \sigma_j'(-t)$, we can replace $Z$ by $|Z|$. Then
  integrating by parts, for any $0 < \epsilon < M < \infty$ we have
  \begin{align*}
    \Ep \brk{\sigma_j'(Z) Z^2 \ind \{\epsilon \leq |Z| \leq M\}} & = \int_\epsilon^M \sigma_j'(z) z^2 p(z) dz \nonumber \\
    &= \sigma_j(M) M^2p(M) - \sigma_j(\epsilon) \epsilon^2 p(\epsilon)  - \int_\epsilon^M \sigma_j(z) (2z p(z) + z^2 p'(z)) dz.
  \end{align*}
  By our w.l.o.g.\ assumption that $\linf{\sigma'} \leq 1$, we have $|\Ep
  [\sigma_j'(Z) Z^2] - \Ep [\sigma_j'(Z) Z^2 \ind \{\epsilon \leq |Z| \leq
    M\}]| \leq \Ep[Z^2 \ind\{|Z| < \epsilon \text{ or } |Z| >
    M\}]$. Thus, recognizing the trivial
  bound $\linf{\sigma_j} \leq 1$, we have
  \begin{align*}
    \left|\E\left[(\sigma_j'(Z)-{\sigma_j\opt}'(Z)) Z^2\right] \right| &\leq 2 \Ep[Z^2 \ind\{|Z| < \epsilon \text{ or } |Z| > M\}] + 2 \prn{\epsilon^2 p(\epsilon) + M^2p(M)} + \nonumber \\
    & \qquad + \underbrace{\left|\int_\epsilon^M \sigma_j(z) (2z p(z) + z^2 p'(z)) -  \int_\epsilon^M \sigma_j\opt(z) (2z p(z) + z^2 p'(z))\right|}_{(\star)}.
  \end{align*}
 	
  We show for any fixed $0 < \epsilon < M < \infty$, $(\star) \to 0$. Applying
  the Cauchy-Schwarz inequality twice, we have the bounds
  \begin{align*}
    \left|\int_\epsilon^M (\sigma_j(z) - \sigma_j\opt(z)) z p(z) dz \right| &\leq \LtwoP{\sigma_j(Z) - \sigma_j\opt(Z)} \cdot \sqrt{\Ep[Z^2]} \to 0, \\
    \left|\int_\epsilon^M (\sigma_j(z) - \sigma_j\opt(z)) z^2 p'(z) dz \right| & \leq  \LtwoP{\sigma_j(Z) - \sigma_j\opt(Z)} \cdot \sqrt{\int_\epsilon^M z^4 \prn{\frac{p'(z)}{p(z)}}^2 p(z) dz} \nonumber \\
    & \leq \LtwoP{\sigma_j(Z) - \sigma_j\opt(Z)} \cdot \sup_{z \in [\epsilon, M]} \left|\frac{p'(z)}{p(z)}\right| \sqrt{\Ep[Z^4]} \to 0,
  \end{align*}
  where for the final inequality we use that $p(z)$ is nonzero and
  continuously differentiable.

  As $(\star) \to 0$, we evidently have
  $\limsup |\E[(\sigma_j'(Z) - {\sigma_j\opt}'(Z)) Z^2]|
  \le 2 \E[Z^2 (\ind\{|Z| < \epsilon\} + \ind\{|Z| > M\})]
  + 2 (\epsilon^2 p(\epsilon) + M^2 p(M))$ for arbitrary
  $0 < \epsilon < M < \infty$.
  Using the assumptions that $\Ep[Z^2] \leq \Ep[\ltwo{X}^2] < \infty$
  and $\lim_{z \to s} z^2p(z) = 0$ for $s \in \{0, \infty\}$, we
  conclude the proof by taking $\epsilon \to 0$ and $M \to \infty$.
\end{proof}

Finally we prove $\xi_j \defeq \norm{\E[\sigma_j'(-Y\<X, \theta\>) XX^\top]
  - \E[\sigma_j'(-Y\<X, u\opt\>) XX^\top]} \to 0$ under the two conditions
in the statement of Lemma~\ref{lem:d-continuity-guarantees}: that
$\sigma'_j$ are Lipschitz or $X$ has a continuous density.
\paragraph{Condition 1. The links have Lipschitz derivatives}
We apply Jensen's inequality to write
\begin{align*}
  \xi_j \leq \E\left[|\sigma_j'(-Y\<X, \theta\>) - \sigma_j'(-Y\<X, u\opt\>)| \cdot \norm{XX^\top}\right]
  & = \E\left[|\sigma_j'(-Y\<X, \theta\>) - \sigma_j'(-Y\<X, u\opt\>)| \cdot \ltwo{X}^2\right] \\
  & \le \norm{\sigma_j'}_{\textup{Lip}}
  \ltwo{\theta - u\opt} \cdot \E\left[\ltwo{X}^3\right],
\end{align*}
where $\norm{\cdot}_{\textup{Lip}}$ denotes the Lipschitz
constant of its argument. Taking
$\theta \to u\opt$ completes the proof for this case.
\paragraph{Condition 2. The covariates have continuous density}
Let $X$ have density $q(x)$. We rewrite the convergence
$\theta \to u\opt$ instead as $\theta
= V u\opt$ where $V \to I_d$ is invertible. We again divide the expectation
into large $\ltwo{X}$ part and small $\ltwo{X}$ part. Let
$M < \infty$ be large enough that $\E[\ltwo{X}^2 \ind\{\ltwo{X} > M\}]
\le \epsilon$. Then using $\linfs{\sigma_j'} < \infty$,
we obtain
\begin{align*}
  \xi_j & \leq \norm{\E\left[(\sigma_j'(-Y\<X, \theta\>) -{\sigma_j}'(-Y\<X, u\opt\>)) XX^\top \right] \ind \{\ltwo{X} \leq M\}} + 2 \lipconst'
  \Ep \brk{\ltwo{X}^2 \ind \{\ltwo{X} > M\}} \nonumber \\
  & \le \norm{ \prn{V^{-\top} \E\left[\sigma_j'(-\<V^\top X, u\opt\>) V^\top X XV \right] V - \E\left[{\sigma_j}'(-\<X, u\opt\>)XX^\top \right]}  \ind \{\ltwo{X} \leq M\}}  + 2 \linf{\sigma_j'} \epsilon.
\end{align*}
By the linear transformation of variables $X' \defeq V^\top X$, the first
term in the above display is
\begin{align*}
	&\norm{ V^{-\top} \E\left[\sigma_j'(-Y\<V^\top X, u\opt\>) V^\top X XV \right] V - \E\left[{\sigma_j}'(-Y\<X, u\opt\>)XX^\top \right]} \nonumber \\
	&= \norm{\det(V^{-1}) \cdot V^{-\top} \prn{\int_{\ltwo{V^\top x} \leq M} \sigma_j'(-x^\top u\opt) xx^\top p(V^{-\top} x) dx}  V - \int_{\ltwo{x} \leq M} \sigma_j'(-x^\top u\opt) xx^\top p(x) dx}.
\end{align*}
Since $p(x)$ is absolutely continuous on any compact set, the above term
converges to $0$ as $V \to I_d$. Finally, as $\epsilon > 0$ was arbitrary,
we take $M \to \infty$ to conclude that $\xi_j \to 0$.

\subsection{Proof of Proposition~\ref{prop:crowd-sourcing-estimator}} \label{proof:crowd-sourcing-estimator}

We apply Theorem~\ref{theorem:semiparametric-master}.
We first give the specialization
of $C_{m,\vec{\sigma}\opt}$ that the assumptions of the proposition
imply, recognizing that as
${\sigma\lr}' = \sigma\lr(1 - \sigma\lr)$, we have
\begin{align*}
  C_{m, \vec{\sigma}\opt} & =
  \frac{\sum_{j = 1}^m
    \E[\sigma\lr(\alpha_j\opt Z) (1 - \sigma\lr(\alpha_j\opt Z))]}{
    (\sum_{j = 1}^m\E[{\sigma\lr}'(\alpha_j\opt Z)])^2}
  = \prn{ \sum_{j = 1}^m \E[\sigma\lr(\alpha_j\opt Z) (1 - \sigma\lr(\alpha_j\opt Z))]}^{-1}.
\end{align*} 
We now argue that we can actually invoke
Theorem~\ref{theorem:semiparametric-master}, which requires verification of
Assumption~\ref{assumption:sp-general}. Because for any $M < \infty$, the
link functions $t \mapsto \sigma\lr(\alpha t)$ have Lipschitz continuous
derivatives for $|\alpha| \le M$, so when $\vec{\sigma}_n$ has form
$\vec{\sigma}_n = [\sigma\lr(\alpha_{n,j}\cdot)]_{j = 1}^m$,
Lemma~\ref{lem:d-continuity-guarantees} implies the continuity of the
mapping $(\theta, \vec{\sigma}) \mapsto \nabla_\theta^2 \poploss(\theta,
\vec{\sigma})$ for $d_{\fsymlinksp}$ at $(u\opt, \vec{\sigma}\opt)$.
Then recognizing that by
Lipschitz continuity of $\sigma\lr$ we have
\begin{equation*}
  \LtwoPbold{\vec{\sigma}_n(Z)
    - \vec{\sigma}\opt(Z)}
  \le \ltwo{\alpha_n - \alpha\opt}^2 \LtwoPbold{Z}^2
  \cp 0
\end{equation*}
whenever $\alpha_n \in \R^m$ satisfies $\alpha_n \cp \alpha\opt$, we
obtain the proposition.

\section{Proofs of nonparametric convergence results}
\label{sec:proof-nonparametric}

\newcommand{\noise}{\xi}
\newcommand{\localcomplexity}{\mc{R}}

In this technical appendix, we include proofs of the results from
Section~\ref{sec:semiparametric} as well as a few additional results, which
are essentially corollaries of results on localized complexities and
nonparametric regression models, though we require a few modifications
because our setting is slightly non-standard.

\subsection{Preliminary results}

To set notation and to make reading it self-contained, we provide some
definitions. The $L^r(P)$ norm of a function or random vector is
$\norm{f}_{L^r(P)} = (\int |f|^r dP)^{1/r}$, so that its $L^2(P_n)$-norm is
$\LtwoPn{f}^2 = \frac{1}{n} \sum_{i = 1}^n f(X_i)^2$.  We consider the
following abstract nonparametric regression setting: we have a function
class $\mc{F} \subset \{\R \to \R\}$ with $f\opt \in \mc{F}$, and our
observations follow the model
\begin{equation}
  \label{eqn:nonparametric-model}
  Y_i = f\opt(X_i) + \noise_i,
\end{equation}
but instead of observing $(X_i, Y_i)$ pairs we observe $(\wt{X}_i, Y_i)$
pairs, where $\wt{X}_i$ may not be identical to $X_i$ (these play the roll
of $\<u\opt, X_i\>$ versus $\<\uinit, X_i\>$ in the results to come).  We
assume that $\noise_i$ are bounded so that $\sup \noise - \inf \noise \le
1$, independent, and satisfy the conditional mean-zero property that
$\E[\noise_i \mid X_i] = 0$ (though $\noise_i$ may not be independent of
$X_i$).  For a (thus far unspecified) function class $\mc{F}$ we set
\begin{equation*}
  \what{f} = \argmin_{f \in \mc{F}} P_n(Y - f(\wt{X}))^2.
\end{equation*}

We now demonstrate that the error $\LtwoPns{\what{f} - f\opt}^2 =
\frac{1}{n} \sum_{i = 1}^n (\what{f}(X_i) - f\opt(X_i))^2$ can be bounded by
a combination of the errors $\wt{X}_i - X_i$ and local complexities of the
function class $\mc{F}$.  Our starting point is a local complexity bound
analagous to the localization results available for in-sample prediction
error in nonparametric regression~\cite[cf.][Thm.~13.5]{Wainwright19}.  To
present the results, for a function class $\mc{H}$ we define the
localized $\noise$-complexity
\begin{equation*}
  \localcomplexity_n(u; \mc{H})
  \defeq \E\left[ \sup_{h \in \mc{H},
      \LtwoPns{h} \le u}
    \left|n^{-1} \sum_{i = 1}^n \noise_i h(x_i) \right| \right],
\end{equation*}
where we treat the expectation conditionally on $X_i$ and $\noise_i$ are
random (i.e.\ $\noise_i = Y_i - f\opt(X_i)$).  For the
model~\eqref{eqn:nonparametric-model}, we define the centered class
$\mc{F}\opt = \{f - f\opt \mid f \in \mc{F}\}$, which is star-shaped as
$\mc{F}$ is a convex set.\footnote{ A set $\mc{H}$ is \emph{star-shaped} if
  for all $h \in \mc{H}$, if $\alpha \in [0, 1]$ then $\alpha h \in
  \mc{H}$.}  We say that $\delta$ satisfies the \emph{critical radius
  inequality} if
\begin{equation}
  \frac{1}{\delta} \localcomplexity_n(\delta; \mc{F}\opt)
  \le \delta.
  \label{eqn:critical-inequality}
\end{equation}
With this, we can provide a proposition giving a high-probability
bound on the in-sample prediction
error of the empirical estimator $\what{f}$, which is essentially
identical to~\cite[Thm.~13.5]{Wainwright19}, though we require a few
modifications to address that we observe $\wt{X}_i$ and not $X_i$
and that the noise $\noise_i$ are bounded but not Gaussian.
\begin{proposition}
  \label{proposition:in-sample-convergence}
  Let $\mc{F}$ be a convex function class, $\delta_n > 0$ satisfy the
  critical inequality~\eqref{eqn:critical-inequality},
  and let
  $\gamma^2 = \sup_{f \in \mc{F}}
  \frac{1}{n} \sum_{i = 1}^n (f(X_i) - f(\wt{X}_i))^2$.
  Then for $t \ge \delta_n$,
  \begin{equation*}
    \P\left(\LtwoPns{\what{f} - f\opt}^2 \ge 30 t \delta_n
    + 25 \gamma \max\{\gamma, \LtwoPns{\noise}\}
    \mid X_1^n, \wt{X}_1^n
    \right)
    \le \exp\left(-\frac{n t \delta_n}{4}\right).
  \end{equation*}
\end{proposition}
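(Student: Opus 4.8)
The plan is to follow the localization/peeling proof of Theorem~13.5 in \cite{Wainwright19}, incorporating two adjustments: the discrepancy between $\wt X_i$ and $X_i$, which gets absorbed into the $\gamma$ terms, and the boundedness (rather than Gaussianity) of $\noise_i$, which affects only the concentration step. Throughout I condition on $(X_1^n,\wt X_1^n)$, so all randomness is over the noise $\noise_1,\dots,\noise_n$.

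First I would write the basic inequality from optimality of $\what f$ in the convex class $\mc F$: substituting $Y_i=f\opt(X_i)+\noise_i$ into $P_n(Y-\what f(\wt X))^2\le P_n(Y-f\opt(\wt X))^2$ and setting $g\defeq\what f-f\opt\in\mc F\opt$ yields
\begin{equation*}
\frac1n\sum_{i=1}^n\bigl(\what f(\wt X_i)-f\opt(X_i)\bigr)^2\le\frac1n\sum_{i=1}^n\bigl(f\opt(\wt X_i)-f\opt(X_i)\bigr)^2+\frac2n\sum_{i=1}^n\noise_i\,g(\wt X_i).
\end{equation*}
The first right-hand term is $\le\gamma^2$, and since $\what f(\wt X_i)-\what f(X_i)$ has empirical $L^2$ norm at most $\gamma$, the left side differs from $\LtwoPns{\what f-f\opt}$ by at most $\gamma$ in $\LtwoPns{\cdot}$; likewise $g(\wt X_i)-g(X_i)$ has empirical $L^2$ norm $\le 2\gamma$, so Cauchy--Schwarz rewrites the cross term as $\frac2n\sum_i\noise_i g(X_i)+O(\gamma\LtwoPns{\noise})$. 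This reduces the whole problem to bounding $\sup_{g\in\mc F\opt}\bigl|\frac1n\sum_i\noise_i g(X_i)\bigr|$ as a function of $\LtwoPns g$, which is exactly the object the localized complexity $\localcomplexity_n$ and the critical inequality~\eqref{eqn:critical-inequality} control.

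Next comes localization. Because $\mc F\opt$ is star-shaped, $u\mapsto\localcomplexity_n(u;\mc F\opt)/u$ is nonincreasing, so~\eqref{eqn:critical-inequality} gives $\E Z(t)=\localcomplexity_n(t;\mc F\opt)\le t\delta_n$ for all $t\ge\delta_n$, where $Z(t)\defeq\sup_{g\in\mc F\opt,\,\LtwoPns g\le t}\bigl|\frac1n\sum_i\noise_i g(X_i)\bigr|$; the rescaling $g\mapsto(t/\LtwoPns g)g\in\mc F\opt$ then bounds the process at every scale by $\max\{\LtwoPns g,t\}$ times the normalized complexity. Here I would replace Wainwright's Gaussian concentration of $Z(t)$ around its mean by a bounded-differences / Talagrand (functional Bernstein) inequality valid for the bounded noise ($\mathrm{range}\le 1$): changing one $\noise_i$ perturbs $\frac1n\sum_i\noise_i g(X_i)$ by at most $|g(X_i)|/n$, and $\sum_i g(X_i)^2\le nt^2$ on the relevant set, which produces a tail for $Z(t)$ of the order needed to give the asserted $\exp(-nt\delta_n/4)$ bound (a peeling over dyadic scales $t,2t,4t,\dots$ converting this to a uniform-in-$g$ statement). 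Assembling the basic inequality, the transfer to $X_i$, and this process bound gives a quadratic inequality $\LtwoPns{\what f-f\opt}^2\lesssim t\delta_n+\gamma^2+\gamma\LtwoPns{\noise}+\delta_n\LtwoPns{\what f-f\opt}$, which AM--GM solves in the stated form.

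The main obstacle is the concentration step: extracting, from merely bounded noise, a deviation inequality for $Z(t)$ whose tail matches $\exp(-nt\delta_n/4)$ uniformly for $t\ge\delta_n$, rather than the cruder $\exp(-cn\delta_n^2)$ — this needs the variance-sensitive (Bernstein/Talagrand) form together with the dyadic peeling, and is the only genuinely new ingredient beyond reciting \cite[Thm.~13.5]{Wainwright19}. A secondary, fiddly point is tracking the several $\gamma$- and $\LtwoPns{\noise}$-flavored error terms so that they collapse exactly into the single summand $25\gamma\max\{\gamma,\LtwoPns{\noise}\}$.
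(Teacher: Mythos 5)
Your overall architecture matches the paper's: a basic inequality that absorbs the $X_i$ versus $\wt{X}_i$ discrepancy into $\gamma$-terms (the paper reaches $\LtwoPn{\Delta}^2 \le \tfrac{2}{n}\noise^\top\Delta + 11\gamma\max\{\gamma,\LtwoPn{\noise}\}$ via a $(1+\eta)$-weighted Young-type inequality rather than your Cauchy--Schwarz transfer of the cross term, but the two routes are interchangeable), followed by localization at the single scale $u = \sqrt{t\delta_n}$ using star-shapedness of $\mc{F}\opt$ and monotonicity of $u \mapsto \localcomplexity_n(u;\mc{F}\opt)/u$, and finally solving the resulting quadratic in $\LtwoPn{\Delta}$.

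The gap is the concentration step, which you correctly flag as the crux but do not resolve, and your proposed tools do not quite close it. Plain bounded differences fails: changing $\noise_i$ moves $Z_n(u) = \sup_{g \in \mc{F}\opt,\,\LtwoPns{g}\le u}|n^{-1}\sum_i \noise_i g(x_i)|$ by up to $\sup_g|g(x_i)|/n$, and since a function with $\LtwoPn{g}\le u$ may concentrate all its mass on one coordinate, $\sum_i \sup_g g(x_i)^2$ can be of order $n^2u^2$ rather than $nu^2$ (the suprema for distinct $i$ are attained by distinct $g$), which destroys the rate. The Talagrand--Bousquet functional Bernstein inequality would work but requires a uniform sup-norm envelope on the localized class, which the proposition does not assume. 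What the paper actually uses is cleaner and avoids both issues: $\noise \mapsto Z_n(u)$ is a supremum of linear functionals of $\noise$, hence convex, and is $(u/\sqrt{n})$-Lipschitz in $\ell_2$ because $|n^{-1}\sum_i(\noise_i-\noise_i')g(x_i)| \le n^{-1}\ltwo{\noise-\noise'}\cdot\sqrt{n}\,\LtwoPn{g}$; concentration for convex Lipschitz functions of bounded independent variables then gives $\P(Z_n(u) \ge \E[Z_n(u)] + u^2) \le \exp(-nu^2/4)$ with no envelope condition. Combined with $\E[Z_n(u)] = \localcomplexity_n(u) \le u\delta_n \le u^2$ for $u \ge \delta_n$ and the star-shaped rescaling $g \mapsto (u/\LtwoPn{g})g$, a single application at $u = \sqrt{t\delta_n}$ suffices; no dyadic peeling is needed.
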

\noindent
See Section~\ref{sec:proof-in-sample-convergence} for a proof of the
proposition.

Revisiting the critical radius inequality~\eqref{eqn:critical-inequality},
we can also apply \cite[Corollary~13.7]{Wainwright19}, which allows
us to use an entropy integral to guarantee bounds on the critical
radius. Here, we again fix $X_1^n = x_1^n$, and for a function
class $\mc{H}$ we let
$\mc{B}_n(\delta; \mc{H}) = \{h \in \starhull(\mc{H})
\mid \LtwoPn{h} \le \delta\}$. Let $N_n(t; \mc{B})$ denote the
$t$-covering number of $\mc{B}$ in $\LtwoPn{n\cdot}$-norm. Then
modifying a few numerical constants, we have
\begin{corollary}[Wainwright~\cite{Wainwright19}, Corollary 13.7]
  \label{corollary:localization-via-dudley}
  Let the conditions of
  Proposition~\ref{proposition:in-sample-convergence} hold. Then
  for a numerical constant $C \le 16$,
  any $\delta \in [0, 1]$ satisfying
  \begin{equation*}
    \frac{C}{\sqrt{n}} \int_{\delta^2 / 4}^\delta
    \sqrt{\log N_n(t; \mc{B}_n(\delta, \mc{F}\opt))} dt
    \le \frac{\delta^2}{2}
  \end{equation*}
  satisfies the critical inequality~\eqref{eqn:critical-inequality}.
\end{corollary}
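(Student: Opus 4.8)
The plan is to adapt the standard chaining argument behind Wainwright's Corollary~13.7 to our slightly nonstandard setting, the only deviations from the textbook proof being that the noise $\noise_i$ is bounded and mean-zero rather than Gaussian, and that we observe $\wt X_i$ rather than $X_i$. The latter is irrelevant here: the critical-radius inequality~\eqref{eqn:critical-inequality} and the localized complexity $\localcomplexity_n(\,\cdot\,;\mc F\opt)$ are stated entirely in terms of the conditioned design points $x_1^n$ at which the members of $\mc F\opt$ are evaluated, and the mismatch between $\wt X_i$ and $X_i$ is handled separately through the quantity $\gamma$ in Proposition~\ref{proposition:in-sample-convergence}.

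First I would condition on $X_1^n = x_1^n$ and observe that, by Hoeffding's lemma, a bounded mean-zero variable with range at most $1$ is $\tfrac14$-sub-Gaussian; consequently the empirical process $h \mapsto \tfrac1n\sum_{i=1}^n \noise_i h(x_i)$ has sub-Gaussian increments with parameter $\tfrac{1}{4n}\LtwoPns{h-h'}^2$, which is all the chaining machinery requires. Next, since $\mc F\opt$ is star-shaped we have $\starhull(\mc F\opt) = \mc F\opt$, so $\mc B_n(\delta;\mc F\opt) = \{h\in\mc F\opt : \LtwoPn{h}\le\delta\}$ coincides exactly with the index set appearing in the definition of $\localcomplexity_n(\delta;\mc F\opt)$; hence $\localcomplexity_n(\delta;\mc F\opt) = \E\big[\sup_{h\in\mc B_n(\delta;\mc F\opt)}|\tfrac1n\sum_i\noise_i h(x_i)|\big]$.

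Then I would apply Dudley's entropy-integral bound to this sub-Gaussian process together with a one-step discretization: cutting the chain at a scale $\alpha$ yields $\localcomplexity_n(\delta;\mc F\opt) \le c_1\alpha + \tfrac{c_2}{\sqrt n}\int_\alpha^\delta\sqrt{\log N_n(t;\mc B_n(\delta;\mc F\opt))}\,dt$, where $c_1\alpha$ bounds the contribution of the coarsest net (the members of $\mc B_n(\delta;\mc F\opt)$ have empirical $L^2$ radius at most $\delta$, so the net error at scale $\alpha$ contributes $O(\alpha)$ after the $1/\sqrt n$ normalization). Taking $\alpha = \delta^2/4$ makes this discretization error at most $\delta^2/2$ once the constants are tracked, while the hypothesis of the corollary forces the integral term $\tfrac{c_2}{\sqrt n}\int_{\delta^2/4}^\delta\sqrt{\log N_n(t;\mc B_n(\delta;\mc F\opt))}\,dt$ to be at most $\delta^2/2$ as well (this is the origin of the stated constant $C\le 16$, namely $C=c_2$ after bookkeeping). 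Adding the two pieces gives $\localcomplexity_n(\delta;\mc F\opt)\le\delta^2$, i.e. $\tfrac1\delta\localcomplexity_n(\delta;\mc F\opt)\le\delta$, which is precisely~\eqref{eqn:critical-inequality}.

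The main obstacle is the constant bookkeeping in the one-step discretization: matching the exact limits of integration $[\delta^2/4,\delta]$, pinning down $C\le 16$, and verifying that the coarsest-net term really is at most $\delta^2/2$. One must also be careful that the chaining is performed in the empirical $\LtwoPn{\cdot}$ metric at the conditioned design and that $N_n(t;\cdot)$ is the covering number at the matching normalization. Once these are fixed, the argument is a routine instantiation of the standard sub-Gaussian chaining lemma, and nothing beyond Hoeffding's lemma is needed to replace the Gaussian noise assumption.
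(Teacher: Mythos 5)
Your proposal is correct and follows exactly the argument behind the cited result: the paper itself gives no proof of this corollary, importing it directly from Wainwright's Corollary~13.7 "modifying a few numerical constants," and your sketch (condition on the design, note the bounded mean-zero noise is $\tfrac14$-sub-Gaussian by Hoeffding's lemma, use that $\starhull(\mc{F}\opt)=\mc{F}\opt$, then apply one-step discretization plus Dudley's entropy integral and bound the residual term at scale $\alpha=\delta^2/4$ by Cauchy--Schwarz) is precisely that proof. The only nitpick is terminological: the $O(\alpha)$ term in the one-step discretization bounds the supremum over pairs within empirical $L^2$ distance $\alpha$, not the "coarsest net" itself, but the bound you invoke for it is the right one.
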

\noindent
As an immediate consequence of this inequality, we have the following:
\begin{corollary}
  \label{corollary:lipschitz-criticality}
  Assume $|x_i| \le b$ for all $i \in [n]$ and that
  $\mc{F}$ is contained in the class of $M$-Lipschitz functions
  with $f(0) = 0$ (or any other fixed constant).
  Then for a numerical
  constant $c < \infty$, the choice
  \begin{equation*}
    \delta_n = c \left(\frac{M b}{n}\right)^{1/3}
  \end{equation*}
  satisfies the critical inequality~\eqref{eqn:critical-inequality}.
\end{corollary}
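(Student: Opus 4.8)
The plan is to obtain the critical inequality~\eqref{eqn:critical-inequality} entirely from the Dudley-type bound of \Cref{corollary:localization-via-dudley}: it suffices to produce $\delta_n \in [0,1]$ with
\begin{equation*}
  \frac{C}{\sqrt n} \int_{\delta_n^2/4}^{\delta_n}
  \sqrt{\log N_n\prn{t;\, \mc{B}_n(\delta_n, \mc{F}\opt)}}\, dt
  \le \frac{\delta_n^2}{2},
\end{equation*}
and then the corollary yields~\eqref{eqn:critical-inequality} for this $\delta_n$. So the only real content is a metric-entropy bound on $\mc{B}_n(\delta,\mc{F}\opt) \subset \starhull(\mc{F}\opt)$, followed by a short integral computation.

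For the entropy bound, I would first note that since every $f \in \mc{F}$ is $M$-Lipschitz with a common value at $0$, the centered class $\mc{F}\opt = \{f - f\opt : f \in \mc{F}\}$---and hence its star-hull---consists of $2M$-Lipschitz functions vanishing at $0$, since scaling by $\alpha \in [0,1]$ only shrinks the Lipschitz constant. On the sample points $|x_i| \le b$ such functions take values in $[-2Mb, 2Mb]$, and the sup-norm over $\{x_1,\ldots,x_n\} \subset [-b,b]$ dominates $\LtwoPn{\cdot}$, so a sup-norm $t$-cover of $\starhull(\mc{F}\opt)$ is an $L^2(P_n)$ $t$-cover of $\mc{B}_n(\delta,\mc{F}\opt)$. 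Rescaling the domain to $[-1,1]$ and the range to $[-1,1]$ turns this into the class of $1$-Lipschitz functions on the unit interval bounded by $1$, whose sup-norm covering number is the standard $\exp(O(1/\epsilon))$ (\cite[Ch.~2.7]{VanDerVaartWe96} or \cite[Ch.~5]{Wainwright19}); undoing the rescaling gives a numerical $c_0 < \infty$ with $\log N_n(t;\, \mc{B}_n(\delta,\mc{F}\opt)) \le c_0 M b / t$ for $0 < t \le 2Mb$, and $0$ for larger $t$.

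Plugging this into the Dudley integral and enlarging the lower limit of integration to $0$,
\begin{equation*}
  \int_{\delta^2/4}^{\delta} \sqrt{\log N_n\prn{t;\, \mc{B}_n(\delta,\mc{F}\opt)}}\, dt
  \le \sqrt{c_0 Mb}\, \int_0^{\delta} t^{-1/2}\, dt
  = 2\sqrt{c_0 Mb\, \delta},
\end{equation*}
so the inequality of \Cref{corollary:localization-via-dudley} is implied by $\frac{2C}{\sqrt n}\sqrt{c_0 Mb\,\delta} \le \frac{\delta^2}{2}$, which, after squaring and dividing by $\delta$, is exactly $\delta^3 \ge 16 C^2 c_0 \cdot Mb/n$. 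Thus $\delta_n = c\,(Mb/n)^{1/3}$ with $c \defeq (16 C^2 c_0)^{1/3}$---a numerical constant, since $C \le 16$---satisfies the Dudley inequality and hence~\eqref{eqn:critical-inequality}, provided $\delta_n \le 1$; this holds once $n \ge c^3 Mb$, and I would simply restrict attention to that regime (the statement is only of interest for $n$ large relative to $Mb$). I do not expect any genuine obstacle here; the two points requiring care are checking that subtracting $f\opt$ and passing to the star-hull preserve the ``$2M$-Lipschitz, vanishing at $0$'' structure feeding the entropy estimate, and not dropping the $\delta\in[0,1]$ hypothesis of \Cref{corollary:localization-via-dudley}.
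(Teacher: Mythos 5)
Your proposal is correct and follows essentially the same route as the paper: reduce to the Dudley criterion of Corollary~\ref{corollary:localization-via-dudley}, bound the $L^2(P_n)$ covering numbers of the (star-hull of the) centered class by the sup-norm covering numbers of Lipschitz functions on $[-b,b]$, and solve $n^{-1/2}\sqrt{Mb\,\delta} \lesssim \delta^2$ to get $\delta_n \asymp (Mb/n)^{1/3}$. Your treatment is if anything slightly more careful than the paper's (you explicitly verify that centering and star-hulling preserve the Lipschitz-and-vanishing-at-zero structure, and you track the $\delta_n \le 1$ requirement), but there is no substantive difference in approach.
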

\begin{proof}
  The covering numbers $N_\infty$
  for the class $\mc{F}$ of $M$-Lipschitz functions
  on $[-b, b]$
  satisfying $f(0) = 0$
  in supremum norm $\linf{f} = \sup_{x \in [-b, b]} |f(x)|$ satisfy
  $\log N_\infty(t; \mc{F}) \lesssim \frac{M b}{t}$
  \cite[cf.][Example 5.10]{Wainwright19}. Using that $N_n \le N_\infty$,
  we thus have
  \begin{equation*}
    \int_{\delta^2/4}^\delta
    \sqrt{\log N_n(t; \mc{B}_n(\delta, \mc{F}\opt))}
    \lesssim
    \int_{\delta^2/4}^\delta
    \sqrt{\frac{Mb}{t}} dt
    = 2 \sqrt{Mb} \left(\sqrt{\delta} - \delta/4\right)
    \le 2 \sqrt{Mb \delta}
  \end{equation*}
  whenever $\delta \le 1$.
  For a numerical constant $c > 0$ it suffices in
  Corollary~\ref{corollary:localization-via-dudley} to choose
  $\delta$ satisfying
  $c \frac{1}{\sqrt{n}} \sqrt{Mb \delta} \le \delta^2$, or
  $\delta = c (\frac{Mb}{n})^{1/3}$.
\end{proof}

\subsection{Proof of Proposition~\ref{proposition:consistency-of-sigma}}
\label{sec:proof-consistency-of-sigma}

We assume without loss of generality that $m = 1$, as nothing in the
proof changes except notationally (as we assume $m$ is fixed).
We apply Proposition~\ref{proposition:in-sample-convergence} and
Corollary~\ref{corollary:lipschitz-criticality}.  For
notational simplicity, let $Q$ denote the measure
on $\R$ that $Y \<u\opt, X\>$ induces for $X \sim P$, and $Q_n$ similarly for
$P_n$.
We first show that
$\LtwoQns{\sigma_n - \sigma\subopt}$ converges.  First, we
recall~\cite[Lemma 3]{Owen90} that $\max_{i \le n} n^{-1/k} \ltwo{X_i} \cas
0$ as $n \to \infty$. Thus there is a (random) $B < \infty$ such that
$\max_{i \le n} \ltwo{X_i} \le B n^{1/k}$ for all $n$. Therefore,
Corollary~\ref{corollary:lipschitz-criticality} implies that the choice
$\delta_n = c (\frac{\lipconst B}{n^{1-1/k}})^{1/3}$ satisfies the critical
inequality~\eqref{eqn:critical-inequality}, and taking $\gamma^2_n =
\frac{\lipconst^2}{n} \sum_{i = 1}^n \<\uinit - u\opt, X_i\>^2 \le M^2 \uerror_n^2
\LtwoPn{X}^2$, we have that for $t \ge \delta_n$,
\begin{equation*}
  \LtwoPns{\sigma_n - \sigma\subopt}^2
  \lesssim t \delta_n + \lipconst^2 \uerror_n^2 \LtwoPn{X}^2
  ~~~ \mbox{with~probability~at~least}~
  1 - \exp\left(-\frac{n t \delta_n}{4}\right)
\end{equation*}
on the event that $\max_{i \le n} \ltwo{X_i} \le B n^{1/k}$ for all $n$,
where we have conditioned (in the probability) on $X_i$.
As $\LtwoPn{X}^2 \cas \E[\ltwo{X}^2]$, we may choose
$t = \delta_n \gg 1/n^{1/3}$ and find that the Borell-Cantelli lemma
then implies that with probability $1$, there is a random $C < \infty$
such that
\begin{equation}
  \label{eqn:ltwopn-sigma-converge}
  \LtwoQn{\sigma_n - \sigma\opt}^2 \le C \left(
  n^{\frac{2}{3k} - \frac{2}{3}} + \epsilon_n^2 \right)
\end{equation}
for all $n$ with probability $1$.

Finally we argue that $\LtwoQ{\sigma_n - \sigma\opt} \cas 0$.
Let $b < \infty$ be otherwise arbitrary, and let
$\mc{G}_b$ be the collection of $2\lipconst$-Lipschitz functions on
$[-b, b]$ with $\linf{g} \le 1$ and $g(0)$ for $g \in \mc{G}_b$, noting
that $\sigma_n - \sigma\opt$ restricted to $[-b, b]$ evidently
belongs to $\mc{G}_b$. Then we have
\begin{align}
  \lefteqn{\LtwoQ{\sigma_n - \sigma\subopt}^2} \nonumber \\
  & = \LtwoQn{\sigma_n - \sigma\subopt}^2
  + \int_{|t| \le b} (\sigma_n(t) - \sigma\subopt(t))^2
  (dQ(t) - dQ_n(t))
  + \int_{|t| > b} (\sigma_n(t) - \sigma\subopt(t))^2 
  (dQ(t) - dQ_n(t)) \nonumber \\
  & \le \LtwoQn{\sigma_n - \sigma\subopt}^2
  + \sup_{g \in \mc{G}_b} |Q g^2 - Q_n g^2|
  + Q([-b, b]^c) + Q_n([-b, b]^c),
  \label{eqn:three-terms-bike}
\end{align}
where the inequality used that $\linf{\sigma_n - \sigma\subopt} \le 1$
by construction.
The first term in inequality~\eqref{eqn:three-terms-bike} we have
already controlled. We may control the second supremum term
almost immediately using Dudley's entropy integral
and a Rademacher contraction inequality.
Indeed, we have
\begin{equation*}
  \E[\sup_{g \in \mc{G}_b} |Q_n g^2 - Q g^2|]
  \stackrel{(i)}{\le} 2 \E[\sup_{g \in \mc{G}_b} |Q_n^0 g^2|]
  \stackrel{(ii)}{\le} 4 \E[\sup_{g \in \mc{G}_b} |Q_n^0|]
  \stackrel{(iii)}{\lesssim} \frac{1}{\sqrt{n}} \int_0^\infty
  \sqrt{\log N_\infty(t; \mc{G}_b)} dt,
\end{equation*}
where inequality~$(i)$ is a standard symmetrization inequality, $(ii)$ is
the Rademacher contraction inequality~\cite[Ch.~4]{LedouxTa91} applied to
the function $t \mapsto t^2$, which is $2$ Lipschitz for $t \in [-1, 1]$,
and $(iii)$ is Dudley's entropy integral bound. As the $\sup$-norm
log-covering numbers of $\lipconst$-Lipschitz functions on $[-b, b]$ scale
as $\frac{\lipconst b}{t}$ for $t \le \lipconst b$ and are 0 otherwise, we
obtain $\E[\sup_{g \in \mc{G}_b} |Q_n g^2 - Q g^2|] \lesssim
\frac{\lipconst b}{\sqrt{n}}$.  The bounded-differences inequality then implies that
for any $t > 0$,
\begin{equation*}
  \P\left(\sup_{g \in \mc{G}_b} |(Q_n - Q)g^2|
  \ge c \frac{\lipconst b}{\sqrt{n}} + t \right) \le
  \P\left(\sup_{g \in \mc{G}_b} |(Q_n - Q) g^2|
  \ge \E[\sup_{g \in \mc{G}_b} |(Q_n - Q) g^2|] + t \right)
  \le \exp(-c n t^2).
\end{equation*}
Finally, the final term in the bound~\eqref{eqn:three-terms-bike}
evidently satisfies $Q([-b, b]^c) \le \frac{\E[\ltwo{X}^k]}{b^k}$
and $\sup_b |Q_n([-b,b]^c) - Q([-b, b])| \le 2 \sqrt{t / n}$
with probability at least $1 - e^{-2t^2}$ by the DKW inequality.
We thus find by the Borel-Cantelli lemma that simultaneously
for all $b < \infty$, with probability at least
$1 - e^{-nt^2}$ we have
\begin{equation*}
  \LtwoQ{\sigma_n - \sigma\subopt}^2 \le
  \LtwoQn{\sigma_n - \sigma\subopt}^2
  + \frac{C \lipconst b}{\sqrt{n}} + C t + \frac{\E[\ltwo{X}^k]}{b^k},
\end{equation*}
where $C$ is a numerical constant.

Substituting inequality~\eqref{eqn:ltwopn-sigma-converge} into
the preceding display and taking $b = n^{-\frac{1}{2(k + 1)}}$, we
get the result.

\subsection{Proof of Proposition~\ref{proposition:in-sample-convergence}}
\label{sec:proof-in-sample-convergence}

We begin with an extension of the
familiar basic inequality~\cite[e.g.][Eq.~(13.18)]{Wainwright19}, where
we see by convexity that for any $\eta > 0$ we have
\begin{align*}
  \sum_{i = 1}^n (Y_i - \what{f}(X_i))^2
  & \le (1 + \eta) \sum_{i = 1}^n (Y_i - \what{f}(\wt{X}_i))^2
  + (1 + 1/\eta) \sum_{i = 1}^n (\what{f}(\wt{X}_i) - \what{f}(X_i))^2 \\
  & \le (1 + \eta) \sum_{i = 1}^n (Y_i - f\opt(\wt{X}_i))^2
  + (1 + 1/\eta) \sum_{i = 1}^n (\what{f}(\wt{X}_i) - \what{f}(X_i))^2 \\
  & \le (1 + \eta)^2 \sum_{i = 1}^n (Y_i - f\opt(X_i))^2
  + (2 + \eta + 1/\eta) \sum_{i = 1}^n
  \left[(\what{f}(\wt{X}_i) - \what{f}(X_i))^2
    + (f\opt(\wt{X}_i) - f\opt(X_i))^2 \right].
\end{align*}
Noting that $Y_i = f\opt(X_i) + \noise_i$, algebraic manipulations yield
that if
$\Delta = [f\opt(X_i) - \what{f}(X_i)]_{i = 1}^n$ is the error vector, then
\begin{equation*}
  \LtwoPn{\Delta}^2 - \frac{2}{n} \noise^T \Delta
  + \LtwoPn{\noise}^2 \le
  (1 + \eta)^2 \LtwoPn{\noise}^2
  + \frac{2 + \eta + 1/\eta}{n} \sum_{i = 1}^n
  \left[(\what{f}(\wt{X}_i) - \what{f}(X_i))^2
    + (f\opt(\wt{X}_i) - f\opt(X_i))^2 \right].
\end{equation*}
Simplifying, we obtain the following:
\begin{lemma}
  \label{lemma:basic-inequality}
  Let $\gamma^2 = \sup_{f \in \mc{F}}
  \frac{1}{n} \sum_{i = 1}^n (f(X_i) - f(\wt{X}_i))^2$
  and $\Delta = [\what{f}(X_i) - f\opt(X_i)]_{i = 1}^n$.
  Then
  \begin{align*}
    \LtwoPn{\Delta}^2
    & \le \inf_\eta \left\{(2 \eta + \eta^2) \LtwoPn{\noise}^2
    + \frac{2}{n} \noise^T \Delta
    + (4 + 2 \eta + 2/\eta) \gamma^2 \right\} \\
    & \le \frac{2}{n} \noise^T \Delta
    + 11 \gamma \max\left\{\gamma, \LtwoPn{\noise}\right\}.
  \end{align*}
\end{lemma}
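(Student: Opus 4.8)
The plan is to continue directly from the inequality derived in the paragraph preceding the statement. First I would recall that for every $\eta > 0$ the convexity expansion carried out above already gives
\begin{equation*}
  \LtwoPn{\Delta}^2 - \frac{2}{n}\noise^T \Delta + \LtwoPn{\noise}^2
  \le (1+\eta)^2 \LtwoPn{\noise}^2
  + \frac{2 + \eta + 1/\eta}{n} \sum_{i=1}^n
  \Big[(\what{f}(\wt{X}_i) - \what{f}(X_i))^2 + (f\opt(\wt{X}_i) - f\opt(X_i))^2\Big].
\end{equation*}
The crucial point is that both $\what{f}$ and $f\opt$ lie in $\mc{F}$, so by the definition of $\gamma^2$ as a supremum over $\mc{F}$ each of the two sums is bounded by $n\gamma^2$; the bracketed contribution is thus at most $2\gamma^2$. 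Subtracting $\LtwoPn{\noise}^2$ from both sides and using $(1+\eta)^2 - 1 = 2\eta + \eta^2$ then yields, for every $\eta > 0$,
\begin{equation*}
  \LtwoPn{\Delta}^2 \le \frac{2}{n}\noise^T\Delta + (2\eta + \eta^2)\LtwoPn{\noise}^2 + (4 + 2\eta + 2/\eta)\gamma^2,
\end{equation*}
and taking the infimum over $\eta > 0$ is precisely the first claimed bound.

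For the second inequality I would plug in an explicit near-optimal choice of $\eta$. Writing $s = \LtwoPn{\noise}$ for brevity, I would take $\eta = \min\{1, \gamma/s\}$ and split into two cases. When $\gamma \ge s$ we have $\eta = 1$, so the $\eta$-dependent terms become $3 s^2 + 8\gamma^2 \le 11\gamma^2$, and $\gamma^2 = \gamma\max\{\gamma, s\}$. When $\gamma < s$ we have $\eta = \gamma/s < 1$, the $\eta$-dependent terms evaluate to $(2\gamma/s + \gamma^2/s^2)s^2 + (4 + 2\gamma/s + 2s/\gamma)\gamma^2 = 4\gamma s + 5\gamma^2 + 2\gamma^3/s$, and using $\gamma < s$ to bound $\gamma^2 \le \gamma s$ and $\gamma^3/s \le \gamma s$ shows this is at most $11\gamma s = 11\gamma\max\{\gamma, s\}$. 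Combining the two cases gives $\LtwoPn{\Delta}^2 \le \frac{2}{n}\noise^T\Delta + 11\gamma\max\{\gamma, \LtwoPn{\noise}\}$, as required.

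I do not anticipate any genuine obstacle: the argument is elementary algebra once the displayed inequality is in hand. The only points requiring care are the observation that the mis-measurement term must be controlled for $\what{f}$ itself — which is exactly why $\gamma^2$ is defined as a supremum over all of $\mc{F}$ rather than evaluated at $f\opt$ — and bookkeeping the numerical constants through the two-case optimization of $\eta$.
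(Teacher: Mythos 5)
Your proposal is correct and follows essentially the same route as the paper: starting from the displayed inequality preceding the lemma, bounding both mis-measurement sums by $n\gamma^2$ via the supremum defining $\gamma$ (which indeed covers $\what{f}$ as well as $f\opt$), and then optimizing $\eta$ with the same case split on $\gamma$ versus $\LtwoPn{\noise}$ and the same choice $\eta = \min\{1, \gamma/\LtwoPn{\noise}\}$. The constant bookkeeping checks out and matches the paper's $11\gamma\max\{\gamma,\LtwoPn{\noise}\}$.
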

\begin{proof}
  The first inequality is algebraic manipulations and
  uses that our choice of $\eta$ was arbitrary.
  For the second, we consider two cases:
  that $\LtwoPn{\noise} \ge \gamma$ and that $\LtwoPn{\noise} \le \gamma$.
  In the first case, we consider $\eta \le 1$,
  yielding the simplified bound
  that
  $\LtwoPn{\Delta}^2 \le \frac{2}{n} \noise^T \Delta
  + 3 \eta \LtwoPn{\noise}^2 + \frac{8 \gamma^2}{\eta}$ for
  $\eta \le 1$. Taking $\eta = \gamma / \LtwoPn{\noise}$ gives
  that
  \begin{equation*}
    \LtwoPn{\Delta}^2
    \le \frac{2}{n} \noise^T \Delta
    + 11 \LtwoPn{\noise} \gamma.
  \end{equation*}
  In the case that $\gamma \ge \LtwoPn{\noise}$, we choose $\eta = 1$,
  and the bound simplifies to
  $\LtwoPn{\Delta}^2 \le \frac{2}{n} \noise^T \Delta
  + 3 \LtwoPn{\noise}^2 + 8 \gamma^2
  \le \frac{2}{n} \noise^T \Delta + 11 \gamma^2$.
\end{proof}

We now return to the proof of the proposition proper.
We begin with an essentially immediate extension of the result~\cite[Lemma
  13.12]{Wainwright19}.  We let $\mc{H}$ be an arbitrary star-shaped
function class.
Define the event
\begin{equation}
  \label{eqn:A-wainwright-event}
  A(u) \defeq \left\{
  \mbox{there~exists~} g \in \mc{H} ~ \mbox{s.t.}~
  \LtwoPn{g} \ge u ~ \mbox{and} ~
  \left|P_n \noise g\right| \ge 2 \LtwoPn{g} u
  \right\},
\end{equation}
which we treat conditionally on $X_1^n = x_1^n$ as in the definition
of the local complexity $\localcomplexity_n$.
(Here the noise $\noise_i$ are still random, taken conditionally
on $X_1^n = x_1^n$.)

\begin{lemma}[Modification of Lemma 13.12 of~\citet{Wainwright19}]
  \label{lemma:star-shaped-localization}
  Let $\mc{H}$ be a start-shaped function class
  and let $\delta_n > 0$ satisfy the critical radius inequality
  \begin{equation*}
    \frac{1}{\delta} \localcomplexity_n(\delta; \mc{H}) \le
    \delta.
  \end{equation*}
  Then for all $u \ge \delta_n$, we have
  \begin{equation*}
    \P(A(u)) \le \exp\left(-\frac{n u^2}{4} \right).
  \end{equation*}
\end{lemma}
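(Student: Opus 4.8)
The plan is to adapt the one-step localization argument of \citet[Lemma~13.12]{Wainwright19}, whose only essential difference from our statement is that the noise there is sub-Gaussian whereas ours is merely bounded with range at most $1$. I would work throughout conditionally on $X_1^n = x_1^n$, so that $\noise_1, \ldots, \noise_n$ are independent, conditionally mean zero, and range-bounded by $1$ (hence each is $\tfrac12$-sub-Gaussian by Hoeffding's lemma). Write $Z_n(u) \defeq \sup_{h \in \mc{H},\, \LtwoPns{h} \le u} \abs{P_n \noise h}$, so that $\E[Z_n(u)] = \localcomplexity_n(u; \mc{H})$ by definition of the localized noise complexity.

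\emph{Step 1 (reduction to a supremum event).} On $A(u)$ there is $g \in \mc{H}$ with $\LtwoPns{g} \ge u$ and $\abs{P_n \noise g} \ge 2 \LtwoPns{g} u$. Since $\mc{H}$ is star-shaped and $u / \LtwoPns{g} \in [0,1]$, the rescaled function $\wt{g} = (u/\LtwoPns{g}) g$ lies in $\mc{H}$, has $\LtwoPns{\wt g} = u$, and $\abs{P_n \noise \wt g} = (u/\LtwoPns{g}) \abs{P_n \noise g} \ge 2u^2$. Thus $A(u) \subseteq \{Z_n(u) \ge 2u^2\}$ and it suffices to bound $\P(Z_n(u) \ge 2u^2)$.

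\emph{Step 2 (bounding the mean).} I would use the standard fact that for star-shaped $\mc{H}$ the map $u \mapsto \localcomplexity_n(u; \mc{H})/u$ is non-increasing: if $0 < u \le u'$ and $h \in \mc{H}$ with $\LtwoPns{h} \le u'$, then $(u/u') h \in \mc{H}$ has $L^2(P_n)$-norm at most $u$, so $\tfrac{u}{u'} Z_n(u') \le Z_n(u)$ pointwise in $\noise$, and taking expectations gives $\localcomplexity_n(u')/u' \le \localcomplexity_n(u)/u$. Combined with the critical inequality $\localcomplexity_n(\delta_n; \mc{H}) \le \delta_n^2$, this yields $\E[Z_n(u)] = \localcomplexity_n(u; \mc{H}) \le u\, \localcomplexity_n(\delta_n; \mc{H})/\delta_n \le u \delta_n \le u^2$ for every $u \ge \delta_n$ (with strict slack once $u > \delta_n$).

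\emph{Step 3 (concentration).} This step carries the modification and is where I expect to have to be most careful with constants. Viewed as a function of $\noise = (\noise_1, \ldots, \noise_n)$, the map $Z_n(u)$ is convex (a supremum of absolute values of linear functionals of $\noise$) and $(u/\sqrt n)$-Lipschitz for $\ltwo{\cdot}$, since $\sum_{i=1}^n h(x_i)^2 \le n u^2$ whenever $\LtwoPns{h} \le u$. As each $\noise_i$ lies in an interval of length at most $1$, Talagrand's concentration inequality for convex Lipschitz functions of bounded independent random variables (see, e.g., \citet{BoucheronBoLu05}) gives $\P(Z_n(u) \ge \E[Z_n(u)] + s) \le \exp(-n s^2/(4 u^2))$; alternatively, exploiting the $\tfrac12$-sub-Gaussianity one can invoke Wainwright's bound directly, which gives an even smaller exponent. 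Taking $s = u^2$ and using Step 2 (so $\E[Z_n(u)] + s \le 2u^2$), I conclude $\P(A(u)) \le \P(Z_n(u) \ge 2u^2) \le \P(Z_n(u) \ge \E[Z_n(u)] + u^2) \le \exp(-nu^2/4)$. The delicate point is precisely that the constant in the convex-Lipschitz concentration and the (absence of) slack in the critical inequality at $u = \delta_n$ must combine to give the exponent $nu^2/4$ exactly; a coarser bookkeeping yields $\exp(-c n u^2)$ for some universal $c > 0$, which is all that is actually needed when the lemma is invoked in Proposition~\ref{proposition:in-sample-convergence}.
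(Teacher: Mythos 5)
Your proposal is correct and follows essentially the same route as the paper: reduce $A(u)$ to the event $\{Z_n(u) \ge 2u^2\}$ via star-shaped rescaling, bound $\E[Z_n(u)] \le u\delta_n \le u^2$ using the non-increasing normalized complexity and the critical inequality, and conclude with convex-Lipschitz concentration for bounded noise with Lipschitz constant $u/\sqrt{n}$ (the paper invokes the same bound, \citet[Thm.~3.4]{Wainwright19}, with range parameter $b=1$). The constants do combine to give exactly $\exp(-nu^2/4)$, so your closing hedge is unnecessary.
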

\noindent
Deferring the proof of Lemma~\ref{lemma:star-shaped-localization}
to Section~\ref{sec:proof-star-shaped-localization},
we can parallel the argument for~\cite[Thm.~13.5]{Wainwright19}
to obtain our proposition.

Let $\mc{H} = \mc{F}\opt$ in Lemma~\ref{lemma:star-shaped-localization}.
Whenever $t \ge \delta_n$, we have
$\P(A(\sqrt{t \delta_n})) \le e^{-nt \delta_n / 4}$. We consider the two
cases that $\LtwoPns{\Delta} \lessgtr \sqrt{t \delta_n}$. In the
former that $\LtwoPns{\Delta} \le \sqrt{t \delta_n}$, we have
nothing to do. In the latter, we have $\what{f} - f\opt \in \mc{F}\opt$
while $\LtwoPns{\Delta} > \sqrt{t \delta_n}$, so that if
$A(\sqrt{t \delta_n})$ fails then we must have
\begin{equation*}
  \left|\frac{1}{n} \sum_{i = 1}^n \noise_i \Delta(x_i)\right|
  \le 2 \LtwoPn{\Delta} \sqrt{t \delta_n}.
\end{equation*}
From the extension of the basic inequality in
Lemma~\ref{lemma:basic-inequality} we see that
\begin{equation*}
  \LtwoPn{\Delta}^2 \le 
  4 \LtwoPn{\Delta} \sqrt{t \delta_n} +
  11 \max\left\{\gamma^2, \gamma \LtwoPn{\noise}\right\}.
\end{equation*}
Solving for $\LtwoPn{\Delta}$ then yields
\begin{equation*}
  \LtwoPn{\Delta}
  \le \frac{4 \sqrt{t \delta_n} + \sqrt{16 t \delta_n
      + 44 \gamma \max\{\gamma, \LtwoPn{\noise}\}}}{2}
  \le 4 \sqrt{t \delta_n} +
  \sqrt{11 \gamma \max\{\gamma, \LtwoPn{\noise}\}}.
\end{equation*}
Simplifying gives Proposition~\ref{proposition:in-sample-convergence}.

\subsubsection{Proof of Lemma~\ref{lemma:star-shaped-localization}}
\label{sec:proof-star-shaped-localization}

Mimicking the proof of~\cite[Lemma 13.12]{Wainwright19}, we
begin with~\cite[Eq.~(13.40)]{Wainwright19}:
\begin{equation*}
  \P(A(u))
  \le \P(Z_n(u) \ge 2 u^2)
  ~~ \mbox{for} ~~
  Z_n(u) \defeq \sup_{g \in \mc{H},
    \LtwoPns{g} \le u} \left|n^{-1} \sum_{i=1}^n \noise_i g(x_i)\right|.
\end{equation*}
Now note that if $\LtwoPns{g} \le u$, then the function $\noise \mapsto
|n^{-1} \sum_{i = 1}^n \noise_i g(x_i)|$ is $u / \sqrt{n}$-Lipschitz
with respect to the $\ell_2$-norm, so that convex concentration
inequalities~\cite[e.g.][Theorem 3.4]{Wainwright19}
imply that
$\P(Z_n(u) \ge \E[Z_n(u)] + t)
\le \exp(-\frac{t^2 n}{4 b^2 u^2})$
whenever $\sup \noise - \inf \noise \le b$, and so
for $b = 1$ we have
\begin{equation*}
  \P(Z_n(u) \ge \E[Z_n(u)] + u^2) \le
  \exp\left(-\frac{n u^2}{4}\right).
\end{equation*}
As $\E[Z_n(u)] = \localcomplexity_n(u)$, we
finally use that the normalized complexity $t \mapsto
\frac{\localcomplexity_n(t)}{t}$ is non-decreasing~\cite[Lemma
  13.6]{Wainwright19} to obtain that
for $u \ge \delta_n$,
$\frac{1}{u} \localcomplexity_n(u)
\le \frac{1}{\delta_n} \localcomplexity_n(\delta_n)
\le \delta_n$, the last inequality by assumption.
In particular, we find that for $u \ge \delta_n$ we have
$\E[Z_n(u)] = \localcomplexity_n(u)
\le u \delta_n \le u^2$, and so
\begin{equation*}
  \P(Z_n(u) \ge 2 u^2) \le
  \P(Z_n(u) \ge \E[Z_n(u)] + u^2)
  \le \exp\left(-\frac{n u^2}{4} \right),
\end{equation*}
as desired.

\end{document}